\newtheorem{theorem}{Theorem}[section]
\newtheorem{lemma}[theorem]{Lemma}
\newtheorem{proposition}[theorem]{Proposition}
\newtheorem{corollary}[theorem]{Corollary}
\theoremstyle{definition}
\newtheorem{definition}[theorem]{Definition}
\newtheorem{example}[theorem]{Example}
\newtheorem{counterexample}[theorem]{Counterexample}
\newtheorem{remark}[theorem]{Remark}
\newtheorem{convention}[theorem]{Convention}
\newtheorem{warning}[theorem]{Warning}
\newcommand{\nach}{\longrightarrow}
\newcommand{\auf}{\longmapsto}
\newcommand{\mc}{\mathcal{C}}
\newcommand{\md}{\mathcal{D}}
\newcommand{\mm}{\mathcal{M}}
\newcommand{\nn}{\mathcal{N}}
\newcommand{\la}{\langle}
\newcommand{\ra}{\rangle}
\newcommand{\Fin}{\mathcal{F}in}
\DeclareMathOperator{\id}{id}
\DeclareMathOperator{\nat}{\mathsf{nat}}
\DeclareMathOperator{\Ho}{\mathsf{Ho}}
\DeclareMathOperator{\hhom}{\mathsf{hom}}
\DeclareMathOperator{\Hom}{\mathsf{Hom}}
\DeclareMathOperator{\im}{Im}
\DeclareMathOperator{\sSet}{\mathsf{Set}_\Delta}
\DeclareMathOperator{\colim}{colim}
\DeclareMathOperator{\HoRKan}{HoRKan}
\DeclareMathOperator{\HoLKan}{HoLKan}
\DeclareMathOperator{\RKan}{RKan}
\DeclareMathOperator{\op}{op}
\DeclareMathOperator{\co}{co}
\DeclareMathOperator{\Cat}{\mathsf{Cat}}
\DeclareMathOperator{\CAT}{\mathsf{CAT}}
\DeclareMathOperator{\Dia}{\mathsf{Dia}}
\DeclareMathOperator{\Holim}{Holim}
\DeclareMathOperator{\Hocolim}{Hocolim}
\DeclareMathOperator{\pr}{pr}
\DeclareMathOperator{\DD}{\mathbb{D}}
\DeclareMathOperator{\EE}{\mathbb{E}}
\DeclareMathOperator{\dia}{\mathsf{dia}}
\DeclareMathOperator{\ipush}{i_{_\ulcorner}}
\DeclareMathOperator{\ipull}{i_\lrcorner}
\DeclareMathOperator{\push}{\ulcorner}
\DeclareMathOperator{\pull}{\lrcorner}
\DeclareMathOperator{\cone}{\mathsf{Cone}}
\DeclareMathOperator{\fibre}{\mathsf{Fiber}}
\DeclareMathOperator{\strict}{\mathsf{strict}}
\DeclareMathOperator{\PDer}{\mathsf{PDer}}
\DeclareMathOperator{\Der}{\mathsf{Der}}
\DeclareMathOperator{\ModQ}{\mathsf{ModQ}}
\DeclareMathOperator{\PPr}{\mathsf{Pr}}
\DeclareMathOperator{\cyl}{\mathsf{cyl}}
\begin{document}

\title{Derivators, pointed derivators, and stable derivators}
\author{Moritz Groth}
\date{\today} 
\address{Moritz Groth, Radboud University, Nijmegen, Netherlands, email: m.groth@math.ru.nl}

\maketitle

\begin{abstract}
We develop some aspects of the theory of derivators, pointed derivators, and stable derivators. As a main result, we show that the values of a stable derivator can be canonically endowed with the structure of a triangulated category. Moreover, the functors belonging to the stable derivator can be turned into exact functors with respect to these triangulated structures. Along the way, we give a simplification of the axioms of a pointed derivator and a reformulation of the base change axiom in terms of Grothendieck (op)fibration. Furthermore, we have a new proof that a combinatorial model category has an underlying derivator.
\end{abstract}

\setcounter{section}{-1}

\tableofcontents

\section{Introduction and plan}

The theory of stable derivators as initiated by Heller \cite{heller,heller_stable} and Grothendieck \cite{grothendieck} and studied, at least in similar settings, among others, by Franke \cite{franke}, Keller \cite{keller_universal} and Maltsiniotis~\cite{maltsiniotis2}\nocite{maltsiniotis1}, can be motivated by saying that it provides an enhancement of triangulated categories. Triangulated categories suffer the well-known defect that the cone construction is not functorial. A consequence of this non-functoriality of the cone construction is the fact that there is no good theory of homotopy (co)limits for triangulated categories. One can still define these notions, at least in some situations where the functors are defined on categories which are freely generated by a graph. This is the case e.g.\ for the cone construction itself, the homotopy pushout, and the homotopy colimit of a sequence of morphisms. But in all these situations, the `universal objects' are only unique up to \emph{non}-canonical isomorphism.  The slogan used to describe this situation is the following one: diagrams in a triangulated category do not carry sufficient information to define their homotopy (co)limits in a \emph{canonical} way.

But in the typical situations, as in the case of the derived category of an abelian category or in the case of the homotopy category of a stable model resp.\ $\infty -$category, the `model in the background' allows for such constructions in a functorial manner. So, the passage from the model to the derived resp.\ homotopy category truncates the available information too strongly. To be more specific, let~$\mathcal{A}$ be an abelian category such that the derived categories which occur in the following discussion exist. Moreover, let us denote by $C(\mathcal{A})$ the category of chain complexes in $\mathcal{A}$. As usual, let $[1]$ be the ordinal $0\leq 1$ considered as a category $(0\nach 1).$ Hence, for an arbitrary category $\mathcal{C},$ the functor category $\mathcal{C}^{[1]}$ of functors from $[1]$ to $\mathcal{C}$ is the arrow category of $\mathcal{C}.$ With this notation, the cone functor at the level of abelian categories is a functor $\mathsf{C}\colon C(\mathcal{A}^{[1]})\cong C(\mathcal{A})^{[1]}\nach C(\mathcal{A}).$ But to give a \emph{construction} of the cone functor in terms of \emph{homotopical} algebra only, one has to consider more general diagrams. For this purpose,  let $f\colon X\nach Y$ be a morphism of chain complexes in $\mathcal{A}.$ Then the cone $\mathsf{C}f$ of $f$ is the \emph{homotopy} pushout of the following diagram: 
$$\xymatrix{
X\ar[d]\ar[r]^f& Y\\
0&
}$$
At the level of derived categories, the cone construction is again functorial \emph{when considered as a functor} $D(\mathcal{A}^{[1]})\nach D(\mathcal{A}).$ The important point is that one forms the arrow categories \emph{before} passage to the derived categories. Said differently, at the level of derived categories, we have, in general, $D(\mathcal{A}^{[1]})\ncong D(\mathcal{A})^{[1]}.$ Moreover, as we have mentioned, to actually give a construction of this functor one needs apparently also the derived category of diagrams in $\mathcal{A}$ of the above shape and a homotopy pushout functor. More systematically, one should not only consider the derived category of an abelian category but also the derived categories of diagram categories and restriction and homotopy Kan extension functors between them. This is the basic idea behind the notion of a derivator.

But the theory of derivators is more than `only an enhancement of triangulated categories'. In fact, it gives us an alternative axiomatic approach to an abstract homotopy theory (cf.\ Remark~\ref{rem_equivalence}). As in the theory of model categories and $\infty-$categories, there is a certain hierarchy of such structures: the unpointed situation, the pointed situation, and the stable situation. In the classical situation of topology, this hierarchy corresponds to the passage from spaces to pointed spaces and then to spectra. In classical homological algebra, the passage from the derived category of non-negatively graded chain complexes to the unbounded derived category can be seen as a second example for passing from the pointed to the stable situation. In the theory of derivators this threefold hierarchy of structures is also present, and the corresponding notions are then derivators, pointed derivators, and stable derivators. Franke has introduced in \cite{franke} a theory of systems of triangulated diagram categories which is similar to the notion of a stable derivator. The fact that the theory of derivators admits the mentioned threefold hierarchy of structures is one main advantage over the approach of Franke.

The theory described in this paper is not completely new. In particular, it owes a lot to Maltsiniotis who exposed and expanded the foundations of the theory originating with Grothendieck. The first two chapters can be considered as a review of these foundations, although our exposition deviates somewhat from existing ones. In particular, we make systematic use of the `base change formalism' from the very beginning, resulting in a streamlined development of the theory. The more original part of the paper lies in the remaining two chapters in which we use a simplified notion of pointed derivators as we discuss below.

We give a complete and self-contained proof that the values of a stable derivator can be \emph{canonically} endowed with the structure of a triangulated category. Similarly, we show that the functors which are part of the derivator can be \emph{canonically} turned into exact functors with respect to these structures. This is in a sense the main work and will occupy the bulk of this paper.  Moreover, we build on ideas of Franke \cite{franke} from his theory of systems of triangulated diagram categories and adapt them to this alternative set of axioms. Similar ideas are used in Lurie's \cite{HA} on the theory of stable $\infty-$categories.

Along the way we give a simplification of the axioms of a pointed derivator. The usual definition of a pointed derivator, here called a strongly pointed derivator, is formulated using the notion of cosieves and sieves. One usually demands that the homotopy left Kan extension functor~$i_!$ along a cosieve $i$ has itself a left adjoint~$i^?,$ and similarly that the homotopy right Kan extension functor~$j_\ast$ along a sieve~$j$ has a right adjoint~$j^!$. Motivated by algebraic geometry, these additional adjoints are then called exceptional resp.\ coexceptional inverse image functors. We show that this definition can be simplified. It suffices to ask that the underlying category of the derivator is pointed, i.e., has a zero object. This definition is more easily motivated, more intuitive for topologists, and, of course, simpler to check in examples. We give a direct proof of the equivalence of these two notions in Section \ref{sec_pointed}. A second proof of this is given in the stable setting using the fact that recollements of triangulated categories are overdetermined (cf.\ Subsection \ref{subsection_recollement}).

The author is aware of the fact that there will be a written up version of a proof of the existence of these canonical triangulated structures in a future paper by Maltsiniotis. In fact, Maltsiniotis presented an alternative, unpublished variant of Franke's theorem in a seminar in Paris in 2001. He showed that this notion of stable derivators is equivalent to a variant thereof (as used in the thesis of Ayoub \cite{ayoub1,ayoub2}) where the triangulations are part of the notion. Nevertheless, we give this independent account. Moreover, the construction of the suspension functor in \cite{cisinskineeman} and the axioms in \cite{maltsiniotis2} indicate that that proof will use the (co)exceptional inverse image functors. But one point here is to show that these functors are not needed for these purposes.

We now turn to a short description of the content of the paper. In Section \ref{sec_derivators}, we give the central definitions and deduce some immediate consequences of the axioms. The existence of certain very special (co)limits can be explained using the so-called (partial) underlying diagram functors. We develop some aspects of the `base change calculus' (Subsection \ref{subsection_BC}) which is the main tool in most of the proofs in this paper. Using that calculus we are able to characterize derivators by saying that they satisfy base change for Grothendieck (op)fibrations. This in turn is the key ingredient to establish the theoretically important class of examples, that for a derivator $\DD$ the prederivator~$\DD^M$ (cf.\ Example \ref{example_prod}) is also a derivator (Theorem \ref{thm_dertensored}). As a further class of examples, we give a simple, i.e., completely formal, proof that combinatorial model categories have underlying derivators. 

In Section \ref{sec_2cat}, we introduce morphisms and natural transformations in the context of derivators which leads to the $2$-category $\Der$ of derivators. We then turn to homotopy-colimit preserving morphisms and establish some basic facts about them. In particular, again using the fact that derivators satisfy base change for Grothendieck (op)fibrations we show that homotopy Kan extensions in a derivator of the form $\DD^M$ are calculated pointwise (Proposition \ref{prop_kanpointwise}) which will be of some importance in Section \ref{sec_stable}. Moreover, we study in some detail the notion of an adjunction between derivators.

In Section \ref{sec_pointed}, we consider pointed derivators and give the typical examples. We prove that our `weaker' definition of a pointed derivator is equivalent to the `stronger' one using the (co)exceptional inverse image functors (Corollary \ref{cor_pointed}). Moreover, in the pointed context homotopy right Kan extensions along sieves give `extension by zero functors' and dually for cosieves (Proposition \ref{prop_extzero}). We briefly talk about (co)Cartesian squares in a derivator and deduce some properties about them. An important example of this kind of results is the composition and cancellation property of (co)Cartesian squares (Proposition \ref{prop_cancel}). Another one is a `detection result' for (co)Cartesian squares in larger diagrams (Proposition \ref{prop_detect}) which is due to Franke \cite{franke}. We close the section by a discussion of the important suspension, loop, cone, and fiber functors. 

In the final section, we stick to stable derivators for which by definition the classes of coCartesian and Cartesian squares coincide. Some nice consequences of this are that the suspension and the loop morphisms define inverse equivalences, that biCartesian squares satisfy the 2-out-of-3 property, and that we are working in the additive context (Proposition \ref{prop_preadditive} and Corollary \ref{cor_additive}). The main aim of the section is to establish the canonical triangulated structures on the values of a stable derivator (Theorem \ref{thm_triang}). These are preserved by exact morphisms of stable derivators (Proposition \ref{prop_triang}) and, in particular, by the functors belonging to the stable derivator itself (Corollary \ref{cor_canonicallyexact}). In the last subsection, we remark that, given a stable derivator and a (co)sieve, we obtain a recollement of triangulated categories. This reproves, in the stable case, that pointed derivators are `strongly pointed'.

There are three more remarks in order before we begin with the paper. First, we do not develop the general theory of derivators for its own sake and also not in its broadest generality. In this paper, we only develop as much of the general theory as is needed to give complete, self-contained proofs of the mentioned results. Nevertheless, this paper may serve as an introduction to many central ideas in the theory of derivators and no prior knowledge is assumed.

The second remark concerns duality. Many of the statements in this paper have dual statements which also hold true by the dual proof (the reason for this is Example \ref{example_dual}). In most cases, we will not make these statements explicit and we will hardly ever give a proof of both statements. Nevertheless, we allow ourselves to refer to a statement also in cases where, strictly speaking, the dual statement is needed. 

The last remark concerns the terminology employed here. In the existing literature on derivators, the term `triangulated derivator' is used instead of `stable derivator'. We preferred to use this different terminology for two reasons: First, the terminology `triangulated derivator' (introduced by Maltsiniotis in \cite{maltsiniotis2}) is a bit misleading in that no triangulations are part of the initial data. One main point of this paper is to give a proof that these triangulations can be canonically constructed. Thus, from the perspective of the typical distinction between \emph{structures} and \emph{properties} the author does not like the former terminology too much. Second, in the related theories of model categories and $\infty-$categories, corresponding notions exist and are called \emph{stable} model categories and \emph{stable} $\infty-$categories respectively. So, the terminology stable derivator reminds us of the related theories.\footnote{This research was supported by the Deutsche Forschungsgemeinschaft within the graduate program `Homotopy and Cohomology' (GRK 1150)}
\hspace{2mm}

\section{Derivators}\label{sec_derivators}

\subsection{Basic definitions}

As we mentioned in the introduction, the basic idea behind a derivator is to consider simultaneously derived or homotopy categories of diagram categories of different shapes. So, the most basic notion in this business is the following one.

\begin{definition}
A \emph{prederivator} $\DD$ is a strict $2$-functor $\DD\colon\Cat ^{\op} \nach \CAT$.
\end{definition}
\noindent
Here, $\Cat$ denotes the $2$-category of small categories, $\Cat^{\op}$ is obtained from $\Cat$ by reversing the direction of the functors, while $\CAT$ denotes the `$2$-category' of not necessarily small categories. There are the usual set-theoretical problems with the notion of the `$2$-category' $\CAT$ in that this will not be a category enriched over $\Cat$. Since we will never need this non-fact in this paper, we use slogans as the `$2$-category $\CAT$' as a convenient parlance and think instead of a prederivator as a function $\DD$ as we describe it now. Given a prederivator $\DD$ and a functor $u\colon J\nach K$, an application of $\DD$ to $u$ gives us two categories $\DD(J),\;\DD(K),$ and a functor 
$$\DD(u)=u^\ast\colon\DD(K)\nach\DD(J).$$
Similarly, given two functors $u,\: v\colon J\nach K$ and a natural transformation $\alpha\colon u\nach v,$ we obtain an induced natural transformation $\alpha^\ast$ as depicted in the next diagram:
$$
\xymatrix{J \rtwocell^u_v {\alpha}& K& & \DD(K) \rtwocell^{u^\ast}_{v^\ast}{\;\;\alpha^\ast}& \DD(J)}
$$
 \noindent
This datum is compatible with compositions and identities in a strict sense, i.e.,  we have equalities of the respective expressions and not only coherent natural isomorphisms between them. For the relevant basic $2$-categorical notions, which were introduced by Ehresmann in \cite{ehresmann}, we refer to~\cite{kelly_2cat} or to \cite[Chapter 7]{borceux1}, but nothing deep from that theory is needed here.\\
The following examples give an idea of how such prederivators arise. Among these probably the second, third, and fourth one are the examples to have in mind in later sections.

\begin{example}
Every category $\mc$ gives rise to the \emph{prederivator represented by} $\mc$:
$$y(\mc)=\mc\colon J\auf \mc^J$$
Here, $\mc^J$ denotes the functor category of functors from $J$ to $\mc.$ 
\end{example}

\noindent
Anticipating the fact that we have a $2$-category $\PDer$ of prederivators (cf.\ Section \ref{sec_2cat}) we want to mention that this example extends to a ($2$-categorical) Yoneda embedding $y\colon\CAT\nach\PDer.$
In this and the companion papers (\cite{groth_monder, groth_enriched}) we introduce many notions for derivators which are analogs of well-known notions from category theory. Then it will be important to see that these notions are extensions of the classical ones in that both notions coincide on the represented (pre)derivators.

\begin{example}
Let $\mathcal{A}$ be a sufficiently nice abelian category, i.e., such that we can form the derived categories occurring in this example without running into set theoretical problems. Recall that, by definition, the derived category $D(\mathcal{A})$ is the localization of the category of chain complexes at the class of quasi-isomorphisms. For a category~$J$, the functor category $\mathcal{A}^J$ is again an abelian category. In the associated category of chain complexes $C(\mathcal{A}^J)\cong C(\mathcal{A})^J$, quasi-isomorphisms are defined pointwise, so that restriction of diagram functors induce functors on the level of derived categories. Thus, we have the \emph{prederivator} $\DD_{\mathcal{A}}$ \emph{associated to an abelian category} $\mathcal{A}$:
$$\DD_\mathcal{A}\colon J\auf \DD_\mathcal{A}(J)=D(\mathcal{A}^J)$$
\end{example}

The next example assumes some knowledge of model categories. The original reference is \cite{quillen} while a well written, leisurely introduction to the theory can be found in \cite{DwyerSpalinski}. Much more material is treated in the monographs \cite{hovey} and \cite{hirschhorn}. 

\begin{convention}
In this paper model categories are assumed to have limits and colimits of all \emph{small} (as opposed to only finite) diagrams. Furthermore, we do \emph{not} take functorial factorizations as part of the notion of a model category. First, this would be an additional structure on the model categories which is anyhow not respected by the morphisms, i.e., by Quillen functors. Second, this assumption would be a bit in conflict with the philosophy of higher category theory. The category of -say- cofibrant replacements of a given object in a model category is contractible so that any choice is equally good and there is no essential difference once one passes to homotopy categories. We refer to \cite[Part 2]{hirschhorn} for many results along these lines.
\end{convention}

\begin{example}\label{ex_model}
Let $\mathcal{M}$ be a cofibrantly generated model category. Recall that one of the good things about cofibrantly generated model categories is that diagram categories $\mathcal{M}^J$ can be endowed with the so-called \emph{projective model structure}. In more detail, let us call a natural transformation of $\mathcal{M}-$valued functors a \emph{projective fibration} if all components are fibrations, and similarly a \emph{projective weak equivalence}  if all components are weak equivalences in $\mathcal{M}$. A \emph{projective cofibration} is a map which has the left-lifting-property with respect to all maps which are simultaneously projective fibrations and projective weak equivalences. With these definitions, $\mathcal{M}^J$ is again a model category and we can thus consider the associated homotopy category. Recall that the canonical functor $\gamma\colon\mathcal{M}\nach\Ho(\mathcal{M})$ from $\mm$ to its homotopy category is a 2-localization. This means, that $\gamma$ induces for every category~$\mc$ an \emph{isomorphism of categories}
$$\gamma^\ast\colon\mc^{\Ho(\mathcal{M})}\nach\mc^{(\mathcal{M},W)}$$ 
\noindent
where the right-hand-side denotes the full subcategory of $\mc^{\mathcal{M}}$ spanned by the functors which send weak equivalences to isomorphisms. Moreover, since projective weak equivalences are defined as levelwise weak equivalences, these are preserved by restriction of diagram functors. By the universal property of the localization functors the restriction of diagram functors descend uniquely to the homotopy categories. Thus, given such a cofibrantly generated model category $\mathcal{M}$, we can form the \emph{prederivator} $\DD_{\mathcal{M}}$ \emph{associated to} $\mathcal{M}$ if we set 
$$\DD_\mathcal{M}\colon J\auf \DD_\mathcal{M}(J)=\Ho(\mathcal{M}^J).$$
\end{example}

A similar example can be given using the theory of $\infty-$categories (aka.\ quasi-categories, weak Kan complexes), i.e., of simplicial sets satisfying the inner horn extension property. These were originally introduced by Boardman and Vogt in their work \cite{BoardVogt} on homotopy invariant algebraic structures. Detailed accounts of this theory are given in the tomes due to Joyal \cite{joyal1,joyal2,joyal3,joyal4} and Lurie \cite{HTT,HA}. A short exposition of many of the central ideas and also of the philosophy of this theory can be found in \cite{groth_infinity}.

\begin{example}
Let $\mathcal{C}$ be an $\infty-$category and let $K\in\sSet$ be a simplicial set. Then one can show that the simplicial mapping space $\mathcal{C}^K_\bullet=\hhom_{\sSet}(\Delta^\bullet \times K, \mathcal{C})$ is again an $\infty-$category (as opposed to a more general simplicial set). This follows from the fact that the Joyal model structure~(\cite{joyal1}) on the category of simplicial sets is Cartesian. We can hence vary the simplicial set $K$ and consider the associated homotopy categories $\Ho(\mathcal{C}^K)$. Using the nerve functor $N$ which gives us a fully faithful embedding of the category $\Cat$ in the category $\sSet$ of simplicial sets, we thus obtain the \emph{prederivator} $\DD_{\mathcal{C}}$ \emph{associated to the} $\infty-$\emph{category} $\mathcal{C}$:
$$\DD_\mathcal{C}\colon J\auf \DD_{\mathcal{C}}(J)=\Ho \big( \mathcal{C}^{N(J)}\big)$$
The functoriality of this construction follows from Theorem 5.14 of \cite{joyal3}.
\end{example}

The last example which we are about to mention now does not seem to be too interesting in its own right. But as we will see later it largely reduces the amount of work in many proofs (cf.\ Theorem~\ref{thm_dertensored}).

\begin{example}\label{example_prod}
Let $\DD$ be a prederivator and let $M$ be a fixed category. Then the assignment 
$$\DD^M\colon \Cat^{\op}\nach \CAT\colon J\auf \DD^M(J)=\DD(M\times J)$$
is again a prederivator. Similarly, given a functor $u\colon L\nach M$ we obtain a morphism of $2$-functors $u^\ast\colon \DD^M\nach \DD^L.$ There is a notion of morphisms of prederivators (cf.\ Section \ref{sec_2cat} and more specifically Example \ref{ex_morphisms}) and it is easy to see that the pairing $(M,\DD)\auf \DD^M$ is actually functorial in both variables. Moreover, we have coherent isomorphisms $(\DD^L)^M\cong\DD^{L\times M}$ and $\DD^e\cong \DD$ for the terminal category~$e$.
\end{example}

\begin{remark}
{\rm i)} In some situations, in particular under certain finiteness conditions, one does not wish to consider diagrams of arbitrary shapes but only of a certain kind (e.g.\ finite, finite-dimensional, posets). There is a notion of a \emph{diagram category} $\Dia$ which is a 2-subcategory of $\Cat$ having certain closure properties. Correspondingly, there is then the associated notion of a \emph{prederivator of type} $\Dia$. We preferred to not give these definitions at the very beginning since we wanted to start immediately with the development of the theory. Once the main results are established we check which properties have been used and come back to this point (cf.\ the discussion before Definition \ref{def_diagram}). So, the reader is invited to replace `a (pre)derivator' by `a (pre)derivator \emph{of type} $\Dia$' throughout this paper. An example of the usefulness of this more flexible notion is given by Keller in \cite{keller_exact} where he shows that there is a stable derivator associated to an exact category in the sense of Quillen \cite{quillen_ktheory} if one restricts to finite directed diagrams.\\
{\rm ii)} There is an additional remark concerning the definition of a prederivator. In our setup a pre-derivator is a $2$-functor $\DD\colon\Cat^{\op}\nach\CAT$ as opposed to a more general pseudo-functor (which is for example used in \cite{franke}). More specifically, we insisted on the fact that $\DD$ preserves identities and compositions in a strict sense and not only up to coherent natural isomorphisms. Since all examples showing up in nature have this stronger functoriality we are fine with this notion. However, from the perspective of `homotopical invariance of structures', a definition based on pseudo-functors would be better: let $\DD$ be a prederivator and let us be given a category $\mathcal{E}_J$ for each small category~$J.$ Let us moreover assume that we are given equivalences of categories $\DD(J)\nach\mathcal{E}_J.$ Then, in general, we cannot use the equivalences to obtain a prederivator $\EE$ with $\EE(J)=\mathcal{E}_J$ such that the equivalences of categories assemble to an equivalence of prederivators. This would only be the case if the equivalences are, in fact, isomorphisms which --by the basic philosophy of category theory-- is a too strong notion. Nevertheless, for the sake of a simplification of the exposition we preferred to stick to $2$-functors but want to mention that everything we do here can also be done with pseudo-functors.
\end{remark}

Let now $\DD$ be a prederivator and let $u\colon J\nach K$ be a functor. Motivated by the above examples we call the induced functor $\DD(u)=u^\ast\colon \DD(K)\nach \DD(J)$ a \emph{restriction of diagram functor} or \emph{precomposition functor}. As a special case of this, let $J=e$ be the terminal category, i.e., the category with one object and identity morphism only. For an object $k$ of $K$, we denote by $k\colon e\nach K$ the unique functor sending the unique object of $e$ to $k$. Given a prederivator $\DD$, we obtain, in particular, for each object $k\in K$ an associated functor $k^\ast\colon \DD(K)\nach \DD(e)$ which takes values in the \emph{underlying category} $\DD(e)$. Let us call such a functor an \emph{evaluation functor}. For a morphism $f\colon X\nach Y$ in~$\DD(K)$ let us write $f_k\colon X_k\nach Y_k$ for its image under $k^\ast.$

\begin{definition}
Let $\DD$ be a prederivator and let $u\colon J\nach K$ be a functor.\\
i) The prederivator $\DD$ admits \emph{homotopy left Kan extensions along} $u$ if the induced functor $u^\ast$ has a left adjoint:
$$(u_!=\HoLKan_u,u^\ast)\colon \DD(J)\rightharpoonup \DD(K)$$
The prederivator $\DD$ admits \emph{homotopy colimits of shape} $J$ if the functor $p_J^\ast$ induced by $p_J\colon J\nach e$ has a left adjoint:
$$({p_J}_!=\Hocolim _J,p_J^\ast)\colon \DD(J)\rightharpoonup\DD(e)$$
ii) The prederivator $\DD$ admits \emph{homotopy right Kan extensions along} $u$ if the induced functor $u^\ast$ has a right adjoint:
$$(u^\ast,u_\ast=\HoRKan_u)\colon \DD(K)\rightharpoonup \DD(J)$$
The prederivator $\DD$ admits \emph{homotopy limits of shape} $J$ if the functor $p_J^\ast$ induced by $p_J\colon J\nach e$ has a right adjoint:
$$(p_J^\ast,{p_J}_\ast=\Holim _J)\colon \DD(e)\rightharpoonup\DD(J)$$
\end{definition}

Recall from classical category theory, that under cocompleteness assumptions left Kan extensions can be calculated pointwise by certain colimits, and similarly that under completeness assumptions right Kan extensions can be calculated pointwise by certain limits \cite[p.\ 237]{maclane}. More precisely, consider $u\colon J\nach K$ and $F\colon J\nach \mc$ where $\mc$ is a complete category:
$$
\xymatrix{
J\ar[r]^-F \ar[d]_-u & \mc  \\
K\ar@{-->}[ru]_-{\RKan _u(F)} &
}
$$
Then the right Kan extension $\RKan_u (F)$ of $F$ along $u$ exists and can be described using \emph{Kan's formula} \cite{kan_adjoint} as 
$$\RKan _u (F)_k\cong \lim_{J_{k/}}\pr^\ast (F)= \lim_{J_{k/}}F\circ \pr,\qquad k\in K.$$
In the above formula, we have used the following notation. Let $u\colon J\nach K$ be a functor and let~$k$ be an object of $K$. Then one can form the \emph{slice category} $J_{k/}$ \emph{of objects} $u$\emph{-under k}. An object in this category is a pair $(j,f)$ consisting of an object $j\in J$ together with a morphism $f\colon  k\nach u(j)$ in~$K$. Given two such objects $(j_1,f_1)$ and $(j_2,f_2),$ a morphism $g\colon (j_1,f_1)\nach (j_2,f_2)$ is a morphism $g\colon j_1\nach j_2$ in $J$ such that the obvious triangle in~$K$ commutes. Dually, one can form the \emph{slice category} $J_{/k}$ \emph{of objects} $u$\emph{-over} $k$. In both cases, there are canonical functors 
$$\pr\colon J_{k/}\nach J\qquad \mbox{and} \qquad \pr\colon J_{/k}\nach J$$
forgetting the morphism component. We will not distinguish these projection morphisms notationally but it will always be clear from the context which projection morphism we are considering.  A dual formula holds for left Kan extension in the case of a cocomplete target category $\mc$ and will not be made explicit.

The corresponding property for \emph{homotopy} Kan extensions holds in the case of model categories (cf.\ Subsection \ref{subsec_ex}) and will be demanded axiomatically for a derivator. In order to be able to formulate this axiom, we have to talk about base change morphisms. For this purpose, let $\DD$ be a prederivator and consider a natural transformation of functors $\alpha\colon w\circ u \nach u'\circ v$. By an application of $\DD$, we thus have the following two squares on the left:
$$
\xymatrix{
J \ar[r]^-v\ar[d]_-u& J' \ar[d]^-{u'} && \DD(J) & \DD(J') \ar[l]_-{v^\ast}&
\DD(K)&\DD(J)\ar[l]_-{u_\ast} & \DD(J') \ar[l]_-{v^\ast}&\\
K \ar[r]_-w & K'\xtwocell[-1,-1]{}\omit&&\DD(K) \ar[u]^-{u^\ast} & \DD(K')\ar[l]^-{w^\ast}\ar[u]_-{{u'}^\ast}\xtwocell[-1,-1]{}\omit&
&\DD(K) \ar@/^1.0pc/[lu]^-=\ar[u]^-{u^\ast}\xtwocell[-1,-1]{}\omit & \DD(K')\ar[l]^-{w^\ast}\ar[u]_-{{u'}^\ast}\xtwocell[-1,-1]{}\omit& \DD(J')\ar@/_1.0pc/[lu]_-= \ar[l]^-{u'_\ast}\xtwocell[-1,-1]{}\omit
}
$$
\noindent
Let us assume that $\DD$ admits homotopy right Kan extensions along $u$ and $u'.$ We denote any chosen adjoints and the corresponding adjunction morphisms by
$$(u^\ast,u_\ast),\qquad \eta \colon\id\nach u_\ast\circ u^\ast,\qquad\mbox{and}\qquad \epsilon \colon u^\ast\circ u_\ast\nach \id$$
\noindent
in the case of $u$ and similarly in the case of $u'.$ This can be used to extend our square to the upper right diagram in which the additional natural transformation are given by the respective adjunction morphisms. We can thus define the natural transformation $\alpha_\ast$ by pasting this diagram to a single natural transformation. Spelling this out, $\alpha_\ast$ is given by the following composition of natural transformations:
$$\xymatrix{
      w^\ast\circ u'_\ast \ar[rr]^{\alpha_\ast}\ar[d]_{\eta} & & u_\ast\circ v^\ast  &\DD(J)\ar[d]_{u_\ast} & \DD(J') \ar[l]_{v^\ast}\ar[d]^{u'_\ast}\xtwocell[1,-1]{}\omit\\
  u_\ast\circ u^\ast\circ w^\ast \circ u'_\ast  \ar[rr]_{\alpha^\ast} & & u_\ast\circ v^\ast\circ u'^\ast \circ u'_\ast \ar[u]_{\epsilon'}&\DD(K)  & \DD(K')\ar[l]^{w^\ast}
}
$$
\noindent
This natural transformation $\alpha_\ast$ is called `the' \emph{Beck-Chevalley transformed 2-cell associated to} $\alpha$. Since this construction is very important in this paper let us make explicit the dual construction. So, let us consider a natural transformation $\alpha \colon u'\circ v\nach w\circ u$ as in:
$$
\xymatrix{
J \xtwocell[1,1]{}\omit \ar[r]^v\ar[d]_u& J' \ar[d]^{u'} && \DD(J) \xtwocell[1,1]{}\omit& \DD(J') \ar[l]_{v^\ast}&
\DD(K)\xtwocell[1,1]{}\omit&\DD(J)\xtwocell[1,1]{}\omit\ar[l]_-{u_!} & \DD(J')\xtwocell[1,1]{}\omit \ar[l]_-{v^\ast}& \\
K \ar[r]_w & K' && \DD(K) \ar[u]^{u^\ast} & \DD(K') \ar[u]_{{u'}^\ast}\ar[l]^{w^\ast} &
&\DD(K) \ar@/^1.0pc/[lu]^-=\ar[u]^-{u^\ast} & \DD(K')\ar[l]^-{w^\ast}\ar[u]_-{{u'}^\ast}& \DD(J')\ar@/_1.0pc/[lu]_-= \ar[l]^-{u'_!}
}
$$
Under the assumption that the prederivator admits homotopy left Kan extensions along $u$ and $u'$ we can proceed as above and define the natural transformation $\alpha_!$ by pasting, i.e., as follows:
$$\xymatrix{
      u_! \circ v^\ast \ar[rr]^{\alpha_!}\ar[d]_{\eta'} & & w^\ast\circ u'_!  &\DD(J)\ar[d]_{u_!} & \DD(J') \ar[l]_{v^\ast}\ar[d]^{u'_!}\\
  u_!\circ v^\ast\circ u'^\ast \circ u'_!  \ar[rr]_{\alpha ^\ast} & & u_!\circ u^\ast\circ w^\ast \circ u'_! \ar[u]_{\epsilon}&\DD(K) \xtwocell[-1,1]{}\omit & \DD(K')\ar[l]^{w^\ast}
}
$$
\noindent
This natural transformation $\alpha_!$ is again called `the' \emph{Beck-Chevalley transformed 2-cell associated to}~$\alpha$. 

In both cases, we constructed a new $2$-cell by choosing certain adjoint functors and then composing with the adjunction morphisms. It is immediate that the result depends on these choices only up to natural isomorphism. This technique will be developed a bit more systematically in Subsection \ref{subsection_BC} but see also \cite{groth_scderivator} where some aspects from classical category theory are treated from this perspective.

At the very moment, we are interested in the following situation. Let $u\colon J\nach K$ be a functor and $k\in K$ an object. Identifying $k$ again with the corresponding functor $k:e\nach K$, we have the following two natural transformations $\alpha$ in the context of the slice constructions:
$$\xymatrix{
   J_{k/}\ar[r]^\pr\ar[d]_{p_{J_{k/}}} & J \ar[d]^u& & J_{/k}\ar[r]^\pr\ar[d]_{p_{J_{/k}}}\xtwocell[1,1]{}\omit& J\ar[d]^u\\
      e \ar[r]_k& K \xtwocell[-1,-1]{}\omit& & e\ar[r]_k & K
}
$$
The components of $\alpha$ at $(j,f\colon k\nach u(j))$ resp.\ $(j,f\colon u(j)\nach k)$ are $f$ in both cases. Assuming $\DD$ to be a prederivator admitting the necessary homotopy Kan extensions, we thus obtain the following Beck-Chevalley transformed $2$-cells $\alpha_\ast$ and $\alpha_!:$
$$
\xymatrix{
\DD(J_{k/})\ar[d]_{\Holim_{J_{k/}}} & \DD(J)\ar[l]_-{\pr ^\ast}\ar[d]^{u_\ast}\xtwocell[1,-1]{}\omit&&\DD(J_{/k})\ar[d]_{\Hocolim_{J_{/k}}} & \DD(J)\ar[l]_-{\pr ^\ast}\ar[d]^{u_!}\\
\DD(e)& \DD(K)\ar[l]^-{k^\ast}&&\DD(e)\xtwocell[-1,1]{}\omit & \DD(K)\ar[l]^-{k^\ast}
}
$$
\noindent
Asking these natural transformations to be isomorphisms is a convenient way to axiomatize Kan's formulas. With these preparations we can give the central definition of a derivator.

\begin{definition}\label{def_derivator}
A prederivator $\DD$ is called a \emph{derivator} if it satisfies the following axioms:\\
(Der1) For two categories $J_1$ and $J_2$, the functor $\DD(J_1\sqcup J_2)\nach \DD(J_1)\times\DD(J_2)$ induced by the inclusions is an equivalence of categories. Moreover, the category $\DD(\emptyset)$ is not the empty category.\\
(Der2) A morphism $f\colon X\nach Y$ in $\DD(J)$ is an isomorphism if and only if $f_j\colon X_j\nach Y_j$ is an isomorphism in $\DD(e)$ for every object $j\in J.$\\
(Der3) For every functor $u\colon J\nach K$, there are homotopy left and right Kan extensions along $u$:
$$(u_!,u^\ast)\colon\DD(J)\rightharpoonup\DD(K)\qquad\mbox{and}\qquad(u^\ast,u_\ast)\colon\DD(K)\rightharpoonup\DD(J).$$
\noindent
(Der4) For every functor $u\colon J\nach K$ and every $k\in K$, the morphisms
$$\Hocolim_{J_{/k}}\pr^\ast(X)\stackrel{\alpha_!}{\nach} u_!(X)_k\qquad \mbox{and}\qquad u_\ast(X)_k\stackrel{\alpha_\ast}{\nach}\Holim_{J_{k/}}\pr^\ast (X)$$
are isomorphisms for all $X\in\DD(J)$.
\end{definition}

A few remarks on the axioms are in order. The first axiom says of course that a diagram on a disjoint union is completely determined by its restrictions to the direct summands. The second part of the first axiom is included in order to exclude the `empty derivator' as an example. But it will also imply the existence of initial and final objects (cf.\ Proposition \ref{prop_(co)limits}). The second axiom can be motivated from the examples as follows. A natural transformation is an isomorphism if and only if it is pointwise an isomorphism. Similarly, in the context of an abelian category, there is the easy fact that a morphism of chain complexes in a functor category is a quasi-isomorphism if and only if it is a quasi-isomorphism at each object. Moreover, in the context of model categories, whatever model structure one establishes on a diagram category with values in a model category, one certainly wants the class of weak equivalences to be defined pointwise. Finally, the corresponding result for $\infty-$categories is established by Joyal as Theorem 5.14 in \cite{joyal3}. The last two axioms of course encode a  `homotopical bicompleteness property' together with Kan's formulas. One could easily develop a more general theory of prederivators which are only homotopy (co)complete or even only have a certain class of homotopy (co)limits. 

\begin{example}\label{example_bicompletecat}
Let $\mathcal{C}$ be a category. The represented prederivator $y(\mathcal{C})\colon J\auf \mathcal{C}^J$ is a derivator if and only if $\mc$ is bicomplete. Thus, the $2$-category of bicomplete categories is embedded into the $2$-category of derivators.
\end{example}

The idea is of course that the derivator encodes additional structure on its values. One nice feature of this approach is that this structure does not have to be chosen but its existence can be deduced from the axioms. Note that all axioms are of the form that they demand a \emph{property}; the only actual \emph{structure} is the given prederivator. This is similar to the situation of additive categories where the enrichment in abelian groups can uniquely be deduced from the fact that the underlying category has certain exactness properties. We will come back to this point later in the context of stable derivators (cf.\ Remark \ref{rem_advantages}). 

As a first example of this `higher structure' we give the following example. We will pursue this more systematically from Subsection \ref{subsection_BC} on. Let $J$ be a category and consider the coproduct $J\sqcup J$ together with the codiagonal and the inclusion functors:

$$
\xymatrix{
J \ar[rd]_-{\id_J}\ar[r]^-{i_1}& J\sqcup J\ar[d]^-{\nabla_J} & J\ar[dl]^-{\id_J}\ar[l]_-{i_2}\\
& J &
}
$$

\begin{proposition}\label{prop_(co)limits}
Let $\DD$ be a derivator and let $J$ be a category.\\
{\rm i)} The value of $\DD$ at the empty category $\emptyset$ is trivial, i.e., $\DD(\emptyset)$ is equivalent to $e.$\\
{\rm ii)} The category $\DD(J)$ admits an initial object $\emptyset$ and a terminal object $\ast$.\\
{\rm iii)} The category $\DD(J)$ admits finite coproducts and finite products.
\end{proposition}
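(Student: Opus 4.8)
The plan is to prove the three parts in sequence, each resting on the previous one, extracting all of the content from axioms (Der1) and (Der3) together with the representability descriptions of initial/terminal objects and of (co)products.

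For i), I would apply (Der1) with $J_1=J_2=\emptyset$. Since $\emptyset\sqcup\emptyset=\emptyset$ and both structure maps $\emptyset\nach\emptyset$ are the identity, strict $2$-functoriality makes the comparison functor of (Der1) literally the diagonal $\Delta\colon\DD(\emptyset)\nach\DD(\emptyset)\times\DD(\emptyset)$, $X\auf(X,X)$, and the axiom asserts that $\Delta$ is an equivalence. Now $\Delta$ is fully faithful exactly when every hom-set of $\DD(\emptyset)$ has at most one element (the assignment $f\auf(f,f)$ is always injective and is surjective only when parallel morphisms coincide), and $\Delta$ is essentially surjective exactly when all objects of $\DD(\emptyset)$ are isomorphic (an isomorphism $(Z,Z)\cong(A,B)$ in the product category forces $A\cong Z\cong B$). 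Combining this with the non-emptiness clause of (Der1), $\DD(\emptyset)$ is a non-empty category with singleton hom-sets, hence equivalent to $e$.

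For ii), I would feed i) into (Der3). Writing $\emptyset_J\colon\emptyset\nach J$ for the unique functor, (Der3) provides a left adjoint $(\emptyset_J)_!$ and a right adjoint $(\emptyset_J)_\ast$ to $\emptyset_J^\ast\colon\DD(J)\nach\DD(\emptyset)$. Choosing the (essentially unique, by i)) object $0\in\DD(\emptyset)$, the adjunction gives for every $X\in\DD(J)$ a bijection $\Hom_{\DD(J)}((\emptyset_J)_!(0),X)\cong\Hom_{\DD(\emptyset)}(0,\emptyset_J^\ast X)$, whose right-hand side is a singleton since $\DD(\emptyset)\simeq e$; thus $(\emptyset_J)_!(0)$ is initial in $\DD(J)$, and dually $(\emptyset_J)_\ast(0)$ is terminal.

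For iii), the empty coproduct and product are the objects constructed in ii). For binary ones I would use the codiagonal $\nabla_J\colon J\sqcup J\nach J$ and the inclusions $i_1,i_2$, which satisfy $\nabla_J\circ i_k=\id_J$. By (Der1), $(i_1^\ast,i_2^\ast)\colon\DD(J\sqcup J)\nach\DD(J)\times\DD(J)$ is an equivalence, so given $X,Y\in\DD(J)$ I can choose $Z\in\DD(J\sqcup J)$ with $i_1^\ast Z\cong X$ and $i_2^\ast Z\cong Y$; by (Der3), $\nabla_J$ has adjoints $\nabla_{J!}$ and $\nabla_{J\ast}$. For any $W\in\DD(J)$ the adjunction, the equivalence $(i_1^\ast,i_2^\ast)$, and the identity $i_k^\ast\nabla_J^\ast=(\nabla_J i_k)^\ast=\id$ yield natural bijections $\Hom_{\DD(J)}(\nabla_{J!}Z,W)\cong\Hom_{\DD(J\sqcup J)}(Z,\nabla_J^\ast W)\cong\Hom_{\DD(J)}(X,W)\times\Hom_{\DD(J)}(Y,W)$, so $\nabla_{J!}Z$ is a coproduct of $X$ and $Y$ and dually $\nabla_{J\ast}Z$ is a product. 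Finitely many factors are then handled by induction, using the evident $n$-fold form of (Der1) (iterate the binary statement) and associativity of binary (co)products. I expect the only non-formal point to be the argument in i) that deduces $\DD(\emptyset)\simeq e$ from the diagonal being an equivalence plus non-emptiness; this is the mild crux, whereas ii) and iii) are routine once i) is in hand.
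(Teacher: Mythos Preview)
Your proposal is correct and follows essentially the same approach as the paper: in i) you both exploit (Der1) for $\emptyset=\emptyset\sqcup\emptyset$ to see that the diagonal on $\DD(\emptyset)$ is an equivalence and then extract triviality from non-emptiness; in ii) you both use the Kan extensions along $\emptyset_J$ guaranteed by (Der3); and in iii) you both combine the (Der1)-equivalence $(i_1^\ast,i_2^\ast)$ with the adjoints of $\nabla_J^\ast$. Your phrasing is slightly more explicit (hom-set bijections) where the paper invokes the slogans ``left adjoints preserve initial objects'' and ``the right adjoint is the diagonal'', but there is no substantive difference.
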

\begin{proof}
i) Considering the disjoint union $\emptyset=\emptyset \sqcup \emptyset$, (Der1) implies that we have an equivalence given by the diagonal functor $\DD(\emptyset)\nach \DD(\emptyset)\times\DD(\emptyset)$. Thus, all morphism sets are either empty sets or singletons. The first case would deduce a contradiction to the fact that the diagonal is a bijection on path components. Thus, $\DD(\emptyset)$ is trivial and we will hence denote any object of $\DD(\emptyset)$ by $0$.\\
ii) Consider the unique empty functor $\emptyset_J:\emptyset \nach J$ and apply (Der3) in order to obtain left resp.\ right adjoints 
$${\emptyset_J}_!\colon\DD(\emptyset)\nach \DD(J),\qquad {\emptyset_J}_\ast\colon \DD(\emptyset)\nach \DD(J).$$
Since a left (right) adjoint preserves initial (final) objects, the image of any object $0$ under ${\emptyset_J}_!$ (${\emptyset_J}_\ast$) is an initial (terminal) object in $\DD(J).$ Let us denote any such image by $\emptyset$ respectively $\ast.$\\
iii) By (Der1), we have an equivalence of categories $(i_1^\ast,i_2^\ast):\DD(J\sqcup J)\stackrel{\simeq}{\nach}\DD(J)\times\DD(J)$. Choose an inverse equivalence $k$ and consider the following diagram:
$$
\xymatrix{
\DD(J)\times\DD(J)\ar@<0.8ex>[r]^-k&\DD(J\sqcup J)\ar@<0.8ex>[r]^-{{\nabla_J}_!}\ar@<0.8ex>[l]^-{(i_1^\ast,i_2^\ast)}& \DD(J)\colon\Delta_{\DD(J)}\ar@<0.8ex>[l]^-{\nabla_J^\ast}
}
$$
\noindent
Since the right adjoint is the diagonal functor, ${\nabla_J}_!\circ k$ gives a coproduct. Similarly, ${\nabla_J}_\ast\circ k$ will define a product functor on $\DD(J).$
\end{proof}

We want to emphasize that, in general, the values of a derivator only have very few \emph{categorical} (co)limits. In order to relate this to the homotopical variants and also for later purposes, let us introduce the underlying diagram functors and their partial variants. We saw already that an object~$m\in M$ induces an evaluation functor $m^\ast\colon\DD(M)\nach\DD(e).$ Similarly, a morphism $\alpha\colon m_1\nach m_2$ in $M$ can be considered as a natural transformation of the corresponding classifying functors and thus gives rise to
$$\xymatrix{
e \rtwocell^{m_1}_{m_2}{\alpha} & M, & &\DD(M)\rtwocell^{m_1^\ast}_{m_2^\ast}{\;\;\alpha^\ast}&\DD(e).
}$$
\noindent
Under the categorical exponential law which can be written in a suggestive form as
$$\Big({\DD(e)^{\DD(M)}}\Big)^M\cong \DD(e)^{M\times\DD(M)}\cong \Big(\DD(e)^M\Big)^{\DD(M)},$$
we hence obtain an \emph{underlying diagram functor}
$$\dia_M\colon\DD(M)\nach \DD(e)^M.$$
Similarly, given a product $M\times J$ of two categories and $m\in M$, we can consider the corresponding functor
$$m\times\id_J\colon J\cong e\times J\nach M\times J.$$
Following the same arguments as above, we obtain a \emph{partial underlying diagram functor}
$$\dia _{M,J}\colon \DD(M\times J)\nach \DD(J)^M.$$
\noindent
Thus, the natural isomorphism $M\cong M\times e$ induces an identification of $\dia_M$ and $\dia_{M,e}.$ Now, the functor $p_M\colon M\nach e$ gives rise to the following diagram
$$
\xymatrix{
\DD(M)\ar[rr]^{\dia_M}\ar@/_1.3pc/[d]_{\Hocolim_M}\ar@/^1.3pc/[d]^{\Holim _M}&& \DD(e)^M\ar@/_1.3pc/@{.>}[d]\ar@/^1.3pc/@{.>}[d]\\
\DD(e)\ar[u]^{p_M^\ast}\ar[rr]_\id&&\DD(e)\ar[u]_{\Delta_M}
}
$$
which commutes in the sense that we have $\dia_M\circ\: p_M^\ast =  \Delta_M\colon\DD(e)\nach\DD(e)^M.$ If the underlying diagram functor $\dia_M$ happens to be an equivalence for a certain category $M$, then also $\Delta_M$ has adjoints on both sides, i.e., the category $\DD(e)$ has then (co)limits of shape $M.$ Similar remarks apply to the case of the partial underlying diagram functor $\dia_{M,J}$ where we would then deduce a conclusion about the category $\DD(J).$ Now, axiom (Der1) implies that the partial underlying diagram functors 
$$\dia_{\emptyset,J}\colon\DD(\emptyset)\nach\DD(J)^\emptyset=e\qquad\mbox{and}\qquad
\dia_{e\sqcup e, J}\colon\DD(J\sqcup J)\nach\DD(J)\times\DD(J)$$
are equivalences. This explains why we were able to deduce Proposition \ref{prop_(co)limits} from the axioms but, in general, do not have other categorical (co)limits.

Although, in general, we do not want to assume that also other partial underlying diagram functors are equivalences, the following definition is very important.
This definition again emphasizes the importance of the distinction between the categories $\DD(K)$ and $\DD(e)^K.$

\begin{definition}
A derivator $\DD$ is called \emph{strong} if the partial underlying diagram functor 
$$\dia_{[1],J}\colon\DD([1]\times J)\nach\DD(J)^{[1]}$$
is full and essentially surjective for each category $J.$
\end{definition}

\begin{remark}
The strongness property of a derivator is a bit harder to motivate. It can be checked that the derivators associated to model categories are strong. Moreover, since the partial underlying diagram functors are isomorphisms for represented derivators, these are certainly also strong. Thus, the strongness property is satisfied by the naturally occurring derivators. 

In this paper, the strongness will play a key role in the construction of the triangulated structures on the values of a stable derivator. The point is that the strongness property allows one to lift morphisms in the underlying category $\DD(e)$ to objects in the category $\DD([1])$ where we can apply certain constructions to it. Similarly, it allows us to lift morphisms in $\DD(e)^{[1]}$ to morphisms in $\DD([1])$ or even to objects in $\DD([1]\times[1]).$ 

But it is not only the case for the stable context that the strongness property is convenient. Already in the context of pointed derivators it is very helpful. This property allows the construction of fiber and cofiber sequences associated to a morphism in the underlying category of a strong, pointed derivator. Similarly, one might expect that in later developments of the theory this property will also be useful in the unpointed context. Nevertheless, we follow Maltsiniotis in not including the strongness as an axiom of the basic notion of a derivator. 

Moreover, it might be helpful to consider variants of the definition. Given a family $\mathcal{F}$ of small categories, we define a derivator $\DD$ to be $\mathcal{F}$-\emph{strong} if the partial underlying diagram functors $\dia_{M,J}$ are full and essentially surjective for all $M\in\mathcal{F}$ and all categories $J.$ Heller considered in \cite{heller} the case where $\mathcal{F}$ consists of all finite, free categories.
\end{remark}

Let us quickly recall the dualization process for derivators. As the author was confused for a while about the different dualizations for $2$-categories we will give some details. The point is that given a 2-category~$\mc$ we obtain a new 2-category $\mc^{\op}$ be inverting the direction of the 1-morphisms and we get a further 2-category $\mc^{\co}$ by inverting the direction of the 2-morphisms. Moreover, these operations can be combined so that given a $2$-category using the various dualizations we obtain $4$ different $2$-categories (more generally, an $n$-category has $2^n$ different dualizations).

Let us explain these dualizations more conceptually, i.e., from the perspective of enriched category theory. First, we can consider `the enrichment level'  $\Cat$ as a \emph{symmetric} monoidal category. The formation of opposite categories can be performed in the context of enriched categories as soon as the enrichment level is symmetric monoidal (cf.\ to \cite[Section 6.2]{borceux2}). Thus, we can form the dual of a $2$-category $\mc$ as a category enriched over $\Cat.$ The result of this dualization is the $2$-category $\mc^{\op}$ in which the $1$-morphisms have changed direction. Alternatively, since the Cartesian monoidal structure on the $1$-category $\Cat$ behaves well with dualization, there is a second way of dualizing a general $2$-category. More precisely, we can consider the dualization of small categories as a monoidal functor $(-)^{\op}\colon\Cat\nach\Cat$ with respect to the Cartesian structures. Since any monoidal functor induces a base change functor at the level of enriched categories (cf.\ to \cite[Section 6.4]{borceux2}), we obtain a $2$-category $\mc^{\co}.$ This $2$-category is obtained from $\mc$ by inverting the direction of $2$-morphisms. Finally, applying both dualizations to $\mc$ we obtain the $2$-category $\mc^{\co,\op}=\mc^{\op,\co}.$ Remarking that the dualization 2-functor of categories $J\mapsto J^{\op}$ inverts the direction of the natural transformations but keeps the direction of the functors we thus can make the following definition.

\begin{definition}
Let $\DD$ be a prederivator, then we define the dual prederivator $\DD^{\op}$ by the following diagram:
$$
\xymatrix{
\Cat^{\op}\ar[r]^{\DD^{\op}}\ar[d]_{(-)^{\op}}& \CAT\\
\Cat^{\op,\co}\ar[r]_{\DD}&\CAT^{\co}\ar[u]_{(-)^{\op}}
}
$$
\end{definition}

\begin{example}\label{example_dual}
A prederivator $\DD$ is a derivator if and only if its dual $\DD^{\op}$ is a derivator.
\end{example}

This result implies that in many general statements about derivators and morphisms between derivators we only have to prove claims about --say-- homotopy left Kan extensions while the corresponding claim for homotopy right Kan extensions follows by duality.

\subsection{Homotopy exact squares and some properties of homotopy Kan extensions}\label{subsection_BC}

In this subsection we want to establish some lemmas about the formation of Beck-Chevalley transformed natural transformations (as it is used in the definition of a derivator). A very convenient fact is the nice behavior of this formalism with respect to pasting. Since the Beck-Chevalley transformation itself already uses pasting of natural transformations let us quickly recall the latter. For this purpose, let us consider the following two diagrams in $\CAT:$
$$
\xymatrix{
\mc_1\xtwocell[1,1]{}\omit&\mc_2\xtwocell[1,1]{}\omit\ar[l]_{v_1}&\mc_3\ar[l]_{v_2}&&\mc_1&\mc_2\ar[l]_{v_1}&\mc_3\ar[l]_{v_2}\\
\md_1\ar[u]^{u_1}&\md_2\ar[u]_{u_2}\ar[l]^{w_1}&\md_3\ar[l]^{w_2}\ar[u]_{u_3}
&&\md_1\ar[u]^{u_1}&\md_2\ar[u]_{u_2}\ar[l]^{w_1}\xtwocell[-1,-1]{}\omit&\md_3\ar[u]_{u_3}\ar[l]^{w_2}
\xtwocell[-1,-1]{}\omit
}
$$
If we call in both diagrams the natural transformations $\alpha_1$ and $\alpha_2$ respectively then we can form the following composite natural transformations:
$$
\alpha_1\odot\alpha_2=\alpha_1w_2\cdot v_1\alpha_2\qquad\mbox{and}\qquad
\alpha_2\odot\alpha_1= v_1\alpha_2\cdot\alpha_1w_2
$$
By definition, these natural transformations are obtained by \emph{pasting} of the respective diagrams. The chosen notation reminds us of the order the transformations show up in the composition. This procedure can also be applied to larger diagrams if all natural transformations `point in the same direction'.

To give an example one can use these pasting diagrams to depict the triangular identities for an adjunction $(L,R,\eta,\epsilon)\colon\mc\rightharpoonup\md$ as follows:
$$
\xymatrix{
\md\xtwocell[1,1]{}\omit&\mc\xtwocell[1,1]{}\omit\ar[l]_-L & \ar@{}[dr]|{=}& \md &&
\mc&\md\ar[l]_-R &\ar@{}[dr]|{=}&\mc\\
&\md \ar@/^1.0pc/[lu]^-=\ar[u]^R & \mc\ar[l]^-L \ar@/_1.0pc/[lu]_-= & \mc\ar[u]_L&& 
&\mc \ar@/^1.0pc/[lu]^-=\ar[u]^L\xtwocell[-1,-1]{}\omit& \md\ar@/_1.0pc/[lu]_-= \ar[l]^-R\xtwocell[-1,-1]{}\omit&\md\ar[u]_R
}
$$
Here, the unlabeled natural transformations are the adjunction morphisms and we simplified the notation of an identity transformation by only displaying the corresponding functor.

Let us now turn to the formalism of Beck-Chevalley transformed natural transformations. For this purpose, let us consider two natural transformations $\alpha_1$ and $\alpha_2$ in $\CAT$ as indicated in:
$$
\xymatrix{
\mc_1\xtwocell[1,1]{}\omit&\mc_2\ar[l]_v&&\mc_1&\mc_2\ar[l]_v\\
\md_1\ar[u]^{u_1}&\md_2\ar[u]_{u_2}\ar[l]^w&&\md_1\ar[u]^{u_1}&\md_2\ar[u]_{u_2}\ar[l]^w
\xtwocell[-1,-1]{}\omit
}
$$
Under the assumption that the vertical functors have left adjoints ${u_i}_!$ we obtain \emph{Beck-Chevalley (BC) transformed 2-cells} ${\alpha_1}_!$ by pasting as depicted in the following diagram on the left. Similarly, the existence of right adjoints~${u_i}_\ast$ allows us to construct \emph{Beck-Chevalley (BC) transformed 2-cells}~${\alpha_2}_\ast$ by pasting as shown in the diagram on the right:
$$
\xymatrix{
\md_1\xtwocell[1,1]{}\omit&\mc_1\xtwocell[1,1]{}\omit\ar[l]_-{{u_1}_!} & \mc_2\xtwocell[1,1]{}\omit \ar[l]_-v& &
\md_1&\mc_1\ar[l]_-{{u_1}_\ast} & \mc_2 \ar[l]_-v&\\
&\md_1 \ar@/^1.0pc/[lu]^-=\ar[u]^-{u_1} & \md_2\ar[l]^-w\ar[u]_-{u_2}& \mc_2\ar@/_1.0pc/[lu]_-= \ar[l]^-{{u_2}_!}& 
&\md_1 \ar@/^1.0pc/[lu]^-=\ar[u]^-{u_1}\xtwocell[-1,-1]{}\omit & \md_2\ar[l]^-w\ar[u]_-{u_2}\xtwocell[-1,-1]{}\omit& \mc_2\ar@/_1.0pc/[lu]_-= \ar[l]^-{{u_2}_\ast}\xtwocell[-1,-1]{}\omit
}
$$
As an result of these pastings we obtain natural transformations ${\alpha_1}_!$ and ${\alpha_2}_\ast$ as follows:
$$
\xymatrix{
\mc_1\ar[d]_{{u_1}_!}&\mc_2\ar[l]_v\ar[d]^{{u_2}_!}&&\mc_1\ar[d]_{{u_1}_\ast}&\mc_2\ar[l]_v\ar[d]^{{u_2}_\ast}
\xtwocell[1,-1]{}\omit\\
\md_1\xtwocell[-1,1]{}\omit&\md_2\ar[l]^w&&\md_1&\md_2\ar[l]^w
}
$$
\noindent
Again, the resulting natural transformations depend on some choices but only up to natural isomorphism. Using this terminology, the natural transformations showing up in axiom (Der4) of Definition \ref{def_derivator} are given by the Beck-Chevalley transformed $2$-cells associated to the following ones respectively:
$$
\xymatrix{
\DD(J_{/k})\xtwocell[1,1]{}\omit & \DD(J) \ar[l]_-{\pr^\ast} && \DD(J_{k/}) & \DD(J) \ar[l]_{\pr^\ast}\\
\DD(e) \ar[u]^-{p^\ast} & \DD(K)\ar[l]^-{k^\ast}\ar[u]_-{u^\ast} && \DD(e) \ar[u]^{p^\ast} & \DD(K) \ar[u]_{u^\ast}\ar[l]^{k^\ast}\xtwocell[-1,-1]{}\omit
}
$$

There is a certain ambiguity in the definition of the Beck-Chevalley transformation. If --say-- in the case of $\alpha_1\colon v\circ u_2\nach u_1\circ w$ the horizontal functors admit right adjoints we obtain a different transformed 2-cell. However, this gives rise to a conjugate natural transformation (see Lemma~\ref{lemma_BCtwoisos}).

We will commit the following abuse of notation. Let us assume we are given a derivator $\DD$ and let us assume we have $\alpha_1=\DD(\beta_1)$ and $\alpha_2=\DD(\beta_2)$ for corresponding natural transformations $\beta_i$ in $\Cat.$ Then, for simplicity, we will denote the Beck-Chevalley transformed $2$-cell associated to $\alpha_1$ and $\alpha_2$ by ${\beta_1}_!$ and ${\beta_2}_\ast$ respectively. Thus, we agree on the following short-hand-notation:
$$\beta_!=(\beta^\ast)_!\qquad\mbox{\and}\qquad\beta_\ast=(\beta^\ast)_\ast$$
Moreover, let us say that the transformations $\beta_!$ and $\beta_\ast$ are obtained by `base change'.

We give some important classes of examples for this base change formalism to indicate its broad applicability beyond the purposes in this paper.

\begin{example}
The following natural transformations respectively formations can be obtained by the base change formalism:\\
{\rm i)} adjunction units and adjunction counits\\
{\rm ii)} the formation of conjugate transformations (cf.\ \cite{maclane})\\
{\rm iii)} the natural maps expressing that a functor preserves certain (co)limits or Kan extensions\\
{\rm iv)} in the context of triangulated categories, the exact structure on a functor adjoint to an exact functor\\
{\rm v)} in the context of a triangulated category with a t-structure $(\mathcal{T},\tau_{\geq 0},\tau_ {\leq 0})$ the natural isomorphisms $\tau_{\leq m}\circ\tau_{\geq n}\nach\tau_{\geq n}\circ\tau_{\leq m}$ for $m,\: n\in\mathbb{Z}$\\
{\rm vi)} in the context of monoidal categories, the monoidal structure on a functor right adjoint to a comonoidal functor and dually
\end{example}

Let us now collect a few key technical lemmas on this formalism. The proofs of these results are quite formal and we will only include the easiest one here (cf.\ however to \cite{groth_scderivator}) to advertise the convenience of drawing these pasting diagrams.
A key fact for the remainder of this paper is the good behavior of Beck-Chevalley transformed $2$-cells with respect to pasting. Thus, let us again consider two pasting situations in $\CAT$
$$
\xymatrix{
\mc_1\xtwocell[1,1]{}\omit&\mc_2\xtwocell[1,1]{}\omit\ar[l]_{v_1}&\mc_3\ar[l]_{v_2}&&\mc_1&\mc_2\ar[l]_{v_1}&\mc_3\ar[l]_{v_2}\\
\md_1\ar[u]^{u_1}&\md_2\ar[u]_{u_2}\ar[l]^{w_1}&\md_3\ar[l]^{w_2}\ar[u]_{u_3}
&&\md_1\ar[u]^{u_1}&\md_2\ar[u]_{u_2}\ar[l]^{w_1}\xtwocell[-1,-1]{}\omit&\md_3\ar[u]_{u_3}\ar[l]^{w_2}
\xtwocell[-1,-1]{}\omit
}
$$
with natural transformations $\alpha_1$ and $\alpha_2.$ 

\begin{lemma}\label{lemma_BCpasting}
Let us consider the above diagrams in $\CAT$ and let us assume that the vertical functors have left adjoints resp.\ right adjoints. Then there are the following relations among the Beck-Chevalley transformed $2$-cells:
$$
(\alpha_1\odot\alpha_2)_!={\alpha_2}_!\odot{\alpha_1}_!\qquad\mbox{respectively}\qquad (\alpha_2\odot\alpha_1)_\ast={\alpha_1}_\ast\odot{\alpha_2}_\ast
$$
\end{lemma}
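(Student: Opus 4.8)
The plan is to prove the left-adjoint statement $(\alpha_1 \odot \alpha_2)_! = {\alpha_2}_! \odot {\alpha_1}_!$; the right-adjoint statement then follows by applying this to the dual situation (or by the mirror argument). The whole point is that the Beck--Chevalley transformation is \emph{defined} by pasting, so the claimed identity ought to be visible as an equality of two pasting diagrams built from the same pieces. First I would unwind the definitions. On the left-hand side, $(\alpha_1 \odot \alpha_2)_!$ is obtained from the pasted $2$-cell $\alpha_1 \odot \alpha_2 \colon v_1 v_2 \circ u_3 \Rightarrow u_1 \circ w_1 w_2$ by the recipe given before Lemma~\ref{lemma_BCpasting}: precompose with the unit $\eta_1 \colon \id \Rightarrow {u_1}_\ast \cdots$ — rather, with the unit $\eta_1' \colon \id \Rightarrow {u_1}_! \circ u_1^\ast$ appropriately placed — whisker with $\alpha_1 \odot \alpha_2$, and postcompose with the counit $\epsilon_3 \colon {u_3}_! \circ u_3^\ast \Rightarrow \id$. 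On the right-hand side, ${\alpha_1}_!$ uses $\eta_1'$ and $\epsilon_2$, while ${\alpha_2}_!$ uses $\eta_2'$ and $\epsilon_3$, and then these are pasted along the middle column $\DD(\mathcal{C}_2) \rightleftarrows \DD(\mathcal{D}_2)$.

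The key step is to draw the big pasting diagram for the right-hand side ${\alpha_2}_! \odot {\alpha_1}_!$ and spot that a unit--counit pair for $u_2$ appears in the middle, composed in the order $\cdots {u_2}_! \circ u_2^\ast \circ {u_2}_! \cdots$ with $\epsilon_2$ on one side and $\eta_2'$ on the other. By one of the triangular identities for the adjunction $({u_2}_!, u_2^\ast)$ — which, as the excerpt itself notes just after introducing pasting diagrams, is exactly the statement that the corresponding pasting composite is an identity — this composite collapses to the identity on ${u_2}_!$. After this cancellation, the remaining pasted diagram for the right-hand side is built out of precisely $\eta_1'$, $\alpha_1$ whiskered appropriately, $\alpha_2$ whiskered appropriately, and $\epsilon_3$, composed in the same arrangement as in the definition of $(\alpha_1 \odot \alpha_2)_!$. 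Recognizing that $\alpha_1 \odot \alpha_2$ is by definition $\alpha_1 w_2 \cdot v_1 \alpha_2$ (the two whiskered pieces pasted in that order), one identifies the two diagrams and concludes.

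I expect the main obstacle to be purely bookkeeping: keeping track of \emph{which} of the several unit/counit morphisms and \emph{which} whiskerings sit in \emph{which} slot of a fairly large ($3 \times 2$-ish, after inserting adjoints) pasting diagram, and making sure the triangular identity is being applied to the middle adjunction $u_2$ and not accidentally to $u_1$ or $u_3$. There is also a mild subtlety in that the Beck--Chevalley $2$-cells are only well-defined up to natural isomorphism (depending on the chosen adjoints), so strictly one fixes choices of adjoints ${u_i}_!$ once and for all and proves the identity on the nose for those choices; the up-to-isomorphism caveat then propagates harmlessly. Since the excerpt explicitly says only the easiest lemma's proof will be included and refers elsewhere (\cite{groth_scderivator}) for these formal pasting manipulations, I would present the argument at the level of "draw both pasting diagrams, apply the triangular identity for $u_2$ in the middle, and observe the results agree," rather than grinding through every whiskering symbolically.
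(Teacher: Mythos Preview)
Your approach is correct and essentially identical to the paper's: draw the pasting diagram for ${\alpha_2}_!\odot{\alpha_1}_!$, cancel the unit--counit pair for the middle adjunction $u_2$ via a triangular identity, and recognize the result as $(\alpha_1\odot\alpha_2)_!$. One small bookkeeping slip: ${\alpha_1}_!$ actually uses the counit $\epsilon_1$ of $u_1$ and the unit $\eta_2$ of $u_2$ (and ${\alpha_2}_!$ uses $\epsilon_2$ and $\eta_3$), not the labeling you wrote --- but this doesn't affect the argument, since you correctly identify that it is the $u_2$-adjunction whose triangular identity does the work.
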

\begin{proof}
We give a proof of the first equation. Unraveling definitions this boils down to depicting the transformation ${\alpha_2}_!\odot{\alpha_1}_!$ as
$$
\xymatrix{
\md_1\xtwocell[1,1]{}\omit&\mc_1\xtwocell[1,1]{}\omit\ar[l]_-{{u_1}_!} & \mc_2\xtwocell[1,1]{}\omit \ar[l]_-{v_1}& &&\\
&\md_1 \ar@/^1.0pc/[lu]^-=\ar[u]^-{u_1} & \md_2\xtwocell[1,1]{}\omit\ar[l]^-{w_1}\ar[u]_-{u_2}& \mc_2\xtwocell[1,1]{}\omit\ar@/_1.0pc/[lu]_-= \ar[l]^-{{u_2}_!}&\mc_3\xtwocell[1,1]{}\omit\ar[l]^-{v_2}&\\
&&&\md_2 \ar@/^1.0pc/[lu]^-=\ar[u]^-{u_2} & \md_3\ar[l]^-{w_2}\ar[u]_-{u_3}& \mc_3\ar@/_1.0pc/[lu]_-= \ar[l]^-{{u_3}_!}
}
$$
and then using a triangular identity in order to obtain $(\alpha_1\odot\alpha_2)_!.$
\end{proof}

Thus, base change is compatible with horizontal pasting and there is a similar such result for vertical pasting. Note, that we only have a compatibility with respect to pasting and not `a functoriality'. In particular, it can (and will) be the case that we start with a commutative square but that the associated transformed $2$-cells are even not isomorphisms. Nevertheless, this compatibility with respect to pasting combined with the $2$-out-of-$3$-property for isomorphisms will be a key ingredient in many proofs of this paper. 

It is useful to remark that the assignments $\alpha\mapsto\alpha_!$ and $\alpha\mapsto\alpha_\ast$ are inverse to each other in a certain precise sense. So, let us again consider a natural transformation~$\alpha$ as depicted below. We can then iterate the Beck-Chevalley transformation as indicated in the following diagrams. By doing so we first obtain $\alpha_!$ and then $(\alpha_!)_\ast:$
$$
\xymatrix{
\mc_1\xtwocell[1,1]{}\omit&\mc_2\ar[l]_v\ar@{}[rrd]|{\mapsto}&& \mc_1\ar[d]_{{u_1}_!}&\mc_2\ar[l]_v\ar[d]^{{u_2}_!}\ar@{}[rrd]|{\mapsto}&& \mc_1\xtwocell[1,1]{}\omit&\mc_2\ar[l]_v\\
\md_1\ar[u]^{u_1}&\md_2\ar[u]_{u_2}\ar[l]^w&& \md_1\xtwocell[-1,1]{}\omit&\md_2\ar[l]^w&& \md_1\ar[u]^{u_1}&\md_2\ar[u]_{u_2}\ar[l]^w
}
$$
\noindent
In this situation we have the following lemma which also has its obvious dual form. The proof is left to the reader but can also be found in \cite{groth_scderivator}.

\begin{lemma}\label{lemma_BCinverse}
In the above situation we have the equality $\alpha=(\alpha_!)_\ast\colon v\circ u_2\nach u_1\circ w.$
\end{lemma}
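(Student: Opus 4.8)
The plan is to write out both Beck--Chevalley transformations as explicit pasted composites of whiskered unit and counit $2$-cells, and then to collapse the result by two applications of the triangle identities; this is the formal fact that ``the mate of the mate is the original $2$-cell'', specialised to the adjunctions $({u_i}_!\dashv u_i)$. Write $\eta_i\colon\id_{\mc_i}\nach u_i\circ{u_i}_!$ and $\epsilon_i\colon{u_i}_!\circ u_i\nach\id_{\md_i}$ for the unit and counit of $({u_i}_!\dashv u_i)$, for $i=1,2$. Note first that the statement is well posed: the right adjoint of the vertical functor ${u_i}_!$ is $u_i$ itself, so $(\alpha_!)_\ast$ is formed back inside the original square and is, like $\alpha$, a $2$-cell $v\circ u_2\nach u_1\circ w$.

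By the construction of the Beck--Chevalley transformation recalled just before Lemma~\ref{lemma_BCpasting}, the $2$-cell $\alpha_!$ is the pasted composite
\[
{u_1}_!\circ v\ \xrightarrow{\ {u_1}_!\, v\,\eta_2\ }\ {u_1}_!\circ v\circ u_2\circ{u_2}_!\ \xrightarrow{\ {u_1}_!\,\alpha\,{u_2}_!\ }\ {u_1}_!\circ u_1\circ w\circ{u_2}_!\ \xrightarrow{\ \epsilon_1\, w\,{u_2}_!\ }\ w\circ{u_2}_!,
\]
and, applying the dual (``$(-)_\ast$'') construction to $\alpha_!$ in the square whose verticals are now ${u_1}_!$ and ${u_2}_!$, the $2$-cell $(\alpha_!)_\ast$ is the pasted composite
\[
v\circ u_2\ \xrightarrow{\ \eta_1\, v\, u_2\ }\ u_1\circ{u_1}_!\circ v\circ u_2\ \xrightarrow{\ u_1\,\alpha_!\, u_2\ }\ u_1\circ w\circ{u_2}_!\circ u_2\ \xrightarrow{\ u_1\, w\,\epsilon_2\ }\ u_1\circ w.
\]

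Then I would substitute the first display for the arrow $u_1\,\alpha_!\, u_2$ in the second and reorganise using the interchange law. By naturality of $\eta_1$ it can be slid rightwards past the (whiskered) $\eta_2$ and $\alpha$ coming from $\alpha_!$, so that it meets the whiskered $\epsilon_1$ coming from $\alpha_!$ in the subcomposite $u_1\xrightarrow{\,\eta_1 u_1\,}u_1\circ{u_1}_!\circ u_1\xrightarrow{\,u_1\epsilon_1\,}u_1$, which equals $\id_{u_1}$ by one of the triangle identities for $({u_1}_!\dashv u_1)$. Symmetrically, the $\eta_2$ coming from $\alpha_!$ meets the outermost $\epsilon_2$ in the subcomposite $u_2\xrightarrow{\,\eta_2 u_2\,}u_2\circ{u_2}_!\circ u_2\xrightarrow{\,u_2\epsilon_2\,}u_2=\id_{u_2}$, a triangle identity for $({u_2}_!\dashv u_2)$. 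Cancelling both subcomposites leaves exactly $\alpha$, which is the claim.

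The argument is purely formal $2$-categorical bookkeeping, so I do not expect a genuine obstacle; the only delicate point is keeping the various whiskerings and the orientations of the $2$-cells straight. Exactly as at the end of the proof of Lemma~\ref{lemma_BCpasting}, this is most transparent if one draws the single pasting diagram defining $(\alpha_!)_\ast$ with $\alpha_!$ expanded: it is then visibly tiled by one copy of $\alpha$ together with two ``zig-zag'' regions, one for each adjunction, and the two triangle identities make those regions collapse. The dual equality $\beta=(\beta_\ast)_!$ follows by the mirror-image argument, or equivalently by applying the statement just proved to the dual derivator $\DD^{\op}$ (Example~\ref{example_dual}).
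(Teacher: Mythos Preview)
Your proof is correct and is precisely the standard ``mate of the mate'' argument one would carry out here. The paper does not give a proof of this lemma (it is left to the reader, with a reference to \cite{groth_scderivator}), so there is nothing to compare against; your explicit unpacking of both Beck--Chevalley transforms followed by two applications of the triangle identities is exactly what is expected. One very minor remark: your closing parenthetical about obtaining the dual equality via $\DD^{\op}$ is slightly misplaced, since the lemma is stated purely in $\CAT$ without reference to any derivator; the ``mirror-image argument'' you mention is the correct justification there.
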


These two lemmas can for example be combined to deduce that for two conjugate natural transformations we have that one of them is an isomorphism if and only if the other is. We want to collect a last lemma about this formalism which proves to be helpful in the discussion of adjunctions (or adjunctions of two variables as in \cite{groth_monder}) and equivalences of derivators. 

\begin{lemma}\label{lemma_BCtwoisos}
Let $\alpha\colon v\circ u_2\nach u_1\circ w$ be a natural transformation in $\CAT$ such that the  functors~$u_i$ have left adjoints ${u_i}_!$ for $i=1,\:2$ and such that the functors $v$ and~$w$ have right adjoints $v_\ast$ and $w_\ast$ respectively. The natural transformations $\alpha_!\colon {u_1}_!\circ v\nach w\circ {u_2}_!$ and $\alpha_\ast\colon u_2\circ w_\ast\nach v_\ast\circ u_1$ are conjugate. In particular, $\alpha_!$ is an isomorphism if and only if $\alpha_\ast$ is an isomorphism
\end{lemma}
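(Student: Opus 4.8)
The plan is to recognize $\alpha_!$ and $\alpha_\ast$ as a pair of conjugate natural transformations with respect to the obvious composite adjunctions; the last assertion then follows from the standard properties of conjugation. First I would assemble the relevant adjunctions: since ${u_i}_!$ is left adjoint to $u_i$ and $v$ (resp.\ $w$) is left adjoint to $v_\ast$ (resp.\ $w_\ast$), composing adjunctions gives adjunctions ${u_1}_!\circ v\dashv v_\ast\circ u_1$ and $w\circ {u_2}_!\dashv u_2\circ w_\ast$ between $\mc_2$ and $\md_1$, whose units and counits are the usual composites of the four given units and counits. Recall from \cite{maclane} that, given adjunctions $(F,G)$ and $(F',G')$ between the same pair of categories, natural transformations $\sigma\colon F\nach F'$ and $\tau\colon G'\nach G$ are \emph{conjugate} when $\tau$ is the mate of $\sigma$, i.e.\ $\tau=(G\epsilon')\circ(G\sigma G')\circ(\eta G')$ with $\eta$ the unit of $(F,G)$ and $\epsilon'$ the counit of $(F',G')$; this is a bijection between such $\sigma$ and such $\tau$, it is compatible with horizontal composition and identities, and consequently $\sigma$ is invertible if and only if $\tau$ is. So the whole statement reduces to the claim that $\alpha_\ast\colon u_2\circ w_\ast\nach v_\ast\circ u_1$ is the mate of $\alpha_!\colon {u_1}_!\circ v\nach w\circ{u_2}_!$ for the two composite adjunctions just described.

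To prove that claim I would do a pasting computation. Unfold $\alpha_!$ into the three-step pasting that defines it (whisker in the unit of $({u_2}_!,u_2)$, apply $\alpha$, whisker in the counit of $({u_1}_!,u_1)$), substitute this into the mate formula for $\alpha_!$ — which pre-pastes the composite unit of $({u_1}_!v,\,v_\ast u_1)$ and post-pastes the composite counit of $(w{u_2}_!,\,u_2 w_\ast)$ — and then simplify. The triangular identities for all four adjunctions $v\dashv v_\ast$, $w\dashv w_\ast$, ${u_1}_!\dashv u_1$ and ${u_2}_!\dashv u_2$ make every spurious unit/counit pair cancel, and what survives is exactly the pasting diagram that defines $\alpha_\ast$ (the unit of $(v,v_\ast)$, then $\alpha$, then the counit of $(w,w_\ast)$). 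An alternative, cleaner bookkeeping uses Lemma~\ref{lemma_BCpasting} and Lemma~\ref{lemma_BCinverse}: both $\alpha_\ast$ and the mate of $\alpha_!$ can be written as the result of pasting the \emph{single} $2$-cell $\alpha$ together with the \emph{same} collection of adjunction units and counits arranged in one large diagram, so that, pasting being associative, the two expressions agree.

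The hard part is purely the combinatorics of that large pasting diagram: keeping the whiskering straight, correctly identifying the units and counits of the composite adjunctions (the natural place for a direction-of-arrow slip), and checking that exactly the right triangular identities fire. Conceptually nothing deeper is going on; indeed the degenerate cases $v=w=\id$ and $u_1=u_2=\id$ already collapse the statement to the textbook fact that a natural transformation and its conjugate are invertible together, which is a reassuring consistency check.
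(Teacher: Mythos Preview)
Your approach is correct and is in fact the standard one: recognizing $\alpha_!$ and $\alpha_\ast$ as conjugate with respect to the composite adjunctions ${u_1}_!v\dashv v_\ast u_1$ and $w{u_2}_!\dashv u_2 w_\ast$, and then invoking the fact that conjugate transformations are invertible together. The pasting verification you outline (expand $\alpha_!$ via its defining three-step composite, insert into the mate formula for the composite adjunctions, and cancel using the four triangular identities) goes through exactly as you say; the alternative bookkeeping via Lemma~\ref{lemma_BCpasting} and Lemma~\ref{lemma_BCinverse} is equally valid and arguably tidier.

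There is nothing to compare your argument against here: the paper does not actually prove this lemma but defers the proof to the companion note \cite{groth_scderivator}. What you have sketched is precisely the kind of argument one would expect to find there, so you may regard your proposal as filling the gap the paper leaves open.
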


More details on the above examples and also the proofs of the last two lemmas can be found in \cite{groth_scderivator}. Let us now apply this formalism in the context of a derivator.

\begin{definition}
Let $\DD$ be a derivator and let us consider a natural transformation $\alpha$ as indicated in the following square in $\Cat:$
$$
\xymatrix{
J_1\ar[r]^v\ar[d]_{u_1}\xtwocell[1,1]{}\omit &J_2\ar[d]^{u_2}\\
K_1\ar[r]_w&K_2
}
$$
The square is $\DD$-\emph{exact} if the base change $\alpha_!\colon{u_1}_!\circ v^\ast\nach w^\ast\circ {u_2}_!$ (or, by Lemma \ref{lemma_BCtwoisos}, equivalently $\alpha_\ast\colon u_2^\ast\circ w_\ast\nach v_\ast\circ u_1^\ast$) is a natural isomorphism. The square is called \emph{homotopy exact} if it is~$\DD$-exact for all derivators $\DD.$
\end{definition}

We will also apply the terminology of $\DD$-exact squares in the context of a prederivator $\DD$ admitting the necessary homotopy Kan extensions. For a derivator $\DD$ it follows immediately from Lemma \ref{lemma_BCpasting} that $\DD$-exact squares are stable under horizontal and vertical pasting. 

\begin{warning}
We want to include a warning on a certain risk of ambiguity if the natural transformation~$\alpha$ under consideration happens to be an isomorphism. In that case it can (and will) happen that the Beck-Chevalley transformation of~$\alpha$ is an isomorphism without this being the case for ~$\alpha^{-1}$ (cf.\ for example to Subsection \ref{subsection_cocont}). In particular, this can happen for commutative squares. Thus, in case there is a risk of ambiguity we will always give a direction to natural isomorphisms and even to identity transformations (cf.\ for example to Proposition \ref{prop_BCfibration}). 
\end{warning}

We will next illustrate the notion of homotopy exact squares by giving some examples which are central to the development of the theory of derivators (for a more systematic discussion we refer to \cite{maltsiniotis_exact}). Using the $2$-functoriality of prederivators, the following is immediate.

\begin{lemma}\label{lemma_predadj}
Let $\DD$ be a prederivator and let $(L,R)\colon J\rightharpoonup K$ be an adjunction. Then we obtain an adjunction 
$$(R^\ast,L^\ast)\colon \DD(J)\rightharpoonup \DD(K).$$
Moreover, if $L$ (resp.\ $R$) is fully faithfully, then so is $R^\ast$ (resp.\ $L^\ast$).
\end{lemma}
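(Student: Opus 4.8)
The plan is to deduce the statement purely from the $2$-functoriality of $\DD$, the only subtle point being the variance: $\DD$ reverses the direction of $1$-morphisms but preserves that of $2$-morphisms. Recall that the adjunction $(L,R)\colon J\rightharpoonup K$ consists of $L\colon J\nach K$, $R\colon K\nach J$, a unit $\eta\colon\id_J\nach R\circ L$ and a counit $\epsilon\colon L\circ R\nach\id_K$ satisfying the two triangular identities $(\epsilon L)\cdot(L\eta)=\id_L$ and $(R\epsilon)\cdot(\eta R)=\id_R$. First I would apply the strict $2$-functor $\DD$ to all of this data. Strictness gives $\DD(\id_J)=\id_{\DD(J)}$, $\DD(\id_K)=\id_{\DD(K)}$, and --because $\DD$ is contravariant on $1$-cells-- $\DD(R\circ L)=L^\ast\circ R^\ast$ and $\DD(L\circ R)=R^\ast\circ L^\ast$; and since $\DD$ preserves the direction of $2$-cells, applying it to $\eta$ and $\epsilon$ yields
$$\eta^\ast\colon\id_{\DD(J)}\nach L^\ast\circ R^\ast\qquad\text{and}\qquad\epsilon^\ast\colon R^\ast\circ L^\ast\nach\id_{\DD(K)},$$
a candidate unit and counit for an adjunction $R^\ast\dashv L^\ast$.

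Next I would check the two triangular identities for $(\eta^\ast,\epsilon^\ast)$ by simply applying $\DD$ to the triangular identities for $(\eta,\epsilon)$. A strict $2$-functor preserves identity $2$-cells, vertical composition of $2$-cells, and whiskering of a $2$-cell by a $1$-cell; the only bookkeeping is that, because of the contravariance on $1$-cells, precomposition whiskering becomes postcomposition whiskering and vice versa. With this, $(\epsilon L)\cdot(L\eta)=\id_L$ maps to $(L^\ast\epsilon^\ast)\cdot(\eta^\ast L^\ast)=\id_{L^\ast}$ and $(R\epsilon)\cdot(\eta R)=\id_R$ maps to $(\epsilon^\ast R^\ast)\cdot(R^\ast\eta^\ast)=\id_{R^\ast}$, which are precisely the two triangular identities exhibiting $(R^\ast,L^\ast,\eta^\ast,\epsilon^\ast)$ as an adjunction $(R^\ast,L^\ast)\colon\DD(J)\rightharpoonup\DD(K)$.

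For the last claim I would invoke the standard fact that in an adjunction the left adjoint is fully faithful iff the unit is a natural isomorphism, and the right adjoint is fully faithful iff the counit is. If $L$ is fully faithful then $\eta$ is a natural isomorphism; a $2$-functor carries invertible $2$-cells to invertible $2$-cells, so $\eta^\ast$ is a natural isomorphism, i.e.\ the unit of the adjunction $R^\ast\dashv L^\ast$ is invertible, hence $R^\ast$ is fully faithful. Dually, if $R$ is fully faithful then $\epsilon$, and therefore $\epsilon^\ast$, is a natural isomorphism, so the right adjoint $L^\ast$ is fully faithful.

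There is no real obstacle here; the content is entirely formal. The one thing to get right is the variance bookkeeping: $\DD$ turns the left adjoint $L$ into the right adjoint $L^\ast$ and the right adjoint $R$ into the left adjoint $R^\ast$, correspondingly swapping the side of each whiskering, so that the triangular identity for $L$ becomes the one for the right adjoint $L^\ast$ and the triangular identity for $R$ becomes the one for the left adjoint $R^\ast$.
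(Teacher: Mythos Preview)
Your proof is correct and follows exactly the paper's approach: the paper's proof is the single sentence that every $2$-functor of this variance sends an adjunction $(L,R,\eta,\epsilon)$ to an adjunction $(R^\ast,L^\ast,\eta^\ast,\epsilon^\ast)$, and you have simply unpacked this, keeping careful track of the variance. Your argument for the fully-faithfulness claim via invertibility of the unit/counit is the natural completion of what the paper leaves implicit.
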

\begin{proof}
Every $2$-functor of the variance of a prederivator sends an adjunction $(L,R,\eta,\epsilon)$ to an adjunction $(R^\ast,L^\ast,\eta^\ast,\epsilon^\ast).$
\end{proof}
\noindent
In the statement of this lemma we allowed ourselves the following abuse of notation. Strictly speaking an adjunction is not determined by the two functors~$L$ and~$R$ but one also has to specify either of the following: the natural isomorphism~$\phi$ of the morphism sets, the unit~$\eta$, or the counit~$\epsilon$. In order to simplify the notation we nevertheless allowed ourselves (and also will do so in the remainder of the paper) to write $(L,R)$ instead of $(L,R,\phi),\:(L,R,\eta),$ or $(L,R,\epsilon).$

A related result using the notion of homotopy exact squares can be formulated as follows. This result expresses the cofinality of right adjoints.

\begin{proposition}\label{prop_cofinality}
For a right adjoint functor $R\colon J\nach K$ the following square is homotopy exact:
$$
\xymatrix{
J\ar[r]^R\ar[d]_{p_J}\xtwocell[1,1]{}\omit&K\ar[d]^{p_K}\\
e\ar[r]_{\id}&e
}
$$
\end{proposition}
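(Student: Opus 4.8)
The plan is to reduce, via the definition of a $\DD$-exact square together with Lemma~\ref{lemma_BCtwoisos}, to showing that the conjugate base change $2$-cell $\alpha_\ast$ is an isomorphism, and then to compute that $2$-cell explicitly, exploiting that $R$, being a right adjoint, lets one identify $R_\ast$ with restriction along its left adjoint.

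Concretely, I would fix a left adjoint $L\colon K\nach J$ of $R$, with unit $\eta\colon\id_K\nach R\circ L$ and counit $\epsilon\colon L\circ R\nach\id_J$. Since the category of functors with target $e$ is trivial, $p_K\circ R=p_J$ and $p_J\circ L=p_K$ hold strictly, and the $2$-cell decorating the square is $\id_{p_J}$. Let $\DD$ be an arbitrary derivator. By Lemma~\ref{lemma_predadj}, applying the $2$-functor $\DD$ to the adjunction $L\dashv R$ yields an adjunction $(R^\ast,L^\ast)\colon\DD(K)\rightharpoonup\DD(J)$ with unit $\eta^\ast\colon\id_{\DD(K)}\nach(R\circ L)^\ast=L^\ast\circ R^\ast$ and counit $\epsilon^\ast$. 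Since $R^\ast$ already has a right adjoint $R_\ast$ by (Der3), and homotopy exactness does not depend on the chosen adjoints, I would take $R_\ast:=L^\ast$ with unit $\eta^\ast$; strict functoriality of $\DD$ then gives $R_\ast\circ p_J^\ast=L^\ast\circ p_J^\ast=(p_J\circ L)^\ast=p_K^\ast$.

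It then remains to unwind the pasting that defines the base change $\alpha_\ast\colon p_K^\ast\nach R_\ast\circ p_J^\ast$ (the one built from the right adjoints of the horizontal functors). Because the bottom functor of the square is $\id_e$, the counit $w^\ast\circ w_\ast\nach\id$ entering the paste is an identity, and because $\alpha=\id$ its image under $\DD$ is an identity as well; so the paste collapses to the whiskered unit $\eta^\ast\ast p_K^\ast\colon p_K^\ast\nach(R\circ L)^\ast\circ p_K^\ast=p_K^\ast$. By $2$-functoriality this whiskering is the image $\DD(p_K\eta)$ of the $2$-cell $p_K\eta\colon p_K\nach p_K$ obtained by postcomposing $\eta$ with $p_K$ in $\Cat$; but $p_K\eta$ is a $2$-cell between functors $K\nach e$ and hence equal to $\id_{p_K}$. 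Therefore $\alpha_\ast=\id_{p_K^\ast}$, which is an isomorphism, and by Lemma~\ref{lemma_BCtwoisos} so is $\alpha_!$. As $\DD$ was arbitrary, the square is homotopy exact.

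The one delicate point, and the step I would expect to be the main obstacle, is this last unwinding: one must check the abstract pasting formula for the Beck--Chevalley transformed $2$-cell against the present special case, verifying that with $w=\id_e$ and $\alpha=\id$ every auxiliary $2$-cell in the paste is an identity (and tracking the side on which the surviving whiskering acts), so that $\alpha_\ast$ is genuinely $\eta^\ast\ast p_K^\ast$. Everything else is routine bookkeeping with the $2$-functoriality of $\DD$ and the essential uniqueness of adjoints.
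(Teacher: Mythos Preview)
Your proof is correct and follows essentially the same approach as the paper: the paper simply asserts that $\alpha_!\colon {p_J}_!R^\ast\nach {p_K}_!$ is the conjugate of the identity $\id\colon p_K^\ast\nach L^\ast p_J^\ast=p_K^\ast$, which is precisely what you verify in detail by choosing $R_\ast=L^\ast$ and unwinding the pasting to obtain $\alpha_\ast=(p_K\eta)^\ast=\id$. Your explicit computation is exactly the content of the paper's phrase ``it can be checked.''
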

\begin{proof}
We have to show that the natural transformation ${p_J}_!R^\ast\nach {p_K}_!$ is an isomorphism for an arbitrary derivator. But it can be checked that this natural transformation is the conjugate transformation of the identity $\id\colon p_K^\ast\nach L^\ast p_J^\ast=(p_JL)^\ast=p_K^\ast$ concluding the proof.
\end{proof}

Thus, for a derivator $\DD$, a right adjoint functor $R\colon J\nach K,$ and an object $X\in\DD(K)$ we have a canonical isomorphism
$$\Hocolim_JR^\ast(X)\stackrel{\cong}{\nach}\Hocolim_KX.$$
For later reference, let us spell out the important special case where the right adjoint $R=t\colon e\nach K$ just specifies a terminal object in $K.$ The second part of the lemma follows immediately by passing to the conjugate of the natural transformation showing up in the first part.

\begin{lemma}\label{lemma_terminal}
Let $\DD$ be a derivator and let $K$ be a category admitting a terminal object $t.$ \\
{\rm i)} For $X\in \DD(K)$ we have a natural isomorphism $X_t\stackrel{\cong}{\nach}\Hocolim _KX.$\\
{\rm ii)} We have a canonical isomorphism of functors $p_K^\ast\stackrel{\cong}{\nach}t_\ast.$ The essential image of  $t_\ast$ consists of precisely those objects for which all structure maps in the underlying diagram are isomorphisms.
\end{lemma}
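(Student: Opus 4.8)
The statement is Lemma \ref{lemma_terminal}. Part~(i) will follow directly from Proposition \ref{prop_cofinality} applied to the right adjoint $t\colon e\nach K$ (a terminal object is precisely a right adjoint to $p_K\colon K\nach e$, since $\Hom_e(p_K(k),\ast)\cong\Hom_K(k,t)$ naturally). Indeed, instantiating that proposition with $J=e$ and $R=t$ gives that the square with top edge $t$, left edge $p_e=\id_e$, bottom edge $\id_e$, right edge $p_K$ is homotopy exact, i.e. the base change ${p_e}_!\,t^\ast\nach {p_K}_!$ is an isomorphism; since ${p_e}_!=\id$ this reads $t^\ast\nach {p_K}_!={\Hocolim}_K$, and evaluated at $X\in\DD(K)$ this is exactly the asserted natural isomorphism $X_t=t^\ast(X)\stackrel{\cong}{\nach}{\Hocolim}_K X$.

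\textbf{Part (ii), the isomorphism of functors.} I would obtain $p_K^\ast\stackrel{\cong}{\nach}t_\ast$ by passing to conjugates, as the excerpt hints. The adjunction $(p_K^\ast,{p_K}_\ast)$ and the adjunction $({p_K}_!,p_K^\ast)$, together with the adjunction $(t_!,t^\ast)$ coming from $t\colon e\nach K$ and Lemma \ref{lemma_predadj} (applied to the adjunction $(p_K,t)$, which yields $(t^\ast,p_K^\ast)$ as an adjunction of prederivator functors, hence $t^\ast={p_K}_!$ on the nose up to canonical iso and $p_K^\ast=t_\ast$ similarly). More concretely: Lemma \ref{lemma_predadj} applied to the categorical adjunction $(p_K,t)\colon K\rightharpoonup e$ yields an adjunction $(t^\ast,p_K^\ast)\colon\DD(K)\rightharpoonup\DD(e)$ on the level of prederivators. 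But $(u^\ast,u_\ast)$ characterizes $u_\ast$ uniquely up to unique natural isomorphism, so from $(t^\ast,p_K^\ast)$ being an adjunction we read off $t_\ast\cong p_K^\ast$ canonically; dually from the same adjunction we recover part~(i) in functor form, $t^\ast\cong{p_K}_!$. The natural isomorphism $p_K^\ast\stackrel{\cong}{\nach}t_\ast$ is then the unit/counit comparison making these two descriptions of the right adjoint agree.

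\textbf{Part (ii), the essential image.} Let $Y\in\DD(K)$ lie in the essential image of $t_\ast$, say $Y\cong t_\ast(Z)$ for $Z\in\DD(e)$; under the identification $t_\ast\cong p_K^\ast$ this says $Y\cong p_K^\ast(Z)$. Now I would use that $\dia_K\circ p_K^\ast=\Delta_K$ (this is the commuting triangle displayed just before the definition of strongness), so the underlying diagram of $p_K^\ast(Z)$ is the constant diagram on $Z_{\ast}$ — in particular all its structure maps are identities, hence isomorphisms; since $\dia_K$ is a functor and $Y\cong p_K^\ast(Z)$, all structure maps of $\dia_K(Y)$ are isomorphisms too. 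Conversely, suppose all structure maps in the underlying diagram $\dia_K(Y)$ are isomorphisms. The counit $\epsilon\colon t^\ast t_\ast\nach\id$ has an adjoint mate; more usefully, consider the unit of the \emph{other} adjunction, the comparison map $c\colon p_K^\ast(Y_t)\nach Y$ obtained as $p_K^\ast$ applied to $\id_{Y_t}$ followed by the counit $p_K^\ast t^\ast=\id$... — the clean way is: using $t^\ast\cong{p_K}_!$ we have a canonical map $Y\to t_\ast t^\ast Y\cong p_K^\ast(Y_t)$ (the unit of $(t^\ast,t_\ast)$), and by (Der2) it suffices to check this is an isomorphism after applying each $k^\ast$, $k\in K$. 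For $k=t$ it is an isomorphism by a triangular identity; for general $k$, choosing a morphism $k\to t$ in $K$ (which exists, $t$ terminal) and using naturality of the unit together with the hypothesis that the structure map $Y_k\to Y_t$ is an isomorphism, one deduces the component at $k$ is an isomorphism as well. Hence $Y\cong p_K^\ast(Y_t)\cong t_\ast(Y_t)$ is in the essential image.

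\textbf{Main obstacle.} The only genuinely delicate point is the last implication: checking pointwise, via (Der2), that the unit $Y\to t_\ast t^\ast Y$ is an isomorphism when the structure maps of $\dia_K(Y)$ are isomorphisms. The subtlety is that $\dia_K$ is in general neither full nor faithful, so one cannot argue purely on underlying diagrams; one must instead track the unit map component-by-component and exploit the triangular identity at the terminal object $t$ together with naturality of the unit along each morphism $k\to t$. Everything else is a formal consequence of Proposition \ref{prop_cofinality}, Lemma \ref{lemma_predadj}, and the conjugation/pasting calculus already developed.
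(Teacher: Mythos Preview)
Your argument is correct and matches the paper's approach: part~(i) is Proposition~\ref{prop_cofinality} specialized to $R=t\colon e\to K$, and the isomorphism $p_K^\ast\cong t_\ast$ in~(ii) is obtained by passing to conjugates (equivalently, uniqueness of right adjoints via Lemma~\ref{lemma_predadj}), exactly as the paper indicates. The paper does not spell out the essential-image claim at all, so your treatment there is an addition rather than a different route.

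One remark on what you flag as the main obstacle: it dissolves if you work directly with the adjunction $(t^\ast,p_K^\ast)$ supplied by Lemma~\ref{lemma_predadj} rather than with $(t^\ast,t_\ast)$. Its unit at $Y$ is $\eta^\ast_Y\colon Y\to p_K^\ast t^\ast Y$, where $\eta\colon\id_K\Rightarrow t\circ p_K$ is the unit of the adjunction $(p_K,t)$ in $\Cat$, with component $!_k\colon k\to t$ at each $k$. By $2$-functoriality of $\DD$ one has $k^\ast(\eta^\ast_Y)=(\eta_k)^\ast_Y=(!_k)^\ast_Y\colon Y_k\to Y_t$, which is literally the structure map of $\dia_K(Y)$ along $!_k$. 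So (Der2) gives that $\eta^\ast_Y$ is an isomorphism precisely when each such structure map is---and since every structure map $Y_k\to Y_{k'}$ factors as $((!_{k'})^\ast_Y)^{-1}\circ(!_k)^\ast_Y$ once the maps to $t$ are invertible, this is equivalent to all structure maps being isomorphisms. No lifting through $\dia_K$ is required; your naturality-square argument is a correct but slightly more roundabout version of this identification.
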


Here is another important result about homotopy Kan extensions.

\begin{proposition}\label{prop_honestext}
Let $u\colon J\nach K$ be a fully faithful functor, then the following square is homotopy exact:
$$
\xymatrix{
J\ar[r]^{\id}\ar[d]_{\id}&J\ar[d]^u\\
J\ar[r]_u&K
}
$$
Thus, the adjunction morphisms $\eta\colon \id \nach u^\ast u_!$ and $\epsilon\colon u^\ast u_\ast\nach \id$ are isomorphisms, i.e., homotopy Kan extension functors along fully faithful functors are fully faithful.
\end{proposition}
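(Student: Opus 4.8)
The plan is to reduce the homotopy exactness of the given square to a statement about slice categories, then invoke (Der4). For a fully faithful functor $u\colon J\nach K$ and an object $j\in J$, I would compute the base change $2$-cell for the square by restricting it along $j\colon e\nach J$ (bottom-left corner) and using (Der2): since isomorphisms in $\DD(J)$ are detected pointwise, it suffices to check that the counit $\epsilon\colon u^\ast u_\ast\nach\id$ becomes an isomorphism after applying $j^\ast$ for every $j\in J$, and dually for $\eta$. By the compatibility of base change with pasting (Lemma~\ref{lemma_BCpasting}) combined with (Der4), the component of $u_\ast(X)$ at $u(j)$ is computed as $\Holim_{J_{u(j)/}}\pr^\ast(X)$; here one must identify the relevant slice category. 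The key observation is that since $u$ is fully faithful, the slice category $J_{u(j)/}$ (of objects of $J$ lying $u$-under $u(j)\in K$) has an initial object, namely $(j,\id_{u(j)})$: for any $(j',f\colon u(j)\nach u(j'))$, full faithfulness gives a unique $g\colon j\nach j'$ in $J$ with $u(g)=f$, which is precisely a unique morphism $(j,\id)\nach(j',f)$ in the slice.

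Given that $J_{u(j)/}$ has an initial object, I would apply the dual of Lemma~\ref{lemma_terminal}(i): for a category with an initial object, the homotopy limit of a diagram is (naturally isomorphic to) its value at that initial object. Applied to $\pr^\ast(X)$ on $J_{u(j)/}$, this says $\Holim_{J_{u(j)/}}\pr^\ast(X)\cong \pr^\ast(X)_{(j,\id_{u(j)})}=X_j$. Chasing through, this shows $u_\ast(X)_{u(j)}\cong X_j$, i.e.\ $(u^\ast u_\ast X)_j\cong X_j$, and one checks this isomorphism is the component of $\epsilon$; hence $\epsilon$ is a pointwise isomorphism and so an isomorphism by (Der2). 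The dual argument using $J_{/u(j)}$ — which has a \emph{terminal} object $(j,\id_{u(j)})$ by the same full faithfulness argument — and (Der4) for $u_!$ together with Lemma~\ref{lemma_terminal}(i) itself shows $\eta\colon\id\nach u^\ast u_!$ is an isomorphism. Finally, the homotopy exactness of the displayed square is exactly the assertion that $\alpha_\ast\colon\id^\ast\circ u_\ast\nach u_\ast\circ\id^\ast$ (or equivalently $\alpha_!$) is an isomorphism, which unwinds to the statement about $\eta$ and $\epsilon$ just proved; alternatively one cites Lemma~\ref{lemma_BCtwoisos} to pass between the two formulations.

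The main obstacle I anticipate is bookkeeping rather than conceptual: one must carefully verify that the abstract isomorphism $u_\ast(X)_{u(j)}\cong X_j$ produced via (Der4) and the initial-object argument genuinely \emph{is} the component of the adjunction counit $\epsilon$ at $j$ (and not merely some isomorphism between the same objects), since the theorem asserts that $\epsilon$ itself is invertible. This requires tracking the Beck--Chevalley $2$-cells through the pasting in Lemma~\ref{lemma_BCpasting} and identifying the canonical comparison maps — routine but the kind of thing where signs and directions of $2$-cells matter, which is why the paper's Warning about giving directions to isomorphisms is relevant here. A clean way to sidestep part of this is to first prove the weaker statement that $u^\ast u_\ast$ and $u^\ast u_!$ are objectwise isomorphic to the identity, then observe that a natural transformation between functors on $\DD(J)$ that is a pointwise isomorphism is an isomorphism by (Der2), and that $\epsilon$, $\eta$ are the \emph{canonical} such transformations.
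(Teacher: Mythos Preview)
Your proposal is correct and follows essentially the same route as the paper: reduce to a pointwise check via (Der2), use (Der4) to express $u_\ast(X)_{u(j)}$ as a homotopy limit over $J_{u(j)/}$, and exploit full faithfulness of $u$ to trivialize that slice. The paper phrases the last step as an isomorphism of categories $J_{/u(j)}\cong J_{/j}$ (and then invokes Proposition~\ref{prop_cofinality}) rather than your ``$J_{u(j)/}$ has an initial object'' together with Lemma~\ref{lemma_terminal}, but these are the same observation packaged differently; the paper also stays entirely within the pasting/base-change formalism (Lemma~\ref{lemma_BCpasting}), which is what automatically handles the bookkeeping concern you raise about identifying the produced isomorphism with the actual counit $\epsilon$ --- your ``sidestep'' suggestion at the end does not actually resolve that point, but the pasting calculus does.
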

\begin{proof}
Since isomorphisms can be detected pointwise we can reduce our task to showing that the following pasting is homotopy exact for all $j\in J:$
$$
\xymatrix{
J_{/j}\ar[r]^{\pr}\ar[d]_p\xtwocell[1,1]{}\omit & J\ar[r]^{\id}\ar[d]_{\id}\xtwocell[1,1]{}\omit &J\ar[d]^u\\
e\ar[r]_j&J\ar[r]_u&K
}
$$
But, the fully faithfulness of $u$ implies that we have an isomorphism $J_{/u(j)}\nach J_{/j}$ so that it suffices (e.g.\ by Proposition \ref{prop_cofinality}) to show that the next pasting is homotopy exact:
$$
\xymatrix{
J_{/u(j)}\ar[r]\ar[d]_p\xtwocell[1,1]{}\omit&J_{/j}\ar[r]^{\pr}\ar[d]_p\xtwocell[1,1]{}\omit & J\ar[r]^{\id}\ar[d]_{\id}\xtwocell[1,1]{}\omit &J\ar[d]^u\\
e\ar[r]&e\ar[r]_j&J\ar[r]_u&K
}
$$
But this is guaranteed by axiom (Der4).
\end{proof}

Since we now know that, for fully faithful $u\colon J\nach K$, the homotopy Kan extension functors $u_!,u_\ast\colon \DD(J)\nach\DD(K)$ are fully faithful, we would like to obtain a characterization of the objects in the essential images. The point of the next lemma is that one only has to control the adjunction morphisms at arguments $k\in K-u(J).$

\begin{lemma}\label{lemma_essim}
Let $\DD$ be a derivator, $u\colon J\nach K$ a fully faithful functor, and $X\in \DD(K).$\\
{\rm i)} $X$ lies in the essential image of $u_!$ if and only if the adjunction counit $\epsilon\colon u_!u^\ast\nach \id$ induces an isomorphism $\epsilon _k \colon u_!u^\ast (X)_k\nach X_k$ for all $k\in K-u(J).$\\
{\rm ii)} $X$ lies in the essential image of $u_\ast$ if and only if the adjunction unit $\eta \colon \id \nach u_\ast u^\ast$ induces an isomorphism $\eta _k\colon X_k\nach  u_\ast u^\ast (X)_k$ for all $k\in K-u(J).$
\end{lemma}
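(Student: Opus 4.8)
The plan is to prove part (i) only, since part (ii) follows by duality (applying part (i) to the dual derivator $\DD^{\op}$, using Example \ref{example_dual}). The ``only if'' direction is immediate from Proposition \ref{prop_honestext}: if $X$ lies in the essential image of $u_!$, say $X \cong u_!(Y)$, then the counit $\epsilon \colon u_! u^\ast (X) \to X$ is an isomorphism, since $u$ fully faithful implies (by Proposition \ref{prop_honestext}) that $\eta \colon \id \to u^\ast u_!$ is an isomorphism, hence $u_! u^\ast u_!(Y) \to u_!(Y)$ is an isomorphism by a triangular identity; in particular $\epsilon_k$ is an isomorphism for every $k \in K$, a fortiori for every $k \in K - u(J)$.

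For the ``if'' direction, suppose $\epsilon_k \colon u_! u^\ast(X)_k \to X_k$ is an isomorphism for all $k \in K - u(J)$. By axiom (Der2), it suffices to show $\epsilon_k$ is an isomorphism for all $k \in K$; by hypothesis we only need to handle $k \in u(J)$, say $k = u(j)$. Here the idea is to use Proposition \ref{prop_honestext} again: the adjunction unit $\eta \colon \id \to u^\ast u_!$ is a natural isomorphism, so $u^\ast u_! u^\ast (X) \to u^\ast(X)$ is an isomorphism, i.e.\ $(u_! u^\ast X)_{u(j)} \to X_{u(j)}$ is an isomorphism for all $j \in J$ --- but one must check that this isomorphism is indeed $\epsilon_{u(j)}$. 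This is the one place where a small compatibility argument is needed: the composite $u^\ast \xrightarrow{\eta u^\ast} u^\ast u_! u^\ast \xrightarrow{u^\ast \epsilon} u^\ast$ is the identity by the triangular identity for the adjunction $(u_!, u^\ast)$, and since $\eta u^\ast$ is an isomorphism, $u^\ast \epsilon$ is its inverse, hence an isomorphism; evaluating at $j \in J$ identifies $u^\ast \epsilon$ at $j$ with $\epsilon$ at $u(j)$ via the equality $(u^\ast X)_j = X_{u(j)}$ coming from $u \circ j = u(j) \colon e \to K$. Thus $\epsilon_{u(j)}$ is an isomorphism for all $j \in J$.

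Combining the two cases, $\epsilon_k$ is an isomorphism for all $k \in K$, so by (Der2) $\epsilon \colon u_! u^\ast(X) \to X$ is an isomorphism in $\DD(K)$, whence $X \cong u_!(u^\ast X)$ lies in the essential image of $u_!$. The main (and only mildly nontrivial) obstacle is the bookkeeping in the previous paragraph: correctly identifying the restriction of the counit $\epsilon$ to objects of $u(J)$ with the pointwise maps governed by the unit $\eta$ via Proposition \ref{prop_honestext}. Everything else is a direct application of (Der2) together with the fact that homotopy Kan extensions along fully faithful functors are fully faithful.
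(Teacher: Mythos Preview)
Your proof is correct and follows essentially the same approach as the paper: both use the relevant triangular identity together with Proposition~\ref{prop_honestext} to conclude that the counit (resp.\ unit) is automatically an isomorphism at the points of $u(J)$, reducing the check to $K-u(J)$ via (Der2). The only cosmetic difference is that the paper writes out part~(ii) directly while you write out part~(i); the arguments are exact duals of one another.
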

\begin{proof}
We give a proof of ii), so let us consider the adjunction $(u^\ast,u_\ast)\colon\DD(K)\rightharpoonup\DD(J).$ By Proposition \ref{prop_honestext}, $u_\ast$ is fully faithful. Thus, $X\in \DD(K)$ lies in the essential image of $u_\ast$ if and only if the adjunction unit $\eta\colon X\nach u_\ast u^\ast X$ is an isomorphism. Since isomorphisms can be tested pointwise, this is the case if and only if we have an isomorphism $\eta _k\colon X_k\nach u_\ast u^\ast (X)_k$ for all $k \in K.$ For the converse direction, one of the triangular identities for our adjunction reads as $\id=\epsilon u^\ast\cdot u^\ast\eta$. Thus, with $\epsilon$ also $u^\ast\eta$ is an isomorphism so that it suffices to check at points which do not lie in the image.
\end{proof}

There are two important classes of fully faithful functors where the essential image of homotopy Kan extensions can be characterized more easily. So let us give their definition.

\begin{definition}
Let $u\colon J\nach K$ be a fully faithful functor which is injective on objects.\\
i) The functor $u$ is called a \emph{cosieve} if whenever we have a morphism $u(j)\nach k$ in $K$ then $k$ lies in the image of $u.$\\
ii) The functor $u$ is called a \emph{sieve} if whenever we have a morphism $k\nach u(j)$ in $K$ then $k$ lies in the image of $u.$
\end{definition}

The following proposition and a variant for the case of pointed derivators (cf.\ Proposition \ref {prop_extzero}) will be frequently used throughout this paper.

\begin{proposition}\label{prop_extzero_unpointed}
Let $\DD$ be a derivator.\\
{\rm i)} Let $u\colon J\nach K$ be a cosieve, then the homotopy left Kan extension $u_!$ is fully faithful and $X\in \DD(K)$ lies in the essential image of $u_!$ if and only if $X_k\cong \emptyset$ for all $k\in K-u(J).$\\
{\rm ii)} Let $u\colon J\nach K$ be a sieve, then the homotopy right Kan extension $u_\ast$ is fully faithful and $X\in \DD(K)$ lies in the essential image of $u_\ast$ if and only if $X_k\cong \ast$ for all $k\in K-u(J).$
\end{proposition}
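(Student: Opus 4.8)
The plan is to prove both statements simultaneously via duality (using Example~\ref{example_dual}), so I would only spell out part {\rm i)} for the cosieve $u\colon J\nach K$, with part {\rm ii)} following by applying {\rm i)} to $\DD^{\op}$. By Proposition~\ref{prop_honestext}, since $u$ is in particular fully faithful, the homotopy left Kan extension $u_!$ is already fully faithful, so only the characterization of the essential image remains. By Lemma~\ref{lemma_essim}{\rm i)}, $X\in\DD(K)$ lies in the essential image of $u_!$ if and only if the counit $\epsilon_k\colon u_!u^\ast(X)_k\nach X_k$ is an isomorphism for every $k\in K-u(J)$. So the whole task reduces to computing $u_!u^\ast(X)_k$ for such $k$ and showing it is $\emptyset$ precisely when $X_k\cong\emptyset$.

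The key computation is via (Der4): for $k\in K$ we have $u_!(u^\ast X)_k\cong\Hocolim_{J_{/k}}\pr^\ast(u^\ast X)$, where $\pr\colon J_{/k}\nach J$ is the projection from the slice category $J_{/k}$ of objects $u$-over $k$. The crucial observation is that when $u$ is a cosieve and $k\in K-u(J)$, the slice category $J_{/k}$ is \emph{empty}: an object of $J_{/k}$ would be a pair $(j,f\colon u(j)\nach k)$, but the cosieve condition says that the existence of a morphism $u(j)\nach k$ forces $k$ to lie in the image of $u$, contradicting $k\in K-u(J)$. Hence $J_{/k}=\emptyset$ and $u_!(u^\ast X)_k\cong\Hocolim_\emptyset\pr^\ast(u^\ast X)$. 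Now I invoke Proposition~\ref{prop_(co)limits}: $\DD(\emptyset)$ is equivalent to $e$ (call its unique-up-to-iso object $0$), and $\Hocolim_\emptyset=({p_\emptyset})_!\colon\DD(\emptyset)\nach\DD(e)$ is left adjoint to $p_\emptyset^\ast$, hence preserves initial objects; since the unique object of $\DD(\emptyset)$ is initial there, its image is an initial object of $\DD(e)$, i.e.\ $\emptyset$. Therefore $u_!(u^\ast X)_k\cong\emptyset$ for all $k\in K-u(J)$, independently of $X$.

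With this in hand the equivalence is immediate: if $X$ is in the essential image of $u_!$, then $X\cong u_!(Y)$ for some $Y\in\DD(J)$, so $X_k\cong u_!(Y)_k\cong u_!(u^\ast u^\ast{}^{-1}\!\ldots)$—more cleanly, $X_k\cong u_!(Y)_k\cong\Hocolim_{J_{/k}}\pr^\ast Y\cong\emptyset$ for $k\in K-u(J)$ by the same empty-slice argument. Conversely, suppose $X_k\cong\emptyset$ for all $k\in K-u(J)$. We must check that $\epsilon_k\colon u_!u^\ast(X)_k\nach X_k$ is an isomorphism for such $k$; but both source and target are initial objects of $\DD(e)$ (the source by the computation above, the target by hypothesis), and any morphism between initial objects is an isomorphism. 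By Lemma~\ref{lemma_essim}{\rm i)} this shows $X$ lies in the essential image of $u_!$, completing part {\rm i)}. Part {\rm ii)} follows by duality: a sieve $u\colon J\nach K$ becomes a cosieve $u^{\op}\colon J^{\op}\nach K^{\op}$, homotopy right Kan extensions in $\DD$ correspond to homotopy left Kan extensions in $\DD^{\op}$, and the terminal object $\ast$ of $\DD(K)$ corresponds to the initial object of $\DD^{\op}(K^{\op})$.

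I expect no serious obstacle here; the only point requiring a little care is being precise about the claim ``$\Hocolim_\emptyset$ of anything is $\emptyset$'' — one should not confuse the empty \emph{category} $\emptyset$ (whose $\DD$-value is trivial) with the initial \emph{object} $\emptyset$ of $\DD(K)$, and one should cite Proposition~\ref{prop_(co)limits}{\rm i)} and the left-adjoint-preserves-initial-objects argument rather than wave hands. Everything else is a direct application of (Der4), Lemma~\ref{lemma_essim}, and Proposition~\ref{prop_honestext}.
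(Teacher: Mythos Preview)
Your proof is correct and follows essentially the same route as the paper: full faithfulness from Proposition~\ref{prop_honestext}, the essential-image criterion from Lemma~\ref{lemma_essim}, the (Der4) computation $u_!u^\ast(X)_k\cong\Hocolim_{J_{/k}}\pr^\ast u^\ast(X)$, the observation that $J_{/k}=\emptyset$ for $k\notin u(J)$ by the cosieve condition, and the identification of $\Hocolim_\emptyset$ with the initial object. You even spell out a couple of points (why $\Hocolim_\emptyset$ yields $\emptyset$, and the explicit duality reduction for part {\rm ii)}) that the paper leaves implicit.
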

\begin{proof}
We give a proof of i). The statement about the fully faithfulness of $u_!$ follows from the fully faithfulness of the cosieve and Proposition \ref{prop_honestext}. To describe the essential image we use the criterion of Lemma \ref{lemma_essim}. But for $k\in K-u(J)$ we have
$$u_!u^\ast(X)_k\cong\Hocolim_{J_{/k}}\pr^\ast u^\ast(X)= \Hocolim_\emptyset\pr^\ast u^\ast (X)=\emptyset.$$
In this sequence, the isomorphism is given by (Der4), the first equality follows from the definition of a cosieve, and the second equality follows from the description of initial objects. Thus $\epsilon _k\colon u_!u^\ast (X)_k\nach X_k$ is an isomorphism for all $k\in K-u(J)$ if and only if $X_k\cong \emptyset$ for all $k \in K-u(J).$
\end{proof}

\subsection{Examples}\label{subsec_ex}
The first aim of this subsection consists of establishing a class of examples which is very important for theoretical issues. Namely, we want to show that with $\DD$ also the prederivator~$\DD^M$ is a derivator for an arbitrary small category $M.$ We will then show that combinatorial model categories have underlying derivators.

For the first point, the hardest part will be to show that $\DD^M$ again satisfies axiom (Der4). In order to achieve this we include a short detour and establish some reformulations of this axiom which also are of independent interest and will again be used further below. 

Let us begin by considering the following pullback diagram in $\Cat:$
$$
\xymatrix{
J_1\ar[r]^{v}\ar[d]_{u_1}&J_2\ar[d]^{u_2}\\
K_1\ar[r]_{w}&K_2\xtwocell[-1,-1]{}\omit
}
$$ 
For the notion of Grothendieck (op)fibrations we refer to \cite[Section 8.1]{borceux2} or \cite{vistoli}.

\begin{proposition}\label{prop_BCfibration}
Using the above notation, a pullback diagram is homotopy exact, if $u_2$ is a Grothendieck fibration or if $w$ is a Grothendieck opfibration.
\end{proposition}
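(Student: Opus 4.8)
The strategy is to reduce the global statement to a pointwise one via axiom (Der4), and then to recognise the relevant slice categories as possessing a terminal object, so that the homotopy (co)limit in question collapses to an evaluation. Concretely, suppose first that $u_2$ is a Grothendieck fibration. To show that the square is homotopy exact we must show that $\alpha_\ast\colon u_2^\ast\circ w_\ast\nach v_\ast\circ u_1^\ast$ is an isomorphism (using Lemma \ref{lemma_BCtwoisos} to switch between the two versions if convenient). By axiom (Der2) this can be checked after applying each evaluation functor $j_2^\ast$ for $j_2\in J_2$. Pasting the defining square of $j_2^\ast$ onto the original square and invoking the compatibility of base change with pasting (Lemma \ref{lemma_BCpasting}), together with the two-cell description in (Der4), reduces the claim to the homotopy exactness of a pasted diagram built from $K_{1,w(j_2)/}$ on one side and $J_{1,j_2/}$ on the other.

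\textbf{The key step.} Because the original square is a pullback and $u_2$ is a fibration, the induced comparison functor between the slices $(J_1)_{j_2/}$ and $(K_1)_{w(u_2 j_2)/}=(K_1)_{w'/}$ (where I abbreviate $w'=w(u_2(j_2))$, noting $u_2 v=w u_1$ forces the relevant objects to match) admits a right adjoint, or — what is cleaner — the slice $(J_1)_{j_2/}$ itself has a terminal object. Indeed, an object of $(J_1)_{j_2/}$ is a pair $(j_1,g)$ with $j_1\in J_1$ and $g\colon j_2\nach v(j_1)$; since $J_1$ is the pullback $K_1\times_{K_2}J_2$, such a datum amounts to a morphism in $K_2$ of the form $w u_1(j_1)=u_2 v(j_1)$ together with compatible morphism in $J_2$, and the fibration property of $u_2$ lets one pick a \emph{Cartesian} lift, which is precisely the terminal object of this slice (this is the standard fact that a pullback of a fibration along $w$, composed with the evaluation at $j_2$, has the slice controlled by a Cartesian arrow). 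Then Lemma \ref{lemma_terminal} identifies $\Holim_{(J_1)_{j_2/}}$ applied to anything with its evaluation at that terminal object, and the analogous statement holds on the $K_1$-side since a fibration is stable under base change, so that $(J_1)_{j_2/}$ and $(K_1)_{w'/}$ are in fact isomorphic as categories over $J_1$ under this identification. Comparing the two evaluations gives the desired isomorphism.

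\textbf{Dualisation and the obstacle.} The case where $w$ is a Grothendieck opfibration is handled by the dual argument: one checks $\alpha_!$ pointwise using the slices $(J_1)_{/j_2}$ and $(K_1)_{/w'}$, and now the \emph{opfibration} property of $w$ — which by the pullback assumption transports to $v$ — guarantees that these slices have an \emph{initial} object given by a coCartesian lift, so that the homotopy colimits collapse by the dual of Lemma \ref{lemma_terminal}; alternatively one may simply apply the already-proven half of the statement to the dual derivator $\DD^{\op}$ (Example \ref{example_dual}), since passing to $\DD^{\op}$ interchanges fibrations with opfibrations and left with right Kan extensions. I expect the main obstacle to be the careful bookkeeping in the key step: making precise, in the presence of all the slice categories and the two pasted two-cells, exactly which comparison functor is an equivalence (or which slice has a terminal object), and verifying that the Cartesian-lift construction genuinely produces a terminal object of the slice over $J_1$ rather than merely over $K_1$ or $J_2$. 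Once that identification is pinned down, the rest is a routine application of (Der4), Lemma \ref{lemma_BCpasting}, and Lemma \ref{lemma_terminal}.
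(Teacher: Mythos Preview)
Your high-level plan (reduce to a pointwise statement via (Der2) and (Der4), then invoke Lemma~\ref{lemma_BCpasting} and a cofinality argument) is exactly the paper's strategy. However, the execution of your ``key step'' contains a genuine error, and it stems from checking the wrong base-change morphism and hence evaluating at objects of the wrong category.

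First, note the direction of the $2$-cell in the square preceding the proposition: it is drawn as $\id\colon w\circ u_1\Rightarrow u_2\circ v$, and the Warning just before the proposition stresses that this orientation matters. With this orientation the relevant condition is that
\[
\id_\ast\colon w^\ast\,{u_2}_\ast \;\longrightarrow\; {u_1}_\ast\,v^\ast
\]
be an isomorphism, not your $u_2^\ast w_\ast\to v_\ast u_1^\ast$. These two are \emph{not} interchangeable via Lemma~\ref{lemma_BCtwoisos}: that lemma pairs $\alpha_\ast$ with respect to one pair of edges with $\alpha_!$ with respect to the \emph{other} pair, not $\alpha_\ast$ with $\alpha_\ast$. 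In fact your version can fail under the hypothesis. Take $K_2=[1]$, $K_1=\{1\}$, $w$ the inclusion, and let $J_2$ have objects $a,b_0,b_1$ with $u_2(a)=0$, $u_2(b_i)=1$ and exactly one arrow $a\to b_i$ for each $i$; then $u_2$ is a Grothendieck fibration, $J_1=\{b_0,b_1\}$ is discrete, and for $X\in\DD(e)$ one computes $(u_2^\ast w_\ast X)_a\cong X$ while $(v_\ast u_1^\ast X)_a\cong X\times X$.

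Second, once you check the correct transformation $w^\ast{u_2}_\ast\to{u_1}_\ast v^\ast$ (a natural transformation of functors $\DD(J_2)\to\DD(K_1)$), the pointwise test is at $k_1\in K_1$, not at $j_2\in J_2$. Your claim that $(J_1)_{j_2/}$ has a terminal object, or that it is isomorphic to $(K_1)_{u_2(j_2)/}$, is false in general (the example above already gives $(J_1)_{a/}$ discrete on two objects while $(K_1)_{0/}\cong e$); the Cartesian-lift property of a fibration produces lifts with prescribed \emph{codomain}, which is not what one needs when the slice is taken under an object of $J_2$.

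The paper's argument at $k_1\in K_1$ runs as follows: since fibrations are stable under pullback, $u_1$ is also a fibration, and the classical fact (cited from \cite{quillen_ktheory}) is that for any fibration the inclusion of the strict fiber into the lax fiber, $c\colon (J_1)_{k_1}\hookrightarrow (J_1)_{k_1/}$, is a \emph{left} adjoint (the right adjoint is built from Cartesian lifts). One then applies Proposition~\ref{prop_cofinality} twice (for $u_1$ and for $u_2$) to replace both slice categories by the fibers $(J_1)_{k_1}$ and $(J_2)_{w(k_1)}$, and finishes by observing that a pullback square induces an \emph{isomorphism} on fibers. That is the missing ingredient in your sketch.
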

\begin{proof}
We give the proof in the case where $u_2$ is a Grothendieck fibration. For a derivator $\DD$ we thus have to show that the canonical map $\id_\ast\colon w^\ast {u_2}_\ast\nach{u_1}_\ast v^\ast$ is a natural isomorphism. Since isomorphisms can be tested pointwise, (Der4) implies that it suffices to show that the following pasting is a homotopy exact square for all $k_1\in K_1:$
$$
\xymatrix{
(J_1)_{k_1/}\ar[r]^{\pr}\ar[d]_p&J_1\ar[r]^{v}\ar[d]_{u_1}&J_2\ar[d]^{u_2}\\
e\ar[r]_{k_1}&K_1\ar[r]_{w}\xtwocell[-1,-1]{}\omit&K_2\xtwocell[-1,-1]{}\omit
}
$$
Since our diagram in $\Cat$ is a pullback diagram, we deduce that with $u_2$ also $u_1$ is a Grothendieck fibration. Thus, the canonical functor
$$c\colon (J_1)_{k_1}\nach (J_1)_{k_1/}\colon\qquad j_1\mapsto (j_1, k_1\stackrel{\id}{\nach} u_1(j_1))$$
is a left adjoint functor (\cite{quillen_ktheory}). Here, we denote by $(J_1)_{k_1}$ the fiber of $u_1$ over $k_1,$ i.e., the subcategory of $J_1$ consisting of all objects sent to $k_1$ and all morphisms sent to $\id_{k_1}.$ Now, Lemma~\ref{lemma_BCpasting} and Proposition \ref{prop_cofinality} imply that the above pasting is homotopy exact if and only if this is the case for the pasting in the following left diagram:
$$
\xymatrix{
(J_1)_{k_1}\ar[r]^c\ar[d]_p&(J_1)_{k_1/}\ar[r]^{\pr}\ar[d]_p&J_1\ar[r]^{v}\ar[d]_{u_1}&J_2\ar[d]^{u_2}&
(J_1)_{k_1}\ar[r]^w\ar[d]_p&(J_2)_{w(k_1)}\ar[r]^c\ar[d]_p&(J_2)_{w(k_1)/}\ar[r]^{\pr}\ar[d]_p&J_2\ar[d]^{u_2}\\
e\ar[r]&e\ar[r]_{k_1}\xtwocell[-1,-1]{}\omit&K_1\ar[r]_{w}\xtwocell[-1,-1]{}\omit&K_2\xtwocell[-1,-1]{}\omit&
e\ar[r]&e\ar[r]\xtwocell[-1,-1]{}\omit&e\ar[r]_{w(k_1)}\xtwocell[-1,-1]{}\omit&K_2\xtwocell[-1,-1]{}\omit
}
$$
It is easy to check that the above two pastings define the same natural transformation. Thus, by exactly the same arguments again it suffices to show that the square
$$
\xymatrix{
(J_1)_{k_1}\ar[r]^w\ar[d]_p&(J_2)_{w(k_1)}\ar[d]^p\\
e\ar[r]&e\xtwocell[-1,-1]{}\omit
}
$$
is homotopy exact. But, since we started with a pullback diagram, $w$ restricted in this way is an isomorphism of categories so that our claim follows (e.g.\ again by Proposition \ref{prop_cofinality}).
\end{proof}

We will refer to this proposition by saying that a derivator satisfies base change for Grothendieck (op)fibrations. This proposition allows us to establish the next theorem. 

\begin{theorem}\label{thm_dertensored}
Let $\DD$ be a derivator and let $M$ be a small category. Then the prederivator $\DD^M\colon \Cat^{\op}\nach\CAT\colon K\mapsto \DD(M\times K)$ is a derivator.
\end{theorem}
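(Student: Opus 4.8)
The strategy is to verify the four axioms (Der1)--(Der4) for $\DD^M$ one at a time, using that $\DD^M(K) = \DD(M \times K)$ and that $\DD$ itself is a derivator. Axioms (Der1) and (Der2) are essentially immediate: for (Der1), note that $M \times (J_1 \sqcup J_2) \cong (M \times J_1) \sqcup (M \times J_2)$ since products distribute over coproducts in $\Cat$, so the equivalence $\DD^M(J_1 \sqcup J_2) \nach \DD^M(J_1) \times \DD^M(J_2)$ follows from (Der1) for $\DD$; moreover $M \times \emptyset = \emptyset$, so $\DD^M(\emptyset) = \DD(\emptyset)$ is nonempty. For (Der2), one must observe that for $f$ in $\DD^M(J) = \DD(M \times J)$, the objects of $M \times J$ are precisely the pairs $(m,j)$, and evaluating at $(m,j)$ in $M \times J$ agrees with first evaluating $\DD^M(J) \nach \DD^M(e) = \DD(M)$ at the functor $e \nach J$ picking out $j$, then at $m$; hence $f$ is an isomorphism in $\DD(M \times J)$ iff all its components $f_{(m,j)}$ are, by (Der2) for $\DD$, which is exactly the condition that $f_j$ is an isomorphism in $\DD^M(e) = \DD(M)$ for all $j \in J$ (again using (Der2) for $\DD$ applied to the category $M$).

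For (Der3), given $u \colon J \nach K$, apply $M \times (-)$ to get $\id_M \times u \colon M \times J \nach M \times K$, and take the homotopy left and right Kan extensions along $\id_M \times u$ which exist by (Der3) for $\DD$. One checks these provide adjoints to $u^\ast \colon \DD^M(K) \nach \DD^M(J)$, since by definition $(\DD^M)(u) = \DD(\id_M \times u)$. So (Der3) is formal. The substantive work is (Der4): one must show that for $u \colon J \nach K$ and $k \in K$, the canonical maps $\Hocolim_{J_{/k}} \pr^\ast(X) \nach u_!(X)_k$ and dually are isomorphisms, where now all the functors are the ones in $\DD^M$, i.e.\ computed via $M \times (-)$. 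The key point, and the reason Proposition~\ref{prop_BCfibration} was proved, is that for a fixed object $(m,k) \in M \times K$ one wants to compare the slice $(M \times J)_{(m,k)/}$ (which controls $u_\ast$ for $\DD^M$ after the identification $\DD^M(K) = \DD(M\times K)$, via (Der4) for $\DD$) with the slice $J_{k/}$ together with $M$. The natural comparison is that the projection $M \times K \nach K$ is a Grothendieck (op)fibration, and its pullback along $k \colon e \nach K$ gives $M \nach e$; one uses the pasting lemma (Lemma~\ref{lemma_BCpasting}) together with base change for Grothendieck (op)fibrations (Proposition~\ref{prop_BCfibration}) to reduce the exactness square for $\DD^M$ to an exactness square for $\DD$ that is already known.

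Concretely, I would fix $k \in K$ and an object $m \in M$, and analyze the square in $\Cat$
$$
\xymatrix{
(M \times J)_{(m,k)/} \ar[r]^-{\pr} \ar[d]_-{p} & M \times J \ar[d]^-{\id_M \times u} \\
e \ar[r]_-{(m,k)} & M \times K
}
$$
whose $\DD$-exactness is the statement of (Der4) for $\DD$ (applied to $\id_M \times u$ and the object $(m,k)$). I would then factor the functor $(m,k) \colon e \nach M \times K$ through $M \times e \xrightarrow{\id_M \times k} M \times K$, observe that $\id_M \times k$ fits into a pullback square with the opfibration $\pr \colon M \times K \nach K$, invoke Proposition~\ref{prop_BCfibration} to see that pullback square is homotopy exact, and use cofinality/adjointness arguments as in the proof of Proposition~\ref{prop_BCfibration} to identify $(M \times J)_{(m,k)/}$ up to a cofinal (adjoint) functor with $\{m\} \times J_{k/} \cong J_{k/}$. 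Assembling these via Lemma~\ref{lemma_BCpasting} shows the $\DD^M$-version of the (Der4)-square is a pasting of homotopy exact squares, hence homotopy exact, which by the pointwise detection of isomorphisms (Der2, already verified for $\DD^M$) gives (Der4) for $\DD^M$. The main obstacle is precisely this bookkeeping of slice categories and the verification that the various pasted $2$-cells agree with the canonical comparison maps of (Der4); everything else is formal manipulation, and the crucial input that makes it go through is the reformulation of derivators via base change for Grothendieck (op)fibrations established in Proposition~\ref{prop_BCfibration}.
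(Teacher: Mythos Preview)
Your treatment of (Der1)--(Der3) matches the paper's (which dismisses them as immediate), and your strategy for (Der4)---reduce to (Der4) for $\DD$ at objects $(m,k)$ via pasting and then conclude via pointwise detection of isomorphisms---is the paper's as well. Two points of imprecision are worth flagging, however.

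First, the pullback square you invoke (``$\id_M \times k$ fits into a pullback square with the opfibration $\pr\colon M\times K\to K$'') is not the one the paper uses, and it is not clear how it would slot into the decomposition. The paper instead factors the $\DD^M$-(Der4) square \emph{vertically} through $M\times K_{k/}$, and it is the upper square
\[
\xymatrix{
M\times J_{k/}\ar[r]\ar[d]&M\times J\ar[d]^{\id_M\times u}\\
M\times K_{k/}\ar[r]_-{\id_M\times\pr}&M\times K
}
\]
that is a pullback over the Grothendieck opfibration $\id_M\times\pr$ and hence $\DD$-exact by Proposition~\ref{prop_BCfibration}; the lower square is then handled by a further horizontal pasting. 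Second, $(M\times J)_{(m,k)/}$ is not merely related to $J_{k/}$ by a cofinal functor; the relevant fact is the canonical isomorphism $(M\times J)_{(m,k)/}\cong M_{m/}\times J_{k/}$, and this is what lets one identify a pasting with the (Der4)-square for $\DD$.

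In fact your outline is closer to a shortcut that bypasses Proposition~\ref{prop_BCfibration} altogether: paste the $\DD^M$-(Der4) square on the left with the (Der4)-square for $\DD$ applied to the projection $M\times J_{k/}\to M$ at $m$. Via the isomorphism $(M\times J_{k/})_{m/}\cong M_{m/}\times J_{k/}\cong (M\times J)_{(m,k)/}$, the resulting pasting is precisely the (Der4)-square for $\DD$ applied to $\id_M\times u$ at $(m,k)$. Both the pasted-on square and the full pasting are then $\DD$-exact by (Der4) for $\DD$, so Lemma~\ref{lemma_BCpasting} gives that $m^\ast$ applied to the base change of the $\DD^M$-(Der4) square is an isomorphism; varying $m$ and using (Der2) finishes. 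So your reference to an opfibration pullback is a red herring rather than a genuine gap, and the argument you sketch can be made to work once the slice identification and the two-out-of-three logic are stated correctly.
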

\begin{proof}
The axioms (Der1)-(Der3) are immediate so we only have to establish axiom (Der4) for $\DD^M.$ By duality, it suffices to give the proof for the case of homotopy right Kan extensions. In other words, we have to show that the square
$$
\xymatrix{
M\times J_{k/}\ar[r]\ar[d]&M\times J\ar[d]\\
M\times e\ar[r]&M\times K\xtwocell[-1,-1]{}\omit
}
$$
is $\DD$-exact. But this $2$-cell can be obtained as the pasting of the following diagram
$$
\xymatrix{
M\times J_{k/}\ar[r]\ar[d]&M\times J\ar[d]\\
M\times K_{k/}\ar[r]\ar[d]&M\times K\ar[d]\xtwocell[-1,-1]{}\omit\\
M\times e\ar[r]&M\times K\xtwocell[-1,-1]{}\omit
}
$$
in which the upper square is a pullback diagram such that the bottom horizontal arrow is a Grothendieck opfibration. Thus, by Proposition \ref{prop_BCfibration} it suffices to show that the lower square given by the natural transformation~$\alpha_2$ is $\DD$-exact. We claim that it would suffice to show that the pasting obtained by the following left diagram is $\DD$-exact for every $m\in M:$
$$
\xymatrix{
M_{m/}\times K_{k/}\ar[r]\ar[d]&M\times K_{k/}\ar[r]\ar[d]&M\times K\ar[d]&&(M\times K)_{(m,k)/}\ar[r]\ar[d]&M\times K\ar[d]\\
e\ar[r]_m&M\times e\ar[r]\xtwocell[-1,-1]{}\omit&M\times K\xtwocell[-1,-1]{}\omit&&e\ar[r]_-{(m,k)}&M\times K\xtwocell[-1,-1]{}\omit
}
$$ 
Using this claim it then suffices to observe that this pasting is naturally isomorphic to the square on the right-hand-side which is $\DD$-exact by Kan's formula. So, it remains to establish our claim for which purpose we call the natural transformation on the left~$\alpha_1.$ The associated diagram obtained by base change looks like
$$
\xymatrix{
\DD(M_{m/}\times K_{k/})\ar[d] & \DD(M\times K_{k/})\ar[l]\ar[d]\xtwocell[1,-1]{}\omit& \DD(M\times K)\ar[l]_-{\pr^\ast}\ar[d]\xtwocell[1,-1]{}\omit\\
\DD(e)& \DD(M\times e)\ar[l]^-{m^\ast}&\DD(M\times K)\ar[l]
}
$$
in which the 2-cells are given by ${\alpha_1}_\ast$ and ${\alpha_2}_\ast$ respectively. The compatibility of base change with respect to pasting thus gives us the following equations:
$$
(\alpha_1\odot\alpha_2)_\ast\qquad =\qquad {\alpha_2}_\ast\odot{\alpha_1}_\ast\qquad =\qquad {\alpha_2}_\ast \pr^\ast \cdot\; m^\ast {\alpha_1}_\ast
$$
Now, the canonical isomorphism $M_{m/}\times K_{k/}\cong (M\times K_{k/})_{m/}$ and axiom (Der4) imply that ${\alpha_2}_\ast$ is an isomorphisms. Using the fact that isomorphisms are detected pointwise we can now conclude as follows: ${\alpha_1}_\ast$ is an isomorphism if and only if $m^\ast {\alpha_1}_\ast$ is an isomorphism for all $m\in M$ which is the case if and only if $(\alpha_1\odot\alpha_2)_\ast$ is an isomorphism for all $m\in M.$ Thus it was indeed enough to show that the above pasting on the left is $\DD$-exact.
\end{proof}

Thus, whenever we want to establish a general result about the values $\DD(M)$ of a derivator $\DD$ we may assume that we are considering the underlying category of a derivator since we can always pass from $\DD$ to $\DD^M.$

Let us note that the conclusion of Proposition \ref{prop_BCfibration} is actually equivalent to (Der4). Moreover, there is a further reformulation using a `symmetric variant of Kan's formulas'. More specifically, let us consider the following square in $\Cat:$
$$
\xymatrix{
(u_1/u_2)\ar[r]^{\pr_1}\ar[d]_{\pr_2}\xtwocell[1,1]{}\omit &J_1\ar[d]^{u_1}\\
J_2\ar[r]_{u_2}&K_2
}
$$
Here, the category $(u_1/u_2)$ is the \emph{comma category} where an object is a triple 
$$(j_1,j_2,\alpha\colon u_1(j_1)\nach u_2(j_2)),\qquad j_1\in J_1,\quad j_2\in J_2$$
and the functors $\pr_i$ are the obvious projection functors. The arrow component of such an object defines the natural transformation depicted in the diagram. If we specialize to $J_1=e$ or $J_2=e$ we get back the diagrams showing up in the pointwise calculation of Kan extensions.

\begin{proposition}\label{prop_3BC}
Let $\DD$ be a prederivator which satisfies the axioms \textnormal{(Der1)-(Der3)}. Then the following three statements are equivalent:\\
{\rm i)} The prederivator $\DD$ is a derivator, i.e., it also satisfies \textnormal{(Der4)}.\\
{\rm ii)} The prederivator $\DD$ satisfies base change for Grothendieck (op)fibrations.\\
{\rm iii)} The prederivator $\DD$ satisfies base change for comma categories, i.e., the squares associated to comma categories are $\DD$-exact.
\end{proposition}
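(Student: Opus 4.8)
The plan is to prove the cycle of implications (iii) $\Rightarrow$ (ii) $\Rightarrow$ (i) $\Rightarrow$ (iii), since none of the individual steps is hard once the base change calculus is in place. For (iii) $\Rightarrow$ (ii): given a pullback square as in Proposition \ref{prop_BCfibration} with $u_2$ a Grothendieck fibration, one compares it with the comma square for the pair $(w,u_2)$ via the canonical functor $J_1\nach (w/u_2)$ induced by the universal property of the comma category. When $u_2$ is a fibration this functor is a right adjoint (by the same argument as in the proof of Proposition \ref{prop_BCfibration}, using Quillen's observation about fibers of fibrations), hence the square relating the pullback to the comma category is homotopy exact by Proposition \ref{prop_cofinality}; pasting with the $\DD$-exact comma square and using Lemma \ref{lemma_BCpasting} together with the 2-out-of-3 property for isomorphisms gives $\DD$-exactness of the pullback. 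The opfibration case is dual.

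The implication (ii) $\Rightarrow$ (i) is essentially already contained in the proof of Proposition \ref{prop_BCfibration} read backwards, or more directly: axiom (Der4) asserts that the two squares
$$
\xymatrix{
J_{/k}\ar[r]^{\pr}\ar[d]_{p}&J\ar[d]^{u}\\
e\ar[r]_{k}&K\xtwocell[-1,-1]{}\omit
}
\qquad\qquad
\xymatrix{
J_{k/}\ar[r]^{\pr}\ar[d]_{p}\xtwocell[1,1]{}\omit&J\ar[d]^{u}\\
e\ar[r]_{k}&K
}
$$
are $\DD$-exact. But the slice category $J_{/k}$ is precisely the pullback of $u\colon J\nach K$ along $k\colon e\nach K$, and $k\colon e\nach K$ is (trivially, or after replacing it) compatible with the fibration hypothesis; more to the point, $J_{/k}\cong (\id_e/u)$ in the sense that it is the comma category of the cospan $e\nach K\leftarrow J$, and one exhibits it as a pullback along a Grothendieck opfibration by the standard trick (the functor $k\colon e\nach K$ need not be an opfibration, but one factors through $K_{/k}$ or $K_{k/}$ as in the proof of Theorem \ref{thm_dertensored}). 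Concretely I would reuse the factorization already appearing in that proof: $J_{k/}\nach K_{k/}\nach e$ exhibits the required square as a pasting of a pullback along the opfibration $K_{k/}\nach K$ with the square for $K_{k/}\nach e$, which is $\DD$-exact because $K_{k/}$ has an initial object (Lemma \ref{lemma_terminal} and Proposition \ref{prop_cofinality}, dualized). The left Kan extension half is dual.

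Finally, (i) $\Rightarrow$ (iii): assuming $\DD$ is a derivator, one must show the comma square for an arbitrary cospan $J_1\xrightarrow{u_1}K_2\xleftarrow{u_2}J_2$ is $\DD$-exact. Testing pointwise via (Der2), for each $j_2\in J_2$ one restricts along $j_2\colon e\nach J_2$ and observes that the fiber of $\pr_2\colon (u_1/u_2)\nach J_2$ over $j_2$, together with the induced natural transformation, is exactly the slice $(J_1)_{/u_2(j_2)}$ appearing in Kan's formula; pasting the comma square with the slice square for $j_2$ and comparing with the (Der4) square for $u_1$ at the object $u_2(j_2)\in K_2$, again via Lemma \ref{lemma_BCpasting} and a cofinality argument for the comparison functor, reduces $\DD$-exactness of the comma square to (Der4) itself. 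The main obstacle, and the only place requiring genuine care, is keeping track of which 2-cells coincide and in which direction the various base change transformations point when pasting --- in particular checking that the comparison functors (fiber inclusions, the maps to comma categories, the factorizations through $K_{k/}$) really are adjoints so that Proposition \ref{prop_cofinality} applies, and that the pasted 2-cell one obtains from Lemma \ref{lemma_BCpasting} is literally the one appearing in the relevant axiom rather than merely a conjugate of it (cf.\ the warnings about ambiguity following Lemma \ref{lemma_BCpasting}).
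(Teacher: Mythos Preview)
Your arguments for (ii) $\Rightarrow$ (i) and (i) $\Rightarrow$ (iii) match the paper's proof almost exactly. The paper establishes (i) $\Leftrightarrow$ (ii) using Proposition~\ref{prop_BCfibration} together with the factorization through $K_{k/}$ (precisely your argument, once you discard the false start that $J_{/k}$ is literally the pullback of $u$ along $k$), and proves (i) $\Rightarrow$ (iii) by pointwise reduction using the right adjoint $(J_1)_{/u_2(j_2)}\nach (u_1/u_2)_{/j_2}$; it then notes that (iii) $\Rightarrow$ (i) is immediate since slice squares are comma squares with one leg equal to $k\colon e\nach K$. The paper never argues (iii) $\Rightarrow$ (ii) directly.

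Your direct (iii) $\Rightarrow$ (ii), however, does not go through as written. Two issues. First, when $u_2$ is a Grothendieck fibration the canonical functor $c\colon J_1\nach (w/u_2)$ is a \emph{left} adjoint, not a right adjoint: its right adjoint sends $(k_1,j_2,f\colon w(k_1)\nach u_2(j_2))$ to $(k_1,f^\ast j_2)$ via the cartesian lift, and one checks the counit directly. (In Proposition~\ref{prop_BCfibration} the Quillen observation you cite also produces a \emph{left} adjoint.) Second, and more fundamentally, Proposition~\ref{prop_cofinality} only concerns squares over the terminal category $e$. The comparison square you need to control in order to paste with the comma square lives over $K_1$ or over $J_2$, and the adjoint of $c$ does \emph{not} commute with the projection to $J_2$ (one has $\pr_{J_2}\circ R(k_1,j_2,f)=f^\ast j_2\neq j_2$), so that square is not an instance of Proposition~\ref{prop_cofinality}. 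Making this route work would require a relative cofinality statement not proved in the paper, or a pointwise reduction---but the latter already uses (Der4). The clean repair is the paper's organization: since (Der4) is the special case $J_2=e$ of (iii), one has (iii) $\Rightarrow$ (i) for free, and then (i) $\Rightarrow$ (ii) is Proposition~\ref{prop_BCfibration}.
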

\begin{proof}
By Proposition \ref{prop_BCfibration} we already know that ii) is implied by i). Let us show that also the converse holds, i.e., we want to show that the square
$$\xymatrix{
   J_{k/}\ar[r]^\pr\ar[d]_{p_{J_{k/}}} & J \ar[d]^u\\
      e \ar[r]_k& K \xtwocell[-1,-1]{}\omit
}
$$
is $\DD$-exact if we assume ii). But the reasoning in this case is a simplified version of the arguments in the proof of Theorem \ref{thm_dertensored}. So, it remains to show that i) and iii) are equivalent. One direction is immediate by specializing comma categories to slice categories so we only have to prove that (Der4) implies iii). Using similar reduction arguments as in the last proof (including the behavior of base change with respect to pasting and the fact that isomorphisms are detected pointwise) it suffices to show that the following pasting is $\DD$-exact for all objects $j_2\in J_2:$
$$
\xymatrix{
(u_1/u_2)_{/j_2}\ar[r]^{\pr}\ar[d]_p\xtwocell[1,1]{}\omit & (u_1/u_2)\ar[r]^{\pr_1}\ar[d]_{\pr_2}\xtwocell[1,1]{}\omit &J_1\ar[d]^{u_1}\\
e\ar[r]_{j_2}&J_2\ar[r]_{u_2}&K
}
$$
Now, there is a canonical functor $R\colon {J_1}_{/u_2(j_2)}\nach (u_1/u_2)_{/j_2}$ which is defined by:
$$(j_1,u_1(j_1)\nach u_2(j_2))\qquad\mapsto\qquad\big((j_1,u_1(j_1)\nach u_2(j_2),j_2),j_2\stackrel{\id}{\nach}j_2\big)$$
This functor can be checked to define a right adjoint so that by Proposition \ref{prop_cofinality} it suffices to show that the pasting in the following diagram is $\DD$-exact:
$$
\xymatrix{
{J_1}_{/u_2(j_2)}\ar[r]^R\ar[d]_p\xtwocell[1,1]{}\omit&(u_1/u_2)_{/j_2}\ar[r]^{\pr}\ar[d]_p\xtwocell[1,1]{}\omit & (u_1/u_2)\ar[r]^{\pr_1}\ar[d]_{\pr_2}\xtwocell[1,1]{}\omit &J_1\ar[d]^{u_1}\\
e\ar[r]&e\ar[r]_{j_2}&J_2\ar[r]_{u_2}&K
}
$$
But this pasting is precisely the square used to calculate homotopy Kan extensions along $u_1$ so that we can conclude by (Der4).
\end{proof}

Let us now turn to the second important class of examples of derivators, namely the ones associated to nice model categories. This is included not only for the sake of completeness but also because our proof differs from the one given in \cite{cisinski}. Our proof is completely self-dual and is simpler in that it does not make use of the explicit description of the generating (acyclic) projective cofibrations of a diagram category associated to a cofibrantly generated model category. We restrict attention to the following situation.

\begin{definition}
A model category $\mathcal{M}$ is called \emph{combinatorial} if it is cofibrantly generated and if the underlying category is presentable.
\end{definition}

This class of model categories was introduced by Smith and is studied e.g.\ in \cite{HTT,rosicky_comb,beke,dugger_combinatorial}. For background on cofibrantly generated model categories we refer to \cite{hovey}. The theory of presentable categories was initiated by Gabriel and Ulmer in \cite{gabrielulmer}. Further references to this theory are \cite{borceux2,adamekrosicky}. One basic idea of the presentability assumption is the following one. The presentability imposes beyond the bicompleteness a certain `smallness condition' on a category which has at least two important consequences. The first one is that the usual set-theoretic problems occurring when one considers functor categories disappear at least if one restricts attention to colimit-preserving functors. But this is anyhow the adapted class of morphism to be studied in this context. Moreover, in the world of presentable categories one can focus more on conceptual ideas than on technical points of certain arguments: a functor between presentable categories is a left adjoint if and only if it is colimit-preserving. The usual `solution set condition' of Freyd's adjoint functor theorem is automatically fulfilled in this context. For more comments in this direction see Subsection 2.6 in \cite{groth_infinity}, where these ideas are discussed in the context of presentable $\infty-$categories. Important examples of presentable categories are the categories of sets, simplicial sets, all presheaf categories (and, more generally, all Grothendieck toposes), algebraic categories as well as the Grothendieck abelian categories and the category $\Cat$. A non-example is the category of topological spaces although this can be repaired if one sticks to the `really convenient category' (Smith) of $\Delta$-generated spaces (\cite{rosicky_convenient}). The slogan is that `presentable categories are small enough so that certain set-theoretical problems disappear but are still large enough to include many important examples'.

Anyhow, all we need from the theory of combinatorial model categories is the validity of the next theorem so that we could also work axiomatically with the conclusion of this theorem. The statement about the projective model structures is a consequence of the lifting theorem of cofibrantly generated model structures along a left adjoint while the statement about the injective model structure was only proved more recently. Both results are for example established in \cite[Proposition A.2.8.2]{HTT}.

\begin{theorem}
Let $\mathcal{M}$ be a combinatorial model category and let $J$ be a small category. The category $\mathcal{M}^J$ can be endowed with the projective and with the injective model structure.
\end{theorem}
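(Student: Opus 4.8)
The plan is to treat the two model structures separately, obtaining the projective one by a transfer argument and the injective one by a cardinality argument that genuinely uses the presentability hypothesis. For the \textbf{projective model structure} I would declare a map of $\mathcal{M}$-valued $J$-diagrams to be a weak equivalence, respectively a fibration, if it is one levelwise, and take the projective cofibrations to be the maps with the left lifting property against the levelwise acyclic fibrations; the projective structure is then produced by right-inducing along the adjunction
$$(F,G)\colon \prod_{j\in\ob(J)}\mathcal{M}\;\rightharpoonup\;\mathcal{M}^J,$$
where $G=(\ev_j)_{j}$ is the tuple of evaluation functors and $F$ sends $(X_j)_j$ to $\coprod_j F_j(X_j)$, with $F_j=\LKan_{j\colon e\to J}$ the free diagram on $X_j$ placed at $j$ (so $F_j(X)(k)=\coprod_{J(j,k)}X$). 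The source is a small product of copies of the combinatorial model category $\mathcal{M}$, hence is itself cofibrantly generated, and $G$ inverts exactly the levelwise weak equivalences; by Kan's transfer theorem it then suffices to check that the domains of $F(I)$ and $F(J)$ are small in $\mathcal{M}^J$ (computed levelwise, using that $\mathcal{M}$ is combinatorial) and that every relative $F(J)$-cell complex is a levelwise weak equivalence. The latter holds because each $F_j$ carries an acyclic cofibration of $\mathcal{M}$ to a levelwise coproduct of acyclic cofibrations, pushouts and transfinite composites in $\mathcal{M}^J$ are formed levelwise, and both operations preserve acyclic cofibrations, so relative $F(J)$-cell complexes are even levelwise acyclic cofibrations. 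This yields the projective model structure, cofibrantly generated by $F(I)$ and $F(J)$.

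For the \textbf{injective model structure} I would declare the weak equivalences and the cofibrations to be the levelwise ones and the injective fibrations to be the maps with the right lifting property against the levelwise acyclic cofibrations. Two-out-of-three and closure under retracts are immediate, and the lifting axioms follow from the two factorization axioms by Quillen's retract argument, so the content lies entirely in the factorizations; and among these the only hard one is the factorization of an arbitrary map as a levelwise acyclic cofibration followed by an injective fibration. (This cannot be deduced from the projective case by the duality $\mathcal{M}^J\mapsto(\mathcal{M}^{\op})^{J^{\op}}$, since $\mathcal{M}^{\op}$ is in general not combinatorial.) Instead the plan is to exhibit a \emph{set} $S$ of levelwise acyclic cofibrations which detects the injective fibrations -- a map is an injective fibration if and only if it has the right lifting property against $S$ -- and then invoke the small object argument. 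To build $S$ I would fix a regular cardinal $\kappa$ for which $\mathcal{M}$ is ``$\kappa$-combinatorial'' (its underlying category locally $\kappa$-presentable, its generating cofibrations with $\kappa$-presentable domain and codomain) and establish the \emph{bounded cofibration property}: for a levelwise acyclic cofibration $A\to B$ in $\mathcal{M}^J$ and a $\kappa$-small subobject $B_0\subseteq B$ there is a $\kappa$-small $B_1$ with $B_0\subseteq B_1\subseteq B$ and $A\times_B B_1\to B_1$ again a levelwise acyclic cofibration. Taking $S$ to be a set of representatives for the levelwise acyclic cofibrations between $\kappa$-small objects, an accessibility/transfinite-induction argument then shows that every levelwise acyclic cofibration is a retract of an $S$-cell complex, which gives the detection property of $S$ and hence the missing factorization.

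The main obstacle is precisely this last point: proving the bounded cofibration property for $\mathcal{M}^J$ and running the accessible-category bookkeeping that converts it into a small generating set $S$. This is the only step where presentability of $\mathcal{M}$ is used in an essential way, and it is the substance of the ``more recently proved'' half of the statement (cf.\ \cite[Proposition~A.2.8.2]{HTT}); everything else -- the transfer for the projective structure and the passage from factorizations to the lifting axioms -- is formal.
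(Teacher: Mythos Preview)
The paper does not actually prove this theorem: it is stated as a black box with the remark that ``the statement about the projective model structures is a consequence of the lifting theorem of cofibrantly generated model structures along a left adjoint while the statement about the injective model structure was only proved more recently. Both results are for example established in \cite[Proposition A.2.8.2]{HTT}.'' Your proposal is entirely consistent with this sketch --- transfer along the free/evaluation adjunction for the projective half, and the bounded-cofibration/accessibility argument (which is indeed the content of Lurie's A.2.8.2) for the injective half --- and you even flag the same reference. So there is no discrepancy; you have simply supplied more detail than the paper chose to give.
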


Recall that the \emph{projective} model structure is determined by the fact that the weak equivalences and the fibrations are defined levelwise. In the \emph{injective} model structure this is the case for the weak equivalences and the cofibrations. We will denote the functor categories $\mathcal{M}^J$ endowed with the corresponding model structures by $\mathcal{M}^J_{proj}$ resp. $\mathcal{M}^J_{inj}.$ In the special case where the combinatorial model category we start with is the category of simplicial sets endowed with the homotopy-theoretic Kan model structure, the projective model structure on a diagram category is the Bousfield-Kan structure of \cite{bousfieldkan} while the injective model structure is the Heller structure of \cite{heller}. One point of these model structures is that certain adjunctions are now Quillen adjunctions for trivial reasons.

\begin{lemma}
Let $\mathcal{M}$ be a combinatorial model category and let $u\colon J\nach K$ be a functor. Then we have the following Quillen adjunctions
$$(u_!,u^\ast)\colon \mathcal{M}^J_{proj}\nach \mathcal{M}^K_{proj}\qquad \mbox{and}\qquad (u^\ast,u_\ast)\colon \mathcal{M}^K_{inj}\nach \mathcal{M}^J_{inj}.$$
\end{lemma}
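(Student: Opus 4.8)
The plan is to invoke the standard criterion that an adjunction $(F,G)\colon\mathcal{C}\rightharpoonup\mathcal{D}$ between model categories is a Quillen adjunction as soon as the left adjoint $F$ preserves cofibrations and acyclic cofibrations, or — equivalently — as soon as the right adjoint $G$ preserves fibrations and acyclic fibrations. Here both functor categories $\mathcal{M}^J$ and $\mathcal{M}^K$ carry the projective and the injective model structures by the previous theorem, and the adjoint pairs in question are just the usual Kan extension adjunctions $(u_!,u^\ast)$ and $(u^\ast,u_\ast)$; these exist because the presentable, hence bicomplete, category $\mathcal{M}$ admits all small limits and colimits, so that $u_!$ and $u_\ast$ can be formed as left and right Kan extensions along $u$.

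For the first adjunction I would argue on the right adjoint $u^\ast$. By the definition of the projective model structure, a morphism $f$ in $\mathcal{M}^K$ is a projective fibration (resp.\ an acyclic projective fibration) precisely when every component $f_k$ is a fibration (resp.\ an acyclic fibration) in $\mathcal{M}$. Since $(u^\ast f)_j=f_{u(j)}$ for each $j\in J$, the functor $u^\ast$ visibly sends projective fibrations to projective fibrations and acyclic projective fibrations to acyclic projective fibrations. Hence $(u_!,u^\ast)\colon\mathcal{M}^J_{proj}\nach\mathcal{M}^K_{proj}$ is a Quillen adjunction. For the second adjunction I would argue dually, this time on the left adjoint $u^\ast$: in the injective model structure a morphism $f$ in $\mathcal{M}^K$ is an injective cofibration (resp.\ an acyclic injective cofibration) exactly when every component $f_k$ is a cofibration (resp.\ an acyclic cofibration), and again $(u^\ast f)_j=f_{u(j)}$ shows that $u^\ast$ preserves injective cofibrations and acyclic injective cofibrations. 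Therefore $(u^\ast,u_\ast)\colon\mathcal{M}^K_{inj}\nach\mathcal{M}^J_{inj}$ is a Quillen adjunction.

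There is no substantial obstacle here; the whole content of the lemma is the fortunate compatibility between the \emph{levelwise} definition of weak equivalences together with fibrations (resp.\ cofibrations) in the projective (resp.\ injective) structure and the fact that precomposition along $u$ is computed componentwise. The only point one must be careful about is bookkeeping: one uses the projective structure precisely where $u^\ast$ appears as a right adjoint, and the injective structure precisely where $u^\ast$ appears as a left adjoint, so that in each case only the \emph{levelwise-defined} class of maps needs to be tracked through $u^\ast$. Duality in the sense of Example~\ref{example_dual} then accounts for the two halves of the statement simultaneously.
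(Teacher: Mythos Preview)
Your proof is correct and is precisely the standard argument the paper has in mind; indeed, the paper states this lemma without proof, introducing it with the remark that these adjunctions are Quillen adjunctions ``for trivial reasons.'' Your observation that $u^\ast$ acts componentwise and therefore preserves whichever class of maps is defined levelwise is exactly that trivial reason.
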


We now have almost everything at our disposal needed to establish the following result.

\begin{proposition}\label{prop_comb}
Let $\mathcal{M}$ be a combinatorial model category. Then the assignment
$$\DD_{\mathcal{M}}\colon \Cat^{\op}\nach\CAT\colon J\auf \Ho(\mathcal{M}^J)$$
defines a strong derivator.
\end{proposition}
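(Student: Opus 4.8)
The plan is to verify the four axioms (Der1)--(Der4) for $\DD_{\mathcal M}(J) = \Ho(\mathcal M^J)$, together with the strongness property, using the two model structures (projective and injective) on $\mathcal M^J$ provided by the preceding theorem and the two Quillen adjunctions of the preceding lemma. The guiding principle is that one model structure computes homotopy left Kan extensions and the other computes homotopy right Kan extensions, and that everything in sight is levelwise, so (Der1) and (Der2) are essentially formal. I would first recall that for a Quillen adjunction $(F,G)$ the derived functors $(\mathbb L F, \mathbb R G)$ form an adjunction on homotopy categories; applying this to $(u_!, u^\ast)\colon \mathcal M^J_{proj} \rightharpoonup \mathcal M^K_{proj}$ and $(u^\ast, u_\ast)\colon \mathcal M^K_{inj}\rightharpoonup \mathcal M^J_{inj}$ gives (Der3). (Here one uses that $u^\ast$ is simultaneously left Quillen for the injective structures and right Quillen for the projective structures, and that the two homotopy categories $\Ho(\mathcal M^J_{proj})$ and $\Ho(\mathcal M^J_{inj})$ agree since both model structures have levelwise weak equivalences; so the resulting functors on $\Ho(\mathcal M^J)$ are well defined.)

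For (Der1): $\mathcal M^{J_1\sqcup J_2} \cong \mathcal M^{J_1}\times\mathcal M^{J_2}$ as model categories (projective or injective), weak equivalences being levelwise, so $\Ho(\mathcal M^{J_1\sqcup J_2}) \cong \Ho(\mathcal M^{J_1})\times \Ho(\mathcal M^{J_2})$, and the latter is even an isomorphism, a fortiori an equivalence; $\Ho(\mathcal M^\emptyset) = \Ho(\ast)$ is the terminal category, which is not empty. For (Der2): a morphism in $\mathcal M^J$ becomes an isomorphism in $\Ho(\mathcal M^J)$ iff it is a weak equivalence, and a weak equivalence in the projective (or injective) structure is exactly a levelwise weak equivalence, which means precisely that each $f_j$ is a weak equivalence in $\mathcal M$, i.e. an isomorphism in $\Ho(\mathcal M)=\DD_{\mathcal M}(e)$. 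Strongness: I would invoke the standard fact that $\dia_{[1],J}\colon \Ho(\mathcal M^{[1]\times J})\to \Ho(\mathcal M^J)^{[1]}$ is full and essentially surjective; essential surjectivity follows because any arrow in $\Ho(\mathcal M^J)$ can be represented by an actual map between cofibrant-fibrant objects of $\mathcal M^J$ (using that $\gamma\colon \mathcal M^J \to \Ho(\mathcal M^J)$ inverts weak equivalences and that every object is weakly equivalent to a bifibrant one), and fullness follows from a similar lifting-of-homotopies argument — this is a known property of homotopy categories of model categories and I would cite it rather than reprove it.

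The substantive step is (Der4), i.e. that $u_!(X)_k \cong \Hocolim_{J_{/k}} \pr^\ast(X)$ and dually. I would deduce this from a pointwise-computation statement for derived Kan extensions. The key facts are: (i) for $F\colon \mathcal M^J_{proj}\to \mathcal M^K_{proj}$ left Quillen, $\mathbb L u_!(X)$ is computed by $u_!$ applied to a projectively-cofibrant replacement of $X$; (ii) the point-evaluation functor $k^\ast\colon \mathcal M^K \to \mathcal M$ preserves weak equivalences and all colimits, so $k^\ast \circ u_! $ agrees on the nose with the honest colimit formula $\colim_{J_{/k}}\pr^\ast$; (iii) one must check that if $X$ is projectively cofibrant in $\mathcal M^J$, then $\pr^\ast X$ is projectively cofibrant (equivalently, the colimit over $J_{/k}$ of $\pr^\ast X$ already computes the homotopy colimit, which holds because $J_{/k}$ has a terminal object $(k,\mathrm{id})$ only when $u$ is nice — in general one needs the stronger statement that the Bousfield--Kan-style colimit formula over the slice is homotopy invariant on projectively cofibrant diagrams). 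This cofibrancy-transfer statement (that $\pr^\ast\colon \mathcal M^J_{proj}\to \mathcal M^{J_{/k}}_{proj}$ is left Quillen, or directly that it preserves cofibrant objects) is exactly the classical pointwise formula for homotopy Kan extensions; I would either cite it or sketch it via the generating projective cofibrations — although, per the remarks in the paper, the whole point is to avoid that description, so I would prefer to phrase (Der4) as the assertion that the Beck--Chevalley map $\alpha_!$ for the slice square is an isomorphism and reduce it to the statement that $\mathbb L u_!$ is computed pointwise by a homotopy colimit, citing \cite{hirschhorn} or \cite{hovey} for the latter.

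The main obstacle is precisely (Der4): making the identification of $\mathbb L u_!(X)_k$ with the homotopy colimit over the slice category canonical and checking it agrees with the Beck--Chevalley $2$-cell $\alpha_!$, rather than merely producing an abstract isomorphism. The cleanest route, and the one I would take, is to prove the general fact once: for a combinatorial $\mathcal M$ and any $u\colon J\to K$, the derived functor $\mathbb L u_!$ agrees, after evaluation at $k\in K$, with $\Hocolim_{J_{/k}}\pr^\ast$, with the comparison being the canonical one; this is standard model-categorical bookkeeping (choose a projectively cofibrant replacement, use that $k^\ast$ is a left adjoint hence commutes with $u_!$ strictly, and use that the restricted diagram is still cofibrant enough to compute its homotopy colimit as the strict colimit). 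Granting that, (Der4) is immediate, self-duality handles the $u_\ast$ half via the injective structure, and the proof concludes.
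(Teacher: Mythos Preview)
Your overall strategy for (Der1)--(Der3) and strongness matches the paper's, and your plan for (Der4) --- show that the four functors in the slice square are simultaneously Quillen on one side, so that the $1$-categorical base change isomorphism derives --- is exactly the paper's approach (in its dual form: the paper works with the injective structures and homotopy right Kan extensions).

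The gap is that you do not prove the one nontrivial step, namely that $\pr^\ast$ is left Quillen for the projective structures (equivalently, in the paper's dual formulation, that $\pr^\ast\colon\mathcal M^J_{inj}\to\mathcal M^{J_{k/}}_{inj}$ is right Quillen). You call this ``standard model-categorical bookkeeping'' or propose to cite it, but this is precisely the content of the proposition; the rest really is formal. The paper's point, advertised in the introduction, is to give a short self-contained argument here that avoids the explicit generating projective cofibrations.

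The paper's trick is this. To show $\pr^\ast$ is right Quillen for the injective structures, it suffices that the left adjoint $\pr_!$ preserve acyclic injective cofibrations. Injective (acyclic) cofibrations are levelwise, so one only needs a pointwise formula for $\pr_!$. Now $\pr\colon J_{k/}\to J$ is a Grothendieck opfibration with discrete fibers, and for any such functor one computes $\pr_!(X)_j\cong\coprod_{\text{fiber over }j}X$; a coproduct of (acyclic) cofibrations is an (acyclic) cofibration, and one is done. The same device handles $k^\ast$: its left adjoint satisfies $k_!(X)_l\cong\coprod_{\hhom_K(k,l)}X$. Your projective version would go through by the dual observation that $\pr\colon J_{/k}\to J$ is a Grothendieck fibration with discrete fibers, giving a product formula for $\pr_\ast$ and hence that $\pr^\ast$ is left Quillen projectively; but you did not supply this argument.
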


\begin{proof}
The first axiom (Der1) is immediate. (Der2) holds in this case since the weak equivalences are precisely the morphisms which are inverted by the formation of homotopy categories and since the weak equivalences are defined levelwise. It is thus enough to consider the two axioms on homotopy Kan extensions. We treat only the case of homotopy right Kan extensions. The other case follows by duality. Axiom (Der3) on the existence of homotopy Kan extension functors follows easily from the last lemma since one only has to consider the associated derived adjunctions at the level of homotopy categories. So it remains to establish Kan's formula. For this purpose, let $u\colon J\nach K$ be a functor and let $k\in K$ be an object. Consider the following diagram, which commutes up to natural isomorphism by the usual base change morphism from classical category theory:
$$\xymatrix{
\mathcal{M}^{J_{k/}}\ar[d]_{\lim}\ar@{}[dr]|{\cong}& \mathcal{M}^{J}\ar[l]_{\pr^\ast}\ar[d]^{u_\ast}\\
\mathcal{M}& \mathcal{M}^K\ar[l]_{k^\ast}
}$$
By the last lemma, the functors $\lim$ and $u_\ast$ are right Quillen functors with respect to the injective model structures. If we can show that also the functors $k^\ast$ and $\pr^\ast$ are right Quillen functors with respect to the injective model structures, then we are done. In fact, in that case the two compositions of derived right Quillen functors are canonically isomorphic and this in turn shows that the base change morphism is an isomorphism. So let us show that $k^\ast$ is a right Quillen functor. By definition of the injective model structures, $k^\ast$ preserves weak equivalences. Hence it is enough to show that $k^\ast$ preserves fibrations. Using the adjunction $(k_!,k^\ast)$ it is enough to show that $k_!\colon \mathcal{M}\nach \mathcal{M}^K$ preserves acyclic cofibrations. But an easy calculation with left Kan extension shows that we have $k_!(X)_l\cong \coprod_{\hhom_K(k,l)} X$. From this description it is immediate that $k_!$ preserves acyclic cofibrations. Finally, we will show in Lemma \ref{lemma_quillengroth} that also $\pr^\ast$ is a right Quillen functor with respect to the injective model structure.

The strongness is left to the reader. It can be deduced by some `mapping cylinder arguments' using the projective model structure on~$\mm^{[1]}.$
\end{proof}

To conclude the proof of Proposition \ref{prop_comb} we have to show that the functor $\pr^\ast\colon \mathcal{M}^J\nach \mathcal{M}^{J_{k/}}$ is a right Quillen functor with respect to the injective model structures. It is again immediate that~$\pr^\ast$ preserves injective weak equivalences. Hence it suffices to show that $\pr^\ast$ preserves injective fibrations. We will prove such a result for arbitrary Grothendieck opfibrations with discrete fibers which applies, in particular, to our situation.

\begin{lemma}\label{lemma_quillengroth}
Let $u\colon J\nach K$ be a Grothendieck opfibration with discrete fibers and let $\mathcal{M}$ be a combinatorial model category. Then the functor $u^\ast\colon \mathcal{M}^K\nach \mathcal{M}^J$ preserves injective fibrations.
\end{lemma}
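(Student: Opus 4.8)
To show that $u^\ast\colon \mathcal{M}^K\nach\mathcal{M}^J$ preserves injective fibrations, I would argue by the usual adjunction/lifting criterion: $u^\ast$ preserves injective fibrations if and only if its left adjoint $u_!\colon\mathcal{M}^J\nach\mathcal{M}^K$ preserves acyclic cofibrations of the injective model structure, i.e.\ levelwise acyclic cofibrations. So the real content is to understand $u_!$ for a Grothendieck opfibration $u\colon J\nach K$ with discrete fibers and to check that it sends a map which is a levelwise acyclic cofibration in $\mathcal{M}^J$ to a levelwise acyclic cofibration in $\mathcal{M}^K$. The point of the discrete-fiber hypothesis is that it makes the pointwise formula for $u_!$ extremely simple.

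\textbf{Key steps.} First I would record the pointwise formula: for $F\in\mathcal{M}^J$ and $k\in K$ one has $(u_!F)_k\cong\colim_{J_{/k}}\pr^\ast F$. Now I would analyse the slice category $J_{/k}$ using the opfibration structure. Since $u$ is a Grothendieck opfibration, every morphism $u(j)\nach k$ admits a cocartesian lift, and discreteness of the fibers forces this lift to be unique; moreover any morphism $j\nach j'$ in $J_{/k}$ (over a morphism in $K$ compatible with the structure maps to $k$) is then forced as well. The upshot is that the full subcategory of $J_{/k}$ spanned by the \emph{cocartesian} arrows $j\nach j^{\mathrm{cocart}}$, or more precisely the discrete set $J_k(k)$ of objects lying over $k$ (equivalently: pairs $(j, u(j)\nach k)$ pushed forward to the fiber over $k$), is cofinal — in fact $J_{/k}$ is equivalent to the discrete set whose elements are indexed by $\coprod_{j\in J}\hhom_K(u(j),k)$ reorganised fiberwise, so that the colimit is simply a coproduct. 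Concretely I expect $(u_!F)_k\cong\coprod_{\text{(something discrete)}}F_{j}$, a disjoint union of values of $F$ indexed by a set. Granting this, if $\phi\colon F\nach G$ is a levelwise acyclic cofibration, then at each $k$ the map $(u_!\phi)_k$ is a coproduct of acyclic cofibrations $\phi_j\colon F_j\nach G_j$, hence an acyclic cofibration (coproducts of acyclic cofibrations are acyclic cofibrations in any model category). This shows $u_!$ preserves levelwise acyclic cofibrations, which is exactly what is needed; dualizing the earlier adjunction argument (or quoting the adjoint lifting criterion directly) then gives that $u^\ast$ preserves injective fibrations.

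\textbf{Main obstacle.} The essential difficulty is the careful identification of the slice category $J_{/k}$ — equivalently, proving that the colimit defining $(u_!F)_k$ degenerates to a coproduct. This requires using the opfibration axiom to push every object of $J_{/k}$ along its structure map into the fiber over $k$, verifying this assignment is cofinal (or that it exhibits $J_{/k}$ as having a connected-components decomposition into contractible — indeed terminal-object-having — pieces), and crucially using discreteness of the fibers to rule out nontrivial automorphisms or nontrivial morphisms within a component, so that no genuine colimit (only a coproduct) survives. Once this combinatorial/$2$-categorical lemma about $J_{/k}$ is in hand, the model-categorical part is routine: coproducts of acyclic cofibrations are acyclic cofibrations, and the adjunction criterion for preservation of fibrations is standard. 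I would also note that this lemma is a mild generalization of the formula $k_!(X)_l\cong\coprod_{\hhom_K(k,l)}X$ already used in the proof of Proposition~\ref{prop_comb}, which is the special case $J=e$; so the argument is genuinely just "the same computation, relativized over the opfibration."
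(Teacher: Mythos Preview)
Your proposal is correct and follows essentially the same approach as the paper. Both reduce to showing that $u_!$ preserves levelwise acyclic cofibrations, compute $(u_!F)_k$ via Kan's formula as a colimit over $J_{/k}$, and then argue this colimit collapses to a coproduct over a discrete set, after which the conclusion is immediate.

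The only difference is in how the collapse is phrased. The paper packages your cofinality argument very cleanly: it notes that for an opfibration the inclusion $c\colon J_k\hookrightarrow J_{/k}$ (sending $j$ to $(j,\id_k)$) is a \emph{right adjoint} (its left adjoint is exactly your ``push forward along the cocartesian lift''), and then invokes Proposition~\ref{prop_cofinality} to get $\colim_{J_{/k}}\pr^\ast F\cong\colim_{J_k}(F|_{J_k})=\coprod_{j\in J_k}F_j$. Your description of the indexing set is a bit meandering (it is simply the fiber $J_k$, not the larger set $\coprod_{j}\hhom_K(u(j),k)$), but once you recognise that your ``pushforward to the fiber'' is precisely the left adjoint to $c$, your argument and the paper's coincide.
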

\begin{proof}
By adjointness, it is enough to show that the left adjoint $u_!\colon \mathcal{M}^J\nach\mathcal{M}^K$ preserves acyclic injective cofibrations. For this purpose, let $X\in \mathcal{M}^J$ and let $k\in K.$ Then we make the following calculation:
$$u_!(X)_k\;\cong \;\colim _{J_{/k}}X\circ \pr \;
\cong \; \colim _{J_k}X\circ \pr \circ \; c\; \cong \; \coprod _{j\in J_k} X_j
$$
The first isomorphism is again Kan's formula for Kan extensions. The second isomorphism is given by the cofinality of right adjoints (Proposition \ref{prop_cofinality}) applied to the canonical functor $c\colon J_k\nach J_{/k}$. Finally, the last isomorphism uses the fact that the Grothendieck opfibration has discrete fibers. From this explicit description of $u_!$ the claim follows immediately.
\end{proof}

The proof of the above theorem actually shows a bit more. Given a cofibrantly generated model category $\mathcal{M}$, the prederivator $\DD_{\mathcal{M}}$ is a what could be called \emph{cocomplete prederivator} (with the obvious meaning). But by far more is true. There is the following more general result which is due to Cisinski \cite{cisinski}.

\begin{theorem}\label{thm_cisinski}
Let $\mathcal{M}$ be a model category and let $J$ be a small category. Denote by $W_J$ the class of levelwise weak equivalences in $\mathcal{M}^J.$ Then the assignment
$$\DD_{\mathcal{M}}\colon \Cat^{\op} \nach \CAT \colon J\auf \mathcal{M}^J[W_J^{-1}]$$
defines a derivator.
\end{theorem}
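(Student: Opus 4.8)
The plan is to verify the axioms of Definition~\ref{def_derivator}, following Cisinski~\cite{cisinski}. Axioms (Der1) and (Der2) are formal and go exactly as in the proof of Proposition~\ref{prop_comb}: a functor out of $J_1\sqcup J_2$ is a pair of functors and the localization commutes with the product, while for (Der2) one uses that the class $W_J$ is objectwise by its very definition and is precisely the class inverted by $\mathcal{M}^J\nach\mathcal{M}^J[W_J^{-1}]$. The substance of the theorem is in axioms (Der3) and (Der4), and by the duality of Example~\ref{example_dual} --- noting $\DD_{\mathcal{M}}^{\op}\cong\DD_{\mathcal{M}^{\op}}$ --- it is enough to treat homotopy \emph{right} Kan extensions.

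The obstacle, compared with Proposition~\ref{prop_comb}, is that for a general model category $\mathcal{M}$ the diagram category $\mathcal{M}^J$ need not admit an injective model structure, so one cannot argue through derived Quillen adjunctions. The substitute is the general calculus of derived functors on homotopical categories: for $u\colon J\nach K$ the ordinary right Kan extension $u_\ast\colon\mathcal{M}^J\nach\mathcal{M}^K$ exists (since $\mathcal{M}$ is complete), and one produces a \emph{right deformation} for it --- a full subcategory $\mathcal{R}_u\subseteq\mathcal{M}^J$, closed under the restrictions that will occur, together with a functorial replacement connected to the identity by a levelwise weak equivalence, on which $u_\ast$ is homotopical. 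The concrete tool for building such replacements, available for an arbitrary $\mathcal{M}$, is the Reedy model structure on the categories $\mathcal{M}^{\Delta}$ and $\mathcal{M}^{\Delta^{\op}}$ (these exist because $\Delta$ is a Reedy category): via the cosimplicial cobar construction one expresses the relevant homotopy limits as totalizations of Reedy fibrant cosimplicial objects, and dually for homotopy colimits. Given these deformations, $u_\ast$ has a total right derived functor, which a standard argument on total derived functors identifies with a right adjoint of the (already homotopical) functor $u^\ast$ on the localizations; this establishes (Der3).

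It remains to establish (Der4), i.e.\ that the base change map $u_\ast(X)_k\nach\Holim_{J_{k/}}\pr^\ast(X)$ is an isomorphism in $\Ho(\mathcal{M})$ for every $u\colon J\nach K$, $k\in K$ and $X\in\DD_{\mathcal{M}}(J)$. Here I would reproduce, for deformation retracts in place of model structures, the pattern of the proofs of Theorem~\ref{thm_dertensored} and Proposition~\ref{prop_3BC}: using the compatibility of base change with pasting (Lemma~\ref{lemma_BCpasting}), the cofinality of right adjoints (Proposition~\ref{prop_cofinality}), and the objectwise detection of isomorphisms, one reduces to showing that restriction along the slice projection $\pr\colon J_{k/}\nach J$ carries $\mathcal{R}_u$-objects to objects on which the derived limit is computed correctly --- the analogue of Lemma~\ref{lemma_quillengroth}. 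The main obstacle is exactly this: constructing the deformations $\mathcal{R}_u$ and checking that restriction and right Kan extension along slice projections are right derivable with respect to them, all without the crutch of a model structure on $\mathcal{M}^J$. This is the technical heart of~\cite{cisinski}; since only the conclusion of the theorem is used in what follows, one may instead simply appeal to that reference.
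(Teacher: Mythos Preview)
Your sketch and the paper agree on the essential point: neither actually proves this theorem, both defer to Cisinski~\cite{cisinski}. The paper gives no proof at all, only a one-line strategic hint---``reduce the situation of an arbitrary diagram category using certain cofinality arguments to the situation where the indexing categories are so-called Reedy categories''---and then cites~\cite{cisinski}.

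Your sketch is considerably more detailed and is a reasonable outline of how one might organize such an argument, but the mechanism you describe is not quite the one the paper points to. You emphasize Reedy structures on $\mathcal{M}^{\Delta}$ together with cosimplicial cobar resolutions to produce right deformations for $u_\ast$; the paper's hint (and Cisinski's actual argument) instead reduces the \emph{indexing category}~$J$ itself to a Reedy (in fact direct or inverse) category via cofinality, so that one can work with the Reedy model structure on $\mathcal{M}^J$ directly rather than passing through cosimplicial replacements. Both routes ultimately rest on the fact that Reedy structures exist for any model category, but they organize the reduction differently: yours keeps $J$ arbitrary and resolves the diagram, while the paper's hint resolves $J$. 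Since you explicitly flag the hard step and hand it off to~\cite{cisinski} anyway, there is no genuine gap---just a difference in how the sketch gestures at the underlying machinery.
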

\noindent
The basic idea is to reduce the situation of an arbitrary diagram category using certain cofinality arguments to the situation where the indexing categories are so-called Reedy categories (\cite{hovey}). The proof can be found in \cite{cisinski}. From the proof it will, in particular, follow that the above localizations make sense (i.e., that no change of universe is necessary!) although, in general, there is no model structure on $\mm^J$ with $W_J$ as weak equivalences. For more comments about the relationship between model categories and derivators see Remark \ref{rem_equivalence}.

\begin{remark}
A combination of Theorem \ref{thm_cisinski} and Example \ref{example_bicompletecat} thus shows that derivators form quite a general framework. First, they subsume bicomplete categories. Moreover, they provide an abstract description of the calculus of homotopy Kan extensions at the level of the various homotopy categories associated to a model categories. Thus, this framework allows us to treat categorical limits and colimits and the homotopical variants on an equal footing. 

This is similar to what happens in the related theories. In the theory of $\infty$-categories, the notion of limits and colimits also subsumes both variants. In the case of nerves of categories, the notion reduces to the classical notion of (co)limits, while when applied to coherent nerves of (locally fibrant) simplicial model categories it coincides with the notion of homotopy (co)limits (cf.\ \cite{HTT} or \cite{groth_infinity}). 

Similarly, every bicomplete category can be endowed with the \emph{discrete model structure} where the weak equivalences are the isomorphisms and all morphisms are (co)fibrations. With respect to this model structure the theory of homotopy (co)limits reduces to the theory of categorical (co)limits.
\end{remark}

\section{The $2$-category of derivators}\label{sec_2cat}

\subsection{Morphisms and natural transformations}

Let $\DD$ and $\DD '$ be prederivators. A \emph{morphism of prederivators} $F\colon\DD\nach\DD'$ is a pseudo-natural transformation between the $2$-functors $\DD$ and $\DD'$ (cf.\ to Definition 7.5.2 of \cite{borceux1}). Spelling out this definition such a morphism is a pair $(F_\bullet, \gamma^F_\bullet)$ consisting of a collection of functors 
$$F_J\colon \DD(J)\nach \DD'(J),\qquad J\in\Cat,$$ 
and a family of natural isomorphisms $\gamma^F_u\colon u^\ast\circ  F_K\nach F_J\circ u^\ast,\; u\colon J\nach K$ as indicated in
$$
\xymatrix{
\DD(K)\ar[r]^{F_K}\ar[d]_{u^\ast}\ar@{}[dr]|{\cong}&\DD'(K)\ar[d]^{u^\ast}\\
\DD(J)\ar[r]_{F_J}&\DD'(J).
}
$$
This datum is subject to the following coherence properties. Given a pair of composable functors $J\stackrel{u}{\nach}K\stackrel{v}{\nach}L$ and a natural transformation $\alpha\colon u_1\nach u_2\colon J\nach K,$ we then have the following relation resp.\ commutative diagrams:
$$\xymatrix{
\gamma_{\id_J}=\id_{F_J}&&u^\ast v^\ast F\ar[r]^{\gamma _v}\ar@/_1.0pc/[dr]_{\gamma_{vu}}&u^\ast F v^\ast\ar[d]^{\gamma _u}&&u_1^\ast F\ar[r]^{\alpha^\ast}\ar[d]_{\gamma_{u_1}}&u_2^\ast F\ar[d]^{\gamma_{u_2}}\\
& &&  F u^\ast v^\ast && F u_1^\ast\ar[r]_{\alpha ^\ast}& Fu_2^\ast
}
$$
Here, we suppressed some indices (as we will frequently do in the sequel) to avoid awkward notation. Given such a morphism $F\colon\DD\nach\DD'$ the functor $F_e\colon\DD(e)\nach\DD'(e)$ is called the \emph{underlying functor}.

As usual the notion of a pseudo-natural transformation can be relaxed or can be strengthened. In the more relaxed situation there would be two versions of such morphisms, the lax ones and the colax ones, but these do not play an important role in this paper (they will briefly show up in the next subsection and they will be of some importance in the context of morphisms of two variables in \cite{groth_monder}). Strictly speaking, also in our situation there are two notions depending on the direction of the natural transformations $\gamma$. Since one can always pass to the inverse natural transformations these notions are equivalent. In what follows, we will be a bit sloppy in notation in that we will not distinguish notationally between the natural isomorphisms $\gamma$ belonging to such a morphism and their inverses $\gamma^{-1}.$
In the case of a $2$-natural transformation, i.e., if all natural transformations $\gamma$ are given by identities, we speak of a \emph{strict morphism}. The class of strict morphisms is too narrow in that many examples will only be pseudo-natural transformation but strict morphisms are conceptually easier. This becomes manifest, for example, in the $2$-categorical Yoneda lemma as opposed to the more general bicategorical Yoneda lemma. 

Finally, let $F,\:G\colon\DD\nach\DD'$ be morphisms of prederivators. A \emph{natural transformation} $\tau\colon F\nach G$ is a modification of the pseudo-natural transformations (see \cite[Definition 7.5.3]{borceux1}). Thus, such a $\tau$ is given by a family of natural transformations $\tau_J\colon F_J\nach G_J$ which are compatible with the coherence isomorphisms belonging to the functors $F$ and $G$ in the sense that for every functor $u\colon J\nach K$ the following diagram commutes:
$$
\xymatrix{
u^\ast F\ar[r]^\tau\ar[d]_\gamma& u^\ast G\ar[d]^\gamma\\
Fu^\ast\ar[r]_\tau&Gu^\ast
}
$$ 
Given two parallel morphisms $F$ and $G$ of prederivators let us denote by $\nat(F,G)$ the natural transformations from $F$ to $G.$ Thus, with prederivators as objects, morphisms as $1$-cells, and natural transformations as $2$-cells we obtain the $2$-category $\PDer$ of prederivators. In fact, this is just a special case of a $2$-category given by $2$-functors, pseudo-natural transformations, and modifications. The full sub-$2$-category spanned by the derivators is denoted by $\Der.$ Given two (pre)derivators $\DD$ and $\DD'$ let us denote the category of morphisms by $\Hom(\DD,\DD')$ while we will write $\Hom^{\strict}(\DD,\DD')$ for the full subcategory spanned by the strict morphisms. Correspondingly, we have two sub-$2$-categories $\PDer^{\strict}\nach\PDer$ and $\Der^{\strict}\nach\Der.$

With Lemma \ref{lemma_BCpasting} and later applications (e.g.\ Proposition \ref{prop_closureDer!}) in mind let us only mention that three of the above defining coherence conditions can be interpreted as equalities of certain pasting diagrams. Examples of morphisms will be given after the following two comments.

Let us only quickly mention that the $2$-categories $\PDer$ and $\Der$ admit finite $2$-products and are, in fact, Cartesian closed $2$-categories in a certain precise sense. This observation plays a central role in the development of monoidal aspects of the theory of derivators (cf.\ to \cite{groth_monder}) but will not be needed in this paper. The $2$-product and related notions will be studied in more detail in that reference.

In every $2$-category we have the notion of adjoint morphisms and equivalences. It turns out that the resulting notion of an equivalence of derivators can be simplified. In fact, an equivalence is already completely determined by giving a single morphism of derivators which is levelwise an equivalence of categories. However, the case of an adjunction is slightly more subtle and will be taken up again in the next subsection (cf.\ Proposition \ref{prop_adjoint}). Let us conclude this subsection by giving some examples.

\begin{example}\label{ex_morphisms}
{\rm i)} Let $\mc$ and $\md$ be categories and let us consider the associated represented prederivators $y\mc$ and $y\md$ respectively. A functor $F\colon \mathcal{C}\nach\mathcal{D}$ induces a strict morphism of prederivators $yF\colon y\mc\nach y\md$ in the obvious way.  This assignment is faithful and the morphisms in the image are precisely the strict ones, i.e., the $2$-natural transformations. Thus, we have a fully faithful $2$-functor 
$$y\colon\CAT\nach\PDer^{\strict}$$
whose restriction to bicomplete categories factors over $\Der^{\strict}.$ This can be seen as a special case of the $2$-categorical Yoneda lemma and from now on we will frequently drop the Yoneda embedding~$y$ from notation.\\
{\rm ii)} Let $\DD$ be a prederivator and let $v\colon L\nach M$ be a functor. The functors $(v\times\id_K)^\ast$ assemble into a strict \emph{precomposition morphism} of prederivators $v^\ast\colon\DD^M\nach\DD^L$ and similarly for natural transformations. Thus, every prederivator $\DD$ induces a $2$-functor $\DD^{(-)}\colon\Cat^{\op}\nach\PDer^{\strict}.$ In fact, this is a partial $2$-functor of the $2$-functor:
$$(-)^{(-)}\colon\Cat^{\op}\times\PDer\nach\PDer\colon(M,\DD)\mapsto\DD^M$$ 
{\rm iii)} Given a prederivator $\DD$ and a small category $M$ let us denote by $\DD(-)^M$ the prederivator which sends $K$ to $\DD(K)^M.$ The partial underlying diagram functors then assemble into a strict \emph{partial underlying diagram morphism} of prederivators $\dia_{M,-}\colon\DD^M\nach\DD(-)^M.$ Axiom (Der1) implies that $\dia_{\emptyset,-}$ and $\dia_{e\sqcup e,-}$ are equivalences in the case of a derivator.\\
{\rm iv)} Let $\mm$ be a combinatorial model category. Since the weak equivalences in the diagram categories~$\mm^K$ are defined levelwise, all the associated localization functors assemble into a strict morphism of derivators:
$$\gamma\colon\mm=y\mm\nach\DD_{\mm}$$
\end{example}

\subsection{Homotopy (co)limit preserving morphisms}\label{subsection_cocont}

Let $F\colon\DD\nach\DD'$ be a morphism of derivators and let $u\colon J\nach K$ be a functor. Let us recall that the natural transformation $\gamma^F_u$ and its inverse are $2$-cells as indicated in the following diagrams:
$$
\xymatrix{
\DD'(J)\xtwocell[1,1]{}\omit&\DD(J)\ar[l]_{F}&&\DD'(J)&\DD(J)\ar[l]_{F}\\
\DD'(K)\ar[u]^{u^\ast}&\DD(K)\ar[l]^{F}\ar[u]_{u^\ast}&&\DD'(K)\ar[u]^{u^\ast}&\DD(K)\ar[l]^{F}\ar[u]_{u^\ast}\xtwocell[-1,-1]{}\omit
}
$$
By passing to the corresponding Beck-Chevalley transformed $2$-cells we obtain natural transformations ${\gamma^F_u}_!$ respectively ${\gamma^F_u}_\ast$ as in:
$$
\xymatrix{
\DD'(J)\ar[d]_{u_!}&\DD(J)\ar[l]_{F}\ar[d]^{u_!}&&\DD'(J)\ar[d]_{u_\ast}&\DD(J)\ar[l]_{F}\ar[d]^{u_\ast}\xtwocell[1,-1]{}\omit\\
\DD'(K)\xtwocell[-1,1]{}\omit&\DD(K)\ar[l]^{F}&&\DD'(K)&\DD(K)\ar[l]^{F}
}
$$

\begin{definition}
Let $F\colon\DD\nach\DD'$ be a morphism of derivators and let $u\colon J\nach K$ be a functor. The morphism $F$ \emph{preserves homotopy left} respectively \emph{homotopy right Kan extensions along} $u$ if the natural transformation
$${\gamma^F_u}_!\colon u_!F\nach Fu_!\qquad\mbox{respectively}\qquad{\gamma^F_u}_\ast\colon Fu_\ast\nach u_\ast F$$
is an isomorphism.
\end{definition}

Similarly, we speak of a morphism of derivators which preserves homotopy left/right Kan extensions if the above property holds for all functors $u$ and also of a morphism which preserves homotopy (co)limits (of a particular shape or also in general). To motivate the definition let us quickly consider the following example.

\begin{example}
Let $F\colon \mc\nach\md$ be a functor between bicomplete categories and let us consider the induced strict morphism of represented derivators. We describe the above canonical morphisms in the absolute case, i.e., let $J$ be a category and $u=p_J\colon J\nach e$ be the unique functor to the terminal category. The above canonical morphisms then take the form:
$$
\xymatrix{
\colim _J F\ar[d]_\eta\ar[r]^\beta&F \colim _J & F \lim _J\ar[r]^\beta\ar[d]_\eta& \lim _J F\\
\colim _J Fu^\ast \colim _J\ar@{=}[r]& \colim _J u^\ast F \colim _J\ar[u]_\epsilon & \lim _J u^\ast F \lim _J\ar@{=}[r]& \lim _J F u^\ast \lim _J\ar[u]_\epsilon
}
$$
In the left diagram, $\beta$ evaluated at $X\in\mc^J$ is the canonical map from the colimit of $F\circ X$ to the image of $\colim X$ under $F$. Thus, we recover the usual notion of a colimit preserving functor, i.e., the functor $F\colon \mathcal{C}\nach\mathcal{D}$ preserves colimits if and only if the induced morphism of derivators $F\colon\mathcal{C}\nach\mathcal{D}$ preserves homotopy colimits. Dual comments apply to the diagram on the right.
\end{example}

We will see later that if a morphism $F\colon\DD\nach\DD'$ preserves certain homotopy Kan extensions then this is also the case for all induced morphisms $F^M\colon\DD^M\nach\DD'^M$ (cf.\ Corollary \ref{cor_minimalmorphism}). This will be a consequence of the fact that homotopy Kan extensions in derivators of the form $\DD^M$ are calculated pointwise. But let us first establish the following expected fact.

\begin{proposition}\label{prop_cocont}
A morphism $F\colon\DD\nach\DD'$ of derivators preserves homotopy left Kan extensions if and only if it preserves homotopy colimits.
\end{proposition}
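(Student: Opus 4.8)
The plan is to reduce the statement about preservation of arbitrary homotopy left Kan extensions to the special case of homotopy colimits by using the pointwise formula (Der4). One direction is trivial: homotopy colimits of shape $J$ are homotopy left Kan extensions along $p_J\colon J\nach e$, so a morphism preserving all homotopy left Kan extensions certainly preserves all homotopy colimits. For the converse, suppose $F\colon\DD\nach\DD'$ preserves homotopy colimits of all shapes; we must show that for every functor $u\colon J\nach K$ the canonical transformation ${\gamma^F_u}_!\colon u_!F\nach Fu_!$ is an isomorphism.

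First I would invoke (Der2): it suffices to check that ${\gamma^F_u}_!$ is an isomorphism after applying every evaluation functor $k^\ast$, $k\in K$. By (Der4), precomposing the square defining $\gamma^F_u$ with the comma-square
$$
\xymatrix{
J_{/k}\ar[r]^-{\pr}\ar[d]_-{p} & J\ar[d]^-{u}\\
e\ar[r]_-{k} & K\xtwocell[-1,-1]{}\omit
}
$$
identifies $k^\ast u_!$ with $\Hocolim_{J_{/k}}\pr^\ast$ via the isomorphism $\alpha_!$, and similarly on the $\DD'$-side. The key step is then the compatibility of Beck--Chevalley transforms with pasting (Lemma \ref{lemma_BCpasting}): the pasted $2$-cell $k^\ast({\gamma^F_u}_!)$ (after absorbing the $\alpha_!$'s) equals, up to the canonical isomorphisms, the transform $({\gamma^F_{\pr}})_!$ composed with the coherence isomorphism $\gamma^F_k$ and the fact that $F$ commutes with $\pr^\ast$ (again via its structure isomorphism $\gamma^F_{\pr}$). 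Chasing this through, $k^\ast({\gamma^F_u}_!)$ is expressed in terms of the canonical comparison map for $F$ applied to the homotopy colimit $\Hocolim_{J_{/k}}$, which is an isomorphism precisely because $F$ preserves homotopy colimits of shape $J_{/k}$.

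The main obstacle is bookkeeping: one has to verify that the pasting of the structure $2$-cells of $F$ with the two $\alpha_!$-squares really does produce the comparison map $\Hocolim_{J_{/k}}\pr^\ast F\nach F\,\Hocolim_{J_{/k}}\pr^\ast$ and not some variant of it. This is where the coherence axioms on the $\gamma^F$'s (compatibility with composition of functors) and the naturality of the Beck--Chevalley construction are used in an essential way; Lemma \ref{lemma_BCpasting} and Lemma \ref{lemma_BCinverse} are the tools that make this a formal diagram chase rather than an ad hoc computation. Once the identification is made, the $2$-out-of-$3$ property for isomorphisms finishes the argument: all the maps surrounding $k^\ast({\gamma^F_u}_!)$ in the pasted diagram are isomorphisms (the $\alpha_!$'s by (Der4), the $\gamma^F$'s by definition of a morphism, the colimit comparison by hypothesis), hence so is $k^\ast({\gamma^F_u}_!)$, and since $k$ was arbitrary, ${\gamma^F_u}_!$ is an isomorphism by (Der2).
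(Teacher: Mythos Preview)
Your proposal is correct and follows essentially the same route as the paper: reduce to pointwise via (Der2), paste with the (Der4) slice square, use the compatibility of Beck--Chevalley transforms with pasting (Lemma~\ref{lemma_BCpasting}), and then use the coherence isomorphisms $\gamma^F_{\pr}$ and $\gamma^F_k$ to swap the order so that the hypothesis on homotopy colimits together with (Der4) on the $\DD$-side finishes it. Your invocation of Lemma~\ref{lemma_BCinverse} is not actually needed here---the paper gets by with Lemma~\ref{lemma_BCpasting} alone---but this is a harmless overcitation rather than a gap.
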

\begin{proof}
Let us assume that $F$ preserves homotopy colimits and let us consider a functor $u\colon J\nach K.$ We obtain the following pasting diagram in which the natural transformation on the right is the one we want to show to be an isomorphism:
$$
\xymatrix{
\DD'(J_{/k})\ar[d]_{p_!}&\DD'(J)\ar[l]_{pr^\ast}\ar[d]_{u_!}&\DD(J)\ar[l]_F\ar[d]^{u_!}\\
\DD'(e)\xtwocell[-1,1]{}\omit&\DD'(K)\ar[l]^{k^\ast}\xtwocell[-1,1]{}\omit&\DD(K)\ar[l]^{F}
}
$$
Using axiom (Der4) and the fact that isomorphisms are detected pointwise it suffices to show that the pasting is an isomorphism. The compatibility of the formation of BC transformed $2$-cells with respect to pasting implies that we have to show that the transformed $2$-cell associated to the following left diagram is an isomorphism:
$$
\xymatrix{
\DD'(J_{/k})\xtwocell[1,1]{}\omit&\DD'(J)\ar[l]_{pr^\ast}\xtwocell[1,1]{}\omit&\DD(J)\ar[l]_F&&
\DD'(J_{/k})\xtwocell[1,1]{}\omit&\DD(J_{/k})\ar[l]_F\xtwocell[1,1]{}\omit&\DD(J)\ar[l]_{pr^\ast}\\
\DD'(e)\ar[u]^{p^\ast}&\DD'(K)\ar[u]^{u^\ast}\ar[l]^{k^\ast}&\DD(K)\ar[l]^{F}\ar[u]_{u^\ast}&&
\DD'(e)\ar[u]^{p^\ast}&\DD(e)\ar[u]^{p^\ast}\ar[l]^{F}&\DD(K)\ar[l]^{k^\ast}\ar[u]_{u^\ast}
}
$$
But, using the isomorphisms $\gamma^F_{pr}$ and $\gamma^F_k,$ this is equivalent to showing that the BC transformed $2$-cell associated to the right diagram is an isomorphism. This in turn follows from our assumption that $F$ preserves homotopy colimits and (Der4).
\end{proof}

For convenience let us collect the following important closure properties of homotopy Kan extensions preserving morphisms.

\begin{proposition}\label{prop_closureDer!}
Let $\DD,\;\DD',$ and $\DD''$ be derivators, let $u\colon I\nach J$ and $v\colon J\nach K$ be functors.\\
{\rm i)} The identity morphism $\id_{\DD}\colon\DD\nach\DD$ preserves homotopy left Kan extensions.\\
{\rm ii)} If $F\colon\DD\nach\DD'$ and $G\colon\DD'\nach\DD''$ preserve homotopy left Kan extensions along $u$ then so does the composition $G\circ F\colon\DD\nach\DD''.$\\
{\rm iii)} If $F\colon\DD\nach\DD'$ preserves homotopy left Kan extensions along $u$ and $v$ then it preserves homotopy left Kan extensions along $v\circ u.$\\
{\rm iv)} If $\tau\colon F\nach G$ is a natural isomorphism of morphisms of derivators $\DD\nach\DD'$ then $F$ preserves homotopy left Kan extensions along $u$ if and only if $G$ does.
\end{proposition}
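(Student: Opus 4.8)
The plan is to deduce all four statements formally from the compatibility of the Beck--Chevalley transformation with pasting (Lemma~\ref{lemma_BCpasting} and its vertical analogue) together with the coherence axioms for morphisms and natural transformations of prederivators. Throughout I identify the naturality square exhibiting $\gamma^F_w$, for a functor $w$, with a $2$-cell having the appropriate copies of $w^\ast$ as its vertical edges, so that its Beck--Chevalley transform is precisely the map ${\gamma^F_w}_!\colon w_!F\nach Fw_!$ occurring in the definition. For part (i), the morphism $\id_{\DD}$ has all its coherence isomorphisms equal to identities, so ${\gamma^{\id_{\DD}}_u}_!$ is the Beck--Chevalley transform of the identity $2$-cell on the trivial square with horizontal edges $\id_{\DD(J)},\id_{\DD(K)}$; unwinding the definition of the transform and using a single triangular identity for the adjunction $(u_!,u^\ast)$ one finds ${\gamma^{\id_{\DD}}_u}_!=\id_{u_!}$, which is an isomorphism.

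For part (ii), the definition of the composite morphism $G\circ F$ exhibits $\gamma^{G\circ F}_u$ as the horizontal pasting of the two naturality squares carrying $\gamma^G_u$ and $\gamma^F_u$ (glued along $\DD'(J)$ and $\DD'(K)$). Lemma~\ref{lemma_BCpasting} then identifies ${\gamma^{G\circ F}_u}_!$ with the corresponding pasting of ${\gamma^G_u}_!$ and ${\gamma^F_u}_!$, and a pasting of isomorphisms is an isomorphism, so $G\circ F$ preserves homotopy left Kan extensions along $u$ as soon as $F$ and $G$ do. Part (iii) is the same argument carried out vertically: the composition coherence axiom for $F$ expresses $\gamma^F_{vu}$ as the vertical pasting of $\gamma^F_u$ and $\gamma^F_v$ (using $(vu)^\ast=u^\ast v^\ast$), and under the canonical identification $(vu)_!\cong v_!u_!$ the vertical analogue of Lemma~\ref{lemma_BCpasting} identifies ${\gamma^F_{vu}}_!$ with the vertical pasting of ${\gamma^F_u}_!$ and ${\gamma^F_v}_!$, hence an isomorphism whenever both of the latter are.

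For part (iv), the modification axiom for $\tau$ says that for every $u\colon J\nach K$ the square with edges $u^\ast\tau$, $\tau u^\ast$, $\gamma^F_u$ and $\gamma^G_u$ commutes. Feeding this into the definition of the Beck--Chevalley transform and using the naturality of the unit $\eta$ and counit $\epsilon$ of $(u_!,u^\ast)$, one checks that the square
$$
\xymatrix{
u_!F\ar[r]^{{\gamma^F_u}_!}\ar[d]_{u_!\tau}&Fu_!\ar[d]^{\tau u_!}\\
u_!G\ar[r]_{{\gamma^G_u}_!}&Gu_!
}
$$
commutes. Since $\tau$ is a natural isomorphism, $u_!\tau$ and $\tau u_!$ are isomorphisms, and hence ${\gamma^F_u}_!$ is an isomorphism if and only if ${\gamma^G_u}_!$ is.

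Essentially everything here is bookkeeping with the pasting lemma and the coherence data, and the one place where a genuine (though routine) computation is needed is the commutativity of the last displayed square in part (iv); I regard that diagram chase with the adjunction unit and counit as the only mild obstacle.
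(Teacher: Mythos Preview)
Your proof is correct and follows essentially the same approach as the paper: part (i) via a triangular identity, and parts (ii)--(iv) via the compatibility of the Beck--Chevalley transformation with horizontal and vertical pasting together with the coherence axioms for morphisms and modifications. The paper's own argument is in fact much more terse than yours, merely gesturing at the pasting lemma and remarking that the coherence conditions should be rewritten as equalities of pasting diagrams; your spelled-out version, including the displayed square in (iv), is exactly the kind of unpacking the paper leaves to the reader.
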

\begin{proof}
The first claim follows immediately from the triangular identities of adjunctions. The other three claims can be deduced from the compatibility of the formation of BC transformed $2$-cells with respect to horizontal and vertical pasting respectively. In the proof of the last two claims it is convenient to rewrite some of the coherence conditions imposed on morphisms and natural transformations of derivators as equalities between certain pasting diagrams.
\end{proof}

Given two derivators $\DD$ and $\DD'$ let us denote by
$$\Hom_!(\DD,\DD')\qquad\mbox{respectively}\qquad\Hom_\ast(\DD,\DD')$$
the full subcategories of $\Hom(\DD,\DD')$ spanned by the morphisms which respect homotopy colimits and homotopy limits respectively. By the above proposition, these are replete subcategories to which the composition law can be restricted. Correspondingly, we obtain the following $2$-categories:
$$\Der_!\qquad\mbox{and}\qquad\Der_\ast$$

\begin{proposition}\label{prop_kanpointwise}
Let $\DD$ be a derivator and let $v\colon L\nach M$ be a functor. The morphism of derivators $v^\ast\colon\DD^M\nach\DD^L$ preserves homotopy Kan extensions. In particular, this is the case for the evaluation morphisms $m^\ast\colon\DD^M\nach\DD.$
\end{proposition}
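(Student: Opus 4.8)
The plan is to reduce the claim, via duality and Proposition \ref{prop_cocont}, to the assertion that $v^\ast$ preserves homotopy colimits, and then to recognise the relevant Beck--Chevalley $2$-cell as the one attached to a pullback square of the kind handled by Proposition \ref{prop_BCfibration} --- the very mechanism that drives the proof of Theorem \ref{thm_dertensored}.

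First I would use Example \ref{example_dual}: the dual of the product derivator $\DD^M$ is again of this form (it is $(\DD^{\op})^{M^{\op}}$) and the dual of $v^\ast$ is again a precomposition morphism, so it suffices to prove that $v^\ast\colon\DD^M\nach\DD^L$ preserves homotopy left Kan extensions. By Proposition \ref{prop_cocont} this follows once we know that $v^\ast$ preserves homotopy colimits of every shape $J$. To see what this amounts to, recall from Example \ref{ex_morphisms} that the precomposition morphism $v^\ast$ is \emph{strict}, so all its coherence isomorphisms are identities. Hence the transformation whose invertibility is at stake, namely ${\gamma^{v^\ast}_{p_J}}_!$, is exactly the Beck--Chevalley transform $\alpha_!$ of the identity $2$-cell on the strictly commuting square in $\Cat$
$$
\xymatrix{
L\times J\ar[r]^-{v\times\id_J}\ar[d]_-{\id_L\times p_J}&M\times J\ar[d]^-{\id_M\times p_J}\\
L\ar[r]_-{v}&M
}
$$
where I have used the canonical identifications $L\times e\cong L$ and $M\times e\cong M$, under which $(\id_L\times p_J)_!$ and $(\id_M\times p_J)_!$ become the homotopy colimit functors of shape $J$ in $\DD^L$ and in $\DD^M$, and the bottom $v$ becomes the underlying functor of $v^\ast\colon\DD^M\nach\DD^L$.

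Finally I would observe that this square is a pullback --- $L\times J$ is the fibre product of $v\colon L\nach M$ with the projection $M\times J\nach M$ --- and that its right-hand vertical functor $\id_M\times p_J$ is precisely that projection $M\times J\nach M$, which is a Grothendieck fibration (projections out of products always are). By Proposition \ref{prop_BCfibration} the square is therefore homotopy exact, so $\alpha_!$ is an isomorphism for every derivator, in particular for $\DD$. This proves that $v^\ast$ preserves homotopy colimits, and the full statement follows as indicated above; the assertion about evaluation morphisms $m^\ast$ is the special case $L=e$, $v=m\colon e\nach M$.

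I do not anticipate a genuine difficulty here: Proposition \ref{prop_BCfibration} does all the real work. The only step that wants a little care is the bookkeeping that turns ``$v^\ast$ preserves homotopy colimits of shape $J$'' into ``the square above is homotopy exact'' --- i.e.\ checking that strictness of $v^\ast$ really identifies ${\gamma^{v^\ast}_{p_J}}_!$ with $\alpha_!$, and that the two vertical functors are, after the evident identifications, the homotopy colimit functors of the two product derivators.
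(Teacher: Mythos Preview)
Your proof is correct and follows essentially the same route as the paper: reduce via Proposition~\ref{prop_cocont} to preservation of homotopy colimits, identify the relevant Beck--Chevalley transform with the one attached to the pullback square $L\times J\to M\times J\to M\leftarrow L$, and conclude by Proposition~\ref{prop_BCfibration}. The paper's proof is terser---it does not spell out the duality step or the strictness bookkeeping---but the argument is the same.
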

\begin{proof}
We will treat the case of homotopy left Kan extensions. By Proposition \ref{prop_cocont} it is enough to show that $v^\ast\colon\DD^M\nach\DD^L$ preserves homotopy colimits. Thus, we have to show that for an arbitrary small category~$J$ the following square is $\DD$-exact:
$$
\xymatrix{
L\times J\ar[r]^-{v\times\id}\ar[d]_{\pr}\xtwocell[1,1]{}\omit&M\times J\ar[d]^{\pr}\\
L\ar[r]_-v&M
}
$$
But this follows from an application of Proposition \ref{prop_BCfibration}.
\end{proof}

Thus, this proposition tells us, in particular, that homotopy Kan extensions in the derivator $\DD^M$ are calculated pointwise. For a functor $u\colon J\nach K$ and an object $X\in\DD^M(J)$ we have natural isomorphisms:
$$\HoLKan_u(X_m)\stackrel{\cong}{\nach}(\HoLKan_uX)_m\qquad\mbox{and}\qquad(\HoRKan_uX)_m\stackrel{\cong}{\nach}\HoRKan_u(X_m)$$
Similarly, in the absolute case, i.e., in the case of $u=p_J\colon J\nach e,$ we obtain canonical isomorphisms:
$$\Hocolim_J(X_m)\stackrel{\cong}{\nach} (\Hocolim_JX)_m \qquad\mbox{and}\qquad (\Holim_JX)_m\stackrel{\cong}{\nach}\Holim_J(X_m)$$
These isomorphisms are well-behaved in the sense that the following diagram commutes. We give the compatibility in the case of homotopy left Kan extensions:
$$
\xymatrix{
 u_!m^\ast u^\ast \ar[d]_=\ar[r]^{{\gamma_u}_!}_\cong& m^\ast  u_! u^\ast\ar[d]^\epsilon\\
u_!u^\ast m^\ast\ar[r]_-\epsilon& m^\ast 
}
$$
In fact, this is immediate using the compatibility of base change with pasting and the equality:
$$
\xymatrix{
J\ar[r]^-{m\times\id}\ar[d]_u&M\times J\ar[d]^{\id\times u}\ar[r]^-{\id\times u}&M\times K\ar[d]^{\id}\ar@{}[rrd]|{=}&& 
J\ar[d]_u\ar[r]^u&K\ar[d]^{\id}\ar[r]^-{m\times\id}&M\times K\ar[d]^{\id}\\
K\ar[r]_-{m\times\id}&M\times K\ar[r]_-{\id}&M\times K&& 
K\ar[r]_-{\id}&K\ar[r]_-{m\times\id}&M\times K
}
$$
\noindent
This compatibility implies, in particular, that, for $X\in \DD^M(K)$, the counit $\epsilon\colon u_!u^\ast(X)\nach X$ is an isomorphism in $\DD^M(K)$ if and only if the counit $\epsilon\colon u_!u^\ast(X_m)\nach X_m$ is an isomorphism in $\DD(K)$ for all objects $m\in M.$ For later reference, we collect the following convenient consequence for the case of a fully faithful functor $u\colon J \nach K.$

\begin{corollary}\label{cor_essimpointwise}
Let $\DD$ be a derivator, $M$ a category, and let $u\colon J\nach K$ be a fully faithful functor. An object $X\in \DD^M(K)$ lies in the essential image of $u_!\colon \DD^M(J)\nach \DD ^M(K)$ if and only if $X_m$ lies in the essential image of $u_!\colon \DD(J)\nach\DD(K)$ for all $m\in M.$
\end{corollary}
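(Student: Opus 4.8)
The plan is to reduce the statement to a pointwise isomorphism criterion for an adjunction counit, and then to invoke the pointwise-computation result for homotopy Kan extensions already established in Proposition \ref{prop_kanpointwise} together with its consequences. First I would recall that, since $u\colon J\nach K$ is fully faithful, Proposition \ref{prop_honestext} tells us that $u_!\colon\DD^M(J)\nach\DD^M(K)$ is fully faithful (here I am using that $\DD^M$ is a derivator, which holds by Theorem \ref{thm_dertensored}). Consequently an object $X\in\DD^M(K)$ lies in the essential image of $u_!$ if and only if the adjunction counit $\epsilon\colon u_!u^\ast(X)\nach X$ is an isomorphism in $\DD^M(K)$; this is the standard fact that an object is in the essential image of a fully faithful right-adjoint-to-restriction precisely when the counit at that object is invertible.

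The second step is to test this isomorphism pointwise over $M$. By (Der2) applied to the derivator $\DD^M$, a morphism in $\DD^M(K)=\DD(M\times K)$ is an isomorphism if and only if it is an isomorphism after applying every evaluation $m^\ast\colon\DD^M(K)\nach\DD(K)$, $m\in M$. So $\epsilon\colon u_!u^\ast(X)\nach X$ is an isomorphism if and only if $m^\ast(\epsilon)\colon m^\ast u_!u^\ast(X)\nach X_m$ is an isomorphism in $\DD(K)$ for all $m$. Now the key input is the compatibility diagram displayed just before the statement of this corollary, which says that under the canonical isomorphism ${\gamma_u}_!\colon u_!m^\ast u^\ast X\stackrel{\cong}{\nach}m^\ast u_! u^\ast X$ (coming from Proposition \ref{prop_kanpointwise}, i.e.\ the fact that $m^\ast$ preserves homotopy left Kan extensions) and the identification $m^\ast u^\ast X = u^\ast m^\ast X = u^\ast(X_m)$, the morphism $m^\ast(\epsilon)$ is identified with the counit $\epsilon\colon u_!u^\ast(X_m)\nach X_m$ in $\DD(K)$. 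Hence $m^\ast(\epsilon)$ is an isomorphism if and only if this latter counit is an isomorphism.

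Finally, I would close the loop by applying the same fully-faithfulness argument in $\DD$ rather than in $\DD^M$: by Proposition \ref{prop_honestext} the functor $u_!\colon\DD(J)\nach\DD(K)$ is fully faithful, so the counit $\epsilon\colon u_!u^\ast(X_m)\nach X_m$ is an isomorphism if and only if $X_m$ lies in the essential image of $u_!\colon\DD(J)\nach\DD(K)$. Stringing together the three equivalences gives exactly the claim: $X$ lies in the essential image of $u_!\colon\DD^M(J)\nach\DD^M(K)$ iff $X_m$ lies in the essential image of $u_!\colon\DD(J)\nach\DD(K)$ for all $m\in M$.

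The only mildly delicate point — and the one I would be most careful about — is the precise identification in the middle step, namely that $m^\ast(\epsilon)$ really does correspond to the counit $\epsilon$ for the adjunction $(u^\ast,u_!)$ on $\DD$ under the coherence isomorphism $\gamma_u$; this is precisely the content of the commuting square exhibited before the corollary, so it requires no new work, but it is where all the bookkeeping lives. Everything else is a direct appeal to (Der2), Proposition \ref{prop_honestext}, and Proposition \ref{prop_kanpointwise}. (The dual statement for $u_\ast$ holds by the dual argument, via $\DD^{\op}$ and Example \ref{example_dual}.)
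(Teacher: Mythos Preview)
Your proof is correct and follows essentially the same approach as the paper: the paper records, just before the corollary, that the compatibility square implies the counit $\epsilon\colon u_!u^\ast(X)\nach X$ in $\DD^M(K)$ is an isomorphism if and only if each $\epsilon\colon u_!u^\ast(X_m)\nach X_m$ is an isomorphism in $\DD(K)$, and then states the corollary as an immediate consequence via the fully-faithfulness of $u_!$. You have simply spelled out these steps in slightly more detail (making the appeal to (Der2) and to Theorem \ref{thm_dertensored} explicit). One typographical slip: you write ``the adjunction $(u^\ast,u_!)$'' where you mean $(u_!,u^\ast)$.
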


The fact that homotopy Kan extensions in the derivator $\DD^M$ are calculated pointwise (Proposition~\ref{prop_kanpointwise}) can also be used to establish the following convenient result.

\begin{corollary}\label{cor_minimalmorphism}
Let $F\colon\DD\nach\DD'$ be a morphism of derivators and let $u\colon J\nach K$ be a functor. Then $F$ preserves homotopy left Kan extensions along $u$ if and only if $F^M\colon\DD^M\nach\DD'^M$ preserves homotopy left Kan extensions along $u$ for all small categories $M.$
\end{corollary}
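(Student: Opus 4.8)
The plan is to prove the two implications separately; one is immediate and the other rests on the pointwise calculation of homotopy Kan extensions in $\DD^M$. For the implication "if $F^M$ preserves homotopy left Kan extensions along $u$ for all small $M$, then so does $F$'', I would simply specialize to $M=e$ and use the canonical identification $\DD^e\cong\DD$ of Example~\ref{example_prod}, under which $F^e$ corresponds to $F$, invoking Proposition~\ref{prop_closureDer!}(iv) to transport the preservation property across this equivalence.

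For the converse, assume $F$ preserves homotopy left Kan extensions along $u$ and fix a small category $M$. The object of study is the canonical $2$-cell ${\gamma^{F^M}_u}_!\colon u_!\circ(F^M)_J\nach(F^M)_K\circ u_!$, a natural transformation of functors $\DD^M(J)\nach\DD'^M(K)=\DD'(M\times K)$, whose invertibility we must establish. First I would reduce, via axiom (Der2) applied to $\DD'(M\times K)$ and then to $\DD'(K)$, to checking that $m^\ast\,{\gamma^{F^M}_u}_!$ is an isomorphism in $\DD'(K)$ for every object $m\in M$, where $m^\ast\colon\DD'^M\nach\DD'$ is the evaluation morphism (whose $K$-component is $(m\times\id_K)^\ast$).

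Next I would identify $m^\ast\,{\gamma^{F^M}_u}_!$, up to invertible factors, with the Beck-Chevalley transform of a composite. The composite morphisms $m^\ast\circ F^M$ and $F\circ m^\ast$ of derivators $\DD^M\nach\DD'$ are canonically isomorphic (their components differ only by the coherence isomorphisms $\gamma^F_{m\times\id_K}$), and both $m^\ast$ (Proposition~\ref{prop_kanpointwise}) and $F$ (hypothesis) preserve homotopy left Kan extensions along $u$; hence so does $m^\ast\circ F^M$, by Proposition~\ref{prop_closureDer!}(ii),(iv). Since the coherence data of the composite $m^\ast\circ F^M$ is obtained by pasting that of $F^M$ with that of $m^\ast$, and forming Beck-Chevalley $2$-cells is compatible with pasting (Lemma~\ref{lemma_BCpasting} and its dual; see also \cite{groth_scderivator}), one obtains an identity of the shape
$${\gamma^{m^\ast\circ F^M}_u}_!\;=\;\bigl(m^\ast\,{\gamma^{F^M}_u}_!\bigr)\cdot\bigl({\gamma^{m^\ast}_u}_!\,(F^M)_J\bigr).$$
The left-hand side is invertible by the previous step, and the second factor on the right is invertible since ${\gamma^{m^\ast}_u}_!$ is (Proposition~\ref{prop_kanpointwise}); hence $m^\ast\,{\gamma^{F^M}_u}_!$ is invertible, and as $m$ was arbitrary the reduction via (Der2) finishes the proof.

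The only genuinely non-formal point is the last displayed identity, namely pinning down how the coherence $2$-cell of a composite of morphisms of derivators decomposes in terms of the $\gamma$'s of the factors and then feeding this through the Beck-Chevalley construction; this is precisely the compatibility of base change with pasting and is routine bookkeeping. If one prefers to sidestep it, the same conclusion can be packaged as the observation that isomorphisms in $\DD'(M\times K)$ are jointly detected by the evaluation morphisms, which both preserve the relevant homotopy Kan extensions and, composed with $F^M$, reduce to $F$.
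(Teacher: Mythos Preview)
Your argument is correct and follows essentially the same route as the paper. The paper also reduces via pointwise detection and the fact that $m^\ast$ preserves homotopy left Kan extensions to the invertibility of the pasting $m^\ast\circ{\gamma^{F^M}_u}_!$ composed with ${\gamma^{m^\ast}_u}_!$, then uses the isomorphism $m^\ast\circ F^M\cong F\circ m^\ast$ (together with Proposition~\ref{prop_closureDer!}) to swap to the pasting of ${\gamma^F_u}_!$ with the $m^\ast$-cell on the $\DD$-side, which is an isomorphism by hypothesis and Proposition~\ref{prop_kanpointwise}; your displayed decomposition is exactly this pasting written out as a composition of natural transformations.
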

\begin{proof}
We have to show that with ${\gamma^F_u}_!$ also the natural transformation ${\gamma^{F_M}_u}_!={\gamma^F_{\id_M\times u}}_!$ is a natural isomorphism. Since isomorphisms can be detected pointwise and since $m^\ast$ preserves homotopy left Kan extensions (by Proposition \ref{prop_kanpointwise}) this is equivalent to the fact that the pasting in the left diagram is a natural isomorphism:
$$
\xymatrix{
\DD'(J)\ar[d]_{u_!}&\DD'^M(J)\ar[l]_{m^\ast}\ar[d]_{u_!}&\DD^M(J)\ar[l]_{F^M}\ar[d]^{u_!}&&
\DD'(J)\ar[d]_{u_!}&\DD(J)\ar[l]_F\ar[d]_{u_!}&\DD^M(J)\ar[l]_{m^\ast}\ar[d]^{u_!}\\
\DD'(K)\xtwocell[-1,1]{}\omit&\DD'^M(K)\ar[l]^{m^\ast}\xtwocell[-1,1]{}\omit&\DD^M(K)\ar[l]^{F^M}&&
\DD'(K)\xtwocell[-1,1]{}\omit&\DD(K)\ar[l]^F\xtwocell[-1,1]{}\omit&\DD^M(K)\ar[l]^{m^\ast}
}
$$
By the natural isomorphism $m^\ast\circ F^M\cong F\circ m^\ast$ (and strictly speaking Proposition \ref{prop_closureDer!}) this is equivalent to the fact that the pasting in the right diagram is a natural isomorphism. But this follows from our assumption that $F$ preserves homotopy left Kan extensions along $u$ and the fact that $m^\ast$ lies in $\Hom_!(\DD^M,\DD).$
\end{proof}

Since the values of a derivator always admit initial objects and finite coproducts (Proposition~\ref{prop_(co)limits}) let us agree on establishing the following terminology.

\begin{definition}
A morphism of derivators \emph{preserves initial objects respectively finite coproducts} if the underlying functor has the respective property.
\end{definition}
\noindent
By the last corollary it follows immediately that for such a morphism the corresponding statement is also true for the functors at all levels.

Let us now again take up the notion of adjunctions between derivators. We include this slightly lengthy discussion since this will motivate how to define an adjunction of two variables for derivators. And this notion in turn will play an essential role in the theory of monoidal and enriched derivators (see \cite{groth_monder, groth_enriched}). 

An adjunction between two derivators~$\DD$ and~$\DD'$ consists of two morphisms~$L\colon\DD\nach\DD'$ and $R\colon\DD'\nach\DD$ and two natural transformations $\eta\colon\id\nach RL$ and $\epsilon\colon LR\nach\id$ which satisfy the usual triangular identities. One might wonder if less data would already determine such an adjunction. As a first step there is the following result.

\begin{lemma}\label{lemma_adjoint}
Let $L\colon\DD\nach\DD'$ be a morphism of prederivators such that $L_K\colon\DD(K)\nach\DD'(K)$ has a right adjoint $R_K$ for each $K\in\Cat.$ Then, there is a unique way to extend the $\{R_K\}$ to a \emph{lax} morphism of prederivators $R\colon\DD'\nach\DD$ such that the following diagram commutes for all functors $u\colon J\nach K,$ $X\in\DD(K),$ and $Y\in\DD'(K):$
$$
\xymatrix{
\hhom_{\DD'(K)}(LX,Y)\ar[r]\ar[d]_{u^\ast}&\hhom_{\DD(K)}(X,RY)\ar[d]^{u^\ast}\\
\hhom_{\DD'(J)}(u^\ast LX,u^\ast Y)\ar[d]_{\gamma^L}&\hhom_{\DD(J)}(u^\ast X,u^\ast RY)\ar[d]^{\gamma^R}\\
\hhom_{\DD'(J)}(Lu^\ast X,u^\ast Y)\ar[r]&\hhom_{\DD(J)}(u^\ast X,Ru^\ast Y)
}
$$
\end{lemma}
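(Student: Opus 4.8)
The plan is to build the structure $2$-cells of $R$ as \emph{mates} (Beck--Chevalley transforms in the sense of Subsection~\ref{subsection_BC}) of the structure isomorphisms of $L$, and then to read off both the commuting hom-set square and the coherence axioms of a lax morphism from the formal calculus of such transforms. First I fix, for each $K\in\Cat$, an adjunction $(L_K,R_K,\eta_K,\epsilon_K)$ and write $\phi_K\colon\hhom_{\DD'(K)}(L_KX,Y)\cong\hhom_{\DD(K)}(X,R_KY)$ for the associated natural bijection. Given $u\colon J\nach K$, the isomorphism $\gamma^L_u\colon u^\ast L_K\nach L_Ju^\ast$ fills a naturality square whose vertical functors $L_K$ and $L_J$ admit right adjoints, so the construction of Subsection~\ref{subsection_BC} --- applied now by pasting with the units and counits of the adjunctions $(L_K,R_K)$ rather than with those of an adjunction $(u_!,u^\ast)$ --- produces a $2$-cell $\gamma^R_u\colon u^\ast R_K\nach R_Ju^\ast$, namely the composite $(R_Ju^\ast\epsilon_K)\circ(R_J(\gamma^L_u)^{-1}R_K)\circ(\eta_Ju^\ast R_K)$; equivalently, $\gamma^R_{u,Y}$ is the transpose under $(L_J,R_J)$ of the morphism $u^\ast(\epsilon_{K,Y})\circ(\gamma^L_u)^{-1}_{R_KY}\colon L_Ju^\ast R_KY\nach u^\ast Y$. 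I want to emphasise that the $\gamma^R_u$ need not be invertible even though the $\gamma^L_u$ are --- when $L$ is strict, $\gamma^R_u$ is the honest Beck--Chevalley transform of an identity, which is in general not an isomorphism --- and this is exactly why $R$ can only be asserted to be lax.

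Second, I check that the displayed square commutes and that it forces the above choice. Unravelling the diagram, for $g\colon L_KX\nach Y$ it asserts the equality $\gamma^R_{u,Y}\circ u^\ast(\phi_K(g))=\phi_J\big(u^\ast(g)\circ(\gamma^L_u)^{-1}_X\big)$ in $\hhom_{\DD(J)}(u^\ast X,R_Ju^\ast Y)$, which is precisely the defining property of the mate (cf.\ \cite{maclane}) and follows from naturality of $\phi_K$ in both variables, naturality of $\epsilon_K$, and a triangle identity, applied to the explicit formula above. For uniqueness I specialise to $X=R_KY$ and $g=\epsilon_{K,Y}$, so that $\phi_K(g)=\id_{R_KY}$: the left-hand side collapses to $\gamma^R_{u,Y}$, whence $\gamma^R_{u,Y}=\phi_J\big(u^\ast(\epsilon_{K,Y})\circ(\gamma^L_u)^{-1}_{R_KY}\big)$ is forced. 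Thus any lax morphism extending the chosen $\{R_K\}$ and making the square commute carries exactly these structure cells, and in particular the underlying functor of $R$ is $R_e$.

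Third, I verify that the family $\{\gamma^R_u\}$ satisfies the three coherence conditions of a lax morphism of prederivators: $\gamma^R_{\id_J}=\id_{R_J}$; the relation expressing $\gamma^R_{vu}$ as the appropriate pasting of $\gamma^R_u$ and $\gamma^R_v$ for composable $J\stackrel{u}{\nach}K\stackrel{v}{\nach}L$; and compatibility with $\alpha^\ast$ for a natural transformation $\alpha\colon u_1\nach u_2$. Each of these is obtained by transporting the corresponding coherence of $L$ through the mate construction: since $\gamma^R_u$ is produced from $\gamma^L_u$ by pasting with units and counits of the adjunctions $(L_K,R_K)$, Lemma~\ref{lemma_BCpasting} (compatibility of Beck--Chevalley transforms with horizontal and vertical pasting) together with the triangle identities --- exactly the bookkeeping carried out in the proof of that lemma --- turns the (isomorphism) coherences for $\{\gamma^L_u\}$ into the required identities for $\{\gamma^R_u\}$; for instance, $\gamma^R_{\id_J}=\id$ reduces to a triangle identity once one substitutes $\gamma^L_{\id_J}=\id$. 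I expect the only real obstacle to be notational --- keeping the directions of the various $2$-cells straight and checking that the pasting diagrams line up --- since conceptually everything is determined by the strict $2$-functoriality of $\DD$ and $\DD'$ and the triangle identities, with no genuine difficulty hidden anywhere.
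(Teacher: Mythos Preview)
Your proposal is correct and follows essentially the same approach as the paper: you construct $\gamma^R_u$ as the mate (Beck--Chevalley transform) of $(\gamma^L_u)^{-1}$, obtain uniqueness by specialising to $X=R_KY$ and $g=\epsilon_{K,Y}$, and deduce the lax-morphism coherences from the triangle identities together with the pasting compatibility of Lemma~\ref{lemma_BCpasting}. The paper's proof records the same construction more tersely as $\gamma^R_u=((\gamma^L_u)^{-1})_\ast$ and likewise omits the coherence verifications, so your write-up is in fact more detailed than the original.
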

\begin{proof}
If we choose $X=RY$ and if we trace the adjunction counit $\epsilon\colon LRY\nach Y$ around the diagram we obtain the uniqueness of the natural transformations $\gamma^R.$ In fact, $\gamma^R$ as indicated in the right diagram is the Beck-Chevalley transformed $2$-cell of the transformation on the left:
$$
\xymatrix{
\DD'(J)&\DD'(K)\ar[l]_{u^\ast}&&\DD'(J)\ar[d]_R&\DD'(K)\ar[l]_{u^\ast}\ar[d]^R\xtwocell[1,-1]{}\omit\\
\DD(J)\ar[u]^L&\DD(K)\ar[u]_L\ar[l]^{u^\ast}\xtwocell[-1,-1]{}\omit&&\DD(J)&\DD(K)\ar[l]^{u^\ast}
}
$$ 
Thus, in formulas, we have $\gamma^R_u=(\gamma^L_u)^{-1}_\ast.$ It remains to check that this defines a lax morphism of prederivators $R\colon\DD\nach\DD'.$ We omit the details but mention that the respective formulas are implied by the triangular identities for adjunctions and by the behavior of the formation of BC transformed $2$-cells with respect to horizontal and vertical pasting (Lemma \ref{lemma_BCpasting}).
\end{proof}
\noindent
In general, we cannot deduce that the lax morphism $R\colon\DD'\nach\DD$ is an actual morphism, i.e., a \emph{pseudo}-natural transformation. However, in the context of derivators this can be reformulated using Lemma \ref{lemma_BCtwoisos} which guarantees that the following natural transformations are conjugate:
$$
{\gamma^L_u}_!\colon u_!\circ L\nach L\circ u_!\qquad\mbox{and}\qquad{\gamma^R_u}\colon u^\ast\circ R\nach R\circ u^\ast
$$
Thus, since these are conjugate transformations, one of them is an isomorphism if and only if this is the case for the other. From this we obtain the following result.

\begin{proposition}\label{prop_adjoint}
Let $L\colon\DD\nach\DD'$ be a morphism of derivators which admits levelwise right adjoints and let~$R\colon\DD'\nach\DD$ be a lax morphism as in Lemma \ref{lemma_adjoint}. The morphism~$L$ is a left adjoint morphism of derivators if and only if~$L$ preserves homotopy left Kan extensions if and only if~$R$ is a morphism of derivators. In particular, a morphism of derivators is an equivalence if and only if it is levelwise an equivalence of categories.
\end{proposition}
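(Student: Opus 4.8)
The plan is to run a short cycle through the three conditions, the bridge in each case being that the relevant structure $2$-cells of $L$ and of $R$ are mutually conjugate. Throughout write $\gamma^L_u\colon u^\ast L\nach Lu^\ast$ for the structure isomorphisms of the morphism $L$, and recall from Lemma \ref{lemma_adjoint} that the lax morphism $R$ carries the structure transformations $\gamma^R_u=(\gamma^L_u)^{-1}_\ast$. I would first dispatch the equivalence between ``$L$ preserves homotopy left Kan extensions'' and ``$R$ is a morphism of derivators''. Since $R$ is already known to be lax, it is a morphism of (pre)derivators precisely when each $\gamma^R_u$ is invertible. Now apply Lemma \ref{lemma_BCtwoisos} to the naturality square of $\gamma^L_u$, whose vertical functors are the $u^\ast$'s (admitting left adjoints $u_!$ by (Der3)) and whose horizontal functors are the $L$'s (admitting the levelwise right adjoints $R$ by hypothesis): the Beck--Chevalley transforms ${\gamma^L_u}_!\colon u_!L\nach Lu_!$ and $\gamma^R_u\colon u^\ast R\nach Ru^\ast$ are conjugate, hence one is an isomorphism if and only if the other is. Letting $u$ range over all functors gives exactly the desired equivalence (and, specialising to $u=p_J\colon J\nach e$ and invoking Proposition \ref{prop_cocont}, the same statement phrased via homotopy colimits).

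Next I would close the loop with ``$L$ a left adjoint morphism of derivators''. If $R$ is a morphism of derivators, take for unit and counit the families $\eta_K,\epsilon_K$ of adjunction units and counits of the levelwise adjunctions $(L_K,R_K)$. The two triangular identities for $(L,R,\eta,\epsilon)$ hold because they hold levelwise, so the only thing to check is that $\eta\colon\id_\DD\nach RL$ and $\epsilon\colon LR\nach\id_{\DD'}$ are modifications, i.e.\ compatible with the coherence isomorphisms of the composite morphisms involved. But this compatibility is precisely the assertion that the adjunction bijections $\hhom_{\DD'(K)}(L_KX,Y)\cong\hhom_{\DD(K)}(X,R_KY)$ are natural in $K$ (with the $\gamma$'s inserted), which is exactly the commuting square of Lemma \ref{lemma_adjoint}; so $\eta$ and $\epsilon$ are modifications and $(L,R,\eta,\epsilon)$ is an adjunction in $\Der$. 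Conversely, if $L$ is a left adjoint morphism with right adjoint $R'$, restricting the $\Der$-adjunction to each $K$ exhibits $R'_K$ as right adjoint to $L_K$, so $R'_K\cong R_K$ canonically; transporting the coherence data of $R'$ along these isomorphisms yields a lax morphism with underlying functors $R_K$ which still satisfies the square of Lemma \ref{lemma_adjoint}, hence, by the uniqueness clause there, coincides with $R$. Since $R'$ is a $1$-cell of $\Der$ it is a morphism of derivators, and therefore so is $R$; this proves all three conditions equivalent.

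For the final clause, one direction is immediate: an equivalence in $\Der$ restricts at each $K$ to an equivalence of categories. For the converse, suppose $F\colon\DD\nach\DD'$ is levelwise an equivalence. Each $F_K$ then admits a right adjoint $G_K$ which may be chosen so that $(F_K,G_K)$ is an adjoint equivalence, with invertible unit and counit; by Lemma \ref{lemma_adjoint} these assemble into a lax morphism $G$ with $\gamma^G_u=(\gamma^F_u)^{-1}_\ast$. Since $\gamma^F_u$ is invertible and the units and counits of the $(F_K,G_K)$ are invertible, every $2$-cell entering the pasting that defines $\gamma^G_u$ is an isomorphism, so $\gamma^G_u$ is an isomorphism and $G$ is a morphism of derivators. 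By the equivalence just proved, $F$ is then a left adjoint morphism with right adjoint $G$, and its unit and counit, being levelwise isomorphisms, are invertible modifications; hence $F$ is an equivalence in $\Der$. The only genuinely delicate point I anticipate is the verification that the levelwise units and counits assemble into modifications, which I intend to extract directly from the commuting square of Lemma \ref{lemma_adjoint} rather than by checking the modification axiom by hand; keeping straight which functors play the roles of $u_i,v,w$ in Lemma \ref{lemma_BCtwoisos} and when one is working with $\gamma^L_u$ versus its inverse also needs care but is otherwise routine, along the lines of Lemma \ref{lemma_BCpasting}.
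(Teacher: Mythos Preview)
Your proof is correct and follows essentially the same approach as the paper, hinging on Lemma~\ref{lemma_BCtwoisos} to show that ${\gamma^L_u}_!$ and $\gamma^R_u$ are conjugate (so one is invertible iff the other is), and on the observation that $\gamma^R_u$ is a composition of isomorphisms in the levelwise-equivalence case. The paper's own proof is far terser---it simply asserts that the three-way equivalence ``follows immediately'' from the conjugacy observation---so you have supplied details the paper elides, in particular the verification that the levelwise units and counits assemble into modifications and the uniqueness argument identifying any right adjoint of $L$ with the lax morphism $R$ of Lemma~\ref{lemma_adjoint}.
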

\begin{proof}
The equivalence of the statements about a left adjoint morphism follows immediately from the above. If the morphism~$L$ happens to be a levelwise equivalence of derivators then the lax morphism~$R$ is an actual morphism of derivators. In fact, the natural transformations $\gamma^R_u$ are compositions of three isomorphisms in this case.
\end{proof}

For later reference, let us collect the following important class of homotopy (co)limit preserving morphisms of derivators.

\begin{corollary}\label{cor_adjointcocont}
A left adjoint morphism of derivators preserves homotopy left Kan extensions.
\end{corollary}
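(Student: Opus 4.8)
The plan is to obtain this as an immediate consequence of Proposition~\ref{prop_adjoint}. So let $L\colon\DD\nach\DD'$ be a left adjoint morphism of derivators, part of an adjunction $(L,R,\eta,\epsilon)$ in the $2$-category $\Der$. The first step is to note that $L$ admits levelwise right adjoints: evaluation at a small category $K$ is a (strict) $2$-functor $\PDer\nach\CAT$ --- it sends a morphism of prederivators to its $K$-component functor and a natural transformation to its $K$-component, compatibly with compositions and identities --- so it carries $(L,R,\eta,\epsilon)$ to an adjunction $(L_K,R_K,\eta_K,\epsilon_K)\colon\DD(K)\rightharpoonup\DD'(K)$. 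In particular the hypotheses of Proposition~\ref{prop_adjoint} are met.

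The second step is to identify the given $R$ with the canonical lax morphism produced by Lemma~\ref{lemma_adjoint} out of the levelwise right adjoints $\{R_K\}$. Because $(L,R,\eta,\epsilon)$ is an adjunction, the hom-set square displayed in Lemma~\ref{lemma_adjoint} commutes for $R$ --- this is the standard ``mates under an adjunction'' compatibility, checked by chasing the counit $\epsilon$ around the square --- so by the uniqueness clause of that lemma $R$ is exactly the lax morphism constructed there. In particular that lax morphism is a genuine (pseudo-natural) morphism of derivators, which is one of the equivalent conditions in Proposition~\ref{prop_adjoint}.

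With these two observations in place, Proposition~\ref{prop_adjoint} applies verbatim and yields that $L$ preserves homotopy left Kan extensions; equivalently, by Proposition~\ref{prop_cocont}, that it preserves homotopy colimits. I expect the only point requiring any care to be the second step, namely verifying that the hom-set diagram of Lemma~\ref{lemma_adjoint} commutes for the right adjoint $R$ coming from the $\Der$-adjunction --- but this is a routine mates computation, and it can in fact be sidestepped entirely by reading ``$L$ is a left adjoint morphism of derivators'' in Proposition~\ref{prop_adjoint} as referring to $L$ together with precisely the data of Lemma~\ref{lemma_adjoint}, in which case the present corollary is literally one of the implications already established there.
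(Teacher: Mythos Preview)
Your proof is correct and follows exactly the paper's approach: the corollary is an immediate consequence of Proposition~\ref{prop_adjoint}, and you have simply spelled out the (routine) verification that a left adjoint morphism in $\Der$ satisfies the hypotheses of that proposition. The paper gives no separate proof, treating the corollary as directly read off from Proposition~\ref{prop_adjoint}.
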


Proposition \ref{prop_kanpointwise} and Proposition \ref{prop_adjoint} together give us immediately the following two classes of examples of adjunctions.

\begin{example}\label{ex_adjunctions}
{\rm i)} Let $\DD$ be a derivator and let $v\colon L\nach M$ be a functor, then we have two adjunctions of derivators $(v_!,v^\ast)\colon\DD^L\rightharpoonup\DD^M$ and $(v^\ast,v_\ast)\colon\DD^M \rightharpoonup\DD^L.$\\
{\rm ii)} Let $(F,U)\colon\mathcal{M}\nach\mathcal{N}$ be a Quillen adjunction between combinatorial model categories. Then the formation of derived Quillen functors gives us two (in general non-strict) morphisms of derivators $\mathbb{L}F\colon\DD_\mathcal{M}\nach\DD_{\mathcal{N}}$ and $\mathbb{R}U\colon\DD_\mathcal{N}\nach\DD_{\mathcal{M}}.$ These are part of an adjunction of derivators $(\mathbb{L}F,\mathbb{R}U)\colon\DD_{\mm}\rightharpoonup\DD_{\nn}$. In particular,~$\mathbb{L}F$ preserves homotopy left Kan extensions and~$\mathbb{R}U$ preserves homotopy right Kan extensions (Corollary \ref{cor_adjointcocont}). If we start with a Quillen equivalence then we obtain a derived equivalence of derivators. This already makes more precise the statement that a Quillen equivalence is not only a Quillen pair inducing an equivalence of homotopy categories but that it also respects the entire `homotopy theory'.\\
\end{example}

\begin{remark}\label{rem_equivalence}
$\bullet$ It can be shown that the above assignment $\mathcal{M}\auf \DD_\mathcal{M}$ suitably restricted defines a bi-equivalence of theories. Loosely speaking this says that nice model categories and nice derivators do the same job. More precisely, Renaudin has shown such a result in \cite{renaudin} by establishing the following two steps: Let $\ModQ$ denote the $2$-category of combinatorial model categories with Quillen adjunctions $(F,U)\colon\mm\rightharpoonup\nn$ as morphisms and natural transformations of left adjoints as $2$-morphisms. Renaudin shows that there is a pseudo-localization $\ModQ[W^{-1}]$ of the combinatorial model categories at the class $W$ of Quillen equivalences. Moreover, let $\Der^{\PPr}$ denote the $2$-category of derivators of small presentation together with adjunctions as morphisms. A derivator is said to be of small presentation if it can be obtained as a `nice' localization of the derivator associated to simplicial presheaves. The assignment $\DD_{(-)}\colon \ModQ\nach \Der^{\PPr}\colon \mathcal{M}\auf \DD_\mathcal{M}$ factors then up to natural isomorphism over the pseudo-localization $\ModQ[W^{-1}]$ as indicated in:
$$\xymatrix{
\ModQ\ar[r]^{\DD_{(-)}}\ar[d]_\gamma\ar@{}[dr]|{\cong}& \Der^{\PPr}\\
\ModQ[W^{-1}]\ar@/_1.3pc/[ru]_-{\DD_{(-)}}&
}
$$
Renaudin showed that the induced $2$-functor $\DD_{(-)}\colon \ModQ[W^{-1}]\nach \Der^{\PPr}$ is a biequivalence, i.e., a $2$-functor which is biessentially surjective and fully faithful in the sense that it induces \emph{equivalences} of morphism categories (for biequivalences cf.\ e.g.\ to \cite{street_categoricalstructures,lack_2catcompanion} and to \cite{lack_model2categories, lack_modelbicategories} for their more conceptual role).\\
$\bullet$ We want to include a remark on different approaches to a theory of $(\infty,1)-$categories. There are by now many different ways to axiomatize such a theory. Among these are the model categories, the $\infty-$categories, and the derivators. These theories are interrelated by various constructions. For a simplicial model category, one can use the coherent nerve construction of Cordier \cite{cordier} to obtain an underlying $\infty-$category. Moreover, given a bicomplete $\infty-$category or a model category, by forming systematically homotopy categories one obtains an associated derivator. These three theories are in fact all `equivalent in a certain sense' if one is willing to restrict to nice subclasses. These comparison results rely heavily on homotopical generalizations of the following `two-step hierarchy'. In classical category theory there are the presheaf categories which can be considered as universal cocompletions. More precisely, the fact that every contravariant set-valued functor on a small category is canonically a colimit of representable ones can be used to prove such a result. Nice localizations of these presheaf categories (the so-called accessible, reflective localizations) give us precisely the presentable categories (Representation Theorem \cite[Theorem 1.46]{adamekrosicky}). These two main steps, namely to establish the universal property of presheaf categories and to characterize presentable categories as nice localizations of presheaf categories, can be redone for all the different theories. To achieve this one has to replace presheaf categories by \emph{simplicial presheaf categories} which is fine with the basic philosophy of higher category theory. Moreover, the classical localization theory is replaced by a suitable Bousfield localization theory \cite{Bousfield, hirschhorn}. For model categories, this was done by Dugger in \cite{dugger_universal,dugger_combinatorial}, while the corresponding results for $\infty-$categories can be found in Lurie's \cite{HTT}. The characterization of presentable $\infty-$categories as being precisely the accessible, reflective localizations of simplicial presheaf categories is therein credited to \cite{Simpson}. For derivators, the free generation property of the derivator associated to simplicial presheaves can be found in \cite{cisinski_derivedkan}. Note however that the basic model used in the background is not the category of simplicial sets but the category of small categories. It can be shown that this way also all `homotopy types are modeled'. Finally, until now, the Representation Theorem for derivators of small presentation is only turned into a definition in \cite{renaudin}. The author plans to come back to this point in a later project. Having established these similar theories at all different levels one can then establish the comparison results if one restricts to the subclasses of (simplicial) combinatorial model categories, presentable $\infty-$categories, and derivators of small presentation.\\
\end{remark}

\section{Pointed derivators}\label{sec_pointed}

\subsection{Definition and basic examples}\label{subsec_pointed}

Since we are mainly interested in stable derivators, we turn immediately to the next richer structure, namely the pointed derivators. There are at least two ways to axiomatize a notion of a pointed derivator. From these two notions, we turn the `weaker one' into a definition. The `stronger one' will be referred to as a strongly pointed derivator, but we will show that these two notions actually coincide. 

\begin{definition}
A derivator $\DD$ is \emph{pointed} if the underlying category $\DD(e)$ of $\DD$ is pointed, i.e.,  admits a zero object $0\in \DD(e)$.
\end{definition}

Note that the pointedness is again only a property and not an additional structure. For a prederivator one would impose a slightly stronger condition: a prederivator is pointed if and only if all of its values and all restriction of diagram functors are pointed. In the case of a derivator these stronger properties follow immediately from the definition. 

\begin{proposition}\label{prop_productpointed}
Let $\DD$ be a pointed derivator and let $u\colon J\nach K$ a functor in $\Cat.$ Then $\DD(K)$ is also pointed and the functors $u_!,\;u^\ast,\;u_\ast$ are pointed. In particular, if a derivator $\DD$ is pointed then this is also the case for $\DD^M$ for every $M\in\Cat.$
\end{proposition}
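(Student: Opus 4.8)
The plan is to derive the whole statement from one structural observation: for every category $K$ the functor $p_K^\ast\colon\DD(e)\nach\DD(K)$ induced by the projection $p_K\colon K\nach e$ is \emph{simultaneously} a left and a right adjoint, since by (Der3) it occurs in the two adjunctions $({p_K}_!,p_K^\ast)$ and $(p_K^\ast,{p_K}_\ast)$. As a left adjoint $p_K^\ast$ preserves initial objects, and as a right adjoint it preserves terminal objects. By hypothesis the zero object $0\in\DD(e)$ is at the same time initial and terminal, so $p_K^\ast(0)$ is at the same time initial and terminal in $\DD(K)$, hence a zero object. This already shows that $\DD(K)$ is pointed for every $K$, and in particular that $\DD(M)$ is pointed.

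Next I would handle the three functors attached to a functor $u\colon J\nach K$. By (Der3) the functor $u^\ast$ has $u_!$ as a left adjoint and $u_\ast$ as a right adjoint, so, exactly as above, $u^\ast$ preserves initial and terminal objects; since by the previous paragraph these agree with the zero objects of the (now known to be pointed) categories $\DD(J)$ and $\DD(K)$, we get $u^\ast(0)\cong 0$. For $u_!$ one only knows \emph{a priori} that, being a left adjoint, it preserves initial objects --- and this is the one point deserving a word of care --- but once $\DD(J)$ and $\DD(K)$ are known to be pointed, their initial objects \emph{are} zero objects, whence $u_!(0)\cong 0$; dually $u_\ast$, being a right adjoint, preserves terminal objects, which are likewise zero objects, so $u_\ast(0)\cong 0$. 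A functor between pointed categories carrying the zero object to the zero object is pointed (it then sends each zero morphism, factored through $0$, to a zero morphism), so all of $u_!,u^\ast,u_\ast$ are pointed.

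For the final assertion I would appeal to Theorem \ref{thm_dertensored}: the prederivator $\DD^M$ is a derivator, and its underlying category is $\DD^M(e)=\DD(M\times e)\cong\DD(M)$, which is pointed by the first paragraph; hence $\DD^M$ is a pointed derivator. No step presents a genuine obstacle: the argument is a purely formal consequence of the adjunctions supplied by (Der3) together with Theorem \ref{thm_dertensored}, the only subtlety being the reminder that a one-sided adjoint preserves, a priori, only one of the two extremal objects, so one must secure pointedness of the values \emph{before} concluding that $u_!$ and $u_\ast$ are pointed.
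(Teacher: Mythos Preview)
Your proof is correct. The only notable difference from the paper's argument lies in how you establish that $\DD(K)$ is pointed: the paper instead invokes axiom (Der2), observing that the unique map from the initial object to the terminal object in $\DD(K)$ is an isomorphism because it is so after evaluation at each $k\in K$ (the evaluation functors $k^\ast$ having adjoints on both sides, they send this map to the corresponding map in the pointed category $\DD(e)$). Your route via $p_K^\ast(0)$ being simultaneously initial and terminal is just as short and has the mild advantage of using only (Der3); the paper's route, on the other hand, directly identifies the canonical comparison map as an isomorphism. For the pointedness of $u_!,u^\ast,u_\ast$ and the conclusion about $\DD^M$, your argument and the paper's coincide.
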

\begin{proof}
The map from the initial to the final object in $\DD(K)$ is an isomorphism since this is pointwise the case. Moreover, each of the functors $u_!,\;u^\ast,\;u_\ast$ has an adjoint on at least one side and will hence preserve the zero objects.
\end{proof}

\begin{example}
{\rm i)} Let $\mc$ be a category. Then the represented prederivator $\mc$ is pointed if and only if the category $\mc$ is pointed.\\
{\rm ii)} The derivator $\DD_{\mathcal{M}}$ associated to a pointed combinatorial model category $\mathcal{M}$ is pointed.\\ 
{\rm iii)} A derivator $\DD$ is pointed if and only if its dual $\DD^{\op}$ is pointed.
\end{example}

We now want to give the stronger axiom as used by Maltsiniotis in \cite{maltsiniotis2}. 

\begin{definition}
A derivator $\DD$ is \emph{strongly pointed} if it has the following two properties:\\
{\rm i)} For every sieve $j\colon J\nach K,$ the homotopy right Kan extension functor $j_\ast$ has a right adjoint $j^!$:
$$(j_\ast,j^!)\colon \DD(J)\rightharpoonup \DD(K)$$
{\rm ii)} For every cosieve $i\colon J\nach K,$ the homotopy left Kan extension functor $i_!$ has a left adjoint $i^?$:
$$(i^?,i_!)\colon \DD(K)\rightharpoonup \DD(J)$$
\end{definition}

It is an immediate corollary of the definition that a strongly pointed derivator is pointed. In fact, one of the two additional properties is enough to ensure this.

\begin{corollary}
If $\DD$ is a strongly pointed derivator, then $\DD$ is pointed.
\end{corollary}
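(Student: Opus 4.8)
The plan is to manufacture an honest zero object in $\DD(e)$ by exploiting that, in a strongly pointed derivator, one of the postulated extra adjoints turns a homotopy Kan extension functor into a \emph{two-sided} adjoint. First I would observe that the unique functor $i=\emptyset_e\colon\emptyset\nach e$ is a cosieve: it is fully faithful and injective on objects vacuously, and the defining implication (``whenever $i(j)\nach k$ then $k$ lies in the image'') holds vacuously since $\emptyset$ has no objects. Hence the strongly pointed hypothesis supplies a left adjoint $i^?$ to $i_!\colon\DD(\emptyset)\nach\DD(e)$. Combined with (Der3), which makes $i_!$ a left adjoint to $i^\ast$, we conclude that $i_!$ is simultaneously a left adjoint and a right adjoint.

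Next I would invoke Proposition~\ref{prop_(co)limits}(i): the category $\DD(\emptyset)$ is equivalent to the terminal category $e$, so its (essentially unique) object, which we call $0$, is both initial and terminal, i.e.\ a zero object of $\DD(\emptyset)$. Applying $i_!$: as a left adjoint it preserves initial objects, so $i_!(0)$ is initial in $\DD(e)$; as a right adjoint it preserves terminal objects, so $i_!(0)$ is also terminal in $\DD(e)$. Therefore $i_!(0)$ is a zero object of $\DD(e)$, and $\DD$ is pointed by definition. In the notation of the proof of Proposition~\ref{prop_(co)limits}, this is exactly the statement that the initial object $\emptyset$ and the terminal object $\ast$ of $\DD(e)$ coincide.

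I do not expect any real obstacle here; the only thing to be careful about is to verify that $\emptyset_e$ genuinely qualifies as a cosieve so that the strongly pointed axiom can be applied to it, which is the vacuous check indicated above. (Dually, $\emptyset_e$ is also a sieve, and running the same argument with $j_\ast$ and its right adjoint $j^!$ in place of $i_!$ and $i^?$ gives a second, entirely symmetric proof.)
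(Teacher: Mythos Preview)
Your proof is correct and follows essentially the same approach as the paper: both use the cosieve $\emptyset_e\colon\emptyset\nach e$, the triviality of $\DD(\emptyset)$, and the adjunction $(i^?,i_!)$ to conclude that $i_!(0)$ is terminal in $\DD(e)$. The only cosmetic difference is that you phrase the last step as ``right adjoints preserve terminal objects,'' whereas the paper writes out the corresponding hom-set computation $\hhom_{\DD(e)}(X,{\emptyset_e}_!(0))\cong \hhom_{\DD(\emptyset)}(\emptyset_e^?X,0)=\ast$ explicitly.
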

\begin{proof}
It is enough to consider the cosieve $\emptyset_e\colon \emptyset\nach e$. For an initial object ${\emptyset_e}_!(0)\in \DD(e)$ and an arbitrary $X\in \DD(e),$ we then deduce 
$\hhom_{\DD(e)}(X,{\emptyset_e}_!(0))\cong \hhom_{\DD(\emptyset)}(\emptyset_e^?X,0)=\ast,$ so that ${\emptyset_e}_!(0)$ is also terminal.
\end{proof}

The aim of this subsection is to prove that also the converse holds (cf.\ Corollary \ref{cor_pointed}). A further proof of that converse will be given in the stable situation, i.e., for stable derivators. That second proof is quite an indirect one. It relies on the fact that recollements of triangulated categories are overdetermined and will be given in Subsection \ref{subsection_recollement}. As a preparation, for the direct proof, we mention the following immediate consequence of Proposition \ref{prop_extzero_unpointed}. It states that homotopy left Kan extension along cosieves and homotopy right Kan extension along sieves are given by `extension by zero functors' and will be of constant use in the remainder of this paper.

\begin{proposition}\label{prop_extzero}
Let $\DD$ be a pointed derivator.\\
{\rm i)} Let $u\colon J\nach K$ be a cosieve, then the homotopy left Kan extension $u_!$ is fully faithful and $X\in \DD(K)$ lies in the essential image of $u_!$ if and only if $X_k\cong 0$ for all $k\in K-u(J).$\\
{\rm ii)} Let $u\colon J\nach K$ be a sieve, then the homotopy right Kan extension $u_\ast$ is fully faithful and $X\in \DD(K)$ lies in the essential image of $u_\ast$ if and only if $X_k\cong 0$ for all $k\in K-u(J).$
\end{proposition}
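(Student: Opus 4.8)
The plan is to obtain this statement as a direct corollary of the unpointed version, Proposition~\ref{prop_extzero_unpointed}. That proposition already supplies, for any cosieve $u\colon J\nach K$, the full faithfulness of $u_!$ together with the description of its essential image as the objects $X\in\DD(K)$ with $X_k\cong\emptyset$ for all $k\in K-u(J)$, and dually, for any sieve, the full faithfulness of $u_\ast$ and its essential image as the objects with $X_k\cong\ast$ for all such $k$. Hence the only work left is to rewrite the conditions ``$X_k\cong\emptyset$'' and ``$X_k\cong\ast$'' as the single condition ``$X_k\cong 0$''.

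The key point is that the evaluation functors $k^\ast\colon\DD(K)\nach\DD(e)$ land in the underlying category $\DD(e)$, which by hypothesis is pointed; so $\DD(e)$ has a zero object $0$, which is at once initial and terminal, giving canonical isomorphisms $\emptyset\cong 0\cong\ast$ in $\DD(e)$. Consequently, for $X\in\DD(K)$ and $k\in K-u(J)$, each of $X_k\cong\emptyset$ and $X_k\cong\ast$ is equivalent to $X_k\cong 0$. Feeding this back into Proposition~\ref{prop_extzero_unpointed}(i) proves part~(i), and into Proposition~\ref{prop_extzero_unpointed}(ii) proves part~(ii); alternatively, (ii) follows from (i) by passing to the dual derivator $\DD^{\op}$, which is again pointed and exchanges the roles of cosieves and sieves as well as of $u_!$ and $u_\ast$.

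There is no serious obstacle; the statement is essentially a definitional unwinding once Proposition~\ref{prop_extzero_unpointed} is at hand. The one thing to keep in mind is that, under our definition, pointedness is a property of the underlying category $\DD(e)$ only, so the comparison of $X_k$ with $0$ must be carried out there --- which is precisely what the passage $X\mapsto X_k$ accomplishes --- rather than in $\DD(K)$ (where one does, however, still obtain a zero object, by Proposition~\ref{prop_productpointed}).
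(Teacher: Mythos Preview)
Your proposal is correct and takes essentially the same approach as the paper: the paper introduces Proposition~\ref{prop_extzero} explicitly as an ``immediate consequence of Proposition~\ref{prop_extzero_unpointed}'' without further proof, and your argument simply spells out that immediacy by noting that in the pointed underlying category $\DD(e)$ one has $\emptyset\cong 0\cong\ast$.
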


Following Heller \cite{heller_stable}, we introduce the following notation. Let $\DD$ be a pointed derivator and let $u\colon J\nach K$ be the inclusion of a full subcategory. Then we denote by $\DD(K,J)\subseteq \DD(K)$ the full, replete subcategory spanned by the objects $X$ which vanish on $J$, i.e., such that $u^\ast(X)=0.$ If $u$ is now a cosieve respectively a sieve, the above proposition guarantees that we have the following equivalences of categories:
$$(u_!,u^\ast)\colon \DD(J)\stackrel{\simeq}{\nach}\DD(K,K-J)\qquad\mbox{respectively}\qquad (u^\ast,u_\ast)\colon \DD(K,K-J)\stackrel{\simeq}{\nach}\DD(J)$$
This proposition, although easily proved, will be of central importance in all what follows. It will, in particular, be of constant use in the study of the important cone, fiber, suspension, and loop functors and also in the proof that the values of a stable derivator can be canonically endowed with the structure of a triangulated category. However, we first have to establish some properties of coCartesian and Cartesian squares and this will be done in the next Subsection \ref{subsection_squares}. 

To conclude this subsection we will now give the proof that pointed derivators are actually strongly pointed. The constructions involved in the proof are motivated by the paper of Rezk~\cite{rezk_naturalmodel} in which he gives a nice construction of the `natural model structure' on $\Cat.$ This model structure is due to Joyal and Tierney \cite{joyaltierney_stacks} and the adjective `natural' refers to the fact that the weak equivalences in that model structure are precisely the equivalences in the $2$-category $\Cat.$ We use a minor modification of the mapping cylinder categories used in \cite{rezk_naturalmodel}. Instead of forming the product with the groupoid generated by $[1]$ we use the category $[1]$ itself. This leads to two `differently oriented versions' of the mapping cylinder and both of them will be needed. 

\begin{lemma}
{\rm i)} Let $u\colon J\nach K$ be a cosieve. Then the subcategory $\DD(K,J)\subseteq \DD(K)$ is coreflective, i.e., the inclusion functor $\iota$ admits a right adjoint.\\
{\rm ii)} Let $u\colon J\nach K$ be a sieve. Then the subcategory $\DD(K,J)\subseteq \DD(K)$ is reflective, i.e., the inclusion functor $\iota$ admits a left adjoint. 
\end{lemma}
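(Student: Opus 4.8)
By Example~\ref{example_dual} the passage to $\DD^{\op}$ exchanges sieves with cosieves and ``reflective'' with ``coreflective'' (the restriction functors and the zero object all dualise), so the two statements are equivalent and I would only treat (ii): for a sieve $u\colon J\nach K$ I must produce a left adjoint to $\iota\colon\DD(K,J)\hookrightarrow\DD(K)$. The guiding idea is that this left adjoint should send $X$ to ``$X$ with its part over $J$ coned off'', and the point of the proof is to realise this operation by a short composite of restrictions and homotopy Kan extensions attached to a \emph{mapping cylinder}, so that only (Der1)--(Der4), Proposition~\ref{prop_extzero}, Proposition~\ref{prop_honestext} and Proposition~\ref{prop_cofinality} are used; in particular the (co)Cartesian square calculus of Subsection~\ref{subsection_squares}, which comes later, is not needed.

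Identify $J$ with its image in $K$ and set $Z:=K-J$, the complementary cosieve. Working inside $K\x[1]$ (with $[1]=(0\nach1)$): the full subcategory $Z\x\{1\}$ is a cosieve, hence $N:=(K\x[1])\setminus(Z\x\{1\})$ is a sieve, with inclusion $\ell\colon N\hookrightarrow K\x[1]$; the two fully faithful functors $d,e'\colon K\nach K\x[1]$ given by $d(k)=(k,0)$, $e'(k)=(k,1)$ satisfy $d=\ell\circ s$, where $s\colon K\xrightarrow{\ \sim\ }K\x\{0\}\hookrightarrow N$ is a sieve with $N-s(K)=J\x\{1\}$; and there is an evident natural transformation $\alpha\colon d\nach e'$. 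I would then set
$$R:=e'^{\ast}\circ\ell_{!}\circ s_{\ast}\colon\ \DD(K)\ \nach\ \DD(N)\ \nach\ \DD(K\x[1])\ \nach\ \DD(K).$$
Since $s$ is a sieve, $s_{\ast}X$ is an extension by zero (Proposition~\ref{prop_extzero}(ii)), so it vanishes on $N-s(K)=J\x\{1\}$; and since $\ell$ is fully faithful (Proposition~\ref{prop_honestext}) this gives $(RX)_{j}=(\ell_{!}s_{\ast}X)_{(j,1)}=(s_{\ast}X)_{(j,1)}=0$ for all $j\in J$. Hence $R$ factors as $\iota\circ R'$ for some $R'\colon\DD(K)\nach\DD(K,J)$.

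It then remains to identify $R'$ as a left adjoint of $\iota$, which I would do on hom-sets. For $X\in\DD(K)$ and $Y\in\DD(K,J)$ the adjunctions $(e'^{\ast},e'_{\ast})$ and $(\ell_{!},\ell^{\ast})$ give
$$\hhom_{\DD(K)}(RX,Y)\ \cong\ \hhom_{\DD(K\x[1])}(\ell_{!}s_{\ast}X,e'_{\ast}Y)\ \cong\ \hhom_{\DD(N)}(s_{\ast}X,\ell^{\ast}e'_{\ast}Y).$$
Because $e'$ is fully faithful, $(e'_{\ast}Y)_{(j,1)}\cong Y_{j}=0$ for $j\in J$, so $\ell^{\ast}e'_{\ast}Y$ vanishes on $N-s(K)$ and therefore, by Proposition~\ref{prop_extzero}(ii) again, lies in the essential image of the fully faithful functor $s_{\ast}$; hence the last group equals $\hhom_{\DD(K)}(X,s^{\ast}\ell^{\ast}e'_{\ast}Y)=\hhom_{\DD(K)}(X,d^{\ast}e'_{\ast}Y)$. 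Finally one shows $d^{\ast}e'_{\ast}\cong\id_{\DD(K)}$: by (Der4) the value $(e'_{\ast}Y)_{(k,0)}$ is a homotopy limit over the slice of $e'$ under $(k,0)$, which is isomorphic to $K_{k/}$ and so has the initial object $(k,\id_{k})$, whence this limit is $Y_{k}$ by the dual of Lemma~\ref{lemma_terminal}(i); thus $d^{\ast}e'_{\ast}Y\cong Y$ by (Der2). Tracing $\alpha$ through these bijections exhibits them as precomposition with a natural map $X\nach RX$, so $R'$ is left adjoint to $\iota$; part (i) is dual and uses the opposite orientation of the cylinder.

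I expect the real work to be combinatorial rather than homotopical: one must choose the ambient category and the subcategory $N$ so that $s$ is a sieve, $\ell_{!}$ is the genuine colimit-type Kan extension (along a sieve), $e'$ is fully faithful, and $N-s(K)$ is exactly $J\x\{1\}$ --- this is precisely what lets Proposition~\ref{prop_extzero} be invoked twice, once to see that $s_{\ast}X$ is an extension by zero and once to recognise $\ell^{\ast}e'_{\ast}Y$ as one. Given the correct cylinder, everything else is routine bookkeeping with adjoint functors and the base change calculus.
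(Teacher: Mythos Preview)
Your proof is correct and uses essentially the same construction as the paper: your category $N=(K\times[1])\setminus(Z\times\{1\})$ is precisely the mapping cylinder $\cyl(u)$ that the paper builds via a pushout, and your sieve $s\colon K\nach N$ is the paper's section. The one presentational difference is that the paper works with the projection $q\colon\cyl(u)\nach K$ and shows that the adjunction $(q_!,q^\ast)$ restricts to $\DD(\cyl(u),i(J))\rightharpoonup\DD(K,u(J))$, whereas you factor through $K\times[1]$ via $e'^\ast\circ\ell_!$ and verify the adjunction by a hom-set calculation; since $e'$ is right adjoint to $\pr_K$ one has $e'^\ast\cong{\pr_K}_!$ and hence $e'^\ast\circ\ell_!\cong(\pr_K\circ\ell)_!=q_!$, so the two reflectors coincide.
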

\begin{proof}
We will give the details for the proof of {\rm ii)} and mention the necessary modifications for~{\rm i)}. So, let $u\colon J\nach K$ be a sieve and let us construct the mapping cylinder category $\cyl(u).$ By definition,~$\cyl(u)$ is the full subcategory of $K\times [1]$ spanned by the objects $(u(j),1)$ and $(k,0).$ Thus, it is defined by the following pushout diagram where $i_0$ is the inclusion at $0$:
$$
\xymatrix{
J\ar[r]^u\ar[d]_{i_0}&K\ar[d]\\
J\times[1]\ar[r]&\cyl(u)
}
$$
There are the natural functors $i\colon J\nach \cyl(u)\colon j\auf (u(j),1)$ and $s\colon K\nach \cyl(u)\colon k\auf (k,0)$. Moreover, $\id\colon K\nach K$ and $J\times [1]\stackrel{\pr}{\nach} J\stackrel{u}{\nach} K$ induce a unique functor $q\colon \cyl(u)\nach K.$ These functors satisfy the relations $q\circ i=u,\;q\circ s=\id_K.$ 

Consider now an object $X\in\DD(\cyl(u),i(J))$ and let us calculate the value of $q_!(X)$ at some $u(j)\in K.$ For this purpose, we show that the following pasting is homotopy exact:
$$
\xymatrix{
e\ar[r]^-{i(j),1}\ar[d]_{\id}\xtwocell[1,1]{}\omit&i(J)_{/i(j)}\times{[1]}\ar[r]^\cong\ar[d]_p\xtwocell[1,1]{}\omit & \cyl(u)_{/u(j)}\ar[r]^{\pr}\ar[d]_p\xtwocell[1,1]{}\omit &\cyl(u)\ar[d]^q\\
e\ar[r]&e\ar[r]&e\ar[r]_{u(j)}&K
}
$$
Since $u$ is a sieve we have an isomorphism as depicted in the diagram. Moreover, $i(J)_{/i(j)}\times{[1]}$ has a terminal element so that the left two squares are homotopy exact by Proposition \ref{prop_cofinality}. Thus, we can conclude by (Der4) that the above pasting is homotopy exact and obtain $q_!(X)_{u(j)}\cong X_{i(j)}=0.$ The adjunction $(q_!,q^\ast)$ restricts to an adjunction $(q_!,q^\ast)\colon \DD(\cyl(u),i(J))\rightharpoonup \DD(K,u(J)).$ 

Moreover, since we defined the mapping cylinder forming the product with $[1]$ as opposed to the groupoid generated by it, $s\colon K\nach \cyl(u)$ is a sieve. Hence, by Proposition \ref{prop_extzero}, we have an induced equivalence $(s_{\ast},s^\ast)\colon\DD(K)\stackrel{\simeq}{\nach}\DD(\cyl(u),\cyl(u)-s(K))=\DD(\cyl(u),i(J)).$ 

Putting these two adjunctions together we obtain the adjunction 
$$(q_!\circ s_{\ast},s^\ast \circ q^\ast)\colon \DD(K)\rightharpoonup \DD(\cyl(u),i(J))\rightharpoonup \DD(K,u(J)).$$
The relation $q\circ s=\id$ implies that the right adjoint of this adjunction is the inclusion $\iota$ as intended and the reflection is given by $r=q_!\circ s_\ast.$ 

The proof of {\rm i)} is similar. Instead of using $\cyl(u)$ one uses this time the mapping cylinder category~$\cyl '(u)$ which is obtained by a similar pushout but using the inclusion $i_1$ instead of $i_0.$ Let us  denote the corresponding functors again by $i,\; q,$ and $s.$ Using a similar calculation of $q_\ast$ and the fact that~$s$ is now a cosieve, we can construct a coreflection $c.$
\end{proof}

\begin{corollary}\label{cor_pointed}
Let $\DD$ be a pointed derivator, then $\DD$ is also strongly pointed.
\end{corollary}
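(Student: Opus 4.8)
The plan is to deduce Corollary \ref{cor_pointed} directly from the preceding Lemma, which produces, for any sieve or cosieve $u\colon J\nach K$, a (co)reflection of the full subcategory $\DD(K,J)\subseteq\DD(K)$. First I would recall the equivalences furnished by Proposition \ref{prop_extzero}: if $j\colon J\nach K$ is a sieve then $(j^\ast,j_\ast)\colon \DD(K,K-J)\stackrel{\simeq}{\nach}\DD(J)$, and dually for a cosieve $i\colon J\nach K$ we have $(i_!,i^\ast)\colon\DD(J)\stackrel{\simeq}{\nach}\DD(K,K-J)$. The point is that under these equivalences the functors $j_\ast$ and $i_!$ are identified, up to the equivalence, with the inclusion $\iota\colon\DD(K,K-J)\nach\DD(K)$; so a right adjoint of $\iota$ yields a right adjoint of $j_\ast$, and a left adjoint of $\iota$ yields a left adjoint of $i_!$.

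More precisely, for a sieve $j\colon J\nach K$, write $L=K-J$ for the complementary cosieve of objects not in the image; the Lemma (part ii, applied with the roles so that $\DD(K,L)$ is reflective — note the Lemma is stated for $\DD(K,J)$ with $J$ the image of the relevant (co)sieve, so here one applies it to the sieve $j$ whose image plays the role of the vanishing locus is $L$; I would simply invoke the Lemma in the form needed, after matching up the conventions) gives that $\iota\colon\DD(K,L)\nach\DD(K)$ has a right adjoint, say $c$. Composing with the equivalence $j_\ast\colon\DD(J)\stackrel{\simeq}{\nach}\DD(K,L)$ and its inverse $j^\ast$, we obtain $j^!:=j^\ast\circ c\circ(\text{nothing})$ — concretely, $(j_\ast, j^\ast\circ c)$ is an adjunction $\DD(J)\rightharpoonup\DD(K)$ since $j_\ast$ factors as the equivalence $\DD(J)\simeq\DD(K,L)$ followed by $\iota$, and a composite of a left adjoint (here an equivalence, hence left adjoint) with a left adjoint $\iota$ has right adjoint the composite of the right adjoints. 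Thus $j^!=j^\ast\circ c$ is the desired right adjoint of $j_\ast$. Dually, for a cosieve $i\colon J\nach K$ with complement $L=K-J$ a sieve, the Lemma (part i) gives a right adjoint of the inclusion $\iota\colon\DD(K,L)\nach\DD(K)$; wait — for cosieves we instead need the coreflection, i.e. part i of the Lemma says $\DD(K,J)$ is coreflective when $J$ is a cosieve's image, so applied to the complementary structure we get that $\iota\colon\DD(K,L)\nach\DD(K)$ admits a right adjoint... I would take care here to use the correct part: since $i_!$ is identified with $\iota\colon\DD(K,L)\nach\DD(K)$ via the equivalence $\DD(J)\simeq\DD(K,L)$, and we want a \emph{left} adjoint $i^?$ of $i_!$, we need $\iota$ to admit a left adjoint, which is exactly the reflectivity statement of the Lemma applied to the sieve $L\nach K$. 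So $i^?:=i^\ast\circ r$ where $r$ is the reflection onto $\DD(K,L)$, and $(i^?, i_!)$ is an adjunction.

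The routine part is checking the bookkeeping of which subcategory ($\DD(K,J)$ versus $\DD(K,K-J)$) is reflective versus coreflective and that the adjunctions compose correctly; this is pure formal nonsense once the identifications are set up, using that equivalences are adjoint on both sides and that adjoints compose. The only genuine subtlety — and the step I expect to require the most care — is correctly matching the hypotheses of the Lemma (which is phrased in terms of $\DD(K,J)$ with $J$ the image of a given (co)sieve) with the complementary full subcategories $\DD(K,K-J)$ appearing in Proposition \ref{prop_extzero}'s equivalences; getting a sieve where the Lemma wants a sieve and a cosieve where it wants a cosieve. Once that is pinned down, the corollary follows in a couple of lines. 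I would therefore write the proof as: ``Let $j\colon J\nach K$ be a sieve. By Proposition \ref{prop_extzero}, $j_\ast$ restricts to an equivalence $\DD(J)\stackrel{\simeq}{\nach}\DD(K,K-J)$ with inverse $j^\ast$, and $j_\ast$ is the composite of this equivalence with the inclusion $\iota\colon\DD(K,K-J)\nach\DD(K)$. The complement $K-J$ is a cosieve, so by part i of the Lemma above $\iota$ admits a right adjoint $c$; hence $j^!:=j^\ast c$ is right adjoint to $j_\ast$. Dually, if $i\colon J\nach K$ is a cosieve then $i_!$ is the composite of the equivalence $\DD(J)\stackrel{\simeq}{\nach}\DD(K,K-J)$ with the inclusion of the reflective (by part ii of the Lemma, since $K-J$ is a sieve) subcategory $\DD(K,K-J)$; a left adjoint of $i_!$ is then $i^?:=i^\ast r$ with $r$ the reflection. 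Thus $\DD$ is strongly pointed.''
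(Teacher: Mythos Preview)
Your proposal is correct and, once you have worked through the bookkeeping, arrives at exactly the paper's argument: for a sieve $j\colon J\nach K$ one applies part (i) of the Lemma to the complementary cosieve $K-J\nach K$ to obtain a coreflection $c$ onto $\DD(K,K-J)$, and then $j^!:=j^\ast\circ c$ is right adjoint to $j_\ast=\iota\circ(\text{equivalence})$; the cosieve case is dual using part (ii). The only suggestion is presentational: the middle portion of your write-up, where you talk yourself through which part of the Lemma applies to which complement, should be excised from the final version---your last paragraph already says everything needed.
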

\begin{proof}
Given a sieve $u\colon J\nach K$ we have to show that $u_\ast$ has a right adjoint. The inclusion $v\colon K-u(J)\nach K$ of the complement is a cosieve. The above lemma applied to $v$ thus gives us a coreflection $c\colon\DD(K)\rightharpoonup \DD(K,K-u(J)).$ Putting this together with the equivalence induced by $u_\ast$ (guaranteed by Proposition \ref{prop_extzero}) we obtain the desired adjunction:
$$
(u_\ast,u^!)\colon
\xymatrix{
\DD(J)\ar@<0.5ex>[r]^-{u_\ast}&\DD(K,K-u(J))\ar@<0.5ex>[l]^-{u^\ast}\ar@<0.5ex>[r]^-\iota&\DD(K)\ar@<0.5ex>[l]^-c
}
$$
\noindent
The proof in the case of a cosieve is, of course, the dual one.
\end{proof}

The proofs of the last two results were constructive. So, for later reference, let us give precise formulas for these additional adjoint functors. Let $\DD$ be a pointed derivator and let $u\colon J\nach K$ be a cosieve. Let us denote by $v\colon J'=K-u(J)\nach K$ the sieve given by the complement. The adjunctions $(u^?,u_!)\colon\DD(K)\rightharpoonup\DD(J)$ and $(v_\ast,v^!)\colon\DD(J)\rightharpoonup\DD(K)$ are given by the following composite adjunctions respectively:
$$
u^?\colon
\xymatrix{
\DD(K)\ar@<0.6ex>[r]^-{s_\ast}&\DD(\cyl(v),i(J'))\ar@<0.6ex>[r]^-{q_!}\ar@<0.6ex>[l]^-{s^\ast}
&\DD(K,v(J'))\ar@<0.6ex>[r]^-{u^\ast}\ar@<0.6ex>[l]^-{q^\ast}&\DD(J)\ar@<0.6ex>[l]^-{u_!}
}
\colon u_!
$$
$$
v_\ast\colon
\xymatrix{
\DD(J')\ar@<0.6ex>[r]^-{v_\ast}&\DD(K,u(J))\ar@<0.6ex>[r]^-{q'^\ast}\ar@<0.6ex>[l]^-{v^\ast}
&\DD(\cyl'(u),i'(J))\ar@<0.6ex>[r]^-{s'^\ast}\ar@<0.6ex>[l]^-{q'_\ast}&\DD(K)\ar@<0.6ex>[l]^-{s'_!}
}
\colon v^!
$$
Here, $\cyl(v)$ is the mapping cylinder obtained from identifying the bottom $J'\times \{0\}$ of $J'\times [1]$ with the image of $v$, $i$ is the inclusion in the cylinder, $q$ is the projection and $s$ is the canonical section of~$q.$ The notation in the second decomposition is similar where the roles of $0$ and $1$ are interchanged.

\subsection{coCartesian and Cartesian squares}\label{subsection_squares}

In this subsection, we introduce coCartesian and Cartesian squares in a derivator and establish some facts about them which will be needed in Section \ref{sec_stable}. Some of these properties are well-known from classical category theory and will be reproved here for the context of derivators. The main results are the behavior of (co)Cartesian squares under cancellation and composition (Proposition \ref{prop_cancel}) and a `detection result' (due to Franke \cite{franke}) for (co)Cartesian squares (Proposition \ref{prop_detect}).

We denote the category $[1]\times [1]$ by $\square$, i.e., $\square$ is the following poset considered as a category where we draw the first coordinate horizontally:
$$\xymatrix{
(0,0)\ar[r]\ar[d] & (1,0)\ar[d] \\
(0,1)\ar[r] & (1,1)
}$$
\noindent
For the treatment of Cartesian and coCartesian squares, it is important to consider the following two inclusions of subcategories $\ipush\colon \push\nach\square$ resp.\ $\ipull\colon\pull\nach\square$ which are given by the subposets: 
$$\xymatrix{
(0,0)\ar[r]\ar[d]&(1,0)\ar@{}[rrrrd]|{\mbox{respectively}}&&&&&(1,0)\ar[d]\\
(0,1)& & & & & (0,1)\ar[r] & (1,1)
}
$$

\begin{definition}
Let $\DD$ be a derivator and let $X\in\DD(\square).$\\
i) The square $X$ is \emph{coCartesian} if it lies in the essential image of $\ipush_!\colon\DD(\push)\nach\DD(\square).$\\
ii) The square $X$ is \emph{Cartesian} if it lies in the essential image of $\ipull_\ast\colon\DD(\pull)\nach\DD(\square).$
\end{definition}
\noindent
It follows immediately from the fully faithfulness of homotopy Kan extensions along fully faithful functors (Proposition \ref{prop_honestext}) and Lemma \ref{lemma_essim} that such an $X\in \DD(\square)$ is coCartesian if and only if the canonical morphism $\epsilon _{(1,1)}\colon \ipush_! \ipush^\ast (X)_{(1,1)}\nach X_{(1,1)}$ is an isomorphism. Dually, the square $X$ is Cartesian if and only if the canonical morphism $\eta _{(0,0)}\colon X_{(0,0)}\nach \ipull_\ast \ipull^\ast (X)_{(0,0)}$ is an isomorphism.

Our first aim in this section is to establish a `detection result' for (co)Cartesian squares in larger diagrams  which will be used frequently later on. So, let us quickly give the notion of a square.

\begin{definition}
Let $J$ be a category. A \emph{square} in $J$ is a functor $i\colon \square \nach J$ which is injective on objects.
\end{definition}

Here is now the intended `detection result' for (co)Cartesian squares.

\begin{proposition}\label{prop_detect}
Let $i\colon \square \nach J$ be a square in $J$ and let $f\colon K\nach J$ be a functor.\\
{\rm i)} Assume that the induced functor $\push \stackrel {\tilde{i}}{\nach}{\big(J-i(1,1)\big)}_{/i(1,1)}$ has a left adjoint and that $i(1,1)$ does not lie in the image of $f$. Then for all $X=f_!(Y)\in \DD(J),\;Y\in\DD(K),$ the induced square $i^\ast (X)$ is coCartesian.\\
{\rm ii)} Assume that the induced  functor $\pull \stackrel {\tilde{i}}{\nach}{\big(J-i(0,0)\big)}_{i(0,0)/}$ has a right adjoint and that $i(0,0)$ does not lie in the image of $f$. Then for all $X=f_\ast(Y)\in \DD(J),\;Y\in\DD(K),$ the induced square $i^\ast (X)$ is Cartesian.
\end{proposition}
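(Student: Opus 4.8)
The plan is to prove part i) by hand and deduce part ii) by duality: passing to $\DD^{\op}$ turns $f_\ast$ into a homotopy left Kan extension, Cartesian squares into coCartesian ones, the under-slice $(J-i(0,0))_{i(0,0)/}$ into the over-slice $(J^{\op}-i(0,0))_{/i(0,0)}$, and the hypothesis ``$\tilde i$ has a right adjoint'' into ``$\tilde i$ has a left adjoint'' (cf.\ Example \ref{example_dual}). For i) the strategy is essentially forced by the only usable criterion: by the remark following the definition of (co)Cartesian squares, $i^\ast(X)$ is coCartesian precisely when the counit $\epsilon_{(1,1)}\colon \big(\ipush_!\ipush^\ast i^\ast(X)\big)_{(1,1)}\nach (i^\ast X)_{(1,1)} = X_{i(1,1)}$ is an isomorphism, so the whole argument reduces to analyzing this single map.

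First I would write $k_0 := i(1,1)$ and let $J' := J - k_0$ be the full subcategory on the remaining objects, with inclusion $j\colon J'\nach J$. Since $k_0$ does not lie in the image of $f$, the functor $f$ factors (uniquely) as $f = j\circ f'$, so that $X = f_!(Y) = j_!(X')$ with $X' := f'_!(Y)\in\DD(J')$. The objects $i(0,0),\,i(1,0),\,i(0,1)$ all lie in $J'$, so $i\circ\ipush$ corestricts to a functor $\bar i\colon \push\nach J'$; because $j$ is fully faithful, Proposition \ref{prop_honestext} gives a natural isomorphism $\ipush^\ast i^\ast X = \bar i^\ast j^\ast j_! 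X'\cong\bar i^\ast X'$. The key observation is that $\bar i$ factors as $\push\xrightarrow{\tilde i}J'_{/k_0}\xrightarrow{\pr}J'$, where $\tilde i$ is exactly the functor appearing in the statement and $\pr$ forgets the structure arrow.

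Next I would evaluate both sides of $\epsilon_{(1,1)}$ using Kan's formula. Since the projection $\push_{/(1,1)}\nach\push$ is an isomorphism of categories, (Der4) gives $\big(\ipush_!\ipush^\ast i^\ast X\big)_{(1,1)}\cong\Hocolim_\push\ipush^\ast i^\ast X\cong\Hocolim_\push\tilde i^\ast\pr^\ast X'$; and (Der4) applied to $j$ at $k_0$ gives $X_{k_0} = (j_!X')_{k_0}\cong\Hocolim_{J'_{/k_0}}\pr^\ast X'$. Setting $W := \pr^\ast X'$, I would then check that under these identifications $\epsilon_{(1,1)}$ becomes the canonical comparison map $\Hocolim_\push\tilde i^\ast(W)\nach\Hocolim_{J'_{/k_0}}(W)$, that is, the base change morphism attached to the square whose horizontal functors are $\tilde i$ and $\id_e$ and whose vertical functors are the projections to $e$. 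Since $\tilde i$ has a left adjoint, it is a right adjoint functor, so that square is homotopy exact by Proposition \ref{prop_cofinality}; hence the comparison map, and therefore $\epsilon_{(1,1)}$, is an isomorphism, which is what we wanted.

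The hard part will be the step identifying $\epsilon_{(1,1)}$ with the cofinality base change morphism. This is a diagram chase tracking several instances of base change through the factorization $i\circ\ipush = j\circ\pr\circ\tilde i$ and through the adjunction $(\ipush_!,\ipush^\ast)$; as usual it is cleanest to rephrase the relevant coherences as equalities of pasting diagrams and then apply the compatibility of Beck--Chevalley transforms with pasting (Lemma \ref{lemma_BCpasting}) together with the fact that isomorphisms are detected pointwise. Everything else --- the factorization of $f$ through $J'$, the use of fully faithfulness of $j$, and the two invocations of (Der4) --- is routine.
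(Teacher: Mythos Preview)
Your proposal is correct and follows essentially the same route as the paper. Both proofs factor $f$ through $J'=J-i(1,1)$, reduce to checking that the counit $\epsilon_{(1,1)}$ is an isomorphism, and conclude via (Der4) together with the cofinality of the right adjoint $\tilde i$ (Proposition~\ref{prop_cofinality}); the only difference is packaging: the paper carries out your ``hard part'' by writing down two equal pasting diagrams in $\Cat$ (one built from $\push\cong\push_{/(1,1)}\to\push\to\square\to J$, the other from $\push\xrightarrow{\tilde i}J'_{/i(1,1)}\xrightarrow{\pr}J'\xrightarrow{j}J$) and invoking Lemma~\ref{lemma_BCpasting} directly, rather than unpacking the base changes into explicit homotopy colimit comparisons as you do.
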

\begin{proof}
We give a proof of i). By assumption on $f$, $f$ factors as $K\stackrel{\bar{f}}{\nach}J-i(1,1)\stackrel{j}{\nach}J$ so that our setup can be summarized by:
$$
\xymatrix{
& \big(J-i(1,1)\big)_{/i(1,1)}\ar[d]^\pr&\\
\push \ar[ur]^{R=\tilde{i}}\ar[r]\ar[d]_{\ipush}&J-i(1,1)\ar[d]^j &K\ar[l]_{\bar{f}}\ar[dl]^f\\
\square\ar[r]_i&J &
}
$$
We want to show that the adjunctions counit $\epsilon\colon \ipush_!\ipush^\ast\nach\id$ is an isomorphism when applied to $i^\ast f_!(Y),\;Y\in\DD(K).$ But by Lemma \ref{lemma_essim} and Lemma \ref{lemma_BCpasting} this is equivalent to showing that the base change morphism associated to the pasting on the left-hand-side is an isomorphism when evaluated at $f_!(Y):$
$$
\xymatrix{
\push\cong\push_{/(1,1)}\ar[r]\ar[d]_p\xtwocell[1,1]{}\omit& \push\ar[r]^{\ipush}\ar[d]_{\ipush}\xtwocell[1,1]{}\omit& \square\ar[r]^i\ar[d]_{\id}\xtwocell[1,1]{}\omit& J\ar[d]^{\id}&
\push\ar[r]^-R\ar[d]_p\xtwocell[1,1]{}\omit& J-i(1,1)_{/i(1,1)}\ar[r]^{\pr}\ar[d]_p\xtwocell[1,1]{}\omit& J-i(1,1)\ar[r]^j\ar[d]_j\xtwocell[1,1]{}\omit& J\ar[d]^{\id}\\
e\ar[r]_{(1,1)}&\square\ar[r]_{\id}&\square\ar[r]_i&J&
e\ar[r]_{\id}&e\ar[r]_{i(1,1)}&J\ar[r]_{\id}&J
}
$$
Using Lemma \ref{lemma_BCpasting} again, this is equivalent to showing that the base change morphism associated to the pasting on the right gives an isomorphism when evaluated at $f_!(Y).$ But this is the case by~(Der4) and Proposition \ref{prop_cofinality}, since $f_!(Y)\cong j_!\bar{f}_!(Y)$ lies in the essential image of $j_!.$
\end{proof}

Typical applications of this proposition will be given when the categories under consideration are posets. Let $J$ and $K$ be posets considered as categories. Recall that a functor $u\colon J\nach K$ is the same as an order-preserving map. Moreover, an adjunction $(u,v)\colon J\rightharpoonup K$ is equivalently given by two order-preserving maps $u\colon J\nach K$ and $v\colon K\nach J$ such that $j\leq vu(j),\;j\in J,$ and $uv(k)\leq k,\; k\in K.$ In fact, in this case the triangular identities are automatically satisfied.

For $n \geq 0,$ we denote by $[n]$ the ordinal number $0<\ldots<n$ considered as a category. Moreover, let us denote the standard cosimplicial face resp.\ degeneracy maps by $d^i\colon [n-1]\nach [n],\;0\leq i \leq n,$ resp.\ $s^j\colon [n+1]\nach [n],\; 0\leq j \leq n.$ Here, $d^i$ is the unique monotone injection omitting $i$ while $s^j$ is the unique monotone surjection hitting $j$ twice. The images of these cosimplicial structure maps under a contravariant functor will, as usual, be written as $d_i$ resp.\ $s_j$.

\begin{lemma}\label{lemma_someadj}
For every $0\leq i\leq n-1$ we have an adjunction $(s^i,d^i)\colon[n]\rightharpoonup [n-1].$ In particular, we thus obtain the adjunctions $(s^0,d^0)\colon [2]\times [1]\rightharpoonup [1]\times [1]$ and $(s^1,d^1)\colon [2]\times [1]\rightharpoonup [1]\times [1].$
\end{lemma}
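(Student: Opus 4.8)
The plan is to invoke the characterization of adjunctions between posets recorded just above the statement: for order-preserving maps $u\colon J \nach K$ and $v\colon K \nach J$, the pair $(u,v)$ is an adjunction $(u,v)\colon J\rightharpoonup K$ precisely when $j \leq vu(j)$ for all $j\in J$ and $uv(k)\leq k$ for all $k\in K$, the triangular identities being automatic. Thus it suffices to check these two inequalities for $u=s^i\colon [n]\nach[n-1]$ and $v=d^i\colon[n-1]\nach[n]$.

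First I would record the explicit formulas. The map $s^i$ is given by $s^i(k)=k$ for $0\leq k\leq i$ and $s^i(k)=k-1$ for $i+1\leq k\leq n$, while $d^i$ is given by $d^i(m)=m$ for $0\leq m\leq i-1$ and $d^i(m)=m+1$ for $i\leq m\leq n-1$. A short case distinction according to whether the argument is $\leq i$ or $>i$ then yields, on the one hand, $s^i d^i = \id_{[n-1]}$ (so in particular $s^i d^i(m)\leq m$ for all $m$), and on the other hand $d^i s^i(k)\geq k$ for all $k\in[n]$, with equality unless $k=i$, in which case $d^i s^i(i)=i+1$. These inequalities provide the unit $\id_{[n]}\nach d^i s^i$ and the counit $s^i d^i\nach\id_{[n-1]}$, hence the adjunction $(s^i,d^i)\colon[n]\rightharpoonup[n-1]$.

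For the two displayed special cases I would specialize to $n=2$ and $i\in\{0,1\}$ to get adjunctions $(s^i,d^i)\colon[2]\rightharpoonup[1]$, and then use that a product of adjunctions is again an adjunction: forming the product with the identity adjunction $(\id_{[1]},\id_{[1]})\colon[1]\rightharpoonup[1]$ produces $(s^i\times\id_{[1]},\,d^i\times\id_{[1]})\colon[2]\times[1]\rightharpoonup[1]\times[1]$, which are exactly the asserted adjunctions (here, as elsewhere in this subsection, the maps $s^i$ and $d^i$ on product categories are understood to act on the first factor only).

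I do not expect any genuine obstacle here; the only points requiring a moment's care are the variance --- that $s^i$ is the left and $d^i$ the right adjoint, matching the direction of the functors $[n]\nach[n-1]$ and $[n-1]\nach[n]$ --- and the behavior of the inequality $k\leq d^i s^i(k)$ at the single index $k=i$, which the case distinction handles directly.
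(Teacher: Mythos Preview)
Your proof is correct and follows exactly the approach the paper sets up: the paper states the lemma without proof, having just recalled the characterization of adjunctions between posets via the inequalities $j\leq vu(j)$ and $uv(k)\leq k$, so your verification of these inequalities for $s^i$ and $d^i$ is precisely what is intended.
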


In the next proposition, we will consider squares $X\in\DD(\square)$ in a derivator and some of its associated sub-diagrams. To establish some short hand notation, let us denote by $d_v^i$ the face maps $\id \times d^i\colon [1]\nach [1]\times [1]=\square$ in the 'vertical direction' giving rise to `horizontal faces' and similarly in the other case. If we apply a contravariant functor to these morphisms we will interchange the indices and thus write $d^h_i$ and $d^v_i$ respectively.

\begin{proposition}\label{prop_cocartiso}
 Let $\DD$ be a derivator.\\
{\rm i)} An object of $\DD([1])$ is an isomorphism if and only if it lies in the essential image of the homotopy left Kan extension functor $0_!\colon\DD(e)\nach\DD([1]).$\\
{\rm ii)} Let $X\in \DD(\square)$ be a square such that $d_1^v (X)$ is an isomorphism, i.e., we have $X_{0,0}\stackrel{\cong}{\nach}X_{1,0}.$ The square $X$ is coCartesian if and only if also $d_0^v (X)$ is an isomorphism.
\end{proposition}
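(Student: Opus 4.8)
For part i), the plan is to use the cosieve $0\colon e\nach[1]$. Since $0$ is a cosieve (the only object $\geq 0$ is $0$ or $1$, but actually $1\geq 0$ so this needs care — in fact $0\colon e\to[1]$ is \emph{not} a cosieve; rather $1\colon e\to[1]$ is a cosieve and $0\colon e\to[1]$ is a sieve). So instead I would argue directly: by Lemma \ref{lemma_essim} and Proposition \ref{prop_honestext}, $0_!$ is fully faithful, and $X\in\DD([1])$ lies in its essential image if and only if the counit $\epsilon_1\colon 0_!0^\ast(X)_1\to X_1$ is an isomorphism. By (Der4), $0_!0^\ast(X)_1\cong\Hocolim_{e_{/1}}\pr^\ast0^\ast(X)$, and the slice category $e_{/1}$ associated to $0\colon e\to[1]$ and the object $1$ consists of the single object $(\ast, 0\to 1)$, hence is terminal; so by Lemma \ref{lemma_terminal} this homotopy colimit is just $0^\ast(X)=X_0$, and one checks the counit map is exactly the structure map $X_0\to X_1$ of the arrow $X$. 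Thus $X$ is in the essential image of $0_!$ iff $X_0\to X_1$ is an isomorphism, which is precisely the condition that $X$ be an isomorphism in $\DD([1])$.

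For part ii), the plan is a pasting argument factoring $\ipush\colon\push\to\square$ through a larger diagram. The key geometric observation is that the functor $d^1\colon[1]\to[2]$ (or rather a suitable product version) exhibits $\push$ as a reflective or coreflective subposet. Concretely, I would introduce the shape $[2]\times[1]$ (or a suitable full subposet of $\square\times[1]$) in which the original square $X$ and the degenerate hypothesis $X_{0,0}\xrightarrow{\cong}X_{1,0}$ can both be encoded, and use Lemma \ref{lemma_someadj}, which gives the adjunctions $(s^0,d^0)$ and $(s^1,d^1)$ between $[2]\times[1]$ and $[1]\times[1]$. The strategy: build an object on the larger shape by left Kan extending $X$ along one of the $d^i$, use Proposition \ref{prop_detect} to recognize various sub-squares as coCartesian, and use the composition/pasting of coCartesian squares (Proposition \ref{prop_cancel}) together with the fact that a square with one pair of parallel edges an isomorphism is coCartesian iff the other pair is an isomorphism — the last fact being reducible to part i) of the present proposition applied in the derivator $\DD^{[1]}$ together with Proposition \ref{prop_kanpointwise} (homotopy Kan extensions in $\DD^{[1]}$ computed pointwise). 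Alternatively, and perhaps more cleanly, I would consider the square $Y$ obtained from $X$ by the functor $\square\to\square$ that is identity but observe that $X$ factors: since $d^v_1(X)$ is an isomorphism, by part i) applied pointwise $X$ restricted appropriately lies in the image of a left Kan extension along the cosieve generated by $d^1$, and then Proposition \ref{prop_detect} i) directly gives that $X$ is coCartesian iff the complementary edge is an isomorphism.

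The main obstacle I anticipate is bookkeeping: correctly identifying which of the four candidate maps $e\to[1]$, $\push\to\square$, etc.\ are sieves versus cosieves, and setting up the pasting diagram on $[2]\times[1]$ so that the adjunctions of Lemma \ref{lemma_someadj} apply in the correct direction (left adjoint for a left-Kan-extension argument). The other delicate point is verifying that the abstract counit isomorphism produced by the essential-image criterion (Lemma \ref{lemma_essim}) really is the concrete structure map of the arrow in question — this requires unwinding the definition of the Beck--Chevalley $2$-cell $\alpha_!$ in (Der4) and matching it with the evaluation at the relevant object, which is routine but must be done carefully to avoid a sign/direction error. Once the $[2]\times[1]$ picture is correctly drawn, the rest is a mechanical application of Proposition \ref{prop_cancel}, Proposition \ref{prop_detect}, and part i).
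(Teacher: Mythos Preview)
Your plan for part i) is correct and is essentially the paper's argument: the paper simply cites the dual of Lemma~\ref{lemma_terminal}, whose content you unpack by hand via Lemma~\ref{lemma_essim} and the slice computation at $1\in[1]$.

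For part ii) your proposal is too diffuse, and none of the three routes you sketch actually closes the argument. The $[2]\times[1]$ route via Proposition~\ref{prop_cancel} forward-references a result stated after the present proposition, and more seriously I do not see which left Kan extension along a $d^i\times\id$ yields a $[2]\times[1]$-diagram whose three subsquares, via Proposition~\ref{prop_cancel}, produce the desired equivalence; the attempt ends up needing the very statement you are trying to prove. The ``part i) in $\DD^{[1]}$'' route, combined with Corollary~\ref{cor_essimpointwise}, characterizes membership in the essential image of $(\id\times 0)_!\colon\DD([1])\to\DD(\square)$, but that functor is not $\ipush_!$, so you have not yet touched coCartesianness. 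Your final suggestion via Proposition~\ref{prop_detect} is closest in spirit, but $d_h^1\colon[1]\to\square$ is a \emph{sieve}, not a cosieve, and Proposition~\ref{prop_detect} only supplies one implication.

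The paper's argument is the factorization you are circling without pinning down: write $d_h^1=\ipush\circ j$ where $j\colon[1]\to\push$ is the inclusion of the left vertical edge. A slice computation at $(1,0)\in\push$ (via Lemma~\ref{lemma_essim} and (Der4), exactly parallel to your part i)) shows that the hypothesis ``$d_1^v(X)$ is an isomorphism'' is equivalent to $\ipush^\ast X$ lying in the essential image of $j_!$. Granting this, one has
\[
X\text{ coCartesian}\ \Longleftrightarrow\ X\cong\ipush_!\ipush^\ast X\cong\ipush_!j_!j^\ast\ipush^\ast X=(d_h^1)_!(d_h^1)^\ast X,
\]
and by Lemma~\ref{lemma_essim} this last condition, checked at $(1,1)$, is precisely that $d_0^v(X)$ is an isomorphism. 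The paper encodes all of this as equalities of pasting diagrams and base-change $2$-cells, but the above is the content. The missing idea in your plan is this specific factorization through $\push$ and the observation that the hypothesis translates into $\ipush^\ast X\in\operatorname{ess.im}(j_!)$.
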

\begin{proof}
{\rm i)} This is a special case of (the dual of) Lemma \ref{lemma_terminal}.\\
{\rm ii)} By {\rm i)} our assumption on $X$ is equivalent to the adjunction counit $0_!0^\ast d_1^v (X)\nach d_1^v (X)$ being an isomorphism. Using Lemma \ref{lemma_essim} and (Der4), we can reformulate this by saying that the base change morphism associated to the following pasting is an isomorphism when evaluated on $X$:
$$
\xymatrix{
e\cong e_{/1}\ar[r]^{\pr}\ar[d]_p\xtwocell[1,1]{}\omit& e\ar[r]^0\ar[d]_0\xtwocell[1,1]{}\omit& [1]\ar[r]^{d^1_v}\ar[d]_{\id}\xtwocell[1,1]{}\omit& \square\ar[d]^{\id}\ar@{}[rrd]|{=}&&
e\ar[r]^{(0,0)}\ar[d]_{\id}\xtwocell[1,1]{}\omit&\square\ar[d]^{\id}\\
e\ar[r]_1&[1]\ar[r]_{\id}&\square\ar[r]_{d^1_v}&\square&&
e\ar[r]_{(1,0)}&\square
}
$$
We want to reformulate this in a way which is more convenient for this proof. For this purpose let us consider the following factorization of the \emph{horizontal} face map:
$$d_h^1=\ipush\circ j\colon[1]\stackrel{j}{\nach}\push\stackrel{\ipush}{\nach}\square$$
Now, our assumption that $d_1^v (X)$ is an isomorphism is equivalent to the counit $j_!j^\ast\ipush^\ast X\nach\ipush^\ast X$ being an isomorphism. In fact, using Lemma \ref{lemma_essim} and (Der4), the claim about the counit can be equivalently restated by saying that the base change of the following pasting is an isomorphism when evaluated at~$X:$ 
$$
\xymatrix{
e\cong [1]_{/(0,1)}\ar[r]^{\pr}\ar[d]_p\xtwocell[1,1]{}\omit& [1]\ar[r]^j\ar[d]_j\xtwocell[1,1]{}\omit& \push\ar[r]^{\ipush}\ar[d]_{\id}\xtwocell[1,1]{}\omit& \square\ar[d]^{\id}\ar@{}[rrd]|{=}&&
e\ar[r]^{(0,0)}\ar[d]_{\id}\xtwocell[1,1]{}\omit&\square\ar[d]^{\id}\\
e\ar[r]_{(0,1)}&\push\ar[r]_{\id}&\push\ar[r]_{\ipush}&\square&&
e\ar[r]_{(1,0)}&\square
}
$$
Thus, the claim follows from our previous reasoning. This in turn can be used to show that under our assumption the square $X$ is coCartesian if and only if the base change associated to
$$
\xymatrix{
[1]\ar[r]^j\ar[d]_j\xtwocell[1,1]{}\omit& \push\ar[r]^{\ipush}\ar[d]_{\id}\xtwocell[1,1]{}\omit& \square\ar[d]^{\id}\ar@{}[rrd]|{=}&&
[1]\ar[r]^{d^1_h}\ar[d]_{d^1_h}\xtwocell[1,1]{}\omit&\square\ar[d]^{\id}\\
\push\ar[r]_{\id}\ar[d]_{\ipush}\xtwocell[1,1]{}\omit& \push\ar[r]_{\ipush}\ar[d]_{\ipush}\xtwocell[1,1]{}\omit& \square\ar[d]^{\id}&&
\square\ar[r]_{\id}&\square\\
\square\ar[r]_{\id}&\square\ar[r]_{\id}&\square &&\\
}
$$
is an isomorphism at $X$. By Lemma \ref{lemma_essim} this is the case if and only if it is the case at $(1,1)$ which in turn is equivalent (by similar arguments as in the beginning of this proof) to the fact that $d_0^v(X)$ is an isomorphism.
\end{proof}

We now discuss the composition and cancellation property of (co)Cartesian squares. Recall from classical category theory that for a diagram in a category of the shape
$$\xymatrix{X_{0,0}\ar[r]\ar[d] & X_{1,0}\ar[d]\ar[r] & X_{2,0}\ar[d]\\
X_{0,1}\ar[r]& X_{1,1}\ar[r]&X_{2,1}}
$$
the following holds: if the square on the left is a pushout, then the square on the right is a pushout if and only if the composite square is. The corresponding result in the theory of derivators is the content of the next proposition. The methods are similar to the ones used in the proof of Proposition~\ref{prop_cocartiso} so we will be a bit more sketchy. Moreover, since we only use horizontal face maps this time we again drop the additional index.

\begin{proposition}\label{prop_cancel}
Let $\DD$ be a derivator and let $X \in \DD([2]\times [1]).$\\
{\rm i)} If $d_2(X)\in \DD(\square)$ is coCartesian, then $d_0(X)$ is coCartesian if and only if $d_1(X)$ is coCartesian.\\
{\rm ii)} If $d_0(X)\in \DD(\square)$ is Cartesian, then $d_2(X)$ is Cartesian if and only if $d_1(X)$ is Cartesian.
\end{proposition}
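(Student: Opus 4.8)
The plan is to prove part i) by a Kan extension argument that reduces everything to axiom (Der4) and the pasting calculus; part ii) then follows by duality. Write $u\colon [2]\times[1]\to[2]\times[1]$ — actually the better setup is to view $[2]\times[1]$ as built up from the three vertical faces via Kan extension. Concretely, let $A\subseteq [2]\times[1]$ be the full subcategory obtained by deleting the object $(2,1)$, and inside $A$ let $B\subseteq A$ be the full subcategory on the three objects $(0,0),(1,0),(2,0),(0,1),(1,1)$ — i.e. delete both $(2,1)$ and one further corner so that $B$ looks like the ``$\push$-shaped'' part relevant to the rightmost square. First I would observe: if $d_2(X)$ is coCartesian, then by Proposition \ref{prop_honestext} and Lemma \ref{lemma_essim} the object $X$ lies in the essential image of the homotopy left Kan extension along an inclusion $i\colon C\hookrightarrow [2]\times[1]$, where $C$ is the full subcategory on the ``$\push$'' of the left square together with the right-hand column $(2,0),(2,1)$, i.e. on $(0,0),(1,0),(0,1),(2,0),(2,1)$. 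The point is that $d_2(X)$ being coCartesian says precisely that the value at $(1,1)$ is the left Kan extension of the rest, and no condition is imposed on the right column.

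Granting that $X\cong i_!(i^\ast X)$, I would then apply the detection result Proposition \ref{prop_detect}, or more directly repeat its proof, to each of the two squares $d_1(X)$ and $d_0(X)$ in turn. For the square $d_1(X)$ (columns $1$ and $2$): the relevant inclusion of its ``$\push$'' into $[2]\times[1]$ factors through $i$ after checking the appropriate slice category $\big(([2]\times[1])-(2,1)\big)_{/(2,1)}$ receives a left adjoint from $\push$ — this is a finite poset computation, and the needed adjunction is an instance of Lemma \ref{lemma_someadj} (the maps $s^i, d^i$ on $[2]\rightharpoonup[1]$). Then by Lemma \ref{lemma_essim}, $d_1(X)$ is coCartesian iff the counit $\ipush_!\ipush^\ast \to \id$ is an isomorphism at $(2,1)$ when evaluated on $i^\ast X$; unravelling via Lemma \ref{lemma_BCpasting} this becomes a statement about a pasting of comma squares that is homotopy exact by (Der4) precisely when... and here is where $d_0(X)$ enters: the same reduction applied to $d_0(X)$ produces the \emph{same} base change morphism at $(2,1)$, because the cone point $(2,1)$ and its slice category are shared. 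So $d_1(X)$ coCartesian $\iff d_0(X)$ coCartesian, both being equivalent to exactness of one fixed pasting.

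The cleanest way to organize the last step is the one already rehearsed in the proof of Proposition \ref{prop_cocartiso}: express $d_1^h$ and the two $\ipush$-inclusions as composites, paste in the slice category $\square\cong\square_{/(1,1)}$, and use Lemma \ref{lemma_BCpasting} together with the $2$-out-of-$3$ property for isomorphisms. Since $d_2(X)$ is already assumed coCartesian, the ``middle'' piece of the pasting is automatically invertible, and what remains is a single square whose exactness is simultaneously the coCartesianness of $d_1(X)$ and of $d_0(X)$.

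The main obstacle I anticipate is purely bookkeeping: verifying that the relevant finite posets — the various $\push$-shaped full subcategories of $[2]\times[1]$, their slice categories over $(1,1)$ and $(2,1)$, and the comma categories appearing after applying Proposition \ref{prop_detect} — really do admit the adjoints needed to invoke Proposition \ref{prop_cofinality}, and that the three or four pasting diagrams one writes down genuinely compose to the asserted $2$-cells rather than to conjugate variants. None of this is deep, but it is error-prone, so in the write-up I would suppress the explicit poset pictures and simply cite Lemma \ref{lemma_someadj}, Proposition \ref{prop_detect}, Proposition \ref{prop_cofinality}, Lemma \ref{lemma_BCpasting}, and Lemma \ref{lemma_essim} at the appropriate points, exactly as in the proof of Proposition \ref{prop_cocartiso}. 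Part ii) is then obtained by applying part i) to the dual derivator $\DD^{\op}$ (Example \ref{example_dual}), under which coCartesian squares in $[2]\times[1]$ correspond to Cartesian squares and the order of the three faces is reversed.
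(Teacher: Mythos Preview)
Your overall strategy --- reduce both conditions to a single base-change morphism at $(2,1)$ and argue via pasting and cofinality, exactly as in the proof of Proposition~\ref{prop_cocartiso} --- is the paper's strategy. The gap is in your choice of intermediate subcategory.

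You take $C\subseteq[2]\times[1]$ to be the full subcategory missing only $(1,1)$, and you correctly observe that $X$ lies in the essential image of $i_!$ for $i\colon C\hookrightarrow[2]\times[1]$ precisely when $d_2(X)$ is coCartesian. But then $(2,1)\in C$, so Proposition~\ref{prop_detect} does \emph{not} apply to either $d_0(X)$ or $d_1(X)$: its hypothesis is exactly that the cone vertex lie outside the image. The phrase ``or more directly repeat its proof'' does not rescue this, and your assertion that the two squares ``produce the same base change morphism at $(2,1)$'' is the very thing to be proved. The coCartesianness of $d_0(X)$ and of $d_1(X)$ are \emph{a priori} colimit conditions over different spans, $\{(1,0),(2,0),(1,1)\}$ versus $\{(0,0),(2,0),(0,1)\}$; with your $C$ the entry $X_{(1,1)}$ in the first span is already a homotopy pushout, and identifying the iterated colimit with the composite one is the content of the proposition.

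The fix is to delete \emph{both} bottom-right vertices. The paper takes $J_0$ on $\{(0,0),(1,0),(2,0),(0,1)\}$ and the intermediate $J_1=J_0\cup\{(1,1)\}$, with $i=i_1\circ i_0\colon J_0\to J_1\to[2]\times[1]$. The assumption on $d_2(X)$ handles the counit of $i_!i^\ast\to\id$ at $(1,1)$, so by Lemma~\ref{lemma_essim} this counit is an isomorphism iff it is one at $(2,1)$. That single condition is then read two ways. Directly, $(J_0)_{/(2,1)}=J_0$ receives the right adjoint $d^1\times\id\colon\push\to J_0$ (Lemma~\ref{lemma_someadj}), so by Proposition~\ref{prop_cofinality} and (Der4) the condition is $d_1(X)$ coCartesian. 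Alternatively, via the factorisation $i=i_1\circ i_0$ one pastes the corresponding four squares; the hypothesis on $d_2(X)$ is used once more to make the $i_0$-piece invertible, and a further cofinality (of $d^0\times\id\colon\push\to J_1$) identifies what remains with $d_0(X)$ coCartesian. Part ii) is then dual, as you say.
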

\begin{proof}
We give a proof of i). For this purpose, let $J_0$ resp.\ $J_1$ be the posets
$$\xymatrix{
(0,0)\ar[r]\ar[d]&(1,0)\ar[r]&(2,0)&\mbox{resp.}&(0,0)\ar[r]\ar[d]&(1,0)\ar[r]\ar[d]&(2,0)\\
(0,1)& &  && (0,1)\ar[r] & (1,1) &
}
$$
and denote the fully faithful inclusion functors by $i\colon J_0 \stackrel{i_0}{\nach} J_1 \stackrel{i_1}{\nach} J_2=[2]\times [1].$ Our assumption on $X$ and Lemma \ref{lemma_essim} guarantee that the base change morphism associated to the following square evaluated at $X$ is an isomorphism if and only if this is the case at $(2,1):$
$$
\xymatrix{
J_0\ar[r]^i\ar[d]_i\xtwocell[1,1]{}\omit&J_2\ar[d]^{\id}\\
J_2\ar[r]_{\id}&J_2
}
$$
We want to reformulate this property in two ways. First, using (Der4) and Proposition \ref{prop_cofinality} applied to $d^1\colon\push\nach J_0$ this can be seen to be equivalent to the claim that $d_1(X)$ is coCartesian. Second, using the factorization $i=i_1\circ i_0$ the property  can also be reformulated by saying that the base change morphism associated to the following pasting diagram is an isomorphism at $(2,1)$ when evaluated at $X:$
$$
\xymatrix{
J_0\ar[r]^{i_0}\ar[d]_{i_0}\xtwocell[1,1]{}\omit& J_1\ar[r]^{i_1}\ar[d]_{\id}\xtwocell[1,1]{}\omit& J_2\ar[d]^{\id}\\
J_1\ar[r]_{\id}\ar[d]_{i_1}\xtwocell[1,1]{}\omit& J_1\ar[r]_{i_1}\ar[d]_{i_1}\xtwocell[1,1]{}\omit& J_2\ar[d]^{\id}\\
J_2\ar[r]_{\id}&J_2\ar[r]_{\id}&J_2
}
$$
But under our assumption on $d_2(X)$ and using the cofinality of $d^0\colon[1]\nach[2]$ this can be seen to be equivalent to the fact that $d_0(X)$ is coCartesian which then concludes our proof.
\end{proof}

Now, that we have established the properties of (co)Cartesian squares necessary for our purposes, we will quickly define left exact, right exact, and exact morphisms of derivators. 

\begin{definition}
A morphism of derivators \emph{preserves coCartesian squares} if it preserves homotopy left Kan extensions along $\ipush\colon\push\nach\square.$ Similarly, a morphism of derivators \emph{preserves Cartesian squares} if it preserves homotopy right Kan extensions along $\ipull\colon\pull\nach\square.$
\end{definition}

As an immediate consequence of Corollary \ref{cor_minimalmorphism} and Corollary \ref{cor_essimpointwise} we have the following result.

\begin{corollary}\label{cor_coCartpointwise}
Let $F\colon \DD\nach\DD'$ be a morphism of derivators. Then $F$ preserves coCartesian squares if and only if $F\colon\DD^M\nach\DD'^M$ preserves coCartesian squares for all categories $M$. Moreover, an object $X\in \DD^M(\square)$ is coCartesian if and only if the squares $X_m\in\DD(\square)$ are coCartesian for all objects $m\in M.$
\end{corollary}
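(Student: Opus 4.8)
The plan is to reduce \textbf{Corollary \ref{cor_coCartpointwise}} to the two general facts already in hand: \textbf{Corollary \ref{cor_minimalmorphism}} (preservation of homotopy left Kan extensions along a fixed functor is detected after passing to $\DD^M$) and \textbf{Corollary \ref{cor_essimpointwise}} (membership in the essential image of $u_!$ in $\DD^M(K)$ is checked pointwise in $M$). The key observation is that ``preserves coCartesian squares'' and ``is a coCartesian square'' have, by definition, been encoded as statements about homotopy left Kan extensions along the single functor $\ipush\colon\push\nach\square$, so both corollaries apply verbatim with $u=\ipush$.

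First I would treat the statement about morphisms. By \textbf{Definition} of preservation of coCartesian squares, $F$ preserves coCartesian squares if and only if $F$ preserves homotopy left Kan extensions along $\ipush\colon\push\nach\square$. By \textbf{Corollary \ref{cor_minimalmorphism}} applied to the functor $u=\ipush$, this holds if and only if $F^M\colon\DD^M\nach\DD'^M$ preserves homotopy left Kan extensions along $\ipush$ for all small categories $M$, which is again by \textbf{Definition} the same as saying $F^M$ preserves coCartesian squares for all $M$. This gives the first claim immediately.

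Next I would treat the statement about objects. Let $X\in\DD^M(\square)$. By \textbf{Definition}, $X$ is coCartesian if and only if $X$ lies in the essential image of $\ipush_!\colon\DD^M(\push)\nach\DD^M(\square)$. Now apply \textbf{Corollary \ref{cor_essimpointwise}} with the fully faithful functor $u=\ipush\colon\push\nach\square$: the object $X\in\DD^M(\square)$ lies in the essential image of $\ipush_!\colon\DD^M(\push)\nach\DD^M(\square)$ if and only if $X_m$ lies in the essential image of $\ipush_!\colon\DD(\push)\nach\DD(\square)$ for all $m\in M$, i.e.\ if and only if each $X_m\in\DD(\square)$ is coCartesian. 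Here I am using that $\ipush$ is injective on objects, hence in particular fully faithful, so that \textbf{Corollary \ref{cor_essimpointwise}} is applicable, and that the evaluation $m^\ast\colon\DD^M\nach\DD$ restricted to $\DD^M(\square)$ is precisely the functor $X\mapsto X_m$ sending a square to its $m$-th component.

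Honestly there is no real obstacle here: the entire content was set up in the preceding corollaries, and the proof is a two-line unwinding of definitions. The only thing to be mildly careful about is the bookkeeping of which ambient category $M$ one is working over (one applies the earlier corollaries to the pair $(\DD,\DD')$, not to $(\DD^N,\DD'^N)$), and the observation that the coherent isomorphism $(\DD^M)^{\square}\cong\DD^{M\times\square}$ from \textbf{Example \ref{example_prod}} is compatible with the relevant restriction functors, so that ``a coCartesian square in $\DD^M$'' indeed means ``an object of $\DD(M\times\square)$ in the essential image of $(\id_M\times\ipush)_!$''. With that identification in place, the statement is precisely the specialization of Corollaries \ref{cor_minimalmorphism} and \ref{cor_essimpointwise} to $u=\ipush$, and nothing further is needed.
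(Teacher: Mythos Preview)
Your proof is correct and matches the paper's approach exactly: the paper simply states that the corollary is an immediate consequence of Corollary~\ref{cor_minimalmorphism} and Corollary~\ref{cor_essimpointwise}, and you have spelled out precisely that deduction. One small slip: you write that $\ipush$ is ``injective on objects, hence in particular fully faithful,'' but injectivity on objects does not imply full faithfulness; the correct reason is that $\ipush$ is by definition the inclusion of a full subposet, hence a fully faithful functor.
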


\begin{definition}
Let $F\colon \DD\nach \DD '$ be a morphism of derivators.\\
{\rm i)} The morphism $F$ is \emph{left exact} if it preserves Cartesian squares and final objects.\\
{\rm ii)} The morphism $F$ is \emph{right exact} if it preserves coCartesian squares and initial objects.\\
{\rm iii)} The morphism $F$ is \emph{exact} if it is left exact and right exact.
\end{definition}

It follows immediately from this definition that a left exact morphism preserves, in particular, finite products and dually for a right exact morphism.

\begin{example}\label{example_exact}
{\rm i)} Let $(F,U)\colon\mathcal{M}\nach\mathcal{N}$ be a Quillen adjunction between combinatorial model categories. The morphism $\mathbb{L}F\colon\DD_{\mathcal{M}}\nach\DD_{\mathcal{N}}$ is right exact and the morphism $\mathbb{R}U\colon\DD_{\mathcal{N}}\nach\DD_{\mathcal{M}}$ is left exact. This holds more generally for an arbitrary adjunction of derivators.\\
{\rm ii)} Let $\DD$ be a derivator and let $u\colon L\nach M$ be a functor. The induced strict morphism of derivators $u^\ast\colon\DD^M\nach\DD^L$ is exact.
\end{example}

\subsection{Suspensions, Loops, Cones, and Fibers}

Let $\DD$ be a pointed derivator and let $J$ be a category. In this subsection we want to construct the suspension and loop functors on $\DD(J)$ and the cone and fiber functors on $\DD(J\times [1]).$ By Proposition \ref{prop_productpointed}, we can assume $J=e.$ 

Let us begin with the suspension functor  $\Sigma$ and the loop functor $\Omega.$ The `extension by zero functors' as given by Proposition \ref{prop_extzero} will again be crucial. Let us consider the following sequences of functors:
$$
\xymatrix{
e\ar[r]^{(0,0)}& \push \ar[r]^\ipush& \square & e\ar[l]_{(1,1)}, && e\ar[r]^{(1,1)}& \pull \ar[r]^\ipull & \square & e\ar[l]_{(0,0)}.
}$$
Since $(0,0)\colon e\nach\push$ is a sieve the homotopy right Kan extension functor $(0,0)_\ast$ gives us an `extension by zero functor' by Proposition \ref{prop_extzero}, and similarly for the homotopy left Kan extension $(1,1)_!$ along the cosieve $(1,1)\colon e\nach\pull$.

\begin{definition}
Let $\DD$ be a pointed derivator.\\
i) The \emph{suspension functor} $\Sigma$ is given by $\xymatrix{
\Sigma\colon\DD(e)\ar[r]^{(0,0)_\ast}& \DD(\push)\ar[r]^{\ipush _!}& \DD(\square)\ar[r]^{(1,1)^\ast}&\DD(e).
}$\\
ii) The \emph{loop functor} $\Omega$ is given by $\xymatrix{
\Omega\colon\DD(e)\ar[r]^{(1,1)_!}& \DD(\pull)\ar[r]^{\ipull _\ast}& \DD(\square)\ar[r]^{(0,0)^\ast}&\DD(e).
}$
\end{definition}

The motivation for these definitions should be clear from topology. Recall that given a pointed topological space $X$, the suspension $\Sigma X$ is constructed by first taking two instances of the canonical inclusion into the (contractible!) cone $\mathsf{C}X$ and then forming the pushout:
$$\xymatrix{
X\ar[r]\ar[d]& \mathsf{C}X&&X\ar[r]\ar[d]&\mathsf{C}X\ar[d]\\
\mathsf{C}X&&&\mathsf{C}X\ar[r]&\Sigma X
}
$$ 
We can consider this diagram as a homotopy pushout. The above definition abstracts precisely this construction. 

Of course, we want to show that these functors define an adjoint pair $(\Sigma, \Omega)\colon \DD(e)\rightharpoonup \DD(e).$ For this purpose, let us denote by $\mathcal{M}\subset \DD(\square),\;\mathcal{M}^{\push}\subset \DD(\push),$ and $\mathcal{M}^{\pull}\subset\DD(\pull)$ the respective full subcategories spanned by the objects $X$ with $X_{1,0}\cong 0 \cong X_{0,1}.$ 

\begin{proposition}
If $\DD$ is a pointed derivator, then we have an adjunction
$(\Sigma,\Omega)\colon \DD(e)\rightharpoonup \DD(e).$
\end{proposition}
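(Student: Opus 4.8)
The plan is to exhibit $(\Sigma,\Omega)$ as a composite of three adjunctions, using the fact that the various homotopy Kan extension functors involved restrict to equivalences onto the subcategories $\mathcal{M}$, $\mathcal{M}^{\push}$, $\mathcal{M}^{\pull}$ of objects vanishing on the off-diagonal entries $(1,0)$ and $(0,1)$. First I would observe that, by Proposition \ref{prop_extzero} applied to the sieve $(0,0)\colon e\nach\push$, the adjunction $((0,0)^\ast,(0,0)_\ast)$ restricts to an equivalence $\DD(e)\stackrel{\simeq}{\nach}\mathcal{M}^{\push}$; dually, $((1,1)_!,(1,1)^\ast)$ restricts to an equivalence $\DD(e)\stackrel{\simeq}{\nach}\mathcal{M}^{\pull}$. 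Here I am using that $\push-\{(0,0)\}=\{(1,0),(0,1)\}$ and $\pull-\{(1,1)\}=\{(1,0),(0,1)\}$, so that the essential image is exactly the objects vanishing at those two points.

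The second step is to analyse the behaviour of $\ipush_!$ and $\ipull_\ast$ on these subcategories. I claim $\ipush_!$ restricts to an equivalence $\mathcal{M}^{\push}\stackrel{\simeq}{\nach}\mathcal{M}$, with inverse $\ipush^\ast$. Since $\ipush$ is fully faithful, $\ipush_!$ is fully faithful and $\ipush^\ast\ipush_!\cong\id$ by Proposition \ref{prop_honestext}; it remains to see that $\ipush_!$ lands in $\mathcal{M}$ (i.e.\ preserves vanishing at $(1,0)$ and $(0,1)$, which is clear since $(1,0),(0,1)\in\push$ and $\ipush^\ast\ipush_!\cong\id$) and that it is essentially surjective onto $\mathcal{M}$. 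The latter follows from Lemma \ref{lemma_essim}: for $X\in\mathcal{M}$ one only needs the counit $\ipush_!\ipush^\ast X\nach X$ to be an isomorphism at $(1,1)$, and by (Der4) this value is $\Hocolim_{\push_{/(1,1)}}\pr^\ast\ipush^\ast X\cong\Hocolim_{\push}\ipush^\ast X$, which computes the homotopy pushout of a span with two legs equal to $0$—but since $X$ itself is obtained by gluing such a span at $(1,1)$, the comparison is an isomorphism. (Concretely: every object of $\mathcal{M}$ restricts on $\push$ to an object of $\mathcal{M}^{\push}$ and is determined by it up to the $(1,1)$-value, and $\ipush_!$ realises the canonical such gluing.) Dually $\ipull_\ast$ restricts to an equivalence $\mathcal{M}^{\pull}\stackrel{\simeq}{\nach}\mathcal{M}$ with inverse $\ipull^\ast$, and moreover $((1,1)^\ast,(1,1)_\ast)$ restricted to $\mathcal{M}$ and $((0,0)^\ast,(0,0)_\ast)$ restricted to $\mathcal{M}$ behave as follows: $(1,1)^\ast\colon\mathcal{M}\nach\DD(e)$ is the composite $\mathcal{M}\stackrel{\ipull^\ast}{\nach}\mathcal{M}^{\pull}\stackrel{(1,1)^\ast}{\nach}\DD(e)$ up to the equivalences, hence has the left adjoint $(1,1)_!$ followed by $\ipull_\ast$ — which is exactly the construction of $\Omega$ read backwards.

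Assembling: $\Sigma=(1,1)^\ast\circ\ipush_!\circ(0,0)_\ast$ and $\Omega=(0,0)^\ast\circ\ipull_\ast\circ(1,1)_!$. Through the chain of equivalences $\DD(e)\simeq\mathcal{M}^{\push}\simeq\mathcal{M}\simeq\DD(e)$ the functor $\Sigma$ is identified with the equivalence sending $X\in\DD(e)$ to $(1,1)^\ast$ of the coCartesian square on the span $0\leftarrow X\rightarrow 0$, while $\Omega$ is identified with the equivalence sending $Y$ to $(0,0)^\ast$ of the Cartesian square on the cospan $0\rightarrow Y\leftarrow 0$. The point is that these two composites of adjunctions are mutually adjoint: $(0,0)_\ast$ is right adjoint to $(0,0)^\ast$, $\ipush_!$ is left adjoint to $\ipush^\ast=\ipull^\ast$ restricted to the relevant subcategories... — more cleanly, I would write $\Sigma$ as the composite of the three \emph{left}-facing adjunctions $((0,0)_\ast\text{ as left adjoint to its right adjoint? no})$ — so instead: take $\Sigma=(1,1)^\ast\ipush_!(0,0)_\ast$ and note each of $(0,0)_\ast$, $\ipush_!$, $(1,1)^\ast$ has a right adjoint, namely $(0,0)^\ast$, $\ipush^\ast$, $(1,1)_\ast$ respectively; hence $\Sigma$ has the right adjoint $(0,0)^\ast\ipush^\ast(1,1)_\ast$. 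The remaining task is to identify this right adjoint with $\Omega=(0,0)^\ast\ipull_\ast(1,1)_!$, and this is where the vanishing subcategories do the work: on $\mathcal{M}$ one has $\ipush^\ast\cong\ipull^\ast$ trivially (both are restriction to $\{(1,0),(0,1)\}$ composed with nothing new — rather, the relevant comparison is $\ipush^\ast(1,1)_\ast\cong\ipull_\ast(1,1)_!$ as functors $\DD(e)\nach\DD(\square)$, which follows because both sides land in $\mathcal{M}$ and agree on $\push$ and on $\pull$, hence on all of $\square$ by (Der1)-type pointwise detection combined with the Kan extension formulas).

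The main obstacle I anticipate is precisely this last identification — showing that the a priori right adjoint $(0,0)^\ast\ipush^\ast(1,1)_\ast$ of $\Sigma$ coincides with the explicitly defined $\Omega$, or equivalently that the \emph{bicartesian} nature of the square (which we do NOT have in a merely pointed derivator!) is not needed. Care is required here: in a pointed (non-stable) derivator the suspension square is coCartesian but not Cartesian, so one cannot simply say "the square is biCartesian hence $\Sigma$ and $\Omega$ are inverse"; rather $\Sigma$ and $\Omega$ are only adjoint, and the adjunction comes from composing the three elementary adjunctions, with the subtlety that $(1,1)_\ast$ (extension by the homotopy limit, not by zero) must be related to $\ipull_\ast(1,1)_!$. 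I expect to handle this by checking that $\ipull_\ast(1,1)_!X$ has the value $0$ at $(1,0)$ and $(0,1)$ — using (Der4) and the description of homotopy right Kan extensions via $\Holim$ over slice categories $\pull_{(1,0)/}$, $\pull_{(0,1)/}$, each of which has an initial object so the limit is computed at that object, which is $0$ — so that $\ipull_\ast(1,1)_!X\in\mathcal{M}$, and then comparing with $(1,1)_\ast X$ via the universal property, or equivalently invoking that $\mathcal{M}\simeq\DD(e)$ via $(1,1)^\ast$ (Lemma \ref{lemma_essim} again, as $\pull$-restriction already forces the $(0,0)$-value). Once $\Sigma$ and $\Omega$ are each exhibited as triples of composable adjunctions with matching middle terms in $\mathcal{M}$, the adjunction $(\Sigma,\Omega)\colon\DD(e)\rightharpoonup\DD(e)$ follows formally by composing adjunctions.
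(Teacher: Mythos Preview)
Your overall strategy---factor $\Sigma$ and $\Omega$ through the full subcategories $\mathcal{M}^{\push},\mathcal{M},\mathcal{M}^{\pull}$ and then compose adjunctions---is exactly the paper's strategy. But two concrete missteps keep you from executing it.

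First, the claim that $\ipush_!$ restricts to an \emph{equivalence} $\mathcal{M}^{\push}\stackrel{\simeq}{\nach}\mathcal{M}$ is false in a merely pointed derivator. The essential image of $\ipush_!|_{\mathcal{M}^{\push}}$ consists of the \emph{coCartesian} squares in $\mathcal{M}$, which in general form a proper subcategory $\mathcal{M}^\Sigma\subsetneq\mathcal{M}$. (Take any $X\in\DD(e)$ with $\Sigma X\not\cong 0$, and consider the square with $X$ at $(0,0)$ and $0$ elsewhere: this lies in $\mathcal{M}$ but is not coCartesian.) Your argument for essential surjectivity tacitly assumes every $X\in\mathcal{M}$ is coCartesian when you write ``since $X$ itself is obtained by gluing such a span at $(1,1)$''. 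You yourself flag the danger later (``which we do NOT have in a merely pointed derivator!''), but the earlier step already relies on it.

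Second, the fallback ``each of $(0,0)_\ast,\ipush_!,(1,1)^\ast$ has a right adjoint'' does not work either: $(0,0)_\ast$ is itself a \emph{right} adjoint (to $(0,0)^\ast$), and does not have an obvious further right adjoint at this point in the development. The identity you would need, $\ipush^\ast(1,1)_\ast\cong\ipull_\ast(1,1)_!$, is also not generally valid on $\DD(e)$.

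The paper's clean fix is to factor into \emph{four} steps rather than three, by harmlessly inserting $\ipull^\ast$ into $\Sigma$ and $\ipush^\ast$ into $\Omega$ (using $(1,1)^\ast=(1,1)^\ast\circ\ipull^\ast$ and $(0,0)^\ast=(0,0)^\ast\circ\ipush^\ast$):
\[
\Sigma\colon\ \DD(e)\xrightarrow{(0,0)_\ast}\mathcal{M}^{\push}\xrightarrow{\ipush_!}\mathcal{M}\xrightarrow{\ipull^\ast}\mathcal{M}^{\pull}\xrightarrow{(1,1)^\ast}\DD(e),
\]
\[
\Omega\colon\ \DD(e)\xleftarrow{(0,0)^\ast}\mathcal{M}^{\push}\xleftarrow{\ipush^\ast}\mathcal{M}\xleftarrow{\ipull_\ast}\mathcal{M}^{\pull}\xleftarrow{(1,1)_!}\DD(e).
\]
Now the outer two pairs are the equivalences you already identified (so they are adjunctions in either direction), and the inner two pairs are the honest adjunctions $(\ipush_!,\ipush^\ast)$ and $(\ipull^\ast,\ipull_\ast)$ restricted to full subcategories they preserve. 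No step claims an equivalence through $\mathcal{M}$, and no step requires a right adjoint to $(0,0)_\ast$. Composing gives $(\Sigma,\Omega)$.
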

\begin{proof}
With the notation established above, the suspension and the loop functor can be factored as follows:
$$
\xymatrix{
\Sigma\colon & \DD(e)\ar@{=}[d]\ar[r]^{(0,0)_\ast}_{\simeq} & \mathcal{M}^{\push}\ar@{=}[d]\ar[r]^{\ipush_!}& \mathcal{M}\ar@{=}[d]\ar[r]^{\ipull^\ast} & \mathcal{M}^{\pull}\ar@{=}[d]\ar[r]^{(1,1)^\ast}_{\simeq}& \DD(e)\ar@{=}[d]& \\
& \DD(e) & \mathcal{M}^{\push}\ar[l]_-{(0,0)^\ast}^\simeq& \mathcal{M}\ar[l]_-{\ipush^\ast} & \mathcal{M}^{\pull}\ar[l]_{\ipull_\ast}& \DD(e)\ar[l]^{\simeq}_{(1,1)_!}& \colon \Omega
}
$$
The existence of the factorization is clear and the fact that the functors $(0,0)_\ast$ and $(1,1)_!$ restricted this way are equivalences follows from their fully faithfulness and Proposition \ref{prop_extzero}. From this description, one sees immediately that we have an adjunction $(\Sigma,\Omega)$ which is, in fact, given as a composite adjunction of four adjunctions among which two are equivalences.
\end{proof}

Using similar constructions, one can introduce \emph{cone} and \emph{fiber functors} for pointed derivators. Again, the definition is easily motivated from topology. If we consider a map of pointed spaces $f\colon X\nach Y$ then the mapping cone $\mathsf{C}f$ of $f$ is constructed in two steps by forming a pushout as indicated in the next diagram:
$$
\xymatrix{
X\ar[r]^f\ar[d]&Y&&X\ar[r]^f\ar[d]&Y\ar[d]\\
\mathsf{C}X&&&\mathsf{C}X\ar[r]&\mathsf{C}f
}$$
To axiomatize this in the context of a pointed derivator, let us consider the following morphisms of posets:
$$\xymatrix{
[1]\ar[r]^i& \push \ar[r]^{i_{\push}}& \square & \pull\ar[l]_{i_{\pull}} & [1]\ar[l]_j
}$$
Here, $i$ is the sieve classifying the horizontal arrow while $j$ is the cosieve classifying the vertical arrow. In particular, by Proposition \ref{prop_extzero}, we have again extension by zero functors $i_\ast$ and $j_!.$

\begin{definition}
Let $\DD$ be a pointed derivator.\\
{\rm i)} The \emph{cone functor} $\cone \colon \DD([1])\nach \DD([1])$ is defined as the composition: $$\cone\colon \DD([1])\stackrel{i_\ast}{\nach}\DD(\push)\stackrel{{i_{\push}}_!}{\nach}\DD(\square)\stackrel{j^\ast}{\nach}\DD([1])$$\\
{\rm ii)} The \emph{fiber functor} $\fibre \colon \DD([1])\nach \DD([1])$ is defined as the composition: $$\fibre\colon \DD([1])\stackrel{j_!}{\nach}\DD(\pull)\stackrel{{i_{\pull}}_\ast}{\nach}\DD(\square)\stackrel{i^\ast}{\nach}\DD([1])$$
Moreover, let $\mathsf{C}\colon \DD([1])\nach \DD(e)$ be the functor obtained from the cone functor by evaluation at 1, and similarly let $\mathsf{F}\colon \DD([1])\nach \DD(e)$ be the functor obtained from the fiber functor by evaluation 0.
\end{definition}

Proposition \ref{prop_cocartiso} shows that the cone $\mathsf{C}f$ of an isomorphism $f$ is the zero object $0.$ In general, the converse is only true in the stable situation (cf.\ Proposition \ref{prop_isocone}). There is the following counterexample to the converse in the unstable situation. 

\begin{counterexample}
Let $\mathcal{E}$ be an exact category in the sense of Quillen (cf.\ \cite{quillen_ktheory}). Moreover, let us assume $\mathcal{E}$ to have enough injectives but also that $\mathcal{E}$ is not Frobenius, i.e., the classes of injectives and projectives do not coincide. The stable category~$\underline{\mathcal{E}}$ which is obtained from $\mathcal{E}$ by dividing out the maps factoring over injectives is a `suspended category' in the sense of \cite{kellervossieck}. Let now $X$ be an object of $\mathcal{E}$ of injective dimension $1$ and let $0\nach X\nach I^0=I\nach I^1=\Sigma X\nach 0$ be an injective resolution of $X$. By definition of the suspended structure on $\underline{\mathcal{E}}$ (cf.\ \cite{kellervossieck} or \cite[Chapter I]{happel_triangulated}) the diagram
$$
\xymatrix{
X\ar[r]^u\ar[d]_{\id}&I\ar[d]^{\id}\ar[r]^v&\Sigma X\ar[d]^{\id}\\
X\ar[r]&I\ar[r]&\Sigma X
}
$$
gives rise to the distinguished triangle $X\stackrel{u}{\nach} I\stackrel{v}{\nach}\Sigma X\stackrel{\id}{\nach}\Sigma X.$ Since $\Sigma X$ is trivial in the stable category $\underline{\mathcal{E}}$ the morphism $u$ is an example of a morphism which is not an isomorphism but still has a vanishing cone. In the stable situation, i.e., in the Frobenius case, this counterexample cannot exist. In fact, the above resolution of $X$ would split because $\Sigma X$ is by assumption injective, hence projective, showing that the injective dimension of $X$ is zero. This example can be made into an example about pointed derivators by using \cite{keller_exact}.
\end{counterexample}

As a preparation for the next proof, let us denote by $\mathcal{N}\subset \DD(\square),\:\mathcal{N}^{\push}\subset \DD(\push),$ and $\mathcal{N}^{\pull}\subset\DD(\pull)$ the respective full subcategories spanned by the objects $X$ with $X_{0,1}\cong 0.$

\begin{proposition}
Let $\DD$ be a pointed derivator, then we have an adjunction: 
$$(\cone,\fibre)\colon \DD([1])\rightharpoonup \DD([1])$$
\end{proposition}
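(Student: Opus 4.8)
The plan is to mimic the proof of the adjunction $(\Sigma,\Omega)$, factoring both $\cone$ and $\fibre$ through the intermediate full subcategories $\mathcal{N}\subset\DD(\square)$, $\mathcal{N}^{\push}\subset\DD(\push)$, and $\mathcal{N}^{\pull}\subset\DD(\pull)$, and exhibiting the composites as chains of adjunctions in which certain links are equivalences. First I would observe that since $i\colon[1]\nach\push$ is a sieve, Proposition \ref{prop_extzero} gives that $i_\ast\colon\DD([1])\nach\DD(\push)$ is fully faithful with essential image $\mathcal{N}^{\push}$ (the objects vanishing at the corner $(0,1)$ not in the image of $i$), so $i_\ast$ restricts to an equivalence $\DD([1])\stackrel{\simeq}{\nach}\mathcal{N}^{\push}$ with inverse $i^\ast$. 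Dually, since $j\colon[1]\nach\pull$ is a cosieve, $j_!\colon\DD([1])\nach\DD(\pull)$ restricts to an equivalence $\DD([1])\stackrel{\simeq}{\nach}\mathcal{N}^{\pull}$ with inverse $j^\ast$. The real content is to check that $\ipush_!$ and $\ipull_\ast$ restrict to mutually adjoint equivalences (or at least an adjunction) between $\mathcal{N}^{\push}$, $\mathcal{N}^{\pull}$, and $\mathcal{N}$, and that the leftover evaluation/restriction functors $j^\ast$ on $\mathcal{N}$ (landing in $\DD([1])$) and its partner behave correctly.

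More concretely, I would argue as follows. The functor $\ipush_!\colon\DD(\push)\nach\DD(\square)$ is fully faithful (Proposition \ref{prop_honestext}) and it sends $\mathcal{N}^{\push}$ into $\mathcal{N}$: an object $X$ of $\mathcal{N}^{\push}$ has $X_{0,1}\cong 0$, and since $0,1\in\push$ both lie in the image of $\ipush$, the canonical map $(\ipush_! X)_{0,1}\nach$ ... is an isomorphism onto $X_{0,1}\cong 0$, so $\ipush_! X\in\mathcal{N}$. Conversely I claim the restriction $\ipush^\ast\colon\mathcal{N}\nach\mathcal{N}^{\push}$ is an inverse equivalence: full faithfulness of $\ipush_!$ gives the unit is iso, and for the counit one uses Lemma \ref{lemma_essim} together with the observation that the only point of $\square$ not in the image of $\ipush$ is $(1,1)$, and that an object of $\mathcal{N}$ is automatically coCartesian there — this last point is exactly Proposition \ref{prop_cocartiso}(ii) (with $X_{0,1}\cong 0$ and $X_{1,0}$ arbitrary we do not get this for free, so actually I would instead restrict to the smaller subcategory $\mathcal{M}$ if needed, or argue directly). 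Here is where I need to be careful: the correct bookkeeping is that $\cone$ factors as $\DD([1])\stackrel{i_\ast}{\nach}\mathcal{N}^{\push}\stackrel{\ipush_!}{\nach}\mathcal{N}\stackrel{j^\ast}{\nach}\DD([1])$ and $\fibre$ as $\DD([1])\stackrel{j_!}{\nach}\mathcal{N}^{\pull}\stackrel{\ipull_\ast}{\nach}\mathcal{N}\stackrel{i^\ast}{\nach}\DD([1])$, and the key is that on $\mathcal{N}$ we have an adjunction $(j^\ast\circ\ipush_!\circ i_\ast,\ \text{stuff})$; more cleanly, $\ipush_!$ and $\ipull_\ast$ become an adjunction $\DD(\push)\rightharpoonup\DD(\pull)$ only after passing through $\DD(\square)$.

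The cleanest route, which I would adopt, is: write $\cone = j^\ast\circ\ipush_!\circ i_\ast$ and $\fibre = i^\ast\circ\ipull_\ast\circ j_!$, and build the adjunction $(\cone,\fibre)$ as a composite of four adjunctions
\[
\xymatrix{
\DD([1])\ar@<0.6ex>[r]^-{i_\ast} & \mathcal{N}^{\push}\ar@<0.6ex>[l]^-{i^\ast} & \mathcal{N}\ar@<0.6ex>[l]^-{?} \ar@<0.6ex>[r]^-{?} & \mathcal{N}^{\pull}\ar@<0.6ex>[r]^-{j^\ast} & \DD([1])\ar@<0.6ex>[l]^-{j_!}
}
\]
where the outer two adjunctions $(i_\ast, i^\ast)$ and $(j_!, j^\ast)$ are in fact equivalences onto $\mathcal{N}^{\push}$ resp.\ $\mathcal{N}^{\pull}$ restricted appropriately, and the middle adjunction is $(\ipush_!|, \ipull_\ast|)$ viewed through the equivalences $\mathcal{N}^{\push}\simeq\mathcal{N}\simeq\mathcal{N}^{\pull}$ (both $\ipush_!$ and $\ipull_\ast$ restrict to equivalences onto $\mathcal{N}$, being fully faithful by Proposition \ref{prop_honestext} with the correct essential images computed via Lemma \ref{lemma_essim}). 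Composing equivalences and adjunctions yields $(\cone,\fibre)$. The main obstacle I anticipate is precisely the essential-image computation: verifying that $\ipush_!$ restricted to $\mathcal{N}^{\push}$ has essential image exactly $\mathcal{N}$ and similarly for $\ipull_\ast$ — i.e.\ that every object of $\DD(\square)$ vanishing at $(0,1)$ is automatically both coCartesian and Cartesian. This needs a pointwise argument with (Der4) and Proposition \ref{prop_cofinality}/Proposition \ref{prop_extzero}, exploiting that the relevant slice categories over $(1,1)$ resp.\ under $(0,0)$ have the vanishing object sitting cofinally/initially — essentially the same mapping-cylinder-free computation already used for $\Sigma$ and $\Omega$, so I would cite the analogous argument and just indicate the modification rather than redo it in full.
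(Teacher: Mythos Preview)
Your overall strategy matches the paper's: factor $\cone$ and $\fibre$ through the full subcategories $\mathcal{N}^{\push}$, $\mathcal{N}$, $\mathcal{N}^{\pull}$ and exhibit the pair as a composite of four adjunctions, two of which (the outer ones) are equivalences by Proposition~\ref{prop_extzero}. But there is a genuine gap in how you handle the middle.

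You repeatedly try to show that $\ipush_!$ restricts to an \emph{equivalence} $\mathcal{N}^{\push}\stackrel{\simeq}{\nach}\mathcal{N}$ (and dually for $\ipull_\ast$), and you correctly identify the obstacle: this would require every object of $\mathcal{N}$---every square vanishing at $(0,1)$---to be coCartesian. That claim is simply false in a general pointed derivator (a square $X\to Y$, $0\to Z$ with $Z$ not the cofiber of $X\to Y$ gives a counterexample), so the argument you sketch cannot be completed as stated. Proposition~\ref{prop_cocartiso} does not help here, since it requires $X_{0,0}\to X_{1,0}$ to be an isomorphism, not merely $X_{0,1}\cong 0$.

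The fix is that you do not need equivalences in the middle at all. The adjunction $(\ipush_!,\ipush^\ast)\colon\DD(\push)\rightharpoonup\DD(\square)$ restricts to an adjunction $\mathcal{N}^{\push}\rightharpoonup\mathcal{N}$ as soon as both functors preserve the subcategories, and this is immediate: $\ipush^\ast$ preserves vanishing at $(0,1)$ trivially, while $\ipush_!$ does so because $(0,1)$ lies in the image of the fully faithful $\ipush$ (Proposition~\ref{prop_honestext}). The same argument gives the restricted adjunction $(\ipull^\ast,\ipull_\ast)\colon\mathcal{N}\rightharpoonup\mathcal{N}^{\pull}$. So the paper's chain is
\[
\DD([1])\underset{i^\ast}{\overset{i_\ast}{\rightleftarrows}}\mathcal{N}^{\push}
\underset{\ipush^\ast}{\overset{\ipush_!}{\rightleftarrows}}\mathcal{N}
\underset{\ipull_\ast}{\overset{\ipull^\ast}{\rightleftarrows}}\mathcal{N}^{\pull}
\underset{j_!}{\overset{j^\ast}{\rightleftarrows}}\DD([1]),
\]
with only the outer two links being equivalences. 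Composing the four adjunctions gives $(\cone,\fibre)$ directly, with no essential-image computation for the middle steps required.
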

\begin{proof}
There are the following factorizations of the cone and fiber functors:
$$\xymatrix{
\cone\colon & \DD([1])\ar@{=}[d]\ar[r]^{i_\ast}_{\simeq}& \mathcal{N}^{\push}\ar@{=}[d]\ar[r]^{\ipush_!}&\mathcal{N}\ar@{=}[d]\ar[r]^{\ipull^\ast}&\mathcal{N}^{\pull}\ar@{=}[d]\ar[r]^{j^\ast}_{\simeq}&\DD([1])\ar@{=}[d]&\\
& \DD([1])&\mathcal{N}^{\push}\ar[l]^{\simeq}_{i^\ast}&\mathcal{N}\ar[l]_{\ipush^\ast}&
\mathcal{N}^{\pull}\ar[l]_{\ipull_\ast}&\DD([1])\ar[l]_{j_!}^{\simeq}&:\fibre
}$$
The existence of these factorizations is again obvious and the fact that the outer functors are equivalences follows again from Proposition \ref{prop_extzero}. Thus, this shows that the pair $(\cone,\fibre)$ is the composition of four adjunctions among which two are equivalences.
\end{proof}

In the literature, there is also an alternative description of some of the functors we just introduced. This alternative description is, for example, helpful in the understanding of morphisms in~$\DD([1])$ which induce zero morphisms on underlying diagrams. Using our explicit construction of the (co)exceptional inverse image functors at the end of Subsection~\ref{subsec_pointed}, let us quickly show these two approaches to be equivalent. For this purpose, let $\DD$ again be a pointed derivator and let us consider the category $[1]$ together with the cosieve $1\colon e\nach[1]$ and the sieve $0\colon e\nach [1].$ Corollary~\ref{cor_pointed} implies that we have adjunctions:
$$
(1^?,1_!)\colon\DD([1])\rightharpoonup\DD(e)\qquad\mbox{and}
\qquad(0_\ast,0^!)\colon\DD(e)\rightharpoonup\DD([1])
$$
The formulas via the mapping cylinder constructions can be made very explicit in this case so that we have the following descriptions of the additional adjoints $1^?$ and $0^!:$
$$
\xymatrix{
1^?\colon\DD([1])\ar[r]^{j_\ast}_{\simeq}&\DD(\push,(0,1))\ar[r]^-{{\pr_1}_!}&\DD([1],0)\ar[r]^-{1^\ast}_-\simeq&\DD(e)\\
0^!\colon\DD([1])\ar[r]^{j_!}_\simeq&\DD(\pull,(1,0))\ar[r]^-{{\pr_1}_\ast}&\DD([1],1)\ar[r]^-{0^\ast}_-\simeq&\DD(e)
}
$$
In both formulas, $j$ denotes the functor classifying the horizontal arrow and the functors $\pr_1$ are suitable restrictions of the projection on the first component $\square\nach [1].$ It follows from Lemma~\ref{lemma_terminal} that in both cases the composition of the last two functors is naturally isomorphic to the homotopy colimit and homotopy limit functor respectively. A final application of (Der4) then implies the following result.

\begin{proposition}\label{prop_altdesc}
Let $\DD$ be a pointed derivator then we have the following natural isomorphisms: 
$$\mathsf{C}\cong 1^?,\qquad \Sigma\cong 1^?\circ 0_\ast,\qquad\mathsf{F}\cong 0^!,\qquad\mbox{and}\qquad\Omega\cong 0^!\circ 1_!$$
In particular, we have adjunctions $(\mathsf{C},1_!)\colon\DD([1])\rightharpoonup\DD(e)$ and $(0_\ast,\mathsf{F})\colon\DD(e)\rightharpoonup\DD([1]).$
\end{proposition}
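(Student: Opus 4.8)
The plan is to prove the three natural isomorphisms $\mathsf{C}\cong 1^?$, $\mathsf{F}\cong 0^!$, and then derive the remaining two together with the adjunctions as formal consequences. By duality it suffices to treat $\mathsf{C}\cong 1^?$ and $\Sigma\cong 1^?\circ 0_\ast$; the statements for $\mathsf{F}$ and $\Omega$ follow by applying the already-established result to $\DD^{\op}$ (using Example \ref{example_dual} and the observation that passing to the opposite derivator exchanges $\push$ with $\pull$, $1$ with $0$, left Kan extensions with right Kan extensions, and hence $\mathsf{C}$ with $\mathsf{F}$, $\Sigma$ with $\Omega$, and $1^?$ with $0^!$).

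For the isomorphism $\mathsf{C}\cong 1^?$, I would unwind the two constructions. On the one hand, $\mathsf{C}\colon\DD([1])\nach\DD(e)$ is by definition the composite $1^\ast\circ j^\ast\circ {i_\push}_!\circ i_\ast$, where $i\colon[1]\nach\push$ is the sieve on the horizontal arrow, $j\colon[1]\nach\square$ is the cosieve on the vertical arrow, and I have used that evaluation at $1$ of the cone functor means precomposition with $(1,1)\colon e\nach\square$, which factors through $j$ via $1\colon e\nach[1]$. On the other hand, the explicit formula just recalled gives $1^?=1^\ast\circ{\pr_1}_!\circ j_\ast$, with $j_\ast\colon\DD([1])\stackrel{\simeq}{\nach}\DD(\push,(0,1))$ the extension-by-zero along the sieve classifying the horizontal arrow (Proposition \ref{prop_extzero}) and ${\pr_1}\colon\push\nach[1]$ the restriction of the first projection. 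The key is to observe that all the intermediate categories in both chains — $\DD(\push,(0,1))$, the subcategory $\mathcal{N}^\push\subseteq\DD(\push)$ of objects vanishing at $(0,1)$, and $\DD([1],0)$ — are identified by the equivalences of Proposition \ref{prop_extzero}, and that the functors ${i_\push}_!$ and the equivalences $i_\ast$, $j_\ast$ are all instances of homotopy Kan extensions along fully faithful functors, hence fully faithful. Concretely I would check that under the equivalence $i_\ast\colon\DD([1])\stackrel{\simeq}{\nach}\mathcal{N}^\push$ one has ${\pr_1}_!\cong {i_\push}_!$ followed by evaluation-type restriction; this reduces to a single application of (Der4) comparing the relevant slice categories, exactly in the spirit of the computation of $q_!$ in the proof of the mapping-cylinder lemma in Subsection \ref{subsec_pointed}.

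Concretely, the cleanest route is to exhibit a single span of functors in $\Cat$ and apply base change. Let $J_0\subseteq\square$ be the cosieve $\push$; the inclusions $i\colon[1]\nach\push$ (horizontal) and $1\colon e\nach\push$ fit into the picture used to define $\mathsf{C}$, while ${\pr_1}\colon\push\nach[1]$ and $1\colon e\nach[1]$ fit into the picture defining $1^?$. Since $1\colon e\nach\push$ is a right adjoint to ${\pr_1}$ (as $(0,0)\leq(0,0)$ and ${\pr_1}$ followed by $1$ lands weakly above the identity on the subposet $\push$ — more precisely $1\circ{\pr_1}$ sends $(0,1)$ to $(1,1)$, so this is not literally a right adjoint; instead I would use that $1\colon e\nach\push$ is the terminal object inclusion), Lemma \ref{lemma_terminal} gives ${\pr_1}_!\cong 1^\ast$ after restricting to objects vanishing at $(0,1)$ — wait, here I must be careful: rather, $1\colon e\nach\square$ picks the terminal object $(1,1)$ and Lemma \ref{lemma_terminal}(i) gives $X_{(1,1)}\cong\Hocolim_\square X$, which combined with cofinality (Proposition \ref{prop_cofinality}) of $\push\hookrightarrow\square$ on objects with vanishing restriction off $\push$ yields the identification. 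The upshot is that both $\mathsf{C}$ and $1^?$ compute, up to canonical isomorphism, the homotopy colimit over $\push$ of the extension-by-zero of a morphism, and the comparison is forced by the universal properties; once $\mathsf{C}\cong 1^?$ is in hand, $\Sigma\cong 1^?\circ 0_\ast$ follows since $\Sigma=\mathsf{C}\circ 0_\ast$ is immediate from the definitions (the defining diagram of $\Sigma$ is exactly that of $\mathsf{C}$ applied to the zero morphism $0\nach X$, i.e.\ to $0_\ast(X)$, evaluated at $(1,1)$). Finally the adjunctions $(\mathsf{C},1_!)$ and $(0_\ast,\mathsf{F})$ are just the adjunctions $(1^?,1_!)$ and $(0_\ast,0^!)$ of Corollary \ref{cor_pointed} transported along these isomorphisms.

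The main obstacle I anticipate is bookkeeping the precise identifications of the chains of equivalences and restriction functors — in particular making sure that the isomorphism $\mathsf{C}\cong 1^?$ is \emph{natural} and not merely objectwise, and that the factorizations through $\mathcal{N}^\push$, $\DD(\push,(0,1))$, and $\DD([1],0)$ really agree on the nose after applying Proposition \ref{prop_extzero}. This is where I would be most careful: rather than chasing objects, I would argue at the level of functors, writing each of $\mathsf{C}$ and $1^?$ as a composite of homotopy (co)limit functors and restrictions, then using the pasting compatibility of base change (Lemma \ref{lemma_BCpasting}) together with (Der4) and Proposition \ref{prop_cofinality} to identify the two composites. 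Everything else — the deduction of the $\mathsf{F}$, $\Omega$ statements by duality, and of the two adjunctions — is formal.
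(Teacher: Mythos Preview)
Your overall strategy matches the paper's: reduce to $\mathsf{C}\cong 1^?$ by duality, and show that both functors compute $\Hocolim_{\push}$ applied to the extension-by-zero $j_\ast\colon\DD([1])\stackrel{\simeq}{\nach}\DD(\push,(0,1))$. The derivations of $\Sigma\cong 1^?\circ 0_\ast$ and of the adjunctions are exactly as you say.

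However, the middle of your argument contains a genuine confusion that you do not resolve. You try to invoke cofinality (Proposition~\ref{prop_cofinality}) for the inclusion $\push\hookrightarrow\square$, but this inclusion is \emph{not} a right adjoint (there is no map $(1,1)\to(0,1)$ in $\square$, so no left adjoint exists). Likewise the tentative claim that $1\colon e\nach\push$ is right adjoint to $\pr_1$ is false, as you half-notice. The paper's route avoids both detours and is cleaner:
\begin{itemize}
\item For $1^?$: since $1$ is terminal in $[1]$, Lemma~\ref{lemma_terminal} gives $1^\ast\cong {p_{[1]}}_!$, and hence $1^\ast\circ{\pr_1}_!\cong {p_{[1]}}_!\circ{\pr_1}_!\cong {p_{\push}}_!=\Hocolim_{\push}$ by uniqueness of left adjoints.
\item For $\mathsf{C}$: axiom (Der4) applied to $\ipush\colon\push\nach\square$ at the point $(1,1)$ gives $(1,1)^\ast\circ{\ipush}_!\cong\Hocolim_{\push_{/(1,1)}}\circ\pr^\ast$, and $\push_{/(1,1)}\cong\push$ since every object of $\push$ maps to $(1,1)$; so $(1,1)^\ast\circ{\ipush}_!\cong\Hocolim_{\push}$.
\end{itemize}
Both composites are then $\Hocolim_{\push}\circ j_\ast$, giving the natural isomorphism directly at the level of functors (so your worry about naturality is unfounded). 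No cofinality of $\push\hookrightarrow\square$ is needed, and no restriction ``to objects vanishing off $\push$'' enters the identification of $(1,1)^\ast\circ{\ipush}_!$ with the homotopy colimit.
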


The above definitions can easily be extended (using Example \ref{ex_morphisms}) to morphisms at the level of derivators. Thus, given a pointed derivator $\DD$ we obtain, in particular, adjunctions of derivators 
$$(\Sigma,\Omega)\colon\DD\rightharpoonup\DD\qquad\mbox{and}
\qquad(\cone,\fibre)\colon\DD^{[1]}\rightharpoonup\DD^{[1]}.$$ 

Since the construction of the above functors is based only on certain extension by zero functors and the formation of some (co)Cartesian squares the following proposition is immediate. It applies, in particular, to the precomposition morphisms $v^\ast\colon\DD^M\nach\DD^L$ for a pointed derivator $\DD.$

\begin{proposition}
Let $G\colon\DD\nach\DD'$ be a morphism of pointed derivators.\\
{\rm i)} If $G$ is left exact then we have canonical isomorphisms 
$$G\circ\Omega \nach \Omega\circ G \qquad\mbox{and}\qquad G\circ\fibre\nach \fibre\circ\: G.$$
{\rm ii)} If $G$ is right exact then we have canonical isomorphisms 
$$\Sigma\circ G\nach G\circ \Sigma\qquad\mbox{and}\qquad\cone\circ\: G\nach G\circ \cone.$$
\end{proposition}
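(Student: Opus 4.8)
The plan is to unwind the definitions of $\Sigma$, $\Omega$, $\cone$, and $\fibre$ as composites of three elementary operations — extension by zero along a (co)sieve, the formation of a (co)Cartesian square (a homotopy Kan extension along $\ipush$ or $\ipull$), and a restriction functor $w^\ast$ — and to check that a left (resp.\ right) exact morphism commutes, up to canonical isomorphism, with each of the three pieces. Pasting these isomorphisms together (using the compatibility of base change with pasting that is exploited throughout, cf.\ Proposition \ref{prop_closureDer!}) then produces the desired natural isomorphisms.

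First I would note that a left exact morphism preserves terminal objects and a right exact one preserves initial objects, so in either case, the derivators being pointed, $G$ preserves the zero object. The one point requiring a small argument is then the following: \emph{any} morphism of pointed derivators which preserves zero objects commutes, up to canonical isomorphism, with extension by zero along sieves and along cosieves. For a sieve $u\colon J\nach K$ this is read off from Proposition \ref{prop_extzero}: there $u^\ast$ restricts to an equivalence $\DD(K,K-u(J))\stackrel{\simeq}{\nach}\DD(J)$ with quasi-inverse $u_\ast$; the morphism $G$ carries $\DD(K,K-u(J))$ into $\DD'(K,K-u(J))$ because it preserves zero objects and commutes (via the coherence isomorphism $\gamma^G$) with restriction along the complementary cosieve; and $G$ commutes with $u^\ast$ itself. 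Combining these three facts, together with the fact that the inclusions $\DD(K,K-u(J))\hookrightarrow\DD(K)$ are strictly compatible with $G$, gives $G\circ u_\ast\cong u_\ast\circ G$. The cosieve case is dual, using the equivalence between $\DD(J)$ and $\DD(K,K-u(J))$ given by $u_!$ and $u^\ast$ and, again, the description of the essential image of $u_!$ in Proposition \ref{prop_extzero}. (Alternatively, one may simply check the comparison $2$-cell pointwise using (Der2), since on $u(J)$ both composites agree with the given diagram and off $u(J)$ both are zero.)

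With this in place the proposition is bookkeeping. For {\rm ii)}, $\Sigma$ is $\DD(e)\stackrel{(0,0)_\ast}{\nach}\DD(\push)\stackrel{\ipush_!}{\nach}\DD(\square)\stackrel{(1,1)^\ast}{\nach}\DD(e)$ with $(0,0)\colon e\nach\push$ a sieve: a right exact $G$ commutes with $(0,0)_\ast$ by the previous paragraph, with $\ipush_!$ because right exactness is \emph{defined} as preservation of coCartesian squares, and with the restriction $(1,1)^\ast$ via $\gamma^G$; pasting the three isomorphisms gives $\Sigma\circ G\cong G\circ\Sigma$. The cone functor $\cone=w^\ast\circ\ipush_!\circ i_\ast$, where $i\colon[1]\nach\push$ is a sieve and $w\colon[1]\nach\square$ is a restriction functor classifying the relevant arrow, is handled verbatim. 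Part {\rm i)} is the mirror image: $\Omega=(0,0)^\ast\circ\ipull_\ast\circ(1,1)_!$ and $\fibre=w^\ast\circ\ipull_\ast\circ j_!$ are built from extension by zero along the cosieves $(1,1)\colon e\nach\pull$ resp.\ $j\colon[1]\nach\pull$, the formation of Cartesian squares $\ipull_\ast$ (preserved by a left exact $G$ by definition), and restriction functors; alternatively one invokes the duality $\DD\mapsto\DD^{\op}$ (Example \ref{example_dual}), which swaps left and right exactness and interchanges $\Sigma\leftrightarrow\Omega$ and $\cone\leftrightarrow\fibre$.

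I do not anticipate a genuine obstacle — the paper itself flags the statement as ``immediate''. The only mildly nontrivial ingredient is the lemma that a zero-preserving morphism commutes with extension by zero along (co)sieves; everything else is just identifying the constituent homotopy Kan extensions of each construction and invoking the corresponding preservation property of $G$, then pasting.
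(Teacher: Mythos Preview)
Your proposal is correct and follows exactly the approach the paper intends: the paper gives no formal proof, merely remarking that the result is ``immediate'' because the functors in question are composites of extension by zero along (co)sieves, the formation of (co)Cartesian squares, and restriction functors. Your expansion of this sketch---in particular the observation that a zero-preserving morphism commutes with extension by zero (checked pointwise via (Der2) and Proposition~\ref{prop_extzero})---is precisely the content the paper leaves implicit.
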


\section{Stable derivators}\label{sec_stable}

\subsection{The additivity of stable derivators}

In this subsection, we come to the central notion of a stable derivator. Similarly to the situation of a stable model category or a stable $\infty$-category, one adds a `linearity condition' to the pointed situation. This will ensure, in particular, that the suspension and the loop functor define a pair of inverse equivalences
$$(\Sigma, \Omega)\colon \DD(e)\stackrel{\simeq}{\nach} \DD(e).$$ This notion was introduced by Maltsiniotis in \cite{maltsiniotis2} by forming a combination of the axioms of Grothendiecks derivators \cite{grothendieck} and Franke's systems of triangulated diagram categories \cite{franke}. More details on the history can be found in the paper \cite{cisinskineeman} by Cisinski and Neeman. 

\begin{definition}
A strong derivator $\DD$ is \emph{stable} if it is pointed and if an object of $\DD(\square)$ is coCartesian if and only if it is Cartesian.
\end{definition}

The strongness property will be crucial in two situations in the construction of the canonical triangulated structures. Let us call a square \emph{biCartesian} if it satisfies the equivalent conditions of being Cartesian or coCartesian. 

\begin{example}
\rm{i)} Let $\mathcal{M}$ be a stable combinatorial model category then the associated derivator~$\DD_{\mathcal{M}}$ is stable. Thus, we have, in particular, the stable derivator associated to unbounded chain complexes, modules over a differential graded algebra, spectra based on simplicial sets and module spectra over a given symmetric ring spectrum. These derivators can be endowed with some additional structure:  they are examples of monoidal derivators resp.\ derivators tensored over a monoidal derivator as discussed in \cite{groth_monder, groth_enriched}.\\
\rm{ii)} A derivator $\DD$ is stable if and only if the dual derivator $\DD^{\op}$ is stable.
\end{example}

Let us begin by the following convenient result. 

\begin{proposition}\label{prop_againtriang}
Let $\DD$ be a stable derivator and let $M$ be a category. Then $\DD^M$ is again stable.
\end{proposition}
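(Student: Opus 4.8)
The plan is to reduce Proposition \ref{prop_againtriang} to results already established in the excerpt, so that almost nothing new needs to be done. First I would recall that by Theorem \ref{thm_dertensored} the prederivator $\DD^M$ is a derivator, by Proposition \ref{prop_productpointed} it is pointed, and it remains to check two things: that $\DD^M$ is strong, and that in $\DD^M$ a square is coCartesian if and only if it is Cartesian.

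For the coCartesian/Cartesian equivalence the key tool is Corollary \ref{cor_coCartpointwise}, which says that an object $X\in\DD^M(\square)=\DD(M\times\square)$ is coCartesian (as a square in $\DD^M$) if and only if each $X_m\in\DD(\square)$ is coCartesian for every $m\in M$, and the dual statement for Cartesian squares. Since $\DD$ is stable, each $X_m$ is coCartesian if and only if it is Cartesian; combining these three equivalences gives that $X$ is coCartesian in $\DD^M$ if and only if it is Cartesian in $\DD^M$. (Strictly, I should note that Corollary \ref{cor_coCartpointwise} is stated for coCartesian squares but the Cartesian version is true by duality, as remarked throughout the paper.)

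For strongness I must show that $\dia_{[1],J}\colon\DD^M([1]\times J)\nach\DD^M(J)^{[1]}$ is full and essentially surjective for every $J$. Unwinding the definition of $\DD^M$, this is the functor $\DD(M\times[1]\times J)\nach\DD(M\times J)^{[1]}$, and after identifying $M\times[1]\times J\cong[1]\times(M\times J)$ one checks that it agrees with $\dia_{[1],M\times J}$ for the derivator $\DD$ (this is a routine check using the coherence isomorphisms for the partial underlying diagram functors and the symmetry of products). Since $\DD$ is strong, $\dia_{[1],M\times J}$ is full and essentially surjective, hence so is $\dia_{[1],J}$ for $\DD^M$.

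The main obstacle, such as it is, is purely bookkeeping: making the identification $\dia_{[1],J}^{\DD^M}\cong\dia_{[1],M\times J}^{\DD}$ precise, since the partial underlying diagram functors are defined via the categorical exponential law and one has to track the various coherence isomorphisms $(\DD^L)^M\cong\DD^{L\times M}$ and the symmetry of the Cartesian product carefully. This is entirely formal and I would only sketch it, citing Example \ref{example_prod} and Example \ref{ex_morphisms} for the functoriality of the construction $(M,\DD)\mapsto\DD^M$. With that identification in hand, all three required properties (derivator, pointed, strong, coCartesian $\Leftrightarrow$ Cartesian) follow immediately from the cited results, completing the proof.
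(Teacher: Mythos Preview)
Your proof is correct and follows essentially the same approach as the paper's own proof. The paper handles strongness in a single sentence (``It is immediate that a derivator $\DD$ is strong if and only if $\DD^M$ is strong for all categories $M$''), whereas you spell out the identification $\dia_{[1],J}^{\DD^M}\cong\dia_{[1],M\times J}^{\DD}$ more explicitly, but this is the same argument; the treatment of pointedness and of the coCartesian/Cartesian equivalence via Corollary~\ref{cor_coCartpointwise} is identical.
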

\begin{proof}
It is immediate that a derivator $\DD$ is strong if and only if $\DD^M$ is strong for all categories $M.$ Moreover, we know that $\DD^M$ is pointed by Proposition \ref{prop_productpointed}. Thus, let us consider the (co)Cartesian squares. For an object $X\in\DD^M(\square)$, using Corollary \ref{cor_coCartpointwise}, we have that $X$ is coCartesian if and only if $X_m\in\DD(\square)$ is coCartesian for all $m\in M.$ Using the stability of $\DD$ and the corresponding result for Cartesian squares in $\DD^M(\square)$ we are done.
\end{proof}

We give immediately the expected result on the suspension and loop functors in this stable situation. Recall the definition of the categories $\mathcal{M},\:\mathcal{M}^{\pull},\:\mathcal{M}^{\push},$ and the factorization of $(\Sigma,\Omega)$ in the case of a pointed derivator. Let us denote, in addition, by $\mathcal{M}^\Sigma\subset\mathcal{M}$ (resp.\ $\mathcal{M}^\Omega\subset \mathcal{M}$) the full subcategory spanned by the coCartesian (resp.\ Cartesian) squares. With this notation, in the case of a \emph{pointed} derivator, there is the following additional factorization of $(\Sigma,\Omega):$
$$
\xymatrix{
\Sigma\colon & \DD(e)\ar@{=}[d]\ar[r]^{(0,0)_\ast}_{\simeq} & \mathcal{M}^{\push}\ar@{=}[d]\ar[r]^{\ipush_!}_\simeq& \mathcal{M}^\Sigma\ar[r]^{\ipull^\ast} & \mathcal{M}^{\pull}\ar@{=}[d]\ar[r]^{(1,1)^\ast}_{\simeq}& \DD(e)\ar@{=}[d]& \\
& \DD(e) & \mathcal{M}^{\push}\ar[l]_-{(0,0)^\ast}^\simeq& \mathcal{M}^\Omega\ar[l]_-{\ipush^\ast} & \mathcal{M}^{\pull}\ar[l]_{\ipull_\ast}^\simeq& \DD(e)\ar[l]^{\simeq}_{(1,1)_!}& \colon \Omega
}
$$
In this diagram, all but possibly the two restriction functors in the middle are equivalences. 
In the case of a \emph{stable} derivator, we have $\mathcal{M}^\Sigma=\mathcal{M}^\Omega$ and these two restriction functors are also equivalences:
$$
\xymatrix{
\Sigma\colon & \DD(e)\ar@{=}[d]\ar[r]^{(0,0)_\ast}_{\simeq} & \mathcal{M}^{\push}\ar@{=}[d]\ar[r]^{\ipush_!}_\simeq& \mathcal{M}^\Sigma\ar@{=}[d]\ar[r]^{\ipull^\ast}_\simeq & \mathcal{M}^{\pull}\ar@{=}[d]\ar[r]^{(1,1)^\ast}_{\simeq}& \DD(e)\ar@{=}[d]& \\
& \DD(e) & \mathcal{M}^{\push}\ar[l]_-{(0,0)^\ast}^\simeq& \mathcal{M}^\Omega\ar[l]_-{\ipush^\ast}^\simeq & \mathcal{M}^{\pull}\ar[l]_{\ipull_\ast}^\simeq& \DD(e)\ar[l]^{\simeq}_{(1,1)_!}& \colon \Omega
}
$$
\noindent
This proves the first half of the next result. The second half can be proved in a similar way.

\begin{proposition}\label{prop_stablesigma}
Let $\DD$ be a stable derivator, then we have equivalences of derivators
$$(\Sigma,\Omega)\colon\DD\stackrel{\simeq}{\nach}\DD\qquad\mbox{and}
\qquad (\cone,\fibre)\colon\DD^{[1]}\stackrel{\simeq}{\nach}\DD^{[1]}.$$ 
\end{proposition}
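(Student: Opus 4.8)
The plan is to reduce the statement about the equivalences $(\Sigma,\Omega)$ and $(\cone,\fibre)$ to the observation already recorded in the excerpt: in the case of a \emph{pointed} derivator we exhibited a factorization of both adjunctions in which every functor is an equivalence except possibly the two restriction functors $\ipush^\ast\colon\mathcal{M}^\Sigma\nach\mathcal{M}^\Omega$ (for the absolute case) and $\ipull^\ast\colon\mathcal{M}^\Sigma\nach\mathcal{M}^\Omega$, along with their analogs for the cone/fiber factorization. So the real content is to show that, for a \emph{stable} derivator, these two middle restriction functors are themselves equivalences, after which the first half of the statement follows by composing equivalences and passing to the induced adjunction, and the second half (the cone/fiber case) follows by the entirely analogous factorization with $\mathcal{N},\mathcal{N}^{\push},\mathcal{N}^{\pull}$ in place of $\mathcal{M},\mathcal{M}^{\push},\mathcal{M}^{\pull}$.

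First I would make precise that $\mathcal{M}^\Sigma\subset\mathcal{M}$ is the full subcategory of coCartesian squares with $X_{1,0}\cong 0\cong X_{0,1}$, and $\mathcal{M}^\Omega\subset\mathcal{M}$ the full subcategory of Cartesian such squares. By the definition of a stable derivator, a square is coCartesian if and only if it is Cartesian, hence $\mathcal{M}^\Sigma=\mathcal{M}^\Omega$ \emph{as subcategories of} $\DD(\square)$. Now I need to check that the functor $\ipush^\ast$ (respectively $\ipull^\ast$) restricts to an equivalence between them; but on these subcategories it is literally the restriction of $\ipush^\ast\colon\DD(\square)\nach\DD(\push)$, whose essential image contains $\mathcal{M}^{\push}$ and which is fully faithful on the essential image of $\ipush_!$ by Proposition~\ref{prop_honestext}. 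Concretely: $\ipush_!\colon\mathcal{M}^{\push}\nach\mathcal{M}^\Sigma$ is an equivalence (this is already in the diagram), with quasi-inverse $\ipush^\ast$; since $\mathcal{M}^\Sigma=\mathcal{M}^\Omega$, this same $\ipush^\ast$ is an equivalence $\mathcal{M}^\Omega\nach\mathcal{M}^{\push}$. A symmetric argument using $\ipull_\ast\colon\mathcal{M}^{\pull}\nach\mathcal{M}^\Omega$ being an equivalence with quasi-inverse $\ipull^\ast$, together with $\mathcal{M}^\Omega=\mathcal{M}^\Sigma$, shows $\ipull^\ast\colon\mathcal{M}^\Sigma\nach\mathcal{M}^{\pull}$ is an equivalence. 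Therefore every functor in the stable factorization of $(\Sigma,\Omega)$ displayed above is an equivalence, so $\Sigma$ and $\Omega$ are equivalences, and being adjoint they are inverse to each other, giving $(\Sigma,\Omega)\colon\DD(e)\stackrel{\simeq}{\nach}\DD(e)$.

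Having the statement at the level of underlying categories, I would then upgrade it to the level of derivators. By Proposition~\ref{prop_againtriang}, $\DD^M$ is stable for every small category $M$, so the argument just given shows $\Sigma,\Omega\colon\DD^M(e)=\DD(M)\nach\DD(M)$ are inverse equivalences for all $M$. Since the suspension and loop morphisms of derivators are built levelwise from extension-by-zero functors and the formation of (co)Cartesian squares (as noted just before the proposition), and since being an equivalence of derivators is detected levelwise by Proposition~\ref{prop_adjoint}, this yields the equivalence of derivators $(\Sigma,\Omega)\colon\DD\stackrel{\simeq}{\nach}\DD$. For the cone and fiber morphisms one repeats the whole argument with the categories $\mathcal{N},\mathcal{N}^{\push},\mathcal{N}^{\pull}$ and the functors $i,j$ in place of the sieve/cosieve inclusions of $e$; the stable factorization of $(\cone,\fibre)$ is obtained from the pointed one exactly as above, with the two middle restriction functors becoming equivalences because in $\DD^{[1]}$ — which is stable by Proposition~\ref{prop_againtriang} applied with $M=[1]$ — coCartesian and Cartesian squares coincide (here one uses Corollary~\ref{cor_coCartpointwise}/\ref{cor_coCartpointwise} to transport the (co)Cartesian condition across the $[1]$-direction). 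The only mildly delicate point — the part I expect to be the main obstacle to writing cleanly rather than a genuine difficulty — is bookkeeping: matching up the subcategory $\mathcal{N}$ of squares vanishing only at $(0,1)$ with the correct essential images, and making sure that the identification $\mathcal{M}^\Sigma=\mathcal{M}^\Omega$ (resp.\ its cone/fiber analog) is used as an equality of \emph{full} subcategories so that a single functor serves as quasi-inverse on both sides; once that is set up the rest is formal composition of equivalences.
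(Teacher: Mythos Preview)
Your proposal is correct and follows essentially the same approach as the paper: the argument there is precisely the observation that in the stable case $\mathcal{M}^\Sigma=\mathcal{M}^\Omega$, so the two middle restriction functors in the displayed factorization become equivalences, with the cone/fiber case declared analogous. You add the explicit upgrade from $\DD(e)$ to the derivator level via Proposition~\ref{prop_againtriang} and Proposition~\ref{prop_adjoint}, which the paper leaves implicit; this is a welcome clarification rather than a different route.
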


Let us mention the following result which shows that in the stable situation isomorphisms can be characterized by the vanishing of the cone. We use the same notation as in Proposition \ref{prop_cocartiso}.

\begin{proposition}\label{prop_isocone}
Let $\DD$ be a stable derivator and let $X\in\DD(\square).$ If two of the three following statements hold for the square $X$ then so does the third one:\\
{\rm i)} The square $X$ is coCartesian,\\
{\rm ii)} The arrow $d_0^v X$ is an isomorphism,\\
{\rm iii)} The arrow $d_1^v X$ is an isomorphism.\\
In particular, an object $f\in\DD([1])$ is an isomorphism if and only if the cone $\mathsf{C}f$ is zero. 
\end{proposition}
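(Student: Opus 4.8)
The plan is to reduce the "two out of three" statement to the already-established Proposition \ref{prop_cocartiso}, using the stability hypothesis to pass freely between the coCartesian and Cartesian conditions. First I would observe that the roles of $d_0^v$ and $d_1^v$ are entirely symmetric up to the self-duality of $\square$ together with passing to the dual derivator $\DD^{\op}$ (which is again stable by the remarks following the definition of stability, cf.\ Example \ref{example_dual}); so it suffices to treat two of the three possible implications, say: (i)\,$+$\,(iii)\,$\Rightarrow$\,(ii), and (ii)\,$+$\,(iii)\,$\Rightarrow$\,(i). The implication (i)\,$+$\,(iii)\,$\Rightarrow$\,(ii) is \emph{exactly} the nontrivial direction of Proposition \ref{prop_cocartiso}\,ii), so nothing new is required there. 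For (ii)\,$+$\,(iii)\,$\Rightarrow$\,(i): assuming $d_1^v X$ is an isomorphism, Proposition \ref{prop_cocartiso}\,ii) says $X$ is coCartesian \emph{if and only if} $d_0^v X$ is an isomorphism; hence (ii) plus (iii) gives (i). The remaining implication (i)\,$+$\,(ii)\,$\Rightarrow$\,(iii) is obtained from the one just proved by applying the horizontal--vertical/dual symmetry mentioned above; concretely, one transposes the square (swapping the two coordinates of $\square$) so that $d_0^v$ and $d_1^v$ exchange roles while "coCartesian" is preserved, and then invokes the case already handled.

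The only genuinely new content is the final sentence, and here the key step is to exhibit, for a morphism $f \in \DD([1])$, a biCartesian square $X \in \DD(\square)$ whose relevant faces are $f$ on one side and (an isomorphism to) zero on the other. Recall from the construction of the cone functor that $\cone f = j^\ast\, {i_{\push}}_!\, i_\ast(f)$, where $i\colon [1]\nach\push$ classifies the horizontal arrow and $j\colon [1]\nach\square$ classifies the vertical arrow; since $i_\ast$ is an extension-by-zero functor (Proposition \ref{prop_extzero}), the square $X := {i_{\push}}_! i_\ast(f) \in \DD(\square)$ has $X_{0,1}\cong 0$, is coCartesian by construction, has $d_1^v X \cong f$ (its "upper" horizontal arrow, coming from $i_\ast(f)$), and has $d_0^v X$ equal to the canonical map $Y \nach \cone f$ where $f\colon X_{0,0}\to X_{1,0}=Y$. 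Wait — I should be careful about which face is which; I would first fix, once and for all in the proof, the identification of the two horizontal faces of $X$ in terms of the defining data of the cone, reading it off from the sieve/cosieve picture at the start of the subsection "Suspensions, Loops, Cones, and Fibers." With that bookkeeping settled, one face of $X$ is $f$ and, since $X_{0,1}\cong 0$, the other horizontal face is the map $X_{1,0}\nach \mathsf{C}f$ (evaluation of $\cone f$ at $1$).

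Given this, the final statement follows cleanly. If $f$ is an isomorphism, then $X$ is a biCartesian square in which $d_1^v X$ is an isomorphism, so by the two-out-of-three part already proved, $d_0^v X$ is an isomorphism; but its source involves $0$ (since $X_{0,1}\cong 0$ forces one corner to be zero, and a short diagram-chase using that $X_{1,1}=\mathsf{C}f$), whence $\mathsf{C}f\cong 0$. (Alternatively and more directly: $\mathsf{C}\cong 1^?$ by Proposition \ref{prop_altdesc}, $1^?$ has a right adjoint $1_!$, and applying it to an isomorphism of $\DD([1])$ lands in the essential image considerations of Proposition \ref{prop_cocartiso}\,i); I would use whichever of these two routes is shorter to write up.) Conversely, if $\mathsf{C}f\cong 0$, then in the biCartesian square $X$ the face $d_0^v X$ has target $\mathsf{C}f\cong 0$ and source $X_{1,0}$; by two-out-of-three (using that $X$ is coCartesian), if $d_0^v X$ fails to be an isomorphism we cannot yet conclude, so instead I would argue: $X$ coCartesian with $X_{0,1}\cong 0\cong X_{1,1}$ means both vertical faces... — actually the clean argument is that $\mathsf{C}f\cong 0$ makes $d_0^v X$ a map with zero target, and combined with "$X$ coCartesian" the two-out-of-three forces $d_1^v X = f$ to be an isomorphism precisely when $d_0^v X$ is; so one reduces to checking that $d_0^v X$ is an isomorphism, i.e.\ that $X_{1,0}\cong 0$, which is false in general. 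The correct move is to instead use the \emph{fiber}: I would apply the dual analysis to $\fibre$, or equivalently note $\Omega \mathsf{C} f \cong \mathsf{F}\, \Sigma f$-type identities, but the quickest is: by Proposition \ref{prop_stablesigma}, $(\cone,\fibre)$ is an equivalence on $\DD^{[1]}$, so $\mathsf{C}f\cong 0$ (equivalently $\cone f\cong 0$ in $\DD([1])$, using that the cone of an iso is the zero object of $\DD([1])$ and conversely an object of $\DD([1])$ with vanishing evaluation at $1$ and ... ) iff $f$ is sent to a zero object, iff $f$ is itself a zero object of $\DD^{[1]}$ is too strong; rather, $\cone\colon\DD^{[1]}\to\DD^{[1]}$ restricted suitably is an equivalence and $\cone$ of an isomorphism is the terminal object, so $\cone f$ terminal forces $f$ terminal in the relevant replete subcategory, i.e.\ $f$ an isomorphism by Proposition \ref{prop_cocartiso}\,i). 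I expect the main obstacle to be exactly this last bookkeeping: pinning down the precise subcategories on which $(\cone,\fibre)$ is an equivalence and translating "$\mathsf{C}f\cong 0$ in $\DD(e)$" into "$\cone f$ is the zero object of the appropriate full subcategory of $\DD([1])$", so that the equivalence of Proposition \ref{prop_stablesigma} can be applied to deduce that $f$ is an isomorphism.
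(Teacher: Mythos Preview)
Your treatment of the two-out-of-three statement is essentially the paper's argument: reduce via Proposition~\ref{prop_cocartiso} to the single implication (i)$+$(ii)$\Rightarrow$(iii), and get that one from the dual of Proposition~\ref{prop_cocartiso} together with stability (coCartesian $=$ Cartesian). One correction: your ``concretely, one transposes the square'' is not the right symmetry. Swapping the two coordinates of $\square$ preserves coCartesian but exchanges horizontal faces with vertical faces, not $d_0^v$ with $d_1^v$. The symmetry you actually want is the one you stated first: pass to $\DD^{\op}$ and use $\square^{\op}\cong\square$ via $(i,j)\mapsto(1-i,1-j)$; equivalently, just invoke the dual of Proposition~\ref{prop_cocartiso} directly, which says that if $d_0^vX$ is an isomorphism then $X$ is Cartesian iff $d_1^vX$ is an isomorphism. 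That is exactly what the paper does.

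The genuine gap is in the ``in particular'' clause, and it is a bookkeeping error that sends you on a long detour. For the defining square $X={i_{\push}}_!i_\ast(f)$ of the cone you have $X_{0,1}\cong 0$ and $X_{1,1}=\mathsf{C}f$, so the bottom face is
\[
d_0^vX \;=\; \big(X_{0,1}\to X_{1,1}\big) \;=\; \big(0\to \mathsf{C}f\big),
\]
not a map with source $X_{1,0}$ as you wrote. Once this is fixed the converse is immediate: if $\mathsf{C}f\cong 0$ then $d_0^vX$ is the identity of $0$, hence an isomorphism; since $X$ is coCartesian by construction, the two-out-of-three part forces $d_1^vX=f$ to be an isomorphism. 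No appeal to Proposition~\ref{prop_stablesigma}, to $(\cone,\fibre)$ as an equivalence, or to Proposition~\ref{prop_altdesc} is needed. This is precisely the paper's one-line argument: ``the second part follows from the first part when applied to the special case of the defining square of the cone.''
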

\begin{proof}
For the first part we can apply Proposition \ref{prop_cocartiso} to see that we only have to show that {\rm i)} and {\rm ii)} imply {\rm iii)}. But this statement follows from the dual of Proposition \ref{prop_cocartiso} which can be applied because every coCartesian square is also Cartesian in the stable situation. Finally, the second part follows from the first part when applied to the special case of the defining square of the cone.
\end{proof}

The next aim is to show that, in the stable case, finite coproducts and finite products in $\DD(J)$ are canonically isomorphic. By Proposition \ref{prop_againtriang}, we can assume that $J=e.$ But let us first mention the following result which is immediate from Proposition \ref{prop_cancel} on the composition and the cancellation properties of (co)Cartesian squares. That result is crucial in order to establish the semi-additivity.

\begin{proposition}\label{prop_2outof3}
Let $\DD$ be a stable derivator and let $X\in \DD([2]\times [1]).$ If two of the squares $d_0(X),d_1(X),$ and $d_2(X)$ are biCartesian, then so is the third one.
\end{proposition}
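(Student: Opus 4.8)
The plan is to reduce the statement entirely to Proposition \ref{prop_cancel} on the composition and cancellation of (co)Cartesian squares, using the one new feature of the stable setting: a square in $\DD(\square)$ is coCartesian exactly when it is Cartesian, i.e. ``biCartesian'' is an unambiguous notion. Since there is nothing to prove if all three of $d_0(X), d_1(X), d_2(X)$ are already assumed biCartesian, I would split into the three genuine cases according to which of the three squares is the one to be concluded, and in each case invoke exactly one of the two halves of Proposition \ref{prop_cancel}.

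First, suppose $d_2(X)$ and $d_1(X)$ are biCartesian. Then both are coCartesian, so part i) of Proposition \ref{prop_cancel}, applied with $d_2(X)$ playing the role of the coCartesian ``left'' square, yields that $d_0(X)$ is coCartesian, hence biCartesian by stability. Second, suppose $d_2(X)$ and $d_0(X)$ are biCartesian. Again $d_2(X)$ is coCartesian, so part i) of Proposition \ref{prop_cancel} says $d_0(X)$ is coCartesian if and only if $d_1(X)$ is; since $d_0(X)$ is coCartesian, so is $d_1(X)$, and thus biCartesian. Third, suppose $d_1(X)$ and $d_0(X)$ are biCartesian. Now $d_0(X)$ is in particular Cartesian, so part ii) of Proposition \ref{prop_cancel} says $d_2(X)$ is Cartesian if and only if $d_1(X)$ is; since $d_1(X)$ is Cartesian, so is $d_2(X)$, hence biCartesian. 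One could alternatively dualize to shorten the write-up, since $\DD^{\op}$ is again a stable derivator and interchanges the roles of $d_0$ and $d_2$ as well as Cartesian and coCartesian.

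I do not expect any real obstacle here: all the substantive pasting and cancellation arguments are already contained in the proof of Proposition \ref{prop_cancel}, and the only extra input is the definitional collapse of Cartesian and coCartesian squares in a stable derivator. The single point deserving a moment of care is the bookkeeping in the two cancellation cases — one must check that the hypothesis required by the relevant half of Proposition \ref{prop_cancel} (coCartesianness of $d_2(X)$ for part i), Cartesianness of $d_0(X)$ for part ii)) is indeed among the two squares assumed biCartesian, which is precisely why the case split is organized by which of $d_0, d_1, d_2$ is the conclusion.
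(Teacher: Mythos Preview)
Your proposal is correct and is exactly the approach the paper takes: the paper states the proposition as ``immediate from Proposition \ref{prop_cancel}'' and gives no further argument, so your case-by-case unpacking of how stability lets one invoke the appropriate half of Proposition \ref{prop_cancel} is precisely what is implicit there.
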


We now give the result on the semi-additivity of the values of a stable derivator, i.e., we want to show that the values then admit finite biproducts. We know already from Proposition \ref{prop_(co)limits} that the values of an arbitrary derivator admit finite coproducts and finite products.

\begin{proposition}\label{prop_preadditive}
Let $\DD$ be a stable derivator and consider a functor $u\colon J\nach K.$ Then finite coproducts and finite products in $\DD(J)$ are canonically isomorphic. Moreover, these are preserved by $u^\ast, u_!,$ and $u_\ast.$
\end{proposition}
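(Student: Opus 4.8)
The plan is to reduce to the underlying category and then exhibit a single biCartesian square which realises the comparison map $X\sqcup Y\to X\times Y$.

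First I would dispose of the reductions. By Proposition~\ref{prop_againtriang} the prederivator $\DD^J$ is again stable, and a finite coproduct (resp.\ product) in $\DD(J)$ is literally a finite coproduct (resp.\ product) in the underlying category of $\DD^J$; so it suffices to treat $J=e$, and the final two assertions about $u^\ast,u_!,u_\ast$ will follow from the case $J=e$. The empty coproduct and empty product of $\DD(e)$ are its initial and its terminal object (both exist by Proposition~\ref{prop_(co)limits}), and these coincide since $\DD$ is pointed; so by an evident induction it is enough to show that for all $X,Y\in\DD(e)$ the canonical morphism $X\sqcup Y\to X\times Y$ is an isomorphism.

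The heart of the argument is a judicious diagram on $[2]\times[2]$. Let $L\subseteq[2]\times[2]$ be the full subposet on the four objects $(1,0),(0,1),(2,0),(0,2)$, so that $L$ is the disjoint union of the two arrows $(1,0)\to(2,0)$ and $(0,1)\to(0,2)$. The inclusion $\{(1,0),(0,1)\}\hookrightarrow L$ is a sieve, so by Proposition~\ref{prop_extzero} there is $D\in\DD(L)$ with $D_{(1,0)}\cong X$, $D_{(0,1)}\cong Y$ and $D$ vanishing on $(2,0),(0,2)$; set $F:=\iota_!D\in\DD([2]\times[2])$ for $\iota\colon L\hookrightarrow[2]\times[2]$. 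Using \textnormal{(Der4)}, Lemma~\ref{lemma_terminal} and the cofinality of terminal objects (Proposition~\ref{prop_cofinality}) I would compute: $L_{/(1,1)}$ is discrete on $\{(1,0),(0,1)\}$, hence $F_{(1,1)}\cong X\sqcup Y$ with $F_{(1,0)}\to F_{(1,1)}$, $F_{(0,1)}\to F_{(1,1)}$ the coproduct inclusions; $L_{/(2,1)}\cong\big((1,0)\to(2,0)\big)\sqcup\{(0,1)\}$, hence $F_{(2,1)}\cong 0\sqcup Y\cong Y$ and, tracing the induced map, $F_{(1,1)}\to F_{(2,1)}$ is the projection $X\sqcup Y\to Y$; symmetrically $F_{(1,2)}\cong X$ with $F_{(1,1)}\to F_{(1,2)}$ the projection $X\sqcup Y\to X$; and $F_{(2,2)}\cong 0$ because $L_{/(2,2)}=L$ has the two terminal objects $(2,0),(0,2)$, on which $D$ vanishes. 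Now consider the square $i\colon\square\to[2]\times[2]$, $i(a,b)=(a+1,b+1)$; then $i(1,1)=(2,2)$ is not in the image of $\iota$, and a short check produces a left adjoint to $\push\to\big([2]\times[2]-(2,2)\big)_{/(2,2)}$ (the order-preserving retraction onto the $\push$ spanned by $(1,1),(2,1),(1,2)$). Hence Proposition~\ref{prop_detect} applies to $F=\iota_!D$ and shows that
$$\xymatrix{ X\sqcup Y\ar[r]\ar[d] & Y\ar[d]\\ X\ar[r] & 0 }$$
(with the coproduct projections as its upper and left arrow) is coCartesian, so by stability it is also Cartesian. Cartesianness says that $X\sqcup Y$ is the homotopy pullback of $Y\to 0\leftarrow X$, which—since $0$ is terminal—is the product $Y\times X\cong X\times Y$; and unwinding definitions this isomorphism is exactly the canonical comparison morphism.

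Finally the preservation statement is formal. By \textnormal{(Der3)} the functor $u^\ast$ is at once a left adjoint (of $u_\ast$) and a right adjoint (of $u_!$), so it preserves all colimits and all limits, in particular finite coproducts and finite products. The functor $u_!$ is a left adjoint, hence preserves finite coproducts; by the part just proved these agree with finite products in both $\DD(J)$ and $\DD(K)$, so $u_!$ preserves finite products too; dually $u_\ast$ is a right adjoint and preserves finite products, equivalently finite coproducts. The main obstacle in this plan is purely bookkeeping: carefully computing the values and structure maps of $F$ from \textnormal{(Der4)} and verifying the left-adjoint hypothesis of Proposition~\ref{prop_detect}; both are routine but must be done with care so that the resulting isomorphism is identified with the \emph{canonical} comparison map and not merely some isomorphism.
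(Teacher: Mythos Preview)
Your proof is correct and, interestingly, you build exactly the same object on $[2]\times[2]$ as the paper does (your $\iota_!D$ is the paper's $Q={j_3}_!{j_2}_!{j_1}_*(X,Y)$, since $\iota=j_3\circ j_2$ and your $D={j_1}_*(X,Y)$), but you extract the conclusion along a genuinely different route. The paper shows that \emph{all four} small sub-squares of $Q$ are biCartesian via four applications of Proposition~\ref{prop_detect}, then invokes the $2$-out-of-$3$ property of biCartesian squares (Proposition~\ref{prop_2outof3}) together with Proposition~\ref{prop_cocartiso} to identify the outer entries and finally reads off that the centre $B$ is simultaneously a coproduct (top-left square) and a product (bottom-right square). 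You instead compute the four relevant values $F_{(1,1)},F_{(2,1)},F_{(1,2)},F_{(2,2)}$ directly from (Der4) via explicit slice calculations, apply Proposition~\ref{prop_detect} just once to the bottom-right square, and conclude from stability that $X\sqcup Y$ is the pullback of $X\to 0\leftarrow Y$, i.e.\ $X\times Y$. Your approach is more economical in lemma-count (no use of Propositions~\ref{prop_2outof3} or~\ref{prop_cocartiso}) at the price of the hands-on slice computations and the implicit fact that homotopy colimits over disjoint unions decompose as coproducts; the paper's approach avoids those computations by letting the biCartesian machinery do the work. Your closing remark about the bookkeeping is apt: the identification of the structure maps $F_{(1,1)}\to F_{(2,1)}$ and $F_{(1,1)}\to F_{(1,2)}$ with $[0,\id]$ and $[\id,0]$ is exactly what pins the isomorphism down as the canonical one, and this is the one place where a careful reader would want to see the details spelled out.
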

\begin{proof}
For the first part, it is again enough to show the result for the case $J=e.$ Let us consider the inclusion $j_2\colon L_2\nach L_3$ of the left poset $L_2$ in the right poset $L_3$:
$$\xymatrix{
 &(1,0)\ar[r]&(2,0) &&& (0,0)\ar[d]\ar[r] & (1,0)\ar[r] & (2,0)\\
 (0,1)\ar[d] && &&& (0,1)\ar[d] &&\\
 (0,2) && &&& (0,2) &&
}
$$
Moreover, let $j_1\colon e \sqcup e \nach L_2$ be the map $(1,0)\sqcup (0,1)$ and let $j_3\colon L_3\nach [2]\times [2]=L$ be the obvious inclusion. Since $j_1$ is a sieve the homotopy Kan extension functor~${j_1}_\ast$ is an `extension by zero functors' by Proposition~\ref{prop_extzero}, and similarly for the homotopy Kan extension functor~${j_2}_!$ associated to the cosieve~$j_2$. Let us consider the functor:
$$\DD(e)\times\DD(e)\simeq \DD(e\sqcup e)\stackrel{{j_1}_\ast}{\nach}\DD(L_2)\stackrel{{j_2}_!}{\nach}\DD(L_3)\stackrel{{j_3}_!}{\nach}\DD(L)$$
The image $Q\in \DD(L)$ of a pair $(X,Y)\in\DD(e)\times\DD(e)$ under this functor has as underlying diagram:
$$
\xymatrix{
&0\ar[r]\ar[d]& X\ar[r]\ar[d] & 0\ar[d]\\
\dia_L (Q):&Y\ar[r]\ar[d] & B \ar[r]\ar[d] & Y'\ar[d]\\
&0\ar[r]& X' \ar[r]& Z
}$$
Let us denote the four inclusions of the smaller squares in $L$ by $i_k,\;k=1,\ldots ,4,$ i.e., let us set
$$i_1=d^2\times d^2,\qquad i_2=d^0\times d^2,\qquad i_3=d^2\times d^0,\qquad\mbox{and}\qquad i_4=d^0\times d^0.$$
An application of Proposition \ref{prop_detect} to these inclusions $i_k\colon\square\nach L,\;k=1,\ldots ,4,$ and $f=j_3$ allows us to deduce that all squares are biCartesian. In fact, in all four cases, $i_k(1,1)\notin \im(j_3)$ and we only have to check that the induced functors $\tilde{i}_k\colon \push\nach {L-i_k(1,1)}_{/i_k(1,1)}$ are right adjoints. For $k=1$, this functor is an isomorphism while in the other three cases Lemma \ref{lemma_someadj} applies. By~Proposition \ref{prop_2outof3}, also the composite squares $(d_2\times d_1)(Q)$ and $(d_1\times d_2)(Q)$ are biCartesian. Hence, Proposition~\ref{prop_cocartiso} ensures that we have isomorphisms $X\cong X'$ and $Y\cong Y'$. Similarly, the square $(d_1\times d_1)(Q)$ is biCartesian and we obtain an isomorphism $Z\cong 0$. Thus, we see that $B$ is simultaneously a coproduct of $X$ and $Y$ and a product of $X'\cong X$ and $Y'\cong Y$.\\
The fact that these biproducts are preserved by $u^\ast, u_!,$ and $u_\ast$ follows immediately since each of the three functors has an adjoint functor on at least one side.
\end{proof}

\begin{corollary}
Let $\DD$ be a stable derivator and let $J$ be a category. Every object of $\DD(J)$ is canonically a commutative monoid object and a cocommutative comonoid object. In particular, the morphism set $\hhom_{\DD(J)}(X,Y),\;X,Y\in \DD(J),$ carries canonically the structure of an abelian monoid.
\end{corollary}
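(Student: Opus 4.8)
The plan is to invoke the classical fact that any category with finite biproducts (i.e.\ a semi-additive category) has canonical commutative monoid and cocommutative comonoid structures on its objects, and that these are dual to each other in a way that forces them to distribute over one another, yielding the abelian monoid structure on hom-sets. Since Proposition \ref{prop_preadditive} already tells us that $\DD(J)$ has finite biproducts for every $J$, the work is to spell out this standard categorical argument in the present setting. The naturality and canonicity come for free because everything is built out of the universal properties of the (co)products, which were themselves obtained canonically from the derivator axioms.

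First I would fix notation: for $X\in\DD(J)$ write $B=X\oplus X$ for the biproduct, with injections $\iota_1,\iota_2\colon X\to B$ (exhibiting $B$ as a coproduct) and projections $\pi_1,\pi_2\colon B\to X$ (exhibiting $B$ as a product), normalized so that $\pi_a\iota_b=\delta_{ab}$ (identity if $a=b$, zero map if $a\neq b$, using the zero object from pointedness). The comultiplication is the diagonal $\Delta=(\id,\id)\colon X\to X\oplus X$ (using the product structure) and the counit is $X\to 0$; dually the multiplication is the fold map $\nabla=[\id,\id]\colon X\oplus X\to X$ (using the coproduct structure) and the unit is $0\to X$. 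The comonoid axioms (coassociativity, counitality, cocommutativity) are immediate from the universal property of products; the monoid axioms are dual. This gives the first two assertions of the corollary, with functoriality/canonicity following from Proposition \ref{prop_preadditive}, which says these biproducts are preserved by all the structure functors of the derivator.

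For the statement about hom-sets: given $f,g\colon X\to Y$ in $\DD(J)$, define $f+g$ as the composite
\[
X\xrightarrow{\ \Delta\ }X\oplus X\xrightarrow{\ f\oplus g\ }Y\oplus Y\xrightarrow{\ \nabla\ }Y.
\]
One checks that the zero map is a two-sided unit for this operation (using counitality of $\Delta$ and unitality of $\nabla$ together with the normalization $\pi_a\iota_b=\delta_{ab}$), and that the operation is associative and commutative (using coassociativity/cocommutativity of $\Delta$ and associativity/commutativity of $\nabla$, plus the interchange relation $\nabla\circ(\nabla\oplus\nabla)=\nabla\circ(\id\oplus\tau\oplus\id)$ coming from the biproduct). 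This is the Eckmann--Hilton style argument and is entirely formal once the biproduct structure is in place. I would not grind through these diagram chases in detail but simply cite the standard reference (e.g.\ \cite[Section VIII.2]{maclane} or \cite{borceux2}) for the semi-additive case.

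The only mild subtlety worth flagging is the phrase ``canonically a commutative monoid object'': one should make precise in which category the monoid structure lives. The natural reading is that the assignment $X\mapsto(X,\nabla_X,0)$ is a section of the forgetful functor from commutative monoid objects in $\DD(J)$ back to $\DD(J)$, and likewise a functor into cocommutative comonoid objects --- and that these structures are natural in $J$ because the functors $u^\ast$, $u_!$, $u_\ast$ preserve biproducts by Proposition \ref{prop_preadditive}. I do not expect any real obstacle here: the entire content is that ``semi-additive $\Rightarrow$ hom-sets are abelian monoids,'' and the derivator-specific input (existence and preservation of biproducts) has already been supplied. The main thing to be careful about is simply stating the canonicity cleanly rather than proving anything hard.
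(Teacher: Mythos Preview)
Your proposal is correct and takes essentially the same approach as the paper: both define the (co)monoid structure via the diagonal $\Delta_X$ and codiagonal $\nabla_X$ coming from the biproduct, and both obtain the abelian monoid structure on hom-sets via the convolution $f+g=\nabla_Y\circ(f\times g)\circ\Delta_X$. The paper's proof is simply a terser version of what you wrote, omitting the explicit biproduct normalizations and the Eckmann--Hilton remark and declaring the verification ``immediate.''
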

\begin{proof}
For $X\in\DD(J),$ the diagonal map $\Delta_X\colon X\nach X\times X\cong X\sqcup X$ is counital, coassociative and cocommutative. Dually, the codiagonal $\nabla_X\colon X\times X\cong X\sqcup X\nach X$ is unital, associative and commutative. These can be used to define the sum of two morphism $f,g\colon X\nach Y$ using the usual convolution or cup product, i.e., as
$$f+g\colon X\stackrel{\Delta _X}{\nach}X\times X\stackrel{f\times g}{\nach}Y\times Y\cong Y\sqcup Y \stackrel{\nabla_Y}{\nach}Y$$
and it is immediate that this defines the structure of an abelian monoid on $\hhom_{\DD(J)}(X,Y).$
\end{proof}

We will from now on use the standard notation $\oplus$ for the biproduct. The next aim is to show that objects of the form $\Omega X$ (resp.\ $\Sigma X$) are even abelian \emph{group} (resp.\ \emph{cogroup}) objects. We give the proof in the case of $\Omega X$ in which case the constructions can be motivated by the process of concatenation of loops in topology. Let us begin with some preparations. Since the aim is to `model categorically' the concatenation and inversion of loops we have to consider finite direct sums of `loop objects'. For the construction of the finite sums of loop objects there is the following conceptual approach which admits an obvious dualization.  Let $\pull_n$ be the poset with objects $e_0,\ldots ,e_n$ and $t$ and with ordering generated by $e_i\leq t,\;i=0,\ldots ,n$. The pictures of $\pull_n$ for $n=1$ and $n=2$ are:
$$
\xymatrix{
& e_1\ar[d] && e_1\ar[dr] & e_2\ar[d]\\
e_0\ar[r]& t&&e_0\ar[r]&t
}
$$
Let $\Fin$ denote the category of the finite sets $\la n\ra=\{0,\ldots ,n\}$ with all set-theoretic maps as morphisms between them. The assignment $\la n\ra\auf \pull_n$
can be extended to a functor $\Fin \nach \Cat$ if we send $f\colon \la k \ra \nach \la n \ra$ to $\pull_f\colon \pull _k\nach \pull_n$ with $\pull_f(e_i)=e_{f(i)}$ and $\pull_f(t)=t.$ Since $t\colon e \nach \pull _n$ is a cosieve, $t_!\colon \DD(e)\nach \DD(\pull_n)$ gives us an `extension by zero functor'. Define $P_n$ as 
$$
\xymatrix{
P_n\colon \DD(e)\ar[rr]^{t_!}&&\DD(\pull _n)\ar[rr]^{\Holim_{\pull_n}}&& \DD(e)  
}
$$
\noindent
and note that we have a canonical isomorphism $P_1X\cong\Omega X.$ This construction can be extended to a functor as described in the following lemma.

\begin{lemma}\label{lemma_gamma}
Let $\DD$ be a stable derivator. The above construction defines a bifunctor: $$P\colon \Fin^{\op}\times\DD(e) \nach \DD(e)\colon (\la n \ra ,X)\auf P_nX$$
\end{lemma}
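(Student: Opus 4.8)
The plan is to promote the two ingredients of the construction --- extension by zero along the cosieve $t^{(n)}\colon e\nach\pull_n$ selecting the cone point, and the homotopy limit $\Holim_{\pull_n}=(p^{(n)})_\ast$ along $p^{(n)}=p_{\pull_n}\colon\pull_n\nach e$ --- to functors of $\la n\ra\in\Fin^{\op}$. Functoriality in the second variable is free: for fixed $\la n\ra$ the functor $P_n=\Holim_{\pull_n}\circ\,t^{(n)}_!$ is a composite of functors, and all the structure maps produced below will be natural in $X$, so once the $\Fin^{\op}$-action is in place the interchange law required of a bifunctor is exactly that naturality. The two relations to exploit are that, for $f\colon\la n\ra\nach\la m\ra$ in $\Fin$, the functor $\pull_f\colon\pull_n\nach\pull_m$ satisfies $\pull_f\circ t^{(n)}=t^{(m)}$ and $p^{(m)}\circ\pull_f=p^{(n)}$.

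First I would build, for each such $f$, a canonical natural isomorphism $\pull_f^\ast\circ t^{(m)}_!\stackrel{\cong}{\nach}t^{(n)}_!$. The square
$$
\xymatrix{
e\ar[r]^-{\id}\ar[d]_-{t^{(n)}}&e\ar[d]^-{t^{(m)}}\\
\pull_n\ar[r]_-{\pull_f}&\pull_m
}
$$
commutes (both composites equal $t^{(m)}$), it is a pullback since $\pull_f^{-1}(t)=\{t\}$, and the bottom horizontal functor $\pull_f$ is a Grothendieck opfibration: the only non-identity morphisms in $\pull_m$ are the $e_j\nach t$, and for $e_i\in\pull_n$ the morphism $e_i\nach t$ is an opcartesian lift of $e_{f(i)}\nach t$. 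Hence the square is homotopy exact by Proposition \ref{prop_BCfibration}, which is precisely the assertion that $t^{(n)}_!=t^{(n)}_!\circ\id^\ast\stackrel{\cong}{\nach}\pull_f^\ast\circ t^{(m)}_!$; write $\beta_f$ for its inverse. Next, from $p^{(m)}\circ\pull_f=p^{(n)}$, i.e.\ (by strict $2$-functoriality of $\DD$) $(p^{(n)})_\ast=(p^{(m)})_\ast\circ(\pull_f)_\ast$, whiskering the unit of $(\pull_f^\ast,(\pull_f)_\ast)$ by $(p^{(m)})_\ast$ yields a canonical natural transformation $c_f\colon\Holim_{\pull_m}\nach\Holim_{\pull_n}\circ\pull_f^\ast$. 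I would then define the structure map of the candidate bifunctor as the composite
$$
P(f)\colon\quad P_m=\Holim_{\pull_m}\circ\,t^{(m)}_!\;\stackrel{c_f}{\nach}\;\Holim_{\pull_n}\circ\,\pull_f^\ast\circ\,t^{(m)}_!\;\stackrel{\Holim_{\pull_n}\beta_f}{\nach}\;\Holim_{\pull_n}\circ\,t^{(n)}_!=P_n,
$$
which is visibly natural in $X$.

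It then remains to verify $P(\id_{\la n\ra})=\id_{P_n}$ --- immediate since then $\pull_f=\id$, $\beta_f=\id$, $c_f=\id$ --- and $P(g\circ f)=P(f)\circ P(g)$ for $\la n\ra\stackrel{f}{\nach}\la m\ra\stackrel{g}{\nach}\la\ell\ra$. This splits into the two cocycle identities $\beta_{g\circ f}=\beta_f\cdot(\pull_f^\ast\beta_g)$ and $c_{g\circ f}=(c_f\,\pull_g^\ast)\cdot c_g$, which, together with naturality of $c_f$, paste to give the desired relation. The first identity is an instance of the compatibility of Beck--Chevalley transformed $2$-cells with horizontal pasting (Lemma \ref{lemma_BCpasting}), applied to the two stacked pullback squares of the above type attached to $f$ and $g$; the second is the standard compatibility of adjunction units with composition, which lives in the same pasting calculus of Subsection \ref{subsection_BC}. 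Finally, bifunctoriality of $P$ on $\Fin^{\op}\times\DD(e)$ is the interchange law $P(\la n\ra,\phi)\circ P(f,X)=P(f,Y)\circ P(\la m\ra,\phi)$ for $f\colon\la n\ra\nach\la m\ra$ and $\phi\colon X\nach Y$, which is exactly the naturality of $P(f)$ in $X$ already observed. I expect no conceptual difficulty; the only thing to watch is the coherence bookkeeping between the three layers $\pull_f^\ast$, $\beta_f$, and $c_f$, and that is precisely what Lemma \ref{lemma_BCpasting} is designed to organize.
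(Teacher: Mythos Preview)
Your proof is correct and follows essentially the same route as the paper: both construct $P(f)$ as the composite of a base-change map $\Holim_{\pull_m}\nach\Holim_{\pull_n}\circ\,\pull_f^\ast$ with the inverse of the canonical isomorphism $\pull_f^\ast\circ t_!\cong t_!$, and both derive functoriality from the pasting calculus of Lemma~\ref{lemma_BCpasting}. The only minor variation is that you verify the latter isomorphism via Proposition~\ref{prop_BCfibration} (observing that $\pull_f$ is a Grothendieck opfibration) whereas the paper checks it directly using (Der4) and pointwise detection of isomorphisms; one small slip is that strict $2$-functoriality gives $(p^{(n)})^\ast=\pull_f^\ast\circ(p^{(m)})^\ast$, not literal equality of the chosen right adjoints, but the canonical identification of adjoints makes this harmless.
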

\begin{proof}
The functoriality of $P$ in the second variable is obvious so let us assume we are given a morphism $f\colon\la k\ra\nach\la n\ra.$ From such a morphism $f$ we obtain the following diagram given on the left-hand-side:
$$
\xymatrix{
e\ar[d]_{t}\ar[r]&e\ar[d]^{t}&&\DD(e)\ar[d]_{t_!}&\DD(e)\ar[d]^{t_!}\ar[l]\\
\pull_k\ar[r]^{\pull_f}\ar[d]_p&\pull_n\ar[d]^p&&\DD(\pull_k)\ar[d]_{\Holim}\xtwocell[-1,1]{}\omit&\DD(\pull_n)\ar[d]^{\Holim}\ar[l]_{\pull_f^\ast}\xtwocell[1,-1]{}\omit\\
e\ar[r]&e&&\DD(e)&\DD(e)\ar[l]
}
$$
The formation of the corresponding base change morphisms gives rise to the pasting diagram on the right (note that we had to use both variants here). Using the fact that isomorphisms can be detected pointwise and (Der4) it is easy to check that the upper $2$-cell is invertible. Thus we can define $P_f$ as the following composition:
$$P_f\colon\quad P_n=\Holim_{\pull_n}\circ\:t_!\nach \Holim_{\pull_k}\circ\pull_f^\ast\circ\:t_!\nach \Holim_{\pull_k}\circ\:t_!=P_k$$
The functoriality of this construction follows from the nice behavior of pasting with respect to both the inversion of natural transformations and base change.
\end{proof}

Let us fix notation for some morphisms in $\Fin$. Given a $(k+1)$-tuple $(i_0,i_1,\ldots ,i_k)$ of elements of $\la n \ra$ let us denote by $(i_0i_1\ldots i_k)$ the corresponding morphism $\la k\ra \nach \la n\ra$ which sends $j$ to $i_j$. For~$n\geq 1$ and $1\leq k \leq n,$ we have thus the morphism $(k-1,k)\colon \la 1\ra \nach \la n \ra.$ So, for a stable derivator~$\DD$ and an object~$X\in\DD(e),$ we obtain by the last lemma induced maps: 
$$(k-1,k)^\ast=P((k-1,k),\id_X)\colon P_nX\nach P_1X\cong \Omega X$$ 
These maps taken together define the following \emph{Segal maps} and satisfy the `usual' Segal condition~(\cite{segal_categories}).

\begin{lemma}\label{lemma_Segal}
Let $\DD$ be a stable derivator and let $X\in\DD(e)$. For $n\geq 1$ and $1\leq k\leq n,$ the~$(k-1,k)^\ast$ together define a natural isomorphism in $\DD(e):$
$$s=s_n\colon P_nX\stackrel{\cong}{\nach} \prod_{k=1}^nP_1(X)\cong\bigoplus_{k=1}^n\Omega X$$
\end{lemma}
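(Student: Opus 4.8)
The plan is to show that the Segal map $s_n\colon P_nX\to\bigoplus_{k=1}^n\Omega X$ is an isomorphism by reducing to an inductive statement about certain (co)Cartesian squares and then invoking the semi-additivity and the characterization of biproducts that were already established in this section. First I would set up the combinatorial picture: inside $\pull_n$ one can find the full subposet on $\{e_{k-1},e_k,t\}$, which is exactly a copy of $\pull_1$, and the inclusion $\pull_{n-1}\hookrightarrow\pull_n$ obtained from an injection $\la n-1\ra\hookrightarrow\la n\ra$. The point is that, for $X\in\DD(e)$, the object $t_!(X)\in\DD(\pull_n)$ has underlying diagram with $t$-component $0$ and each $e_i$-component $X$; its homotopy limit $P_nX$ can then be analysed by slicing $\pull_n$ into overlapping ``corner'' pieces $\pull_1$.

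The key step is a composition/cancellation argument analogous to Proposition~\ref{prop_cancel}, but for a fan of $n$ corners rather than two stacked squares. Concretely, I would consider the object of $\DD(\pull_n)$ associated to $X$ and show by induction on $n$ that the restriction along the subposet $\{e_0,\dots,e_{n-1},t\}\cup\{\text{the corner }e_{n-1},e_n,t\}$ exhibits $P_nX$ as a homotopy pullback of $P_{n-1}X$ and $P_1X\cong\Omega X$ over the zero object $0$ (the $t$-component). Since $\DD$ is stable, this homotopy pullback over $0$ is also a homotopy pushout over $0$, and by the semi-additivity result (Proposition~\ref{prop_preadditive}) such a biCartesian square with a zero corner exhibits the fourth vertex as the biproduct; hence $P_nX\cong P_{n-1}X\oplus\Omega X$, and the inductive hypothesis gives $P_nX\cong\bigoplus_{k=1}^n\Omega X$. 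To see that the isomorphism produced this way is precisely the one assembled from the maps $(k-1,k)^\ast$, I would track the adjunction (co)units through the decomposition: the $k$-th component of $s_n$ is, by construction via Lemma~\ref{lemma_gamma}, the base change map $\Holim_{\pull_n}t_!\to\Holim_{\pull_1}(\pull_{(k-1,k)})^\ast t_!$, and these are exactly the projections of the biCartesian-square description onto the individual corners; naturality of base change with respect to pasting (Lemma~\ref{lemma_BCpasting}) then identifies the assembled map $s_n$ with the biproduct comparison.

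An alternative, and perhaps cleaner, route avoids the induction: one checks directly that the square in $\Cat$
$$
\xymatrix{
e\sqcup\cdots\sqcup e \ar[r]\ar[d] & \pull_n \ar[d]^{p}\\
e \ar[r]_-{t} & e
}
$$
(with the top-left the discrete set $\{e_0,\dots,e_n\}$, and the left vertical to $e$) together with (Der4) and Proposition~\ref{prop_extzero} computes $\Holim_{\pull_n}t_!(X)$ as an iterated homotopy pullback of $n$ copies of the cospan $X\to 0\leftarrow X$; the corner-by-corner decomposition is then literally the statement that this iterated pullback splits as a biproduct in the stable setting. Either way, the honest content is identical and is supplied by the results already in hand.

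The main obstacle I anticipate is purely bookkeeping: making precise the ``fan of corners'' decomposition of $\pull_n$ and checking that the relevant inclusion functors admit adjoints so that Proposition~\ref{prop_detect} (or directly Proposition~\ref{prop_cofinality} plus (Der4)) applies to each corner, and then verifying that the map one obtains agrees on the nose with $s_n$ rather than merely up to some permutation or automorphism of $\bigoplus\Omega X$. The second-variable naturality in $X$ is immediate, and the isomorphism statement itself is forced once semi-additivity and stability are invoked; so the real work is the careful identification of the canonical map, which I would do by the pasting calculus for base change transformations exactly as in the proof of Proposition~\ref{prop_preadditive}.
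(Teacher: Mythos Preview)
Your overall strategy---induction on $n$, decomposing $P_nX$ as a homotopy pullback of $P_{n-1}X$ and $\Omega X$ over a zero object, then invoking stability and semi-additivity to identify this with a biproduct---is the paper's approach as well. But two issues are worth flagging.

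First, you have the values of $t_!(X)$ reversed: since $t\colon e\to\pull_n$ is a \emph{cosieve} (it is the inclusion of the terminal object), Proposition~\ref{prop_extzero} says $t_!(X)$ has value $X$ at $t$ and value $0$ at each $e_i$, not the other way around. The same slip recurs in your alternative route, where the cospan should read $0\to X\leftarrow 0$ rather than $X\to 0\leftarrow X$. Your pullback decomposition happens to remain correct---the overlap of the two pieces is $\{e_{n-1},t\}\cong[1]$, and $\Holim_{[1]}(0\to X)\cong 0$---but for a different reason than the one you give.

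Second, and more substantively, you assert the decomposition $P_nX\simeq P_{n-1}X\times_0\Omega X$ without supplying the mechanism that produces it inside the derivator; this is precisely the technical content of the lemma. The paper's device (for $n=2$, to which the induction reduces by functoriality of $P_\bullet$) is to enlarge $\pull_2$ to a poset $J\cong[1]\times\pull$ by adjoining new minimal elements $\omega_0\leq e_0,e_1$ and $\omega_1\leq e_1,e_2$, and then to use that the inclusion $\pull\hookrightarrow J$ admits a left adjoint. Cofinality (Proposition~\ref{prop_cofinality}) then rewrites $P_2X$ as $\Holim_{\pull}$ of the restriction of $j_\ast t_!(X)$, and one checks directly via (Der4) that this restricted $\pull$-diagram is $\Omega X\to 0\leftarrow\Omega X$, whose homotopy limit is the biproduct $\Omega X\oplus\Omega X$. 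Your references to Proposition~\ref{prop_detect} and cofinality point in the right direction, but without an explicit auxiliary construction of this kind the step remains a genuine gap.
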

\begin{proof}
By induction on $n$ and by the functoriality of $P_\bullet X,$ it is enough to check this for $n=2$. Let $J$ be the poset obtained from $\pull_2$ by adding two new elements $\omega_0$ and $\omega_1$ such that $\omega_0\leq e_0,e_1$ and $\omega _1\leq e_1,e_2.$ Moreover, let us denote the resulting inclusion by $j\colon \pull_2\nach J.$ Under the obvious isomorphism $J\cong [1]\times \pull,$ we can consider the adjunction $(d^1\times \id, s^0\times \id)\colon \pull \rightharpoonup [1]\times\pull$ as an adjunction $(L,R)\colon \pull\rightharpoonup J.$ By Proposition \ref{prop_cofinality} we have a natural isomorphism between $P_2$ and
$$
\xymatrix{
\DD(e) \ar[r]^{t_!}& \DD(\pull_2)\ar[r]^{j_\ast}& \DD(J)\ar[r]^{L^\ast}& \DD(\pull)\ar[r]^{\Holim}&\DD(e).
}
$$
But it is easy to see that the composition of the first three functors evaluated on $X$ yields a diagram which vanishes at $t$ and is isomorphic to $\Omega X$ at the two remaining arguments. It thus follows that we have an isomorphism $P_2(X)\cong\Omega X\oplus \Omega X$ induced by the Segal map.
\end{proof}

Having the functorial construction of finite direct sums of \emph{loop objects} at our disposal, we want to show now that $\Omega X$ is always canonically an abelian group object. As an intermediate step, let us construct a pairing $\star\colon \Omega X\oplus \Omega X\nach \Omega X$ which will be called the \emph{concatenation map}. By the last lemma we can invert the Segal maps and hence define the pairing by the following composition:
$$
\xymatrix{
\star \colon \Omega X\oplus \Omega X & P_2(X)\ar[l]^-\cong_-s\ar[r]^-{(02)^\ast}&\Omega X
}
$$

\begin{lemma}\label{lemma_concatenation}
Let $\DD$ be a stable derivator and let $X$ be an object of $\DD(e).$ The concatenation map $\star\colon \Omega X\oplus\Omega X\nach \Omega X$ is an associative pairing on $\Omega X$.
\end{lemma}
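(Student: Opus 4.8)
The goal is to verify the single identity $\star\circ(\star\oplus\id_{\Omega X})=\star\circ(\id_{\Omega X}\oplus\star)$ of morphisms $\Omega X^{\oplus 3}\nach\Omega X$, where, as usual, the two threefold biproducts $(\Omega X\oplus\Omega X)\oplus\Omega X$ and $\Omega X\oplus(\Omega X\oplus\Omega X)$ are identified via the canonical associativity isomorphism (legitimate by Proposition~\ref{prop_preadditive} and its corollary). The plan is to recognise both composites as one and the same map obtained from $P_3X$: I claim that each of them equals $(03)^\ast\circ s_3^{-1}$, where $s_3\colon P_3X\stackrel{\cong}{\nach}\Omega X^{\oplus 3}$ is the Segal isomorphism of Lemma~\ref{lemma_Segal} and $(03)\colon\la 1\ra\nach\la 3\ra$ is the indicated morphism of $\Fin$. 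This is the familiar Segal-type argument for the associativity of a concatenation product, transported into $\DD(e)$.

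Two ingredients feed into this. First, the functoriality of $P_\bullet X\colon\Fin^{\op}\nach\DD(e)$ (Lemma~\ref{lemma_gamma}) turns every relation $g\circ f=h$ in $\Fin$ into the relation $f^\ast\circ g^\ast=h^\ast$ among the induced maps. Second, by Lemma~\ref{lemma_Segal} the Segal map $s_n$ is the unique isomorphism with $\pr_k\circ s_n=(k-1,k)^\ast$ for $1\le k\le n$, and by definition $\star=\mu^\ast\circ s_2^{-1}$ with $\mu=(02)\colon\la 1\ra\nach\la 2\ra$. Using these I would first identify $(\star\oplus\id_{\Omega X})\circ s_3$ and $(\id_{\Omega X}\oplus\star)\circ s_3$ with maps of the form $s_2\circ g^\ast$ for suitable $g\colon\la 2\ra\nach\la 3\ra$ in $\Fin$; this is a coordinatewise check against the defining property of $s_2$, using only a handful of evident composition identities between morphisms $\la 1\ra\nach\la 3\ra$ (for instance, that a suitable Segal edge of $P_3X$ followed by $g$ reproduces $\mu$ followed by another Segal edge of $P_3X$). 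Composing on the left with $\star=\mu^\ast\circ s_2^{-1}$ and invoking functoriality, each composite becomes $(g\circ\mu)^\ast\circ s_3^{-1}$, and a short computation shows $g\circ\mu=(03)$ in both cases; hence both composites equal $(03)^\ast\circ s_3^{-1}$, which gives associativity.

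The main obstacle here is entirely one of bookkeeping rather than of mathematical content: one must fix, once and for all, a compatible convention identifying the various threefold biproducts with the ordered biproduct $\bigoplus_{k=1}^{3}\Omega X$ occurring in Lemma~\ref{lemma_Segal}, and keep careful track of which morphism of $\Fin$ implements each projection and each ``combined edge'' $(0\,i)$. Once such conventions are fixed, everything reduces to verifying equalities between morphisms out of $P_3X$ coordinatewise, via the defining properties of $s_2$ and $s_3$ and the functoriality of $P$; there is no homotopy-theoretic difficulty, since the whole argument takes place in the ordinary (additive) category $\DD(e)$ and only reuses the already-established Lemmas~\ref{lemma_gamma} and~\ref{lemma_Segal}.
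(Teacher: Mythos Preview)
Your proposal is correct and follows essentially the same Segal-type approach as the paper: both arguments reduce associativity to the single identity that each threefold concatenation equals $(03)^\ast\circ s_3^{-1}$, using only the functoriality of $P_\bullet X$ and the defining property of the Segal isomorphisms. The paper phrases this via generalized elements $f,g,h\colon U\nach\Omega X$ and an appeal to the Yoneda lemma, whereas you work directly with the morphisms $\star\circ(\star\oplus\id)$ and $\star\circ(\id\oplus\star)$; this is a purely stylistic difference, and your version arguably makes the bookkeeping with the intermediate maps $g\colon\la 2\ra\nach\la 3\ra$ (namely $(023)$ and $(013)$) a bit more explicit than the paper's diagram does.
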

\begin{proof}
Let $U$ be a further object of $\DD(e)$ and consider three morphisms $f,g,h\colon U\nach \Omega X$ in $\DD(e)$. In the following diagram, all maps labeled by $s$ are Segal maps:
$$
\xymatrix{
& \Omega X&& \\
&& P_2X\ar@/^1.0pc/[lu]^{(02)^\ast}\ar@/^1.0pc/[ld]_s & \\
U\ar@/_1.4pc/[ddr]_{f,g,h}\ar[r]^-{f,g\star h}\ar@/^1.4pc/[ruu]^{f \star (g\star h)} & \Omega X\oplus \Omega X& & P_3X\ar@/^1.0pc/[lu]_{(013)^\ast}\ar@/_2.2pc/[lluu]_{(03)^\ast}\ar@/_1.0pc/[ld]^s \ar@/^2.2pc/[lldd]^s\\
&& \Omega X\oplus P_2X\ar@/_1.0pc/[lu]^-{\id \oplus (02)^\ast}\ar@/_1.2pc/[dl]_-{\id \oplus s} &\\
& \Omega X\oplus \Omega X\oplus \Omega X & & 
}$$
The two quadrilaterals on the left commute by definition of the concatenation and the right one commutes by functoriality of $P_\bullet$. We can thus deduce the relation $f\star (g\star h)=(03)^\ast m(f,g,h)$ where $m(f,g,h)\colon U\nach P_3 X$ is the unique map such that $s\circ m(f,g,h)=(f,g,h).$ This `associative description' of $f\star (g\star h)$ together with the Yoneda lemma implies the associativity of the concatenation map.
\end{proof}

Heading for the additive inverse of the identity on loop objects, let us consider the only non-trivial automorphism $\sigma\colon \la 1\ra\nach \la 1\ra$ in $\Fin$. Then $\pull_\sigma\colon \pull\nach\pull$ is the isomorphism interchanging the vertices $(1,0)$ and $(0,1)$. There is thus an induced automorphism $\sigma^\ast=(10)^\ast\colon \Omega X\nach \Omega X$ which we call the \emph{inversion of loops}.

\begin{proposition}\label{prop_sign}
Let $\DD$ be a stable derivator and let $X\in \DD(e)$. The inversion of loops map $\sigma^\ast\colon\Omega X\nach \Omega X$
is an additive inverse to $\id_{\Omega X}.$ In particular, $\Omega X\in \DD(e)$ is an abelian \emph{group} object. 
\end{proposition}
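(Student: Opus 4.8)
The plan is to deduce the proposition from the single identity $\id_{\Omega X}+\sigma^\ast=0$ in $\hhom_{\DD(e)}(\Omega X,\Omega X)$, where $+$ is the abelian monoid structure of the corollary following Proposition \ref{prop_preadditive}. Recall that this addition is $f+g=\nabla_{\Omega X}\circ(f,g)$, the composite of $(f,g)\colon Z\to\Omega X\times\Omega X\cong\Omega X\oplus\Omega X$ with the codiagonal. Thus $\id_{\Omega X}+\sigma^\ast$ is precisely the composite $\Omega X\xrightarrow{(\id,\sigma^\ast)}\Omega X\times\Omega X\xrightarrow{\nabla}\Omega X$, i.e. the morphism appearing in the group-object axiom with $\sigma^\ast$ as candidate inversion map. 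Since the monoid structure on $\Omega X$ is already commutative and its multiplication is induced by $\nabla$ under the biproduct identification, once this composite is shown to be the zero morphism, $\Omega X$ will be an abelian group object with inversion $\sigma^\ast$.

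The main step is therefore to compute $\nabla_{\Omega X}\circ(\id,\sigma^\ast)$ via the functorial description of loop sums. First I would identify the concatenation pairing $\star\colon\Omega X\oplus\Omega X\to\Omega X$ of Lemma \ref{lemma_concatenation} with the codiagonal $\nabla_{\Omega X}$. Since $\Omega X\oplus\Omega X$ is in particular a coproduct by Proposition \ref{prop_preadditive}, and $\nabla_{\Omega X}$ is the unique morphism restricting to $\id$ on each summand, it suffices to check $\star\circ\iota_k=\id_{\Omega X}$ on the two coproduct inclusions $\iota_1,\iota_2$. I would transport $\iota_k$ along the Segal isomorphism $s\colon P_2X\to\Omega X\oplus\Omega X$ of Lemma \ref{lemma_Segal} and verify, by reading off the two Segal components $(01)^\ast,(12)^\ast$, that $s^{-1}\circ\iota_1=(011)^\ast$ and $s^{-1}\circ\iota_2=(001)^\ast$, where $(011),(001)\colon\langle 2\rangle\to\langle 1\rangle$ act through the bifunctor $P$ of Lemma \ref{lemma_gamma}; in each case one component is $\id_{\Omega X}$ and the other is $P$ applied to a constant map $\langle 1\rangle\to\langle 1\rangle$. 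Such a constant map factors through $\langle 0\rangle$, so $P$ applied to it factors through $P_0X$, and $P_0X\cong 0$ because $P_0X=\Holim_{\pull_0}t_!(X)$ with $\pull_0\cong[1]$ and $t_!(X)\cong(0\to X)$, whose homotopy limit over $[1]$ is $0$ by the dual of Lemma \ref{lemma_terminal}; hence that component vanishes. Functoriality of $P$ then yields $\star\circ\iota_1=(02)^\ast\circ s^{-1}\circ\iota_1=\big((011)\circ(02)\big)^\ast=\id_{\Omega X}$, and similarly for $\iota_2$ since $(001)\circ(02)=\id_{\langle 1\rangle}$; thus $\star=\nabla_{\Omega X}$.

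To finish, the same Segal-component computation shows $s^{-1}\circ(\id_{\Omega X},\sigma^\ast)=(010)^\ast$, its components being $\id_{\Omega X}$ and $\sigma^\ast=(10)^\ast$, so that
$$\id_{\Omega X}+\sigma^\ast=\nabla_{\Omega X}\circ(\id_{\Omega X},\sigma^\ast)=\star\circ(\id_{\Omega X},\sigma^\ast)=(02)^\ast\circ(010)^\ast=\big((010)\circ(02)\big)^\ast=(00)^\ast ,$$
and $(00)\colon\langle 1\rangle\to\langle 1\rangle$ again factors through $\langle 0\rangle$, so $(00)^\ast$ factors through $P_0X\cong 0$ and is the zero morphism; this gives $\id_{\Omega X}+\sigma^\ast=0$. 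I expect the main obstacle to be organisational rather than conceptual: one must keep the contravariance of $P$ straight and correctly pin down which morphisms of $\Fin$ realise, through the Segal isomorphism, the coproduct inclusions and the map $(\id_{\Omega X},\sigma^\ast)$. The only genuinely homotopical input beyond Lemmas \ref{lemma_Segal}, \ref{lemma_gamma} and \ref{lemma_concatenation} is the vanishing $P_0X\cong 0$, which uses once more the extension-by-zero description (Proposition \ref{prop_extzero}) together with Lemma \ref{lemma_terminal}; everything else is formal manipulation inside the additive structure of $\DD(e)$ furnished by Proposition \ref{prop_preadditive} and the following corollary.
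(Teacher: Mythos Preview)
Your argument is correct and takes a genuinely different, more direct route than the paper. The paper proceeds by analysing the $S_3$-action on $P_2X$: it writes the conjugate of $\sigma_{01}^\ast$ by the Segal map as a lower triangular matrix $\left(\begin{smallmatrix}\sigma^\ast&0\\\alpha&\beta\end{smallmatrix}\right)$, obtains the relation $\alpha\sigma^\ast+\beta\alpha=0$ from the involution property, derives the formula $f\star g=\alpha f+\beta g$, and then uses the \emph{associativity} of $\star$ (Lemma~\ref{lemma_concatenation}) together with a second expression for $\star$ coming from $\sigma_{12}$ to force $\alpha=\beta=\id$. Your approach bypasses all of this: you identify $\star$ with $\nabla$ directly by checking the two coproduct inclusions, using only the functoriality of $P_\bullet$ and the single homotopical fact $P_0X\cong 0$, and then compute $\id+\sigma^\ast=(02)^\ast\circ(010)^\ast=(00)^\ast=0$ in one line.

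What each approach buys: yours is cleaner and, notably, does not use Lemma~\ref{lemma_concatenation} at all (you over-cite it; only the \emph{definition} of $\star$ preceding that lemma is needed). The paper's approach, while more circuitous, makes the Eckmann--Hilton flavour explicit and records the matrix descriptions of the full $S_3$-action on $\Omega X\oplus\Omega X$, which may be of independent interest. Both routes yield the identity $f\star g=f+g$ stated in the remark following the proposition.
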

\begin{proof}
By functoriality of the construction $P_\bullet X$, there is a right action of the symmetric group on three letters on $P_2X$. We want to describe the corresponding action on $\Omega X\oplus \Omega X$ obtained by conjugation with the Segal map $s$. The strategy of the proof is then to use this action in order to relate the concatenation product and the addition of morphisms. 

For different elements $i,j\in \la 2 \ra$ let us denote by $\sigma _{ij}$ the associated transposition. One checks that the following diagram commutes
$$\xymatrix{
P_2X\ar[d]_s\ar[rr]^{\sigma_{02}^\ast}&&P_2X\ar[d]^s\\
\Omega X\oplus \Omega X\ar[rr]_{\left( \begin{array}{cc} 0&\sigma^\ast \\ \sigma ^\ast & 0\end{array}\right)}&&\Omega X\oplus \Omega X
}$$
where the arrows labeled by $s$ are again Segal maps. From the equality of the maps $$\sigma_{01}\circ (01)=(01)\circ \sigma\colon \la 1\ra \nach \la 2\ra$$ we conclude that the endomorphism of $\Omega X\oplus \Omega X$ corresponding to $\sigma _{01}$ is a lower triangular matrix
$$s\circ \sigma _{01}^\ast \circ s^{-1}=\left(\begin{array}{cc}\sigma^\ast & 0\\ \alpha & \beta\end{array}\right)\colon\Omega X\oplus \Omega X\nach \Omega X\oplus \Omega X$$
for some maps $\alpha,\beta \colon \Omega X\nach \Omega X.$ The fact that $\sigma_{01}$ is an involution implies the relations: 
$$\alpha \sigma ^\ast + \beta\alpha = 0 \qquad\mbox{and}\qquad \beta^2=\id$$
The aim is now to show that both maps $\alpha$ and $\beta$ are identities which would in particular imply that $\sigma ^\ast$ is an additive inverse of $\id _{\Omega X}$. 

From the relation $(02)=\sigma_{01}\circ (12)$ we immediately get $(02)^\ast=(12)^\ast\circ \sigma_{01}^\ast\colon P_2X\nach \Omega X.$ Using the matrix description of the map induced by $\sigma _{01}$ we see that for two maps $f,g\colon U\nach \Omega X$ there is the following formula for the concatenation product:
$$f\star g=\alpha f + \beta g\colon U\nach \Omega X$$ 
By Lemma \ref{lemma_concatenation} we know that the concatenation pairing is associative. If we take $U=\Omega X$ and compare the two expressions for $(0\star 0)\star \id_{\Omega X}$ and $0\star(0\star \id_{\Omega X})$ we already obtain the first intended relation $\beta= \id_{\Omega X}.$ 

Instead of using $(02)=\sigma_{01}\circ (12),$ we can also use the relation $(02)=\sigma _{12}\circ (01)\colon \la 1\ra \nach \la 2 \ra$ to obtain a further description of the concatenation product. First, since 
$$\sigma _{12}=\sigma _{02}\circ \sigma_{01}\circ \sigma _{02}\colon \la 2 \ra\nach \la 2\ra$$
we obtain that the endomorphism on $\Omega X\oplus \Omega X$ induced by $\sigma _{12}^\ast$ has the following matrix description:
$$s\circ \sigma_{12}^\ast \circ s^{-1}=\left(\begin{array}{cc}\sigma^\ast \beta \sigma ^\ast & \sigma ^\ast \alpha \sigma ^\ast \\ 0& \sigma^\ast\end{array}\right)\colon\Omega X\oplus \Omega X\nach \Omega X\oplus \Omega X$$
From this and the formula $(02)^\ast =(01)^\ast \circ \sigma_{12}^\ast$ we see that the concatenation product can also be written as: 
$$f\star g= \sigma^\ast \beta \sigma^\ast f + \sigma ^\ast \alpha \sigma ^\ast g\colon U\nach \Omega X$$ 
A comparison of these two descriptions concludes the proof since we obtain $\alpha=\sigma ^\ast \beta \sigma ^\ast =\id_{\Omega X}.$
\end{proof}

\begin{remark}
Although we will not make use of this remark we want to emphasize the following. The proof of the last proposition shows that the addition on mapping spaces into loop objects coincides with the pairing induced by the concatenation of loops. Similarly, additive inverses are given by the inversion of loops. Thus for maps $f,g\colon U\nach \Omega X$ we have: 
$$f+g=f\star g\qquad \mbox{and}\qquad -f\stackrel{def}{=}\sigma^\ast f$$
\end{remark}

A combination of this proposition, the result on the semi-additivity of $\DD(J)$ (Proposition \ref{prop_preadditive}), and the fact that $(\Sigma,\Omega)$ is a pair of inverse equivalences in the stable situation gives us immediately the following corollary.

\begin{corollary}\label{cor_additive}
If $\DD$ is a stable derivator then $\DD(J)$ is an additive category for an arbitrary $J.$ Moreover, for an arbitrary functor $u\colon J\nach K$, the induced functors $u^\ast,u_!,$ and $u_\ast$ are additive.
\end{corollary}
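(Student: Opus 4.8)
The plan is to reduce the statement to the absolute case $J = e$ and then assemble the additive structure from the preadditive structure already established. First I would invoke Proposition \ref{prop_againtriang} (that $\DD^M$ is stable for a stable $\DD$) together with the fact that the underlying category of $\DD^M$ is $\DD(M)$; thus a general claim about the category $\DD(J)$ reduces to the corresponding claim about the underlying category of the stable derivator $\DD^J$. In particular, it suffices to show that $\DD(e)$ is additive whenever $\DD$ is stable. From the preceding results we already know that $\DD(e)$ is pointed (it is a pointed derivator), admits finite biproducts by Proposition \ref{prop_preadditive}, and that each $\hhom_{\DD(e)}(X,Y)$ is an abelian monoid via the convolution product built from the (co)diagonal maps (the corollary following Proposition \ref{prop_preadditive}). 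So the only missing ingredient for additivity is the existence of additive inverses, i.e.\ that the abelian monoids $\hhom_{\DD(e)}(X,Y)$ are in fact abelian groups.

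For the group structure, the key observation is the equivalence $(\Sigma,\Omega)\colon \DD(e) \stackrel{\simeq}{\to} \DD(e)$ from Proposition \ref{prop_stablesigma}. Given any object $Y \in \DD(e)$, we may write $Y \cong \Sigma\Omega Y$, so every object is (up to canonical isomorphism) of the form $\Sigma X$ — equivalently, $\Omega$ is essentially surjective, so every object is of the form $\Omega X$. By Proposition \ref{prop_sign}, $\Omega X$ is an abelian \emph{group} object in $\DD(e)$ for every $X$; since group objects are preserved by equivalences and the property of being a group object is invariant under isomorphism, every object $Y \in \DD(e)$ is an abelian group object. Consequently, for every $U, Y \in \DD(e)$ the hom-set $\hhom_{\DD(e)}(U,Y)$ inherits the structure of an abelian group (not merely an abelian monoid), with the addition agreeing with the previously defined convolution addition because both are determined by the biproduct $Y \oplus Y$, the diagonal $\Delta_Y$, and the codiagonal $\nabla_Y$. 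This, combined with the already-established bilinearity of composition over the monoid structure, shows $\DD(e)$ is preadditive; together with the existence of finite biproducts (a zero object and finite products/coproducts that coincide, Proposition \ref{prop_preadditive}), it is additive. Transporting back along $\DD \rightsquigarrow \DD^J$ gives that $\DD(J)$ is additive for all $J$.

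For the second assertion — that $u^\ast$, $u_!$, $u_\ast$ are additive for any functor $u\colon J \to K$ — I would argue as follows. Each of these three functors has an adjoint on at least one side: $u^\ast$ has adjoints on both sides by (Der3), $u_!$ is a left adjoint, and $u_\ast$ is a right adjoint. A functor between additive categories that preserves finite products (or finite coproducts) is automatically additive, since the abelian group structure on hom-sets is determined by the biproduct together with the diagonal and codiagonal, all of which are preserved by a product-preserving (resp.\ coproduct-preserving) functor; right adjoints preserve finite products and left adjoints preserve finite coproducts. Proposition \ref{prop_preadditive} already records that the biproducts are preserved by $u^\ast, u_!, u_\ast$, so additivity is immediate. (Alternatively, one can note directly that an adjoint functor preserves the convolution addition on hom-sets because the unit and counit are compatible with the diagonal and codiagonal maps.)

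The main obstacle is essentially already handled upstream: the genuine content is Proposition \ref{prop_sign}, i.e.\ producing the additive inverse on loop objects via the inversion-of-loops map and the Segal-condition machinery of Lemmas \ref{lemma_gamma}–\ref{lemma_concatenation}. Given that, the present corollary is a matter of packaging — reducing to $J = e$, invoking essential surjectivity of $\Omega$ to spread the group-object property to all objects, checking that the resulting addition coincides with the earlier monoid addition (both being manufactured from the same biproduct data, hence equal), and using the adjoint/biproduct-preservation argument for the functors. The one point requiring a little care is the compatibility of the two a priori different additions on hom-sets, but this follows formally once one notes both are defined by precisely the same universal recipe in terms of $\Delta$, $\nabla$, and the identification $Y \times Y \cong Y \sqcup Y$.
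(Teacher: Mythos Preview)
Your proposal is correct and follows essentially the same approach as the paper: the paper simply states that the corollary follows by combining Proposition~\ref{prop_sign} (additive inverses on loop objects), Proposition~\ref{prop_preadditive} (semi-additivity), and the fact that $(\Sigma,\Omega)$ is an equivalence in the stable case. You have unpacked this one-line argument in more detail, including the reduction to $J=e$ via Proposition~\ref{prop_againtriang} and the observation that essential surjectivity of $\Omega$ transports the group-object structure to all objects, but the logical content is the same.
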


\subsection{The canonical triangulated structures}

We can now attack the main result of this section, namely, that given a stable derivator $\DD$ then the categories $\DD(J)$ are canonically triangulated categories. Using Proposition \ref{prop_againtriang}, we can again assume without loss of generality that we are in the case $J=e$. The suspension functor of the triangulated structure will be the suspension functor $\Sigma \colon \DD(e)\nach \DD(e)$ we constructed already. 

Thus, let us construct the class of distinguished triangles. For this purpose, let $K$ denote the poset:
$$
\xymatrix{
(0,0)\ar[d]\ar[r]&(1,0)\ar[r]&(2,0)\\
(0,1)&&
}
$$
Moreover, let $i_0\colon [1]\nach K$ be the map classifying the left horizontal arrow and let $i_1\colon K\nach [2]\times [1]$ be the inclusion. Let us denote the composition by $i\colon [1]\stackrel{i_0}{\nach}K\stackrel{i_1}{\nach}[2]\times [1].$ Again, since~$i_0$ is a sieve, ${i_0}_\ast$ gives us an extension by zero functor. Let us consider the functor: 
$$T\colon \DD([1])\stackrel{{i_0}_\ast}{\nach}\DD(K)\stackrel{{i_1}_!}{\nach}\DD([2]\times [1])$$
We claim that the squares $d_0T(f),\:d_1T(f),$ and $d_2T(f)\in \DD(\square)$ are then biCartesian for an arbitrary $f\in\DD([1]).$ Moreover, if the underlying diagram of $f$ is $X\nach Y$ then we have canonical isomorphisms $T(f)_{2,1}\cong \Sigma X$ and $T(f)_{1,1}\cong \mathsf{C}(f).$
In fact, by Proposition \ref{prop_2outof3}, it is enough to show the biCartesianness of $d_0T(f)$ and $d_2T(f).$ This can be done by two applications of the detection result Proposition \ref{prop_detect} to $i_1\colon K\nach J=[2]\times [1].$ It is easy to check (using Lemma \ref{lemma_someadj} in one of the cases) that the assumptions of that proposition are satisfied. Since $i_0$ is a sieve, the underlying diagram of $d_1T(f)$ and $d_2T(f)$ respectively look like:
$$
\xymatrix{
X\ar[r]\ar[d]&0\ar[d]&&X\ar[r]\ar[d]&Y\ar[d]\\
0\ar[r] & T(f)_{2,1}&&0\ar[r]&T(f)_{1,1}
}
$$
Moreover, by the proof of Proposition \ref{prop_stablesigma}, $d_1T(f)$ lies in the essential image of 
$$\xymatrix{
\DD(e)\ar[r]^{(0,0)_\ast}&\DD(\push)\ar[r]^{\ipush_!}&\DD(\square).
}
$$
Hence, we have a canonical isomorphism $T(f)_{2,1}\cong\Sigma X.$
Similarly, if we let $j\colon [1]\nach \push$ denote the functor classifying the upper horizontal morphism $d_2T(f)$ then lies in the essential image of 
$$
\xymatrix{
\DD([1])\ar[r]^{j_\ast}&\DD(\push)\ar[r]^{\ipush_!}&\DD(\square).
}
$$
Hence, we also have a canonical isomorphism $T(f)_{1,1}\cong \mathsf{C}(f)$ as intended.

Thus, for $f\in \DD([1]),$ by first restricting $T(f)$ to $[3]$ in the expected way and then forming the underlying diagram in $\DD(e)$, we obtain a triangle  $(T_f)$ in $\DD(e)$ which is of the following form:
$$
(T_f):\quad X\nach Y\nach \mathsf{C}(f)\nach \Sigma X
$$
Call a triangle in $\DD(e)$ \emph{distinguished} if it is isomorphic to $(T_f)$ for some $f\in\DD([1]).$ We are now in the position to state the following important theorem.

\begin{theorem}\label{thm_triang}
Let $\DD$ be a stable derivator and let $J$ be a category. Endowed with the suspension functor $\Sigma\colon \DD(J)\nach\DD(J)$ and the above class of distinguished triangles, $\DD(J)$ is a triangulated category.
\end{theorem}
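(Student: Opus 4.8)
The plan is to reduce immediately to the absolute case and then to verify the axioms (TR1)--(TR4) of a triangulated category one at a time, using strongness to produce coherent lifts of morphisms and diagrams, and the calculus of biCartesian squares to build the necessary auxiliary diagrams. By Proposition \ref{prop_againtriang} the prederivator $\DD^J$ is again a stable derivator, and the class of distinguished triangles in $\DD(J)=\DD^J(e)$ is, by construction, exactly the one attached to $\DD^J$; hence it suffices to treat $J=e$. We already know that $\Sigma\colon\DD(e)\nach\DD(e)$ is an equivalence (Proposition \ref{prop_stablesigma}) and that $\DD(e)$ is additive (Corollary \ref{cor_additive}), so the structural prerequisites are in place.

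For (TR1): closure under isomorphism is built into the definition of a distinguished triangle. The triangle $X\stackrel{\id}{\nach}X\nach 0\nach\Sigma X$ is distinguished because any lift of $\id_X$ to an object of $\DD([1])$ has underlying arrow an isomorphism and hence vanishing cone (Proposition \ref{prop_isocone}), while restricting $T(f)$ along $i=i_1\circ i_0$ recovers $f$ up to canonical isomorphism since $i_0,i_1$ are fully faithful (Proposition \ref{prop_honestext}); thus $(T_{\overline{\id_X}})$ has first morphism $\id_X$ and third object $0$. Finally, every morphism $f\colon X\nach Y$ of $\DD(e)$ embeds into a distinguished triangle: by strongness the functor $\dia_{[1],e}\colon\DD([1])\nach\DD(e)^{[1]}$ is essentially surjective, so there is $\bar f\in\DD([1])$ whose underlying arrow is isomorphic to $f$, and $(T_{\bar f})$ does the job. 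This is where strongness is indispensable.

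For the rotation axiom (TR2) I would proceed as follows. Given $f\in\DD([1])$ with underlying arrow $X\nach Y$, build, using the extension-by-zero functors and starting from $T(f)$, a coherent diagram over $[2]\times[2]$ (a grid of biCartesian squares) whose first row realizes $X\nach Y\nach 0$, whose middle entry is $\mathsf{C}f$, and whose remaining squares are forced to be biCartesian by repeated application of the detection result Proposition \ref{prop_detect} and the $2$-out-of-$3$ property Proposition \ref{prop_2outof3}; these identify $T(f)_{2,1}\cong\Sigma X$, the $(2,2)$-entry with $\Sigma Y$, and exhibit a sub-$[3]$ whose underlying diagram is the rotated triangle $Y\nach\mathsf{C}f\nach\Sigma X\nach\Sigma Y$. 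That this rotated triangle is again of the form $(T_g)$ follows from the essential uniqueness of Kan extensions along fully faithful functors. The delicate point is the sign: the connecting map $\Sigma X\nach\Sigma Y$ produced by the grid differs by the antipode from $\Sigma f$, and identifying this antipode with $-\id$ is precisely Proposition \ref{prop_sign} (dually, the inversion-of-loops map is the additive inverse of the identity on loop objects). The converse direction of (TR2) uses the same grid read backwards together with the fact that $\Sigma$ is an equivalence.

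For (TR3) and (TR4) I would argue in the same spirit. Given a commuting square between the first legs of two distinguished triangles, lift the whole datum --- using fullness and essential surjectivity of $\dia_{[1],e}$ and $\dia_{[1],[1]}$ --- to a coherent diagram over a finite poset into $\DD$, extend it by homotopy Kan extensions to a larger diagram whose relevant squares are all biCartesian (Propositions \ref{prop_detect}, \ref{prop_cancel}, \ref{prop_2outof3}), and read off the required fill-in; functoriality of the construction yields commutativity of the morphism of triangles. The octahedral axiom is handled likewise: from a composable pair $X\nach Y\nach Z$ one builds the associated diagram over $[2]\times[2]$ (or a truncation), whose various $[3]$-slices are the four triangles of the octahedron and whose biCartesianness --- again from \ref{prop_detect} together with \ref{prop_2outof3} and \ref{prop_cancel} --- gives all the required commutativities, including the compatibility of the two cone maps with the octahedral morphism $\mathsf{C}(g\circ f)\nach\mathsf{C}(g)$. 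The main obstacles, in my estimation, are pinning down the sign in (TR2) --- the one place the additive structure and Proposition \ref{prop_sign} are genuinely needed --- and the bookkeeping of biCartesian squares and coherent lifts in (TR4), while (TR1)(c) and (TR3) are where the strongness hypothesis cannot be dispensed with.
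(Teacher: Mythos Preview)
Your proposal is correct and follows essentially the same approach as the paper: reduce to $J=e$ via Proposition~\ref{prop_againtriang}, handle (T1) and (T3) through strongness and Proposition~\ref{prop_isocone}, obtain the sign in (T2) from Proposition~\ref{prop_sign} by building a grid of biCartesian squares, and treat (T4) by extending a coherent lift of the composable pair to a larger poset whose squares are forced to be biCartesian by Proposition~\ref{prop_detect} and the $2$-out-of-$3$ property.

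Two minor deviations are worth noting. First, for (T2) the paper does not argue the converse direction directly: having already established (T1) and (T3), it invokes the standard fact (cited from \cite{schwede_bookproject}) that only one direction of the rotation axiom is then required. Your ``read the grid backwards'' suggestion would work but is unnecessary. Second, for (T4) a $[2]\times[2]$ grid is too small to exhibit all four triangles of the octahedron simultaneously; the paper instead uses a full subposet of $[4]\times[2]$ (minus two corners), large enough that one can read off the triangles on $f_1$, $f_2$, $f_3=f_2\circ f_1$ \emph{and} the triangle $C_1\nach C_3\nach C_2\nach\Sigma C_1$ from a single coherent object. The tools you name (Propositions~\ref{prop_detect}, \ref{prop_cancel}, \ref{prop_2outof3} and strongness) are exactly the ones used, so this is only a matter of choosing the right indexing poset.
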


\noindent
The fact that this triangulated structure is compatible with the restriction and homotopy Kan extension functors will be discussed in Corollary \ref{cor_canonicallyexact}. For easier reference to the axioms of a triangulated category we include a definition. For more background on this theory cf.\ for example~\cite{neeman} or to \cite{schwede_bookproject}. The form of the octahedron axiom given here is sufficient in order to obtain the usual form of the octahedron axiom. This observation was made in \cite{kellervossieck} (for a proof of it see \cite{schwede_bookproject}).

\begin{definition}
Let $\mathcal{T}$ be an additive category with a self-equivalence $\Sigma\colon \mathcal{T}\nach\mathcal{T}$ and a class of so-called distinguished triangles $X\nach Y\nach Z\nach \Sigma X.$ The pair consisting of $\Sigma$ and the class of distinguished triangles determines a \emph{triangulated structure} on $\mathcal{T}$ if the following four axioms are satisfied. In this case, the triple consisting of the category, the endofunctor, and the class of distinguished triangles is called a \emph{triangulated category}.\\
{\rm (T1)} For every $X\in\mathcal{T},$ the triangle $X\stackrel{\id}{\nach}X\nach 0\nach \Sigma X$ is distinguished. Every morphism in $\mathcal{T}$ occurs as the first morphism in a distinguished triangle and the class of distinguished triangles is replete, i.e., is closed under isomorphisms.\\
{\rm (T2)} A triangle $X\stackrel{f}{\nach} Y\stackrel{g}{\nach} Z\stackrel{h}{\nach} \Sigma X$ is distinguished if and only if the rotated triangle $Y\stackrel{g}{\nach} Z\stackrel{h}{\nach} \Sigma X\stackrel{-f}{\nach}\Sigma Y$ is.\\
{\rm (T3)} Given two distinguished triangles and a commutative solid arrow diagram
$$\xymatrix{
X\ar[r]\ar[d]^u& Y\ar[r]\ar[d]^v & Z\ar[r]\ar@{-->}[d]^w & \Sigma X\ar[d]^{\Sigma u}\\
X'\ar[r] & Y'\ar[r] & Z'\ar[r] & \Sigma X'
}
$$
there exists a dashed arrow $w\colon Z\nach Z'$ as indicated such that the extended diagram commutes.\\
{\rm (T4)} For every pair of composable arrows $f_3\colon X\stackrel{f_1}{\nach} Y\stackrel{f_2}{\nach} Z$ there is a commutative diagram
$$\xymatrix{
X\ar[r]^{f_1}\ar@{=}[d]& Y \ar[r]^{g_1}\ar[d]_{f_2}& C_1\ar[r]^{h_1}\ar[d]& \Sigma X\ar@{=}[d]\\
X\ar[r]_{f_3}& Z\ar[d]_{g_2}\ar[r]_{g_3}& C_3\ar[r]_{h_3}\ar[d]& \Sigma X\ar[d]^{\Sigma f_1}\\
&C_2\ar[d]_{h_2}\ar@{=}[r]&C_2\ar[d]^{\Sigma g_1\circ h_2}\ar[r]_{h_2}&\Sigma Y\\
&\Sigma Y\ar[r]_{\Sigma g_1}& \Sigma C_1& 
}
$$
in which the rows and columns are distinguished triangles.
\end{definition}

We will now give the proof of the theorem.

\begin{proof} (of Theorem \ref{thm_triang})\\
It suffices to do this for the case $J=e$. The additivity of $\DD(e)$ is already given by Corollary \ref{cor_additive}. Moreover, in this stable setting, the suspension functor $\Sigma$ is an equivalence.\\
\indent
(T1):  The first part of axiom (T1) is settled by Proposition \ref{prop_isocone} and the second part is  settled using the assumed strongness. The last part of (T1) holds by definition of the class of distinguished triangles.\\ 
\indent
(T3): Axiom (T3) is settled similarly by reducing first to the situation of triangles of the form~$(T_f)$ for $f\in \DD([1])$ and then applying the strongness again.\\
\indent
(T2): Before we give the actual proof of axiom (T2) we recall that the axioms of a triangulated category as given here are not in a minimal form. In fact, if one has already established axioms (T1) and (T3) it suffices to give a proof of one half of the rotation axiom as indicated in the next claim (cf.\ again to \cite{schwede_bookproject} for this fact).\\
\textit{Claim:} let $X\stackrel{f}{\nach} Y\stackrel{g}{\nach} Z\stackrel{h}{\nach} \Sigma X$ be a distinguished triangle in $\DD(e)$, then also the rotated triangle $Y\stackrel{g}{\nach} Z\stackrel{h}{\nach} \Sigma X\stackrel{-\Sigma f}{\nach} \Sigma Y$ is distinguished.\\
We can again reduce to the case where the given distinguished triangle is $(T_f)$ for some $f\in \DD([1]).$ Let us consider the category $J$ given by the following full subposet of $[2]\times[2]$
$$\xymatrix{
(0,0)\ar[r]\ar[d]& (1,0)\ar[r]\ar[dd]& (2,0)\\
(0,1)\ar@/_1.0pc/[dr]& & \\
& (1,2) &
}
$$
and let $i\colon [1]\nach J$ be the functor classifying the upper left horizontal morphism. Then $i$ is a sieve and $i_\ast$ gives us thus an extension by zero functor. Moreover, let us denote by $j$ the canonical inclusion of $J$ in $K=[2]\times [2]-\{(0,2)\}.$ For a given $f\in \DD([1])$ let us consider $j_!i_\ast(f)$. Again, by a repeated application of Proposition \ref{prop_detect} all squares in $j_!i_\ast(f)$ are biCartesian. If the diagram of $f$ is $f\colon X\nach Y$ then the underlying diagram of  $j_!i_\ast(f)$ looks like:
$$\xymatrix{
X\ar[r]^f\ar[d]& Y\ar[r]\ar[d]^g& 0\ar[d]\\
0\ar[r]& \mathsf{C}f\ar[r]^h\ar[d]& \Sigma X\ar[d]\\
& 0\ar[r]& \Sigma Y
}
$$
In fact, the inclusion $(d^1\times d^2)\colon \push\nach K$ allows us to identify the value at $(2,1)$ with $\Sigma X$ while the inclusion $(d^0\times d^1)\colon\push\nach K$ gives us an identification of the lower right corner with $\Sigma Y$. However, this last inclusion differs from the usual one by the automorphism $\sigma\colon\push\nach\push.$ By Proposition \ref{prop_sign}, the induced map $\sigma^\ast\colon \Sigma Y\nach \Sigma Y$ is $-\id_{\Sigma Y}$. Hence, using moreover the unique natural transformation of the two inclusions $(d^0\times d^1)\nach (d^1\times d^2)\colon\push\nach K,$ we can identify the morphism $\Sigma X\nach \Sigma Y$ as $-\Sigma f$ and this shows that the triangle $(T_g)$ is as stated in the claim.\\
(T4): It remains to give a proof of the octahedron axiom. The proof of this will be split into two parts.\\
{\rm i)} In the first part, given an object $F\in \DD([2]),$ we construct an associated octahedron diagram in~$\DD(e).$ The pattern of this part of the proof is by now quite familiar. Consider the category $J$ given by the following full subposet of $[4]\times [2]$
$$\xymatrix{
(0,0)\ar[r]\ar[d]& (1,0)\ar[r]\ar[dd]& (2,0)\ar[r]& (3,0)\ar@/^1.0pc/[dr]& \\
(0,1)\ar[rrrr]\ar@/_1.0pc/[dr]&&&&(4,1)\\
& (1,2) &&&
}
$$
and let $i\colon [2]\nach J$ classify the two composable upper left morphisms. Moreover, let 
$$j\colon J\nach K=[4]\times [2] -\{(4,0),(0,2)\}$$
be the canonical inclusion. Since $i$ is a sieve, the homotopy right Kan extension functor $i_\ast$ is an extension by zero functor. For $F\in \DD([2])$ let us consider $D=j_!i_\ast (F) \in \DD(K).$ If the underlying diagram of $F$ is $X\stackrel{f_1}{\nach}Y\stackrel{f_2}{\nach}Z$ then the underlying diagram of $D$ is
$$\xymatrix{
X\ar[r]^{f_1}\ar[d]& Y\ar[r]^{f_2}\ar[d] & Z\ar[d]\ar[r]& 0\ar[d]&\\
0\ar[r]& \widehat{C_1}\ar[r]\ar[d]& \widehat{C_3}\ar[r]\ar[d]& SX\ar[r]\ar[d]& 0\ar[d]\\
&0\ar[r]& \widehat{C_2}\ar[r]& SY\ar[r]& S\widehat{C_1}
}
$$
A repeated application of Proposition \ref{prop_detect} guarantees that all squares in $D$ are biCartesian. Hence the same is also true for all compound squares one can find in $D$. This allows us to find canonical isomorphisms $\widehat{C_k}\cong \mathsf{C}(f_k)$ if we set $f_3=f_2\circ f_1.$ More precisely, the cone functor $\mathsf{C}$ has of course to be applied to $f_1=d_2(F),\:f_2=d_0(F),$ and $f_3=d_1(F)\in \DD([1]).$ Similarly, we obtain isomorphisms $SX\cong \Sigma X,\:SY\cong \Sigma Y,$ and $S\widehat{C_1}\cong\Sigma \widehat{C_1}.$ Thus, one can extract an octahedron diagram in $\DD(e)$ from the object $D.$\\
{\rm ii)} In this part, we show that every `first half of an octahedron diagram' comes up to isomorphism from an object $F\in \DD([2]).$ Let us restrict attention to the upper left square 
$$\xymatrix{
X\ar[r]^{f_1}\ar@{=}[d]& Y\ar[d]^{f_2}\\
X\ar[r]_{f_3}& Z
}
$$
of such a diagram. The strongness of $\DD$ guarantees that there is an object $F_1\in \DD([1])$ and an isomorphism $\dia F_1\cong (f_1\colon X\nach Y).$ Moreover, let us consider $p^\ast Z\in \DD([1]),$ where $p\colon [1]\nach e$ is the unique functor. Then, we obtain a morphism $\phi\colon F_1\nach p^\ast Z$ as the image of $f_2$ under the two natural isomorphisms (we applied Lemma \ref{lemma_terminal} to obtain the second one): 
$$
\hhom _{\DD(e)}(Y,Z)\quad\cong\quad \hhom _{\DD([1])}(F_1,1_\ast Z)\quad\cong\quad \hhom _{\DD([1])}(F_1,p^\ast Z)
$$
Considering this map $\phi\colon F_1\nach p^\ast Z$ as an object of $\DD([1])^{[1]},$ a further application of the strongness guarantees the existence of an object $Q\in \DD(\square)$ such that $\dia _{[1],[1]}Q\cong (\phi\colon F_1\nach p^\ast Z)$:
$$\xymatrix{
\dia Q:&X\ar[d]_{f_1}\ar[r]^{\phi_0}&Z\ar[d]\\
&Y\ar[r]_{\phi_1}&Z
}
$$
If $i\colon [2]\nach \square$ classifies the non-degenerate pair of composable arrows passing through the lower left corner $(0,1)$ then let us set $F=i^\ast Q\in \DD([2]).$ This $F$ does the job.
\end{proof}

From now on, whenever we consider the values of a stable derivator as triangulated categories we will always mean the triangulated structure of Theorem \ref{thm_triang}. The next aim is to show that the functors belonging to a stable derivator can be canonically made into exact functors with respect to these structures. In the stable setting, Corollary \ref{cor_additive} induces immediately the following one.

\begin{corollary}\label{cor_exactadd}
Let $F\colon \DD\nach \DD'$ be a morphism of stable derivators, then:
$$F \mbox{ is left exact}\quad\iff\quad F  \mbox{ is exact}\quad\iff\quad F \mbox{ is right exact}$$
In particular, the components $F_J\colon \DD(J)\nach\DD'(J)$ of an exact morphism are additive functors.
\end{corollary}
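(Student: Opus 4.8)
The plan is to observe that, for a morphism between \emph{stable} derivators, each of the two clauses defining left exactness coincides with the corresponding clause defining right exactness, so that the three notions collapse to one; the additivity of the components is then a formal consequence.

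First I would dispose of the clauses concerning objects. A stable derivator is by definition pointed, so $\DD(e)$ and $\DD'(e)$ possess zero objects and, in particular, their initial and terminal objects coincide. Hence ``$F$ preserves initial objects'' and ``$F$ preserves terminal objects'' are literally the same requirement on the underlying functor $F_e$, namely that it send a zero object to a zero object.

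Next I would handle the clauses concerning squares. Since $\DD$ is stable, an object of $\DD(\square)$ is Cartesian if and only if it is coCartesian, and likewise for $\DD'$. It then remains to check that ``$F$ preserves homotopy right Kan extensions along $\ipull$'' is equivalent to ``$F_\square$ carries every Cartesian square to a Cartesian square'', and dually for $\ipush$ and coCartesian squares. One implication is immediate, since $F_\square\ipull_\ast(Y)\cong\ipull_\ast F_\pull(Y)$ as soon as the comparison $2$-cell ${\gamma^F_{\ipull}}_\ast$ is invertible. For the converse one observes, using $\gamma^F_{\ipull}$ and a triangular identity together with the fact that $\ipull$ is fully faithful (whence so is $\ipull_\ast$ by Proposition \ref{prop_honestext}), that ${\gamma^F_{\ipull}}_\ast$ becomes an isomorphism after restriction along $\ipull^\ast$; if in addition $F_\square$ sends Cartesian squares to Cartesian squares, then ${\gamma^F_{\ipull}}_\ast(Y)$ is a morphism between two objects in the essential image of the fully faithful functor $\ipull_\ast$ which becomes an isomorphism after $\ipull^\ast$, and hence is itself an isomorphism. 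Combining this with ``Cartesian $=$ coCartesian'' in both $\DD$ and $\DD'$, the property ``$F$ preserves Cartesian squares'' and the property ``$F$ preserves coCartesian squares'' coincide. Together with the previous paragraph, ``$F$ is left exact'' and ``$F$ is right exact'' unfold to the same pair of conditions, and ``$F$ is exact'' is by definition their conjunction; hence all three notions agree.

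For the last assertion, recall that a left exact morphism preserves finite products (as observed immediately after the definition of exact morphisms), and that this property is inherited by the functors at all levels (Corollary \ref{cor_minimalmorphism} together with the remark following the notion of a morphism preserving finite coproducts); so for an exact $F$ the component $F_J$ preserves finite products for every $J$. By Corollary \ref{cor_additive} the categories $\DD(J)$ and $\DD'(J)$ are additive, so there finite products agree with finite coproducts; a product-preserving functor between additive categories preserves the zero object, hence zero morphisms, hence the biproduct diagrams with their projections and reconstructed injections, and therefore the convolution description of the addition on hom-sets. Thus $F_J$ is additive. I expect the only genuinely fiddly point to be the bookkeeping of the middle paragraph, identifying the Kan-extension formulation of ``preserving (co)Cartesian squares'' with the naive ``sending (co)Cartesian squares to (co)Cartesian squares''; the remainder is formal.
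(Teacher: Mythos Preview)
Your argument is correct and matches the paper's intended approach; the paper gives no proof at all beyond the phrase ``Corollary~\ref{cor_additive} induces immediately the following one,'' relying tacitly on the same two observations you make explicit (zero objects identify initial with terminal, and the stability axiom identifies Cartesian with coCartesian in both source and target). The one point you treat with care that the paper passes over in silence is the identification of the Kan-extension definition of ``preserves (co)Cartesian squares'' with the naive condition ``$F_\square$ sends (co)Cartesian squares to (co)Cartesian squares''; your handling of this via fully faithfulness of $\ipull_\ast$ and the triangular identity is the standard argument and is correct.
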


Exact morphisms are the `correct' morphisms for stable derivators. Some evidence for this is given by the next result.

\begin{proposition}\label{prop_triang}
Let $F\colon \DD\nach \DD'$ be an exact morphism of stable derivators and let $J$ be a category. The functor $F_J\colon \DD(J)\nach \DD'(J)$ can be canonically endowed with the structure of an exact functor of triangulated categories.
\end{proposition}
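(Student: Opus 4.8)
The plan is to reduce immediately to the case $J = e$ by the usual trick: by Proposition \ref{prop_againtriang} both $\DD^J$ and $\DD'^J$ are stable derivators, and by Corollary \ref{cor_minimalmorphism} (together with Example \ref{example_exact}~ii) and the fact that $\dia_{M,-}$ intertwines everything) the morphism $F^J \colon \DD^J \nach \DD'^J$ is again exact and has underlying functor $F_J$. The triangulated structure on $\DD^J(e) = \DD(J)$ is by construction (Theorem \ref{thm_triang}) the one transported from $\DD^J$, so it suffices to exhibit the exact structure on the underlying functor $F_e \colon \DD(e) \nach \DD'(e)$ of an exact morphism of stable derivators. By Corollary \ref{cor_exactadd} this functor is additive, so what remains is to produce the natural isomorphism $F_e \circ \Sigma \stackrel{\cong}{\nach} \Sigma \circ F_e$ and to check that it sends distinguished triangles to distinguished triangles.

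The natural isomorphism is already available. Since $F$ is right exact it preserves initial objects and coCartesian squares, and the construction of $\Sigma$ (Definition before Proposition on $(\Sigma,\Omega)$) is built entirely out of an extension-by-zero functor $(0,0)_\ast$, the homotopy left Kan extension $\ipush_!$, and the evaluation $(1,1)^\ast$. Concretely, $F$ preserves the extension-by-zero functors along cosieves and sieves because it preserves the vanishing condition on the relevant objects (by Proposition \ref{prop_extzero} and the characterization of the essential images, which $F$ respects since $F$ preserves zero objects and the relevant Kan extensions); $F$ commutes with $\ipush_!$ up to the canonical isomorphism $\gamma^F$ since $F$ preserves coCartesian squares; and $F$ commutes strictly with evaluation up to $\gamma^F_{(1,1)}$. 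Pasting these three canonical isomorphisms together yields a natural isomorphism $\tau \colon F \circ \Sigma \stackrel{\cong}{\nach} \Sigma \circ F$. (This is precisely the content of the last Proposition of Section \ref{sec_pointed}, which gives $\Sigma \circ G \nach G \circ \Sigma$ for right exact $G$; one takes the inverse.)

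For the compatibility with distinguished triangles, recall that a distinguished triangle in $\DD(e)$ is, up to isomorphism, of the form $(T_f)$ for some $f \in \DD([1])$, obtained by restricting the object $T(f) = i_{1!}i_{0\ast}(f) \in \DD([2]\times[1])$ to $[3]$ and passing to the underlying diagram. The key point is that $F$, being right exact, commutes with the whole construction of $T$: it preserves the extension-by-zero functor $i_{0\ast}$ (a homotopy right Kan extension along a sieve, whose essential image is characterized by a vanishing condition preserved by $F$), and it preserves $i_{1!}$ — here I would invoke the detection result Proposition \ref{prop_detect} in the form already used in the construction of $T$, namely that $T(f)$ has all of $d_0,d_1,d_2$ biCartesian, so that $F^{[2]\times[1]}$ applied to $T(f)$ again has all squares biCartesian, hence equals $T(F^{[1]}(f))$ up to the canonical isomorphism. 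Since $F_e$ commutes with evaluation and with the underlying-diagram functor $\dia$ (via $\gamma^F$), applying $F_e$ to the triangle $(T_f)$ produces a triangle isomorphic to $(T_{F^{[1]}(f)})$, where one uses $\tau$ to identify the last term $F_e(\Sigma X) \cong \Sigma F_e(X)$ and $\gamma^F$ to identify $F_e(\mathsf{C}(f)) \cong \mathsf{C}(F^{[1]}(f))$. Thus $F_e$ carries distinguished triangles to distinguished triangles, and $(F_e, \tau)$ is an exact functor.

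I expect the main obstacle to be bookkeeping rather than conceptual: one must verify that the canonical isomorphism $\tau$ built by pasting is exactly the one that makes the identification $F_e(T_f) \cong T_{F^{[1]}(f)}$ a triangle isomorphism — i.e.\ that the sign coming from $\Sigma$ and the sign implicit in the rotation axiom are handled consistently. Concretely this amounts to checking that the diagram identifying $F_e \circ \Sigma$ with $\Sigma \circ F_e$ is compatible with the maps $\mathsf{C}(f) \nach \Sigma X$ appearing in $(T_f)$, which in turn reduces to the compatibility of $\gamma^F$ with pasting (the coherence conditions on a morphism of derivators, rephrased as equalities of pasting diagrams as in the proof of Proposition \ref{prop_closureDer!}). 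All of this is formal once one sets up the pasting diagrams carefully, so I would state it as such and leave the routine diagram chase to the reader, exactly as is done for the analogous coherence verifications elsewhere in the paper.
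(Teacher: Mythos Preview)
Your proposal is correct and follows essentially the same approach as the paper's proof: reduce to $J=e$ via Proposition~\ref{prop_againtriang}, then use that an exact morphism preserves zero objects and coCartesian squares to conclude that it preserves both the defining squares for $\Sigma$ and the composite biCartesian diagrams $T(f)$ used to define distinguished triangles. The paper's argument is considerably terser than yours---it simply observes that the suspension and the distinguished triangles are \emph{defined} via coCartesian squares with prescribed vanishing entries, so preservation is immediate; the coherence and sign bookkeeping you flag as the ``main obstacle'' is absorbed into the word ``canonical'' and not spelled out.
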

\begin{proof}
By Proposition \ref{prop_againtriang}, we can assume without loss of generality that $J=e.$ Moreover, by definition, $F$ preserves zero objects and coCartesian squares. In particular, coCartesian squares such that the two off-diagonal entries vanish are preserved by $F.$ This gives us the canonical isomorphism $F\circ \Sigma \cong \Sigma \circ F.$ Similarly, $F$ preserves composites of two coCartesian squares. In particular, among the composites those which vanish at $(2,0)$ and $(0,1)$ are preserved. These were used to define the class of distinguished triangles in the canonical triangulated structures from where it follows that~$F$ together with the canonical isomorphism $F\circ\Sigma\cong\Sigma\circ F$ is exact.
\end{proof}

This result can now be applied to Example \ref{example_exact}. In particular, we can deduce that the functors belonging to a stable derivator respect the canonical triangulated structures we just constructed.

\begin{corollary}\label{cor_canonicallyexact}
Let $\DD$ be a stable derivator and let $u\colon J\nach K$ be a functor. The induced functors $u^\ast\colon \DD(K)\nach \DD(J)$ and $u_!,\:u_\ast\colon\DD(J)\nach\DD(K)$ can be canonically endowed with the structure of exact functors.
\end{corollary}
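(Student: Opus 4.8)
The plan is to derive Corollary~\ref{cor_canonicallyexact} from Proposition~\ref{prop_triang} (or rather Corollary~\ref{cor_exactadd} together with Proposition~\ref{prop_triang}) by exhibiting, for a fixed functor $u\colon J\nach K$, exact morphisms of stable derivators whose underlying functors at the level $e$ reproduce $u^\ast$, $u_!$, and $u_\ast$ — but after first passing to the derivators $\DD^K$ and $\DD^J$, so that the ``values at $e$'' become $\DD(K)$ and $\DD(J)$. Concretely, I would invoke Example~\ref{ex_adjunctions}(i), which gives the adjunctions of derivators $(u_!,u^\ast)\colon\DD^J\rightharpoonup\DD^K$ and $(u^\ast,u_\ast)\colon\DD^K\rightharpoonup\DD^J$, whose underlying functors on $\DD^J(e)=\DD(J)$ and $\DD^K(e)=\DD(K)$ are precisely $u_!,u^\ast$ and $u^\ast,u_\ast$ respectively.

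The key steps, in order, are as follows. First, by Proposition~\ref{prop_againtriang}, both $\DD^J$ and $\DD^K$ are stable derivators, and by Theorem~\ref{thm_triang} their values — in particular $\DD^J(e)=\DD(J)$ and $\DD^K(e)=\DD(K)$ — carry canonical triangulated structures; moreover these agree with the triangulated structures of Theorem~\ref{thm_triang} applied directly to $\DD$, since the triangulated structure on $\DD(L)$ is by construction already the one ``pulled back'' from $\DD^L(e)$. Second, Example~\ref{example_exact}(ii) records that the precomposition morphism $u^\ast\colon\DD^K\nach\DD^J$ is exact; I would observe that Corollary~\ref{cor_adjointcocont} together with Corollary~\ref{cor_exactadd} shows that any left adjoint morphism of stable derivators is exact, and dually any right adjoint morphism is exact, so that in the adjunctions $(u_!,u^\ast)\colon\DD^J\rightharpoonup\DD^K$ and $(u^\ast,u_\ast)\colon\DD^K\rightharpoonup\DD^J$ all three morphisms $u_!,u^\ast,u_\ast$ (viewed at the derivator level) are exact. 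Third, I would apply Proposition~\ref{prop_triang} to each of these exact morphisms to conclude that the underlying functors $u^\ast\colon\DD(K)\nach\DD(J)$ and $u_!,u_\ast\colon\DD(J)\nach\DD(K)$ are canonically exact functors of triangulated categories. Finally, the general case of arbitrary $J$ in the statement reduces again to the case just treated by replacing $\DD$ with $\DD^J$ and noting that $(\DD^J)^{K}\cong\DD^{J\times K}$, so that ``$u^\ast,u_!,u_\ast$ on $\DD(L)$'' for $L$ a product is subsumed.

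There is really only one place where care is needed, and it is the point I expect to be the main obstacle: checking that the triangulated structure on $\DD(L)$ obtained via Proposition~\ref{prop_triang} applied to the exact morphism in question is literally the \emph{same} structure as the canonical one of Theorem~\ref{thm_triang}, and not merely an isomorphic one. This is because Proposition~\ref{prop_triang} transports a triangulated structure along an exact morphism, and one must make sure that when the morphism is, say, $u^\ast\colon\DD^K\nach\DD^J$, the structure it produces on $\DD^J(e)$ is the canonical structure on $\DD(J)$ rather than some structure depending on $u$. The resolution is that the canonical triangulated structure on any value $\DD'(e')$ of a stable derivator $\DD'$ is defined intrinsically (via $\Sigma$ and the functor $T$ built from extension-by-zero and (co)Cartesian squares), so the structure on the source and target of an exact morphism are both already fixed before the morphism is considered; Proposition~\ref{prop_triang} then says the morphism is exact \emph{for those fixed structures}. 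Thus the only thing to verify is the harmless naturality statement that the $\Sigma$ and the class of distinguished triangles on $\DD^L(e)=\DD(L)$ coincide with those of Theorem~\ref{thm_triang} for $\DD$ at $L$, which is immediate from the definitions since $\dia_{[1]}$, the (co)Cartesian squares, and extension-by-zero functors for $\DD^L$ at $e$ are exactly those for $\DD$ at $L$. Once this bookkeeping is in place the corollary follows formally.

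Finally, it is worth remarking that the exactness isomorphisms produced are canonical in the strong sense: for $u^\ast$ the isomorphism $u^\ast\circ\Sigma\cong\Sigma\circ u^\ast$ is the strictness isomorphism (indeed $u^\ast$ being a strict precomposition morphism, this is an identity), while for $u_!$ and $u_\ast$ the relevant isomorphisms $u_!\circ\Sigma\cong\Sigma\circ u_!$ and $u_\ast\circ\Omega\cong\Omega\circ u_\ast$ are the Beck--Chevalley isomorphisms ${\gamma_{u}}_!$, respectively ${\gamma_{u}}_\ast$, whose invertibility is exactly the content of $u_!,u_\ast$ preserving coCartesian, respectively Cartesian, squares — which holds by Corollary~\ref{cor_adjointcocont} and its dual. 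This makes precise the phrase ``canonically endowed with the structure of exact functors''.
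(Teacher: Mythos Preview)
Your proposal is correct and follows essentially the same strategy as the paper: view $u^\ast$ as the underlying functor of the exact morphism of stable derivators $u^\ast\colon\DD^K\nach\DD^J$ (Example~\ref{example_exact}(ii)) and apply Proposition~\ref{prop_triang}. The one difference is in how you treat $u_!$ and $u_\ast$: you lift them to exact morphisms of derivators via Corollary~\ref{cor_adjointcocont} (and its dual) combined with Corollary~\ref{cor_exactadd}, and then apply Proposition~\ref{prop_triang} to each, whereas the paper only does this for $u^\ast$ and then invokes the classical triangulated-category fact (cited from Margolis) that any adjoint of an exact functor is canonically exact. Your route is a bit longer but has the virtue of staying entirely inside the derivator formalism; the paper's shortcut is more economical but imports an external result.
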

\begin{proof}
Since we have adjunctions $(u_!,u^\ast)$ and $(u^\ast,u_\ast)$, it suffices to show that $u^\ast$ can be canonically endowed with the structure of an exact functor (cf.\ \cite[p.463]{margolis}). But this functor $u^\ast$ can be considered as $u^\ast\colon \DD^K(e)\nach\DD^J(e)$ and hence the result follows by a combination of the last proposition and Example \ref{example_exact}.
\end{proof}

\begin{remark}\label{rem_advantages}
Theorem \ref{thm_triang} and Proposition \ref{prop_triang} reveal certain advantages of the language of stable derivators over the language of triangulated categories. A triangulated category $\mathcal{T}$ is, by the very definition, a triple consisting of a category $\mathcal{T}$ together with a functor $\Sigma\colon \mathcal{T}\nach\mathcal{T}$ and a class of distinguished triangle as additionally \emph{specified structure}. These are then subject to a list of axioms. One advantage of the stable derivators is that this structure does not have to be specified but instead is canonically available. Once the derivator is stable, i.e., has some easily motivated \emph{properties}, triangulated structures can be canonically constructed. In particular, the octahedron axiom does not have to be made explicit. 

Similarly, the fact that a morphism $F$ of triangulated categories is exact means, by the very definition, that the functor is endowed with an \emph{additional structure} given by a natural isomorphism $\sigma\colon F\circ \Sigma\nach \Sigma \circ F$ which behaves nicely with respect to the two chosen classes of distinguished triangles. But, in fact, the exactness of such a morphism should only be a property and not a structure. In most applications, the exact functors under consideration are `derived functors' of functors defined `on certain models in the background'. And in this situation, the exactness then reflects the fact that this functor preserves (certain) finite homotopy (co)limits. In the setting of stable derivators this is precisely the notion of an exact morphism. In particular, the exactness of a morphism is again a property and not the specification of an additional structure.

These same advantages are also shared by stable $\infty-$categories as studied in detail in Lurie's book \cite{HA}. A short introduction to that theory can be found in \cite[Section 5]{groth_infinity}.
\end{remark}

We are now basically done with the development of the theory of (stable) derivators. So let us analyze what conditions on a $2$-subcategory $\Dia\subseteq \Cat$ have to be imposed in order to be able to also deduce the same results for (stable) derivators of type $\Dia.$ By the very definition of a derivator, we need that the empty category and the terminal category belong to $\Dia$. Moreover, it has to be closed under finite coproducts to give sense to axiom (Der1). Furthermore, we frequently reduced situations to the case of the underlying category by using the passage from $\DD$ to $\DD^M$. Thus, $\Dia$ has also to be closed under products. We also used various finite posets as admissible shapes in the proofs of this section so we should ask axiomatically for a sufficient supply of them. Finally, $\Dia$ has to be closed under the slice construction since we impose axiomatically Kan's formula. There is the following definition of a diagram category which we cite from \cite{cisinskineeman}. In particular, this notion has the closure properties we used in the development of the theory.

\begin{definition}\label{def_diagram}
A full $2$-subcategory $\Dia \subseteq \Cat$ is called a \emph{diagram category} if it satisfies the following axioms:\\
$\bullet$ All finite posets considered as categories belong to $\Dia$.\\
$\bullet$ For every $J\in\Dia$ and every $j\in J$, the slice constructions $J_{j/}$ and $J_{/j}$ belong to $\Dia$.\\
$\bullet$ If $J\in \Dia$ then also $J^{\op}\in\Dia$.\\
$\bullet$ For every Grothendieck fibration $u\colon J\nach K,$ if all fibers $J_k,\:k\in K,$ and the base $K$ belong to $\Dia$ then also $J$ lies in $\Dia$.
\end{definition}
\noindent
With this notion one can now define prederivators and (pointed, stable) derivators of type $\Dia$ as $2$-functors $\Dia^{\op}\nach \CAT$ satisfying the corresponding axioms. We leave it to the reader to check that all results we established so far can also be proved in that more general situation.

\begin{example}
The full $2$-subcategory of finite posets is the smallest diagram category, $\Cat$ itself is the largest one. Further examples are given by the full $2$-subcategories spanned by the finite categories or the finite-dimensional categories. Moreover, the intersection of a family of diagram categories is again a diagram category.
\end{example}

\subsection{Recollements of triangulated categories}\label{subsection_recollement}

In this short subsection, we mainly mention that sieves and cosieves give rise to recollements of triangulated categories in the context of a stable derivator. This can be used to reprove (in the stable case) that the (co)exceptional inverse image functors show up for free. 

We begin with a very short recap of the theory of recollements of triangulated categories. For classical examples of recollements in algebraic geometry cf.\ \cite{beilinson}, for a very nice modern treatment cf.\ also to the thesis of Heider \cite{heider}. Recollements capture axiomatically the situation in which we are given three triangulated categories $\mathcal{T}',\:\mathcal{T},$ and $\mathcal{T}''$ such that every object of $\mathcal{T}$ can be obtained as an extension of an object of $\mathcal{T}''$ by an object of $\mathcal{T}'$ and vice-versa. More precisely, there is the following definition.

\begin{definition}
A \emph{recollement of triangulated categories} is a diagram of triangulated categories and exact functors\\
$$
\xymatrix{
\mathcal{T}'\ar[rr]^{i_!}&& \mathcal{T}\ar[rr]^{j^\ast} \ar@/^1.2pc/[ll]^{i^\ast} \ar@/_1.2pc/[ll]_{i^?}&& \mathcal{T}''\ar@/^1.2pc/[ll]^{j_\ast} \ar@/_1.2pc/[ll]_{j_!}
}
$$
such that the following properties hold:\\
$\bullet$ the pairs $(i^?,i_!),\:(i_!, i^\ast),\:(j_!,j^\ast),$ and $(j^\ast,j_\ast)$ are adjunctions\\
$\bullet$ $j^\ast i_!=0$\\
$\bullet$ the functors $i_!,\: j_!,$ and $j_\ast$ are fully faithful and\\
$\bullet$ every object $X\in\mathcal{T}$ sits in two distinguished triangles of the form
$$
\xymatrix{
i_! i^\ast X\ar[r]& X\ar[r] & j_\ast j^\ast X\ar[r]& \Sigma i_! i^\ast X,&
j_!j^\ast X\ar[r]& X\ar[r] & i_! i^? X \ar[r]& \Sigma j_!j^\ast X
}
$$
where in both triangles the first two arrows are the respective adjunction morphisms.
\end{definition}

One can show that in this situation $\mathcal{T}'=\ker j^\ast$ and that $\mathcal{T}''$ is the Verdier quotient $\mathcal{T}/\mathcal{T}'$ (\cite{heider}). The latter follows immediately from the first since by definition a recollement gives us a reflective localization and a coreflective colocalization (\cite{krause}). Let us remark further that this definition is not given in a minimal form but is overdetermined. Recall from classical category theory that if a functor admits an adjoint on both sides then if one of the adjoints is fully faithful then this is also the case for the other one (\cite[Prop. 3.4,2]{borceux1}). And, even more interesting for us, it suffices to only have the right half of a recollement. More precisely, there is the following result (\cite[Proposition~1.14]{heider}).

\begin{proposition}
Consider a diagram of triangulated categories and exact functors
$$\xymatrix{
\mathcal{T}\ar[rr]^{j^\ast} && \mathcal{T}''\ar@/^1.2pc/[ll]^{j_\ast} \ar@/_1.2pc/[ll]_{j_!}
}
$$
\noindent 
such that $(j_!,j^\ast)$ and $(j^\ast,j_\ast)$ are adjunctions and one of the two functors $j_!,\:j_\ast$ is fully faithful. If we denote by $\mathcal{T}'$ the kernel of $j^\ast$ and by $i_! \colon \mathcal{T}'\nach \mathcal{T}$ the inclusion then the above diagram can be extended to a recollement:
$$
\xymatrix{
\mathcal{T}'\ar[rr]^{i_!}&& \mathcal{T}\ar[rr]^{j^\ast} \ar@/^1.2pc/[ll]^{i^\ast} \ar@/_1.2pc/[ll]_{i^?}&& \mathcal{T}''\ar@/^1.2pc/[ll]^{j_\ast} \ar@/_1.2pc/[ll]_{j_!}
}
$$
\end{proposition}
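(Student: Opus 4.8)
The plan is to reconstruct the missing data of a recollement from the right half by a careful use of the adjunctions, exactly parallel to the unpointed/pointed manipulations carried out earlier in the paper. First I would fix notation: we are given exact functors $j^\ast\colon\mathcal{T}\to\mathcal{T}''$, $j_!,j_\ast\colon\mathcal{T}''\to\mathcal{T}$ with $(j_!,j^\ast)$ and $(j^\ast,j_\ast)$ adjunctions and (say) $j_\ast$ fully faithful; the other case is dual. By the cited fact that a functor with adjoints on both sides has one adjoint fully faithful iff the other is (\cite[Prop.\ 3.4.2]{borceux1}), $j_!$ is then automatically fully faithful as well, so both counit $j^\ast j_\ast\to\id$ and unit $\id\to j^\ast j_!$ are isomorphisms. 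Set $\mathcal{T}'=\ker j^\ast$, a thick (hence triangulated) subcategory of $\mathcal{T}$, and let $i_!\colon\mathcal{T}'\to\mathcal{T}$ be the inclusion, which is trivially exact.

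The key steps, in order, are the following. (1) Produce the right adjoint $i^\ast$ of $i_!$: this is the standard Bousfield colocalization argument. For $X\in\mathcal{T}$ complete the adjunction counit $j_!j^\ast X\to X$ to a distinguished triangle $j_!j^\ast X\to X\to \bar X\to\Sigma j_!j^\ast X$; applying the exact functor $j^\ast$ and using that $j^\ast j_!j^\ast X\to j^\ast X$ is an isomorphism (counit of $(j_!,j^\ast)$ composed suitably, or directly the iso $\id\cong j^\ast j_!$ applied) shows $j^\ast\bar X=0$, i.e.\ $\bar X\in\mathcal{T}'$; one checks $X\mapsto \bar X$ is right adjoint to $i_!$ by verifying the universal property against objects of $\mathcal{T}'$, using that $\Hom(Z,j_!W)=0$ and $\Hom(Z,\Sigma j_!W)=0$ for $Z\in\mathcal{T}'$ (because $\Hom(Z,j_!W)\cong\Hom(j^\ast\! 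Z?,\dots)$ — more precisely use that $j_!$ is left adjoint to $j^\ast$ and apply $\Hom(Z,-)$ to the triangle). Dually, (2) produce the left adjoint $i^?$ of $i_!$ from the other adjunction unit $X\to j_\ast j^\ast X$ by completing to a triangle $\bar{\bar X}\to X\to j_\ast j^\ast X\to\Sigma\bar{\bar X}$ with $\bar{\bar X}\in\mathcal{T}'$. (3) The two distinguished triangles in the definition are then exactly these two triangles, once one identifies $\bar X\cong j_\ast j^\ast X$'s companion with $i_!i^\ast X$ — that is, identify $\bar{\bar X}=i_!i^? X$ and the cone $\bar X$ with $i_!i^\ast X$ — which is immediate from the construction. (4) The remaining bookkeeping — $j^\ast i_!=0$ is the definition of $\mathcal{T}'$; the adjunction $(i^?,i_!)$ follows from $(i_!,i^\ast)$ by a general nonsense argument or is constructed directly in step (2); $i_!$ fully faithful is the inclusion of a full subcategory; exactness of $i^\ast,i^?$ follows since they are adjoints of the exact functor $i_!$ (using \cite[p.463]{margolis} as invoked in Corollary \ref{cor_canonicallyexact}). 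Finally one records the consequence: applied to a stable derivator $\DD$ and a sieve $j\colon J\to K$ with complement cosieve, Proposition \ref{prop_extzero} gives the fully faithful $j_\ast=v_\ast$ and $j^\ast$ together with $j_!$, so the proposition manufactures $i^\ast$ and in particular the coexceptional $i^?$, reproving Corollary \ref{cor_pointed} in the stable case.

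The step I expect to be the main obstacle is (1)–(2): verifying that the cone $\bar X$ of the adjunction (co)unit genuinely lies in $\mathcal{T}'$ and then that the assignment $X\mapsto\bar X$ is functorial and adjoint to $i_!$. The subtlety is that the octahedral axiom is needed to make $X\mapsto\bar X$ into a functor (cones are not a priori functorial in a triangulated category), and one must check the relevant $\Hom$-vanishing $\Hom_{\mathcal{T}}(Z,j_!W)=0=\Hom_{\mathcal{T}}(j_\ast W,Z)$ for $Z\in\ker j^\ast$, which uses the adjunctions $(j_!,j^\ast)$, $(j^\ast,j_\ast)$ together with $j^\ast Z=0$. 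Once these vanishing statements are in hand, the universal property and the distinguished triangles drop out formally, and exactness of the new functors is automatic from adjointness. Since this is precisely \cite[Proposition~1.14]{heider}, I would in the paper simply cite it; the sketch above indicates the proof one would give if a self-contained argument were wanted.
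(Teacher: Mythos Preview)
The paper does not prove this proposition; it simply records it with the citation to \cite[Proposition~1.14]{heider} and moves on to the example. Your final sentence---``I would in the paper simply cite it''---is therefore exactly what the paper does, and that part of your proposal is correct.

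Your supplementary sketch, however, has the two constructions interchanged, and the error is not purely notational. In step (1) you complete the counit $j_!j^\ast X\to X$ to a triangle and claim the cone $\bar X$ gives the \emph{right} adjoint $i^\ast$ of $i_!$. For that you would need $\Hom(Z,j_!W)=0$ for $Z\in\ker j^\ast$, and you visibly hesitate at this point (the ``$j^\ast Z?$'' with a question mark). Your hesitation is justified: the adjunction $(j_!,j^\ast)$ gives $\Hom(j_!W,Z)\cong\Hom(W,j^\ast Z)=0$, not $\Hom(Z,j_!W)=0$, and the latter is false in general (in a recollement $i^\ast j_!$ is the fiber of $j_!\to j_\ast$, which is typically nonzero). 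What the triangle $j_!j^\ast X\to X\to\bar X$ actually produces is the \emph{left} adjoint $i^?$: applying $\Hom(-,Z)$ and using $\Hom(j_!j^\ast X,Z)\cong\Hom(j^\ast X,j^\ast Z)=0$ gives $\Hom(\bar X,Z)\cong\Hom(X,Z)$. Dually, the fiber of the unit $X\to j_\ast j^\ast X$ gives $i^\ast$, using $\Hom(Z,j_\ast W)\cong\Hom(j^\ast Z,W)=0$. This matches the two triangles in the definition of a recollement as stated just above the proposition. With this swap corrected, the rest of your outline (functoriality via Bousfield-type arguments, exactness of the new adjoints via \cite[p.~463]{margolis}) goes through.
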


In the context of a stable derivator, there is the following class of examples.

\begin{example}
Let $\DD$ be a stable derivator and consider a sieve $j\colon U\nach X$. Moreover, let $Z$ be the full subcategory of $X$ spanned by the objects which are not in the image of $j$. Then the inclusion $i\colon Z\nach X$ is a cosieve. Moreover, by the fully faithfulness of homotopy Kan extensions along fully faithful functors and by Proposition \ref{prop_extzero}, the last proposition gives us the following recollements:
$$
\xymatrix{
\DD(U)\ar[rr]^{j_\ast}&& \DD(X)\ar[rr]^{i^\ast} \ar@/^1.2pc/[ll]^{j^!} \ar@/_1.2pc/[ll]_{j^\ast}&& \DD(Z)\ar@/^1.2pc/[ll]^{i_\ast} \ar@/_1.2pc/[ll]_{i_!}&
\DD(Z)\ar[rr]^{i_!}&& \DD(X)\ar[rr]^{j^\ast} \ar@/^1.2pc/[ll]^{i^\ast} \ar@/_1.2pc/[ll]_{i^?}&& \DD(U)\ar@/^1.2pc/[ll]^{j_\ast} \ar@/_1.2pc/[ll]_{j_!}
}
$$
\end{example}

This example shows that for a sieve $j\colon U\nach X$ (resp.\ for a cosieve $i\colon Z\nach X$) the additional adjoint functor $j^!\colon \DD(X)\nach \DD(U)$ (resp.\ $i^?\colon \DD(X)\nach \DD(Z)$) shows up for free in the above recollements. Thus, this reproves, in the stable case, that a pointed derivator admits (co)exceptional inverse image functors.

%\addcontentsline{toc}{part}{References}
\bibliographystyle{alpha}
\bibliography{derivators_1}

\end{document}